\newcommand\C{\mathbb{C}}
\newcommand\R{\mathbb{R}}
\newcommand\HH{\mathbb{H}}
\newcommand\Q{\mathbb{Q}}
\newcommand\Qbar{\overline{\mathbb{Q}}}
\newcommand\A{\mathbb{A}}
\newcommand\p{\mathfrak{p}}
\newcommand\m{\mathfrak{m}}
\renewcommand\O{\mathcal{O}}
\renewcommand\P{\mathbb{P}}
\newcommand\Sp{\mathbb{S}}
\newcommand\Z{\mathbb{Z}}
\newcommand\F{\mathbb{F}}
\newcommand\PSL{\mathop{\rm PSL}\nolimits}
\newcommand\Isom{\mathop{\rm Isom}\nolimits}
\newcommand\Tr{\tilde{X}}
\newcommand\PTr{\tilde{Y}}
\newcommand\Tra{\tilde{X}_{\rm a}}
\newcommand\Trna{\tilde{X}_{\rm na}}
\newcommand\Homna{\mathop{\rm R}\nolimits_{\rm na}}
\newcommand\Homnahat{\mathop{\hat{\rm R}}\nolimits_{\rm na}}
\newcommand\Hom{{\rm R}}
\newcommand\Homhat{\hat{\rm R}}
\newcommand\SL{\mathop{\rm SL}\nolimits}
\newcommand\tr{\mathop{\rm tr}\nolimits}
\newcommand\isom{\cong}
\newcommand\Ge{\Gamma_{\rm e}}
\newcommand\Tp{T_{\rm e}}
\newcommand\lam{\lambda}
\newtheorem{theorem}{Theorem}[section]
\newtheorem{lemma}[theorem]{Lemma}
\newtheorem{proposition}[theorem]{Proposition}
\newtheorem{conjecture}[theorem]{Conjecture}
\newtheorem{question}[theorem]{Question}
\newtheorem{corollary}[theorem]{Corollary}
\newtheorem{definition}[theorem]{Definition}
\newtheorem{remark}[theorem]{Remark}
\newcounter{nootje}
\title{On Character varieties of two-bridge knot groups}
\author{Melissa L. Macasieb \\ 
Kathleen L. Petersen\\
Ronald M. van Luijk}
\address{Department of Mathematics,
Mathematics Building,
University of Maryland,
College Park, MD 20742}
\email{melmacasieb@gmail.com}
\address{Department of Mathematics,
         Florida State University, 208 Love Building, 
         Tallahassee, FL 32306-4510}
\email{petersen@math.fsu.edu}
\address{Mathematisch Instituut, Universiteit Leiden, Postbus 9512,
2300 RA, Leiden, The Netherlands}
\email{rmluijk@gmail.com}
\begin{document}

\begin{abstract} We find explicit models for the
  $\PSL_2(\C)$- and $\SL_2(\C)$-character varieties of the fundamental
  groups of complements in $\Sp^3$ of an infinite
  family of two-bridge knots that contains the twist knots. 
  We compute the genus of the components of these
  character varieties, and deduce upper bounds on the degree of the
  associated trace fields.  We also show that these knot complements
  are fibered if and only if they are commensurable to a fibered knot
  complement in a $\Z/2\Z$-homology sphere, resolving a conjecture of Hoste and
  Shanahan. 
\end{abstract}

\maketitle

\section{Introduction}
Given a finitely generated group $\Gamma$, the set of all representations
$\Gamma \to \SL_2(\C)$ naturally carries the structure of an algebraic set.
So does the set of characters of these representations. Often the
components of this last set that contain only characters of abelian 
representations are 
well understood. The union of the other components is called 
the $\SL_2(\C)$-character variety of $\Gamma$.
Over the last few decades, the $\SL_2(\C)$-character
variety of the fundamental groups of hyperbolic $3$-manifolds has 
proven to be an effective tool in understanding their
topology (see \cite{CCGLS}, \cite{CGLS}, \cite{CS}). The same
can be said for their $\PSL_2(\C)$-character variety, 
defined in \S \ref{psltwo}, but 
in general it is difficult to find even the simplest 
invariants of these varieties, such as the number of irreducible components.

\vspace{.1cm}
\begin{figure}[h!]
\centering
\includegraphics[scale=.6]{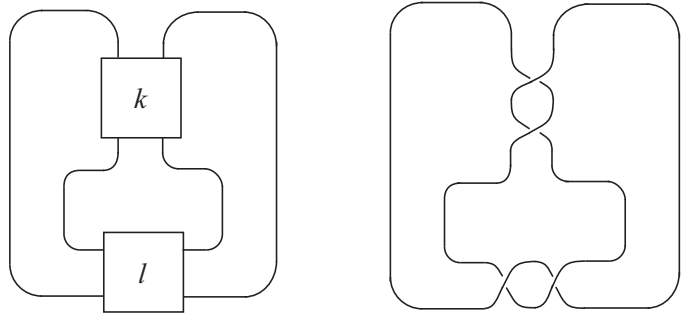}
\caption{The knot $J(k,l)$ and the figure-eight knot $J(2,-2)$.}
\label{twisty}
\end{figure}

In this paper we consider the case that $\Gamma$ is a knot group, i.e., 
the fundamental group of the complement in $\Sp^3$ of a knot. 
We look at the knots $J(k,l)$ 
as described in Figure \ref{twisty}, where $k$ and $l$ are integers 
denoting the number of half twists in the labeled boxes; positive
numbers correspond to right-handed twists and negative numbers
correspond to left-handed twists. Note that $J(k,l)$ is a knot if and 
only if $kl$ is even; otherwise it is a two-component link. 
The subfamilies of knots $J(\pm 2,l)$, with $l \in \Z$, consist of all 
twist knots, containing the figure-eight knot $J(2,-2)$ and the trefoil
$J(2,2)$. The complement of the knot $J(k,l)$ is hyperbolic if and only if 
$|k|,|l| \geq 2$ and $J(k,l)$ is not the trefoil.


We compute the genus of every component of the
character varieties associated to these knots. 
This is the first 
time such results have been found for an infinite family of knots.
In particular it shows that the genus of both character varieties of 
a knot complement can be arbitrarily large, which was not known before. 

More precisely, for any nonzero integers $k$ and $l$ with $kl$ even, 
we let $M(k,l)$ denote 
the complement $\Sp^3 \setminus J(k,l)$ and let $X(k,l)$ and $Y(k,l)$ 
denote the $\SL_2(\C)$- and $\PSL_2(\C)$-character variety of 
the fundamental group $\pi_1(M(k,l))$. Both varieties are curves 
and $X(k,l)$ is a double cover of $Y(k,l)$. 
Our first main result is a non-recursively defined model for $Y(k,l)$.
Secondly, we construct a projective birational model for $Y(k,l)$ that 
we prove to be smooth and irreducible when $J(k,l)$ is hyperbolic and $k\neq l$. For $k=l>2$ the
curve $Y(k,l)$ has two smooth components and we identify which of the 
two is the 
canonical component $Y_0(k,l)$, defined in \S \ref{psltwo}. 
The results, and those for $X(k,l)$ and its canonical component $X_0(k,l)$, 
defined in \S \ref{sltwo}, are summarized in the following theorems. 

\begin{theorem}\label{mainone}
  Let $k,l$ be any nonzero integers with $l$ even, $|k|\geq 2$, and
  $k\neq l$.
\begin{enumerate}
\item The curve $Y(k,l)$ is irreducible. It has geometric genus
  \[ (\lfloor |k|/2\rfloor-1)(|l|/2-1)\] and is hyperelliptic if and only
  if $|k|\leq 5$ or $|l|\leq 5$.
\item If $|l|>2$, then the curve $Y(l,l)$ has two components. The component
$Y_0(l,l)$ has genus $0$. The other component has genus $(|l|/2-2)^2$ and is 
hyperelliptic if and only if $|l|\leq 6$.
\end{enumerate}
\end{theorem}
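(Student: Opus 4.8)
The plan is to extract everything in the theorem from the smooth projective model of $Y(k,l)$ already in hand. The non-recursive model presents $Y(k,l)$ (birationally) as a plane curve $F_{k,l}(r,s)=0$ in two trace-type coordinates, and the preceding construction realizes its smooth model as a smooth curve $C$ of bidegree $(a,b)$ in $\P^1\times\P^1$ with $a:=\lfloor|k|/2\rfloor=\deg_r F_{k,l}$ and $b:=|l|/2=\deg_s F_{k,l}$; in particular the two coordinate projections $C\to\P^1$ have degrees $a$ and $b$. Under the hypotheses of (1) ($l$ even, $|k|\ge 2$, $k\ne l$) the knot $J(k,l)$ is hyperbolic with $k\ne l$, so $C$ is also irreducible. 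Now adjunction on $\P^1\times\P^1$ gives $K_C=\O_C(a-2,b-2)$, whence $2g(C)-2=(a-2)b+(b-2)a$ and $g(C)=(a-1)(b-1)=(\lfloor|k|/2\rfloor-1)(|l|/2-1)$. (Equivalently one could run Riemann--Hurwitz on a coordinate projection, identifying the branch locus with the zeros of an explicit Chebyshev-type discriminant, but the surface computation is cleaner.) For the ``if'' half of the hyperelliptic statement: if $|k|\le 5$ then $a\le 2$, and if $|l|\le 5$ then $b\le 2$ since $l$ is even; in either case $C$ carries a pencil of degree $\le 2$, hence is hyperelliptic (trivially so if its genus is $\le 1$).

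For the converse, assume $|k|,|l|\ge 6$, so $a,b\ge 3$, $g(C)=(a-1)(b-1)\ge 4$, and suppose for contradiction that $C$ is hyperelliptic; let $q\colon C\to\P^1$ be its hyperelliptic pencil, unique since $g(C)\ge 2$. The two coordinate projections cannot both factor through $q$: if they did, the embedding $C\hookrightarrow\P^1\times\P^1$ would factor through the degree-$2$ map $q$ and hence have degree $\ge 2$ onto its image, which is impossible. So one projection, say the degree-$a$ map $\pi\colon C\to\P^1$, does not factor through $q$. Then $q$ and $\pi$ admit no nontrivial common factorization: a common quotient $C\to C'$ of degree $\ge 2$ through which $q$ factors must itself have degree $2$ with $C'\cong\P^1$, i.e. it is the hyperelliptic pencil, through which $\pi$ would then also factor -- contrary to assumption. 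The Castelnuovo--Severi inequality therefore yields $g(C)\le(2-1)(a-1)=a-1$, contradicting $g(C)=(a-1)(b-1)\ge 2(a-1)$. Hence $C$ is not hyperelliptic, and the hyperelliptic locus in (1) is exactly $|k|\le 5$ or $|l|\le 5$.

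Part (2) follows the same template. When $k=l$ (even, $|l|>2$) the defining polynomial, of bidegree $(|l|/2,|l|/2)$, factors off a linear form -- a Chebyshev-type difference identity of the shape $P(r)-P(s)=(r-s)Q(r,s)$ -- so the two smooth components supplied by the construction have bidegrees $(1,1)$ and $(|l|/2-1,|l|/2-1)$; adjunction gives genera $0$ and $(|l|/2-2)^2$ respectively. That the rational $(1,1)$-component is the canonical component $Y_0(l,l)$ is verified by computing the discrete faithful character explicitly and checking that it satisfies the linear equation but not $Q$. For the non-canonical component $C'$ of bidegree $(c,c)$ with $c=|l|/2-1$: its projections to $\P^1$ have degree $c$, so for $c\le 2$ (i.e. $|l|\le 6$) it carries a degree-$\le 2$ pencil and is hyperelliptic (trivially if its genus is $\le 1$), while for $c\ge 3$ the Castelnuovo--Severi argument above applies verbatim and shows it is not.

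The step I expect to be the main obstacle is the converse half of the hyperelliptic statements. The two visible pencils of degrees $a$ and $b$ do not by themselves exclude a hidden $g^1_2$, so one must genuinely pin down the degrees of the coordinate projections (equivalently, that $F_{k,l}$ has honest bidegree $(a,b)$), invoke uniqueness of the hyperelliptic pencil, and rule out both projections factoring through it before Castelnuovo--Severi can be brought to bear. A secondary point requiring care is the identification of $Y_0(l,l)$ among the two components: for small $|l|$ both have low genus, so they cannot be told apart by any numerical invariant and one must locate the discrete faithful character by hand.
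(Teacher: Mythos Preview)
Your argument is correct and matches the paper's on the structural level: both deduce the genus and the hyperelliptic criterion from the smooth bidegree-$(a,b)$ model in $\P^1\times\P^1$. The differences are in two places.

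For the ``only if'' direction of the hyperelliptic statement, the paper uses a one-line cohomological argument: on a smooth curve $C\subset\P^1\times\P^1$ of bidegree $(a,b)$ the canonical sheaf is $\iota^*\O(a-2,b-2)$, which for $a,b\ge 3$ is very ample, so the canonical map is an embedding and $C$ cannot be hyperelliptic. Your Castelnuovo--Severi route is also valid and arguably more elementary---it avoids ampleness criteria and the adjunction identification of $K_C$---at the cost of the small combinatorial detour through uniqueness of the $g^1_2$ and the observation that the two projections cannot both factor through it. Both arguments are standard; the paper's is shorter.

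For identifying the canonical component $Y_0(l,l)$, the paper does not compute the discrete faithful character directly. Instead it uses the symmetry of the model: switching the two coordinates $(r,t)\mapsto(t,r)$ is induced by turning the $4$-plat upside down, and a result of Ohtsuki says this involution fixes the canonical component pointwise, forcing $Y_0(l,l)$ to be the diagonal $r=t$. Your proposed direct verification would also work in principle, but the symmetry argument is cleaner and uniform in $l$, sidestepping the concern you raise about distinguishing two low-genus components by hand.
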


\begin{theorem}\label{maintwo}
  Suppose $l$ is a nonzero even integer, say $l=2n$.  If $k\neq l$ is
  an integer satisfying $|k|\geq 2$, then $X(k,l)$ is irreducible and
  its genus equals
$$
3|mn|-|m|-a|n|+b,
$$
with $m = \lfloor k/2 \rfloor$ and
$$
a = \left\{
\begin{array}{rl}
  4 & \mbox{if $k$ is odd and $k<0$},\cr
  1 & \mbox{otherwise}.\cr
\end{array}
\right.
\qquad
b = \left\{
\begin{array}{rl}
  2 & \mbox{if $k$ is odd and $k<0<l$},\cr
  1 & \mbox{if $k$ is odd and $l<0$},\cr
  -1 & \mbox{if $k$ is even and $kl>0$},\cr
  0 & \mbox{otherwise}.\cr
\end{array}
\right.
$$
If $|l|>2$, then $X(l,l)$ has two components, namely $X_0(l,l)$ of
genus $|n|-1$ and an other component of genus $3n^2-7|n|+5$.
\end{theorem}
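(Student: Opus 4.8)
The plan is to obtain the genus of $X(k,l)$ from that of $Y(k,l)$, which is supplied by Theorem \ref{mainone}, by applying the Riemann--Hurwitz formula to the canonical degree-two morphism $\pi\colon X(k,l)\to Y(k,l)$. Recall that this morphism is induced by $\chi_\rho\mapsto\chi_{\epsilon\rho}$, where $\epsilon$ is the unique nontrivial homomorphism $\pi_1(M(k,l))\to\{\pm1\}$ and $(\epsilon\rho)(\gamma)=\epsilon(\gamma)\rho(\gamma)$. In the affine coordinates $(x,z)$ carrying the explicit models --- with $x$ the trace of a meridian and $z$ the trace of a product of the two meridional generators --- the deck involution $\tau$ of $\pi$ is $\tau(x,z)=(-x,z)$, since $\epsilon$ sends each meridian to $-1$ and their product to $+1$. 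Hence the defining polynomial of $X(k,l)$ is even in $x$, and $\pi$ is the extension to the smooth projective models of the map $(x,z)\mapsto(x^2,z)$.

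I would then locate and count the ramification of $\pi$. A point of the affine model is fixed by $\tau$ precisely when $x=0$, i.e.\ when the meridian has trace $0$; substituting $x=0$ into the explicit equation of $X(k,l)$ and counting the solutions with multiplicity --- via the Chebyshev-type recursions underlying the model --- gives a count that is a linear expression in $m=\lfloor k/2\rfloor$ and $n=l/2$ with leading term $|mn|$, whose correction terms depend on the signs of $k$ and $l$ and on the parity of $k$. This case dependence is exactly what produces the constants $a$ and $b$. One must check that each such point is a simple branch point, i.e.\ that $X(k,l)$ is unibranch there and that $\tau$ does not merely exchange two branches on which $x$ vanishes. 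One then treats the finitely many points at infinity, where the description of the points at infinity of $Y(k,l)$ obtained in establishing Theorem \ref{mainone} can be reused to decide which points lying over infinity are ramified for $\pi$, again splitting into the same sign/parity cases. Writing $R$ for the resulting total number of branch points (necessarily even, and, as the count shows, positive, so that $\pi$ is not the split cover and $X(k,l)$ is irreducible because $Y(k,l)$ is by Theorem \ref{mainone}), Riemann--Hurwitz gives $g(X(k,l))=2\,g(Y(k,l))-1+R/2$.

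It remains to substitute. From Theorem \ref{mainone} we have $g(Y(k,l))=(\lfloor|k|/2\rfloor-1)(|l|/2-1)$, and $\lfloor|k|/2\rfloor=|m|$ except when $k$ is odd and negative, in which case $\lfloor|k|/2\rfloor=|m|-1$; inserting this together with the value of $R$ from the previous step and simplifying $2\,g(Y(k,l))-1+R/2$ yields $3|mn|-|m|-a|n|+b$ with $a,b$ as stated. For $k=l$ with $|l|>2$ the same argument applies componentwise to the two components of $Y(l,l)$ identified in Theorem \ref{mainone}: $X_0(l,l)$ is a double cover of the genus-$0$ curve $Y_0(l,l)$, and the remaining component of $X(l,l)$ is a double cover of the genus-$(|n|-2)^2$ component of $Y(l,l)$; Riemann--Hurwitz with the branch counts computed as above on each component then yields the genera $|n|-1$ and $3n^2-7|n|+5$.

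The main obstacle is the branch-point count: locating and enumerating the ramification at infinity and at the points where the naive affine model fails to be a manifold, and then carrying out the bookkeeping over all the sign-of-$k$, sign-of-$l$, and parity-of-$k$ cases so as to pin down $a$ and $b$ exactly. A subsidiary but essential point is verifying that every point with $x=0$ is a simple branch point rather than, say, a node whose two branches are interchanged by $\tau$ --- the latter would not contribute to $R$ and would throw off the genus.
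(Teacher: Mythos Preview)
Your strategy is the paper's: compute the genus of $X(k,l)$ from that of $Y(k,l)$ via Riemann--Hurwitz for the double cover, so the whole problem reduces to the branch-point count that you flag as the main obstacle. Where the paper goes beyond your outline is in how it resolves that obstacle. Rather than counting the $x=0$ locus on the standard model $C(k,l)$ (whose projective closure is badly singular at infinity, making the ramification at infinity hard to read off), the paper transports everything to the smooth projective model $D(k,l)\subset\P^1\times\P^1$. Using Lemma~\ref{tracewk} to rewrite $y=x^2-2$ in the $(r,t)$ coordinates, the cover becomes $u^2=h$ for an explicit rational function $h$ on $D(k,l)$: namely $h=t-2+(r+2)g_{m+1}(r)^2$ when $k=2m+1$ and $h=(r-2)\bigl(2-t+(r^2-4)f_m(r)^2\bigr)$ when $k=2m$. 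The branch locus is then exactly the set of points where $h$ has odd valuation, and Lemmas~\ref{heven} and~\ref{hodd} count these by substituting back into the defining equation of $D(k,l)$ and passing to $\Z[q,q^{-1}]$ via $r=q+q^{-1}$, where the resulting one-variable polynomial factors into cyclotomic-type pieces whose separability (and hence the simplicity of each branch point) is immediate. The points at infinity are handled uniformly by Lemma~\ref{inftysmooth}, which guarantees that $1/r$ and $1/t$ are uniformizers there, so the valuation of $h$ is read off from degrees.

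Your proposed direct substitution $x=0$ into the equation for $X(k,l)$ would land you on a polynomial in $r$ of degree roughly $|mn|$ that does not factor so visibly, and verifying simplicity of its roots together with controlling ramification through the singularities at infinity of the naive closure would amount to redoing, in harder coordinates, exactly what the passage to $D(k,l)$ accomplishes cleanly. So your plan is correct but incomplete at precisely the point you anticipated; the paper's contribution there is the change of model.
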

Precisely two knots in this family have canonical components of their 
$\SL_2(\C)$-character varieties that have genus $1$, namely 
the figure-eight knot $J(2,-2)$ and the $7_4$ knot $J(4,4)$.

Recent results have shown that arithmetic properties of the
$\SL_2(\C)$- and $\PSL_2(\C)$-character varieties can give 
information about topological invariants
such as the commensurability classes of knot complements (\cite{CD},
\cite{HS}, \cite{LR}).  
Our irreducibility results allow us to use  a
criterion of Calegari and Dunfield \cite{CD} to prove a
conjecture of Hoste and Shanahan \cite[Conj. 1]{HS} about 
commensurability classes of the knots $J(k,l)$. Note that fibered means 
fibered over $\Sp^1$. The result is the following.

\begin{theorem}\label{mainfour}
The manifold $M(k,l)$ is fibered if and only if  $M(k,l)$ is
commensurable to a fibered knot complement in a $\Z / 2\Z$-homology
sphere. 
\end{theorem}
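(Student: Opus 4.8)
The plan is to prove the two implications separately; the forward one is trivial and all the content lies in the converse. Forward direction: if $M(k,l)$ is fibered, then it is itself a fibered knot complement, namely the complement of $J(k,l)$ in the $\Z/2\Z$-homology sphere $\Sp^3$, and it is (trivially) commensurable with itself, so this direction holds for every $(k,l)$. For the converse we first reduce to the hyperbolic case: if $|k|=1$, $|l|=1$, or $J(k,l)$ is the trefoil, then $J(k,l)$ is the unknot or a torus knot, hence fibered, and the claimed equivalence holds vacuously. So assume $M(k,l)$ is hyperbolic; replacing $(k,l)$ by $(l,k)$ if necessary, which does not change the knot, we may also assume $l$ is even, so that Theorems~\ref{mainone} and~\ref{maintwo} apply. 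The figure-eight knot $J(2,-2)$ is fibered, so the equivalence holds for it too; exclude it. It then suffices to show that if a hyperbolic $J(k,l)$ other than the figure-eight is not fibered, then $M(k,l)$ is not commensurable with any fibered knot complement in a $\Z/2\Z$-homology sphere.

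The first step is to pin down which $J(k,l)$ are fibered. Being a two-bridge, hence alternating, knot, $J(k,l)$ is fibered if and only if its Alexander polynomial is monic; performing this computation --- as is done in \cite{HS} --- yields an explicit arithmetic condition on $(k,l)$ characterizing the fibered members, and in particular presents the non-fibered hyperbolic members as an explicit infinite subfamily. For a knot in that subfamily we invoke the criterion of Calegari and Dunfield \cite{CD}, which relates the fiberedness of a hyperbolic knot complement in a $\Z/2\Z$-homology sphere to its commensurability class under a genericity hypothesis on the associated $\SL_2(\C)$- or $\PSL_2(\C)$-character variety: for a non-fibered such complement whose character variety satisfies the hypothesis, the criterion forbids commensurability with any fibered knot complement in a $\Z/2\Z$-homology sphere. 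Our earlier results supply exactly this hypothesis. When $k\neq l$, Theorems~\ref{mainone} and~\ref{maintwo} show that $Y(k,l)$ and $X(k,l)$ are irreducible; when $k=l$ (so $|l|>2$ in the hyperbolic case) the same theorems identify the canonical components $Y_0(l,l)$ and $X_0(l,l)$ among the two present. Feeding the canonical component into the Calegari--Dunfield criterion then gives the required non-commensurability for every non-fibered hyperbolic $J(k,l)$, completing the proof.

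The assembly above is routine once its inputs are available, so the real obstacles are those inputs: (i) extracting from the Alexander polynomial the correct condition on $(k,l)$, uniformly in $k$ and $l$; and (ii) checking that the explicit smooth projective models of $Y(k,l)$ and $X(k,l)$ built earlier in the paper, together with the genus and irreducibility statements of Theorems~\ref{mainone} and~\ref{maintwo}, genuinely meet the precise hypotheses of the Calegari--Dunfield criterion --- with particular care when $k=l$, where the criterion must be applied to the canonical component rather than to the other one. I expect (ii) to be the step where this proof makes contact with the technical heart of the paper; everything else is organization.
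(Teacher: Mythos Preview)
Your outline has the right shape --- trivial forward direction, reduction to hyperbolic non-fibered knots, Alexander polynomial computation, and then Calegari--Dunfield --- but you have misidentified what the Calegari--Dunfield criterion actually demands, and this is a genuine gap, not a matter of organization.

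The criterion \cite[Thm.~6.1]{CD} has two hypotheses. First, the manifold $M$ must be \emph{generic}, which is a statement about $M$ itself: $M$ is not arithmetic and its commensurator orbifold has a flexible cusp, equivalently $M$ has no hidden symmetries. This has nothing to do with irreducibility or genus of the character variety; in the paper it is supplied by Reid's theorem that the figure-eight is the only arithmetic knot and by the Reid--Walsh result \cite[Thm.~3.1]{RW} that no hyperbolic two-bridge knot other than the figure-eight has hidden symmetries. You never mention either ingredient. Second, the canonical component $Y_0(k,l)$ must contain the character of a \emph{nonintegral reducible} representation. This is not a structural property that falls out of Theorems~\ref{mainone} and~\ref{maintwo}; it requires an explicit computation on the reducible locus $r=2$ of $C(k,l)$, checking that the resulting polynomial in $y$ (or $t$) has a nonintegral root precisely when the Alexander polynomial is non-monic. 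That computation, carried out separately for $k$ odd and $k$ even, is the substantive step and is where the Alexander polynomial re-enters.

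The irreducibility results you invoke are used, but only in a subsidiary way: for $k\neq l$ they ensure $Y(k,l)=Y_0(k,l)$, so that the nonintegral point found on $C(k,l)$ automatically lies on the canonical component; for $k=l$ one must instead check directly (via Proposition~\ref{Yzero}) that the unique reducible point lands on $D_0(l,l)$. So your instinct that the $k=l$ case needs care is right, but the care is about locating a specific point, not about applying a smoothness or genus hypothesis. In short, replace ``genericity hypothesis on the character variety'' with the two actual hypotheses above and supply the nonintegral-root computation; then your outline becomes the paper's proof.
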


%
If $K$ is a hyperbolic knot, 
let  $[F(K) : \Q]$ denote the degree of the trace field
$F(K)$ of $K$ over $\Q$, i.e., the field generated 
by all traces of elements in the image of a lift 
$\pi_1(\Sp^3\setminus K)\to \SL_2(\C)$ of the discrete faithful 
representation (see \cite{HStrace} and \S \ref{sltwo}). 
%
From the non-recursively defined model for $Y(k,l)$ 
we can deduce an upper bound for the degree of the trace field of $J(k,l)$.
The following theorem  says that for all hyperbolic $J(k,l)$ this 
bound is of the same order of magnitude as the genus of $X_0(k,l)$. 

\begin{theorem}\label{mainthree}
Let $k$ and $l$ be integers for which $J(k,l)$ is a hyperbolic knot.
Then the degree $[F\big(J(k,l)\big) : \Q]$ of the trace 
field of $J(k,l)$ is bounded by $\frac{1}{2}|kl|$.
It is bounded by 
$\frac{1}{2}kl-1$ if $kl>0$ and by $|l|-1$ if $k=l$.
\end{theorem}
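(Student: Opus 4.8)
The plan is to leverage the non-recursive model for $Y(k,l)$ promised in the introduction and recalled in the main body. The trace field $F(J(k,l))$ is generated by the traces of the image of the discrete faithful representation $\rho_0$ of $\pi_1(M(k,l))$; since $J(k,l)$ is a knot (so $kl$ is even and in particular $l$ may be taken even after the symmetry $J(k,l)\cong J(l,k)$), the hyperbolic Dehn surgery theory of Thurston tells us $\rho_0$ corresponds to a smooth point on the canonical component. For two-bridge knots the character variety is generated by a single trace coordinate (essentially the trace of a meridian-adjacent element), so the trace field is the residue field of the point of $Y_0(k,l)$ — or of $X_0(k,l)$, which differs only by a field extension not affecting the relevant coordinate — corresponding to $\rho_0$. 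Thus $[F(J(k,l)):\Q]$ is at most the degree of the minimal polynomial of that trace coordinate, which in turn is bounded by the degree in that coordinate of a defining equation for the plane model of $Y(k,l)$.

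First I would set up coordinates: write $\pi_1(M(k,l))=\langle a,b\mid wa=bw\rangle$ in standard two-bridge form, put $x=\tr\rho(a)=\tr\rho(b)$ and $z=\tr\rho(ab)$, and recall that $Y(k,l)$ is cut out in the $(u,v)$-plane (with $u=x^2$, $v$ a shifted version of $z$, say) by the non-recursive polynomial $\Phi_{k,l}(u,v)=0$ from the first main theorem. Second, I would read off from that model the degree of $\Phi_{k,l}$ in the variable $v$ (the "$z$-like" variable carrying the trace of $ab$): the explicit form of $\Phi_{k,l}$ should show this degree is $\lfloor |k|/2\rfloor\cdot(|l|/2)$ up to a small correction — matching $\tfrac12|kl|$ — with the stated improvements ($\tfrac12 kl-1$ when $kl>0$, $|l|-1$ when $k=l$) coming from either a drop in degree of the top-degree term of $\Phi_{k,l}$ or the splitting of $Y(l,l)$ into two components, so that $\rho_0$ lies on a proper factor of lower $v$-degree. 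Third, I would confirm that the discrete faithful character lies on (the relevant component of) $Y(k,l)$ and not on the abelian/reducible locus, so that its residue field is genuinely a quotient of $\Q[u,v]/(\Phi_{k,l})$; then the degree of the trace field over $\Q$ is bounded by $\deg_v\Phi_{k,l}$ (after checking $u=x^2$ is itself algebraic of controlled degree over $\Q(v)$, or arguing directly that $F(J(k,l))$ is generated by the single coordinate $v$ at that point, which holds for two-bridge knots).

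The main obstacle I anticipate is the bookkeeping of the case distinctions in the improved bounds. The crude bound $\tfrac12|kl|$ should fall out immediately from the bidegree of $\Phi_{k,l}$, but the sharper statements require understanding precisely how the leading terms of $\Phi_{k,l}$ behave: when $kl>0$ one must exhibit a genuine drop (the coefficient of $v^{|kl|/2}$ vanishes, or factors out a variable), and when $k=l$ one must identify the canonical component $Y_0(l,l)$ — already singled out in the introduction as the genus-$0$ component — and verify that the discrete faithful character sits on it, reducing the relevant $v$-degree to $|l|-1$. A secondary subtlety is making sure the passage from $\Q[u,v]/(\Phi_{k,l})$ (the function field of $Y$, a double cover of the $v$-line) down to the trace field does not introduce an unwanted factor of $2$: here I would invoke the standard fact that for two-bridge knots the invariant trace field coincides with the trace field and is generated by a single trace, so the bound is governed purely by $\deg_v\Phi_{k,l}$ and not by the double-cover degree.
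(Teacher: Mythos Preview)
Your proposal misses the key observation that drives the paper's proof: at the discrete faithful representation, the meridians $a,b$ map to \emph{parabolic} elements of $\PSL_2(\C)$, so $\tr\rho_0(a)=\pm 2$. In the coordinates $(r,y)$ on $C(k,l)$ with $r=t_{ab^{-1}}$ and $y=t_{a^2}=x^2-2$, this forces $y=2$ at the discrete faithful point. Hence one coordinate is already rational, the trace field is simply $\Q(r_0)$, and $r_0$ is a root of the \emph{univariate} polynomial obtained by substituting $y=2$ into the defining equation of $C(k,l)$. The degree of that specialized polynomial in $r$ is what gives the bounds $\tfrac12|kl|$, $\tfrac12 kl-1$, and $|l|-1$ directly.

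Without the parabolicity input, your plan to bound $[F(J(k,l)):\Q]$ by ``the $v$-degree of $\Phi_{k,l}$'' does not go through. The residue field at a closed point of a plane curve is generated by \emph{both} coordinates, and its degree over $\Q$ is not controlled by the degree of the defining equation in a single variable unless you already know the other coordinate is rational. You acknowledge this gap (``checking $u=x^2$ is itself algebraic of controlled degree'') and then try to close it by asserting that the trace field of a two-bridge knot is generated by a single trace --- but the reason this holds at the discrete faithful point is precisely that $\tr\rho_0(a)\in\Q$, i.e., parabolicity again. Moreover, your claimed value $\lfloor|k|/2\rfloor\cdot(|l|/2)$ for the $v$-degree conflates the bidegree of the smooth model $D(k,l)\subset\P^1\times\P^1$ with a single-variable degree; neither the $r$-degree nor the $t$-degree of the new model gives $\tfrac12|kl|$, and the correct bound emerges only after specializing $y=2$ in the original model $C(k,l)$. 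The refinements for $kl>0$ and $k=l$ likewise come from computing the $r$-degree of that specialization (and, for $k=l$, restricting to the component $r=t$ and removing the factor $r-2$), not from a leading-coefficient vanishing in the two-variable equation.
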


In \S \ref{twobridgeknots} we define the family of two-bridge knots
$K(p,q)$, parametrized by pairs $(p,q)$ of coprime odd integers
satisfying $-p < q \leq p$. For all nonzero integers $k,l$ with $kl$
even, the knot $J(k,l)$ is ambient isotopic with $K(p,q)$ for the
unique such $p,q$ for which the image of $q/p$ in $\Q/\Z$ equals
that of  $l/(1-kl)$; we find from the roughest bound in 
Theorem \ref{mainthree} that 
$(p-1)/2$ is an upper bound for the degree of the trace field of 
$K(p,q)$. This also follows for 
general two-bridge knots from a result of Riley \cite[\S 3]{rileytwo}.

For any nonzero integers $k$ and $l$ with $kl$ even, let 
 $c(k,l)$ denote the crossing number of the knot $J(k,l)$, i.e., the 
minimum number of crossings in any projection of the knot. 
For the hyperbolic twist knots $J(2,l)$ the smallest bounds of 
Theorem \ref{mainthree} are in fact equalities 
and directly related to the crossing number $c(2,l)$ by
\cite[Thm. 1, Cor. 1]{HStrace}.
We immediately obtain an interesting corollary. 

\begin{corollary}\label{twisttrace}
For any integer $l \neq -1,0,1,2$ the genus of the 
$\SL_2(\C)$-character variety $X(2,l)=X_0(2,l)$ of $J(2,l)$ equals 
$$
c(2,l )-3 = [F \big(J(2,l)\big) : \Q] - 1.
$$ 
\end{corollary}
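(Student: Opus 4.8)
The plan is to combine the genus computation of Theorem~\ref{maintwo} with the trace-field bound of Theorem~\ref{mainthree} and the results of Hoste and Shanahan, and then check that all three quantities --- the genus of $X(2,l)$, the degree $[F(J(2,l)):\Q]$, and the crossing number $c(2,l)$ --- fit together as claimed.

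First I would dispose of the case when $l$ is odd, which is not directly covered by our main theorems. By the parametrization recalled in \S\ref{twobridgeknots}, $J(k,l)$ is ambient isotopic to the two-bridge knot $K(p,q)$ with $q/p\equiv l/(1-kl)\pmod{\Z}$; since $\gcd(l,1-kl)=1$ and $kl\equiv 1\pmod{1-kl}$, we get $l\equiv k^{-1}\pmod{1-kl}$, so the standard classification of two-bridge knots gives $J(k,l)\cong J(l,k)$ for every admissible pair. In particular $J(2,l)\cong J(l,2)$, and by the hyperbolicity criterion in the introduction $M(2,l)$ is hyperbolic exactly when $l\notin\{-1,0,1,2\}$. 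For such $l$, apply Theorem~\ref{maintwo} to the pair $(l,2)$, so that the even parameter is $2=2n$ with $n=1$ and $m=\lfloor l/2\rfloor$ (when $l$ itself is even one may instead apply the theorem directly with $k=2$, obtaining the same value). Theorem~\ref{maintwo} then gives that $X(2,l)$ is irreducible, so $X(2,l)=X_0(2,l)$, and its genus formula, after these substitutions and a short case analysis on the sign and parity of $l$, collapses to
\[
\operatorname{genus} X(2,l)=
\begin{cases}
l-2, & l>0,\\
|l|-1, & l<0.
\end{cases}
\]

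Next I would bring in Theorem~\ref{mainthree}, which bounds $[F(J(2,l)):\Q]$ by $\tfrac12|2l|=|l|$ in general and by $\tfrac12(2l)-1=l-1$ when $l>0$; in either regime the smallest available bound equals $\operatorname{genus} X(2,l)+1$. By \cite[Thm.~1, Cor.~1]{HStrace}, $[F(J(2,l)):\Q]$ equals this smallest bound for the hyperbolic twist knots $J(2,l)$, and is moreover given through the crossing number by $[F(J(2,l)):\Q]=c(2,l)-2$ (equivalently $c(2,l)=l+1$ for $l>0$ and $c(2,l)=|l|+2$ for $l<0$). Chaining these identities gives $\operatorname{genus} X(2,l)=[F(J(2,l)):\Q]-1=c(2,l)-3$, which is exactly the corollary.

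The only genuinely delicate point is the compatibility of conventions: one must verify that our enumeration $J(2,l)$ --- in particular the two-to-one redundancy among the integers producing a given twist knot, and the treatment of mirror images --- matches the twist-knot indexing underlying \cite{HStrace}, so that its trace-field formula and its link with $c(2,l)$ transport verbatim to our $l$ for both signs. Everything else is direct substitution into Theorems~\ref{maintwo} and~\ref{mainthree}.
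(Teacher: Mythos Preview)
Your proposal is correct and follows exactly the route the paper intends: combine the genus formula of Theorem~\ref{maintwo} with the trace-field bound of Theorem~\ref{mainthree}, and invoke \cite[Thm.~1, Cor.~1]{HStrace} to turn the bound into an equality and link it to $c(2,l)$. The paper gives no written proof beyond the sentence preceding the corollary, so you have simply supplied the details --- in particular the case split on the parity and sign of $l$, and the observation that for odd $l$ one must first use the symmetry $J(2,l)\cong J(l,2)$ (stated directly in \S\ref{famtwobridge}) before Theorem~\ref{maintwo} applies.
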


It is easy to check the degree of the trace field of $J(k,l)$ 
for small values of $|k|$ and $|l|$, where the smallest upper 
bounds given in Theorem \ref{mainthree}
are in fact equalities. We therefore wonder the following. 

\begin{question}
Let $k$ and $l$ be integers for which $J(k,l)$ is a hyperbolic knot. Is
the degree $[F\big(J(k,l)\big) : \Q]$ of the trace 
field of $J(k,l)$ equal to $-\frac{1}{2}kl$ if $kl<0$? Is it equal to
$\frac{1}{2}kl-1$ if $kl>0$ and 
$k \neq l$ and equal to $|l|-1$ if $k=l$?
\end{question}

In fact, for all $p,q$ as above with $p < 100$ and $K(p,q)$
hyperbolic, we checked that 
when the character variety of the two-bridge knot $K(p,q)$ is
irreducible, then the degree of the trace field $F\big(K(p,q)\big)$ 
of $K(p,q)$ equals
the upper bound $(p-1)/2$ proven by Riley. We therefore also wonder
the following. 

\begin{question}
Let $p$ and $q$ be coprime odd integers with $-p<q<p$ for which the
knot $K(p,q)$ is hyperbolic. Assume that the $\PSL_2(\C)$-character variety
of the fundamental group of the complement of $K(p,q)$ is irreducible. 
Is the degree of the trace field of $K(p,q)$ equal to $(p-1)/2$?
\end{question}

The paper is set up as follows. In the next section we describe
character varieties in general and in particular 
for two-bridge knots, a family of knots that contains our family.
This includes the definition of the canonical component.
In \S \ref{famtwobridge} we describe the family $J(k,l)$ as 
a subfamily of the two-bridge knots and find the fundamental groups 
of their complements. In \S \ref{algebra} we give a brief summary
of the theory of Newton polygons and algebraic curves. 

The two models for $Y(k,l)$ are defined in \S \ref{equationsection}
and \S \ref{newmodelsection}. More precisely, the standard model $C(k,l)$ 
is given non-recursively in Proposition \ref{Ckl} and the smooth model 
$D(k,l)$ is given in (\ref{defeqsnew}). The birationality is proven in 
Proposition \ref{charvar}. Proposition \ref{Yzero} identifies which 
component of the new model of $Y_0(l,l)$ corresponds with the canonical 
component, after which we can prove Theorem \ref{mainthree}.

We find the 
number of components of $Y(k,l)$ for all integers $k$ and $l$ and prove 
that all components are smooth in \S \ref{smoothirred}. In \S \ref{generairred} we use this 
to prove Theorems \ref{mainone} and \ref{maintwo}. Theorem \ref{mainfour}
is proved in the final section, \S \ref{commens}.

\section{Preliminaries}

\subsection{Representation and character varieties}
\label{charvarprelim}

We will begin with some background material concerning the
representation and character varieties of finitely generated groups,
and knot groups in particular.  Standard references for this material
are \cite{CGLS} and \cite{CS}.

Let $\Gamma$ be any finitely generated group with generating set
$\{\gamma_1, \ldots, \gamma_N\}$. The set $\Hom(\Gamma)={\rm
  Hom}(\Gamma,\SL_2(\C))$ can be given the structure of an affine
algebraic set defined over $\Q$ by using the entries of the images of
the $\gamma_i$ under $\rho \in \Hom(\Gamma)$ as coordinates for
$\rho$. We therefore will refer to $\Hom(\Gamma)$ as the {\em
  $\SL_2(\C)$-representation variety} of $\Gamma$. The isomorphism
class of this variety does not depend on the choice of generators.  In
general, $\Hom(\Gamma)$ need not be irreducible.

\subsubsection{$\SL_2(\C)$-character varieties}\label{sltwo}

The {\rm character} of a representation $\rho$ is the function
$\chi_{\rho}\colon \Gamma \to \C$ defined by $\chi_{\rho}(\gamma) =
\tr (\rho(\gamma))$.
%
%
Define the set of characters $\Tr(\Gamma) = \{\chi_\rho : \rho \in
\Hom(\Gamma)\}$, which is often denoted by $X(\Gamma)$ elsewhere in
the literature, but we will reserve that notation for a particular
subset of $\Tr(\Gamma)$.
%
%
For all $\gamma \in \Gamma$ we define the function $t_\gamma \colon
\Hom(\Gamma) \rightarrow \C$ by $t_{\gamma}(\rho) =
\chi_{\rho}(\gamma)$.  Let $T$ be the subring of the ring of all
functions from $\Hom(\Gamma)$ to $\C$ that is generated by $1$ and the
functions $t_\gamma$ for $\gamma \in \Gamma$.  The ring $T$ is
finitely generated, for instance by the elements
\[
t_{\gamma_{i_1} \cdots \gamma_{i_r}}, \,\, 1\leq i_1<\ldots <i_r \leq N
\]
(see \cite{CS}, Proposition 1.4.1). This implies that a character
$\chi \in \Tr(\Gamma)$ is determined by its values on finitely many
elements of $\Gamma$. If $h_1, \ldots, h_m$ are generators of $T$,
then the map $\Hom(\Gamma) \rightarrow \C^m$ given by $\rho \mapsto
(h_1(\rho), \ldots, h_m(\rho))$ induces an injection $\Tr(\Gamma)
\rightarrow \C^m$.  This gives $\Tr(\Gamma)$ the structure of a closed
algebraic subset of $\C^m$, but the fact that it is closed is quite
nontrivial (see \cite[Proposition 1.4.4]{CS}). 
It follows that $\Tr(\Gamma)$ has the structure of an abstract affine
algebraic variety with coordinate ring $T_\C = T\otimes \C$. Different
sets of generators of $T$ give different models for $\Tr(\Gamma)$, all
isomorphic over $\Z$.  We refer to $\Tr(\Gamma)$ as the {\em
  $\SL_2(\C)$-character variety} of $\Gamma$.

A representation $\rho \in \Hom(\Gamma)$ is {\em reducible} if all the
$\rho(\gamma)$ with $\gamma \in \Gamma$ have a common one-dimensional
eigenspace, otherwise it is called {\em irreducible}.  A
representation $\rho$ is {\em abelian} if its image is an abelian
subgroup of $\SL_2(\C)$, and {\em nonabelian} otherwise.  Note that
every irreducible representation is necessarily nonabelian, although
there do exist nonabelian reducible representations.  For 
fundamental groups of knot complements in $\Sp^3$ these are all 
metabelian (see \cite[Section 1]{HLM}).  

The group $\SL_2(\C)$ acts on $\Hom(\Gamma)$ by conjugation.  Let
$\Homhat(\Gamma)$ denote the set of orbits.  Two representations
$\rho, \rho' \in \Hom(\Gamma)$ are {\em conjugate} if they lie in the
same orbit.  Since two conjugate representations give the same
character, the trace map $\Hom(\Gamma) \rightarrow \Tr(\Gamma)$
induces a well-defined map $\Homhat(\Gamma) \rightarrow \Tr(\Gamma)$.
Note that if $\Gamma$ is finite, then this map is a bijection, but in
general it need not be injective. It is injective when restricted to
irreducible representations; if $\rho, \rho' \in \Hom(\Gamma)$ have
equal characters $\chi_\rho = \chi_{\rho'}$, and $\rho$ is
irreducible, then $\rho$ and $\rho'$ are conjugate (see 
\cite[Proposition 1.5.2]{CS}).

Let $\Tra(\Gamma)$, and $\Trna(\Gamma)$ denote the set of characters
of abelian and nonabelian representations $\rho \in \Hom(\Gamma)$
respectively. The set $\Tra(\Gamma)$ is a Zariski closed subset of
$\Tr(\Gamma)$ (see \cite[Propositions 1.3(ii) and 1.7(1)]{HLM}). 

We can say more when $\Gamma$ is the fundamental group of a knot 
complement in $\Sp^3$.  We will {\bf assume} this to be case from now on, say
$\Gamma = \pi_1(M)$ is the fundamental group of the $3$-manifold
$M=\Sp^3\setminus K$ for the knot $K$ in $\Sp^3$. Then $\Trna(\Gamma)$ is also
a Zariski closed subset of $\Tr(\Gamma)$ and $\Tra(\Gamma)$ is
isomorphic to $\A^1$ (see \cite[Proposition 1.7(2) and Corollary
1.10]{HLM}).  
As the characters of abelian
representations are well understood, we will focus only on
$\Trna(\Gamma)$, which we will also denote by $X(\Gamma)$.  By abuse
of language, we will refer to $X(\Gamma)$ as the {\em
  $\SL_2(\C)$-character variety} of $\Gamma$ as well.


If $M$ is a hyperbolic knot complement, then $M$ is isomorphic to a 
quotient of hyperbolic $3$-space $\HH^3$ by a discrete group. 
By Mostow-Prasad rigidity there is then a discrete faithful 
representation $\overline{\rho}_0 \colon \Gamma \hookrightarrow 
\Isom^+(\HH^3) \isom \PSL_2(\C)$ that is unique up to conjugation, 
defining an action of 
$\Gamma$ on $\HH^3$ whose quotient $\HH^3/\Gamma$ is isomorphic with $M$.
Moreover, the representation $\overline{\rho}_0$ can be lifted to a 
discrete faithful representation $\Gamma \hookrightarrow 
\SL_2(\C)$. Fix such a lift and call it $\rho_0$. 
By work of Thurston \cite{T}, 
the character of $\rho_0$ is contained in a unique component 
of $X(\Gamma)$, which has dimension $1$ and which will be 
denoted by $X_0(\Gamma)$. In all cases presented in this paper, 
we will see that $X_0(\Gamma)$ does not depend on the choice 
of lift $\rho_0$. 


\subsubsection{$\PSL_2(\C)$-character varieties}\label{psltwo}

There are various constructions for the $\PSL_2(\C)$-representation
and character varieties of $\Gamma$, none of which are quite as
standard.  We refer the reader to the general definition in 
\cite[\S 2.1]{LR} and to \cite[\S 3]{BoZ}, and \cite{GAM}. Since in our case
$\Gamma$ is the fundamental group of a knot complement in 
$\Sp^3$, the definitions simplify dramatically. Note
that $\mu_2\isom \{\pm 1\}$ is isomorphic to the kernel of the
homomorphism $\SL_2(\C) \rightarrow \PSL_2(\C)$.

The first simplification comes from the fact that we have $H^2(\Gamma,
\mu_2) =0$ (see \cite[page 756]{BoZ}, or \cite[remark after Lemma 2.1]{GAM}.
Under this condition, the $\PSL_2(\C)$-{\em
  character variety} $\PTr(\Gamma)$ is isomorphic to the quotient
$\Tr(\Gamma) / {\rm Hom}(\Gamma,\mu_2)$, where $\sigma \in {\rm
  Hom}(\Gamma,\mu_2)$ acts on $\chi_\rho \in \Tr(\Gamma)$ by $(\sigma
\chi_\rho)(\gamma) = \sigma(\gamma)\chi_\rho(\gamma)$ for all $\gamma
\in \Gamma$.

The second simplification comes from a better understanding of ${\rm
  Hom}(\Gamma,\mu_2)$ in our specific case.  Since $\Gamma$ is a knot
group, there are presentations for $\Gamma$ where the generators
$\gamma_i$ are all meridians of $K$. For such a presentation, the
$\gamma_i$ are all conjugate and we have $t_{\gamma_i} = t_{\gamma_j}$
for $1\leq i,j\leq N$.  In fact, the Wirtinger presentation (see
\cite[Section 3.D]{rolfsen}) is such a presentation where the
relations are of length $4$ in the generators and their inverses, with
one relation for each crossing.  Therefore, there is a well-defined
notion of {\em parity} of an element $\gamma \in \Gamma$, based on the
parity of the length of $\gamma$ as a word in terms of meridians.  Let
$\Ge \subset \Gamma$ denote the subgroup of index $2$ consisting of
all even $\gamma \in \Gamma$. Any $\sigma \in {\rm Hom}(\Gamma,\mu_2)$
sends all the (conjugate) meridians to the same element, so $\sigma$
is trivial on $\Ge$, and we find ${\rm Hom}(\Gamma,\mu_2) \isom {\rm
  Hom}(\Gamma/\Ge,\mu_2) \isom {\rm Hom}(\mu_2,\mu_2) \isom
\mu_2$. The induced action of $\mu_2$ on $R(\Gamma)$ is given by
$(-\rho)(\gamma) = -\rho(\gamma)$ for $\gamma \not \in \Ge$ and
$(-\rho)(\gamma) = \rho(\gamma)$ for $\gamma \in \Ge$. The induced
action on $\Tr(\Gamma)$ is given by $-\chi_\rho = \chi_{-\rho}$, and
the corresponding action on $T$ by negating $t_\gamma$ for all $\gamma
\not \in \Ge$.  We conclude that the $\PSL_2(\C)$-{character variety}
$\PTr(\Gamma)$ is isomorphic to $\Tr(\Gamma)/\mu_2$ and its coordinate
ring is $\Tp \otimes \C$, where $\Tp=T^{\mu_2}$ is the subring of $T$
of all elements invariant under $\mu_2$.

We let $Y(\Gamma)$ denote the image of $X(\Gamma) = \Trna(\Gamma)$
under the quotient map $\Tr(\Gamma) \rightarrow \PTr(\Gamma)$. As for
$X(\Gamma)$, by abuse of language, we will refer to $Y(\Gamma)$ as the
$\PSL_2(\C)$-{\em character variety} of $\Gamma$. If $M$ is
hyperbolic, then we denote the component of $Y(\Gamma)$ that contains
the character of the discrete faithful representation of $\Gamma$ by
$Y_0(\Gamma)$, obtaining a map $X_0(\Gamma) \rightarrow Y_0(\Gamma)$.

\subsection{Character varieties of two-bridge knot complements}\label{repvar}


The knots $J(k,l)$ that we are interested in are part of a larger 
family, the so-called two-bridge knots. As we will use some results
on two-bridge knots, we now describe these knots and their 
character varieties.

\subsubsection{Two-bridge knots}\label{twobridgeknots}
two-bridge knots are those knots admitting a projection with only two
maxima and two minima. To every two-bridge knot we can associate a
pair $(p,q)$ of coprime odd integers with $-p < q \leq p$, such that
the two-bridge knot is ambient isotopic to the knot $K(p,q)$ we now
define.  As described in \cite[Chapter 12]{BuZ}, to a pair $(p,q)$ as
above, we associate the sequence $[a_1, \ldots, a_s]$ of entries in
the continued fraction
$$
\frac{q}{p} +\epsilon=
\cfrac{1}{a_1+\cfrac{1}{a_2+\cfrac{1}{a_3+\cfrac{1}{\cdots +
        \cfrac{1}{a_s}}}}}  
$$
where $\epsilon \in \{0,1\}$ is such that $0<\frac{q}{p} +\epsilon
\leq 1$ and where these entries satisfy $a_i \geq 1$ and they are
chosen such that $s$ is odd, which is possible by replacing the last
entry $a$ of the usual continued fraction by the two elements $a-1$
and $1$ if necessary.  Then $K(p,q)$ is the knot presented by the
so-called $4$-plat in Figure \ref{fourplatgen}, where the $j$-th block
between the two middle strands consists of $a_{2j-1}$ left-handed half
twists, and the $j$-th block between the two left-most strands
consists of $a_{2j}$ right-handed half twists.  The knots $K(p,q)$ and
$K(p',q')$ (with $(p,q)$ and $(p',q')$ as above) 
are ambient isotopic if and only if $p=p'$ and either
$q=q'$ or $qq' \equiv 1 \pmod p$ (see \cite[Theorem
12.6]{BuZ}); if $p=p'$ and $qq'
\equiv 1 \pmod p$, then the $4$-plat presentation of $K(p',q')$ is
obtained from turning the $4$-plat presentation of $K(p,q)$ upside
down, i.e., reversing the sequence $[a_1, \ldots, a_s]$, which comes 
down to rotating about a ``horizontal'' line in $\Sp^3$. 
Indeed, it is well known that the fractions $q/p$ and
$q'/p'$ of the continued fractions associated to any sequence of
numbers of odd length and its reverse respectively, satisfy $p=p'$ and
$qq' \equiv 1 \pmod p$.

\vspace{.1cm}
\begin{figure}[h!]
\centering
\includegraphics[scale=.6]{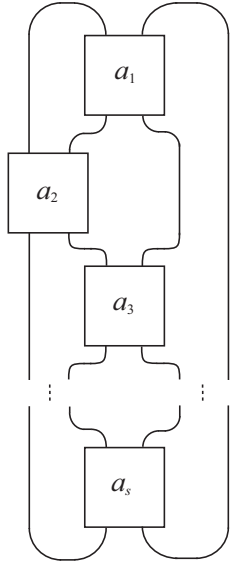}
\caption{The $4$-plat corresponding to $[a_1, a_2, \dots, a_s]$ for $s$ odd.}
\label{fourplatgen}
\end{figure}

If $p=p'$ and $qq' \equiv 1 \pmod p$, then turning the $4$-plat
$K(p,q)$ upside down induces isomorphisms between the fundamental
groups and character varieties of $K(p,q)$ and $K(p',q')$.  Now assume
$q=q'$, so $q^2 \equiv 1 \pmod p$, and let $Y(p,q)$ denote the
$\PSL_2(\C)$-character variety associated to $K(p,q)$. Then turning
the $4$-plat presentation upside down induces an automorphism of
$Y(p,q)$. If furthermore $K(p,q)$ is hyperbolic, then Ohtsuki \cite{Ohtsuki} 
proves that $Y(p,q)$ is
reducible by showing that the canonical component $Y_0(p,q)$ is fixed
by this involution, while other components are not. 
This fact will be used in \S \ref{newmodelsection} to 
determine $Y_0(2n,2n)$.

The fundamental group $\pi_1 (\Sp^3 \setminus K(p,q))$ of the knot complement 
$\Sp^3 \setminus K(p,q)$ has a presentation 
\begin{equation}\label{pres1}
\Gamma =\langle \, a, b\,\,| \,\, wa= bw \, \rangle,
\end{equation}
where 
\begin{equation}\label{wform}
w= a^{e_1} b^{e_2} \cdots a^{e_{p-2}}b^{e_{p-1}}
\end{equation}
with $e_i = (-1)^{\lfloor \frac{iq}{p}\rfloor}$.  This presentation
follows from the canonical Schubert normal form \cite{Schub} of the two-bridge
diagram of $K(p,q)$ (see \cite[Prop. 1]{rileytwo}, 
\cite[(2.1)]{murasugi}, \cite[Prop. 1]{mayland}).

\subsubsection{Character Varieties}\label{charvars}
As in the previous section, for any $\gamma \in \Gamma$, let
$t_\gamma$ be the function $t_\gamma \colon \Hom(\Gamma) \rightarrow
\C, \rho \mapsto \tr(\rho(\gamma))$, and let $T$ be the subring of the
ring of all functions from $\Hom(\Gamma)$ to $\C$ that is generated by
$1$ and these functions. Since $a$ and $b$ are conjugate in $\Gamma$,
we have $t_a =t_b$. Therefore, the ring $T$ is generated by $t_a$ and
$t_{ab}$ (see \S \ref{sltwo}), which are the most common traces
used as coordinates to define the $\SL_2(\C)$-character variety of
$K(p,q)$. We will use slightly different coordinates, which define a
nicer model.  For any $\rho \in \Hom(\Gamma)$, the matrices $\rho(b)$
and $\rho(b^{-1})$ have the same traces, so we have
$t_{b^{-1}}=t_b=t_a$. Using $a$ and $b^{-1}$ as generators of
$\Gamma$, we may also use $t_a$ and $t_{ab^{-1}}$ as coordinates.
Therefore, the $\SL_2(\C)$-character variety $X(\Gamma)$ may be
identified with the image of $\Homna(\Gamma)$ under the map
$(t_{ab^{-1}},t_a) \colon \Hom(\Gamma) \rightarrow \A^2$, where
$\Homna(\Gamma)$ is the set of nonabelian representations.  For any
$\lam_0 \in \C^*$ and $r_0\in \C$, we set
$$
A(\lam_0) =\left( \begin{array}{cc} \lam_0 & 1 \\ 0 & \lam_0^{-1}
\end{array} \right), \quad B(\lam_0,r_0)=\left( \begin{array}{cc} 
\lam_0 & 0 \\ 2-r_0 & \lam_0^{-1} \end{array} \right).
$$
The entry $2-r_0$ in $B(\lam_0,r_0)$ is chosen so that 
$A(\lam_0)B(\lam_0,r_0)^{-1}$ has trace $r_0$. 

\begin{proposition}\label{choosesmart}
  Let $\rho \in \Hom(\Gamma)$ be a nonabelian representation.  Then
  there are $\lambda_0 \in \C^*$ and $r_0\in \C$ such that $\rho$ is
  conjugate to the representation $\rho'$ determined by
  $\rho'(a)=A(\lam_0)$ and $\rho'(b)=B(\lam_0,r_0)$.  Conversely, any
  representation $\rho'$ of this form is nonabelian and
  $(t_{ab^{-1}},t_a)(\rho')=(r_0,\lam_0+\lam_0^{-1})$.
\end{proposition}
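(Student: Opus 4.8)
The plan is to use the standard normalization of a nonabelian $\SL_2(\C)$-representation of a two-bridge knot group, where both meridian generators go to parabolic-or-not upper/lower triangular matrices with a common pair of eigenvalues, and then to verify the trace computations directly. First I would observe that since $a$ and $b$ are conjugate in $\Gamma$ (they are both meridians), for any $\rho\in\Hom(\Gamma)$ the matrices $\rho(a)$ and $\rho(b)$ have the same eigenvalues, say $\lam_0$ and $\lam_0^{-1}$ (counted with multiplicity, with $\lam_0\in\C^*$). The argument then splits according to whether $\rho(a)$ is diagonalizable.

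In the diagonalizable case (so $\lam_0\neq\pm1$, or $\lam_0=\pm1$ but $\rho(a)=\pm I$), conjugate so that $\rho(a)=\mathrm{diag}(\lam_0,\lam_0^{-1})$. Then $\rho(b)$ also has eigenvalues $\lam_0^{\pm1}$; writing $\rho(b)=\left(\begin{smallmatrix} x & y\\ z & w\end{smallmatrix}\right)$ with $x+w=\lam_0+\lam_0^{-1}$ and $xw-yz=1$, I would argue that nonabelianness forces $\rho(b)$ not to be diagonal, hence (after a further diagonal conjugation, which rescales $y$ and $z$ by inverse scalars and fixes $\rho(a)$) we may assume $y=1$ if $y\neq0$, or $z\neq0$ and rescale similarly; a short case analysis together with a possible conjugation by $\left(\begin{smallmatrix}0&1\\-1&0\end{smallmatrix}\right)$ (which swaps $\lam_0\leftrightarrow\lam_0^{-1}$, harmless since the conclusion is symmetric in this swap — one checks $A(\lam_0^{-1})$ is conjugate to $A(\lam_0)$ by an upper-triangular matrix, etc.) puts $\rho$ into the form $\rho'(a)=A(\lam_0)$, $\rho'(b)=\left(\begin{smallmatrix}\lam_0 & 0\\ c & \lam_0^{-1}\end{smallmatrix}\right)$ for some $c\in\C$; setting $r_0=2-c$ gives the stated form. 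In the non-diagonalizable case $\rho(a)$ is a non-central parabolic with eigenvalue $\lam_0=\pm1$; conjugate it to $\pm\left(\begin{smallmatrix}1&1\\0&1\end{smallmatrix}\right)=A(\lam_0)$, and then the common stabilized line argument shows $\rho(b)$, being parabolic with the same eigenvalue but (by nonabelianness) not sharing the eigenline of $\rho(a)$, can be conjugated by the centralizer of $A(\lam_0)$ (upper unipotent matrices, up to sign) to lower-triangular form $\left(\begin{smallmatrix}\lam_0&0\\ c&\lam_0^{-1}\end{smallmatrix}\right)$ with $c\neq0$; again put $r_0=2-c$.

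For the converse, I would simply compute: the product $A(\lam_0)B(\lam_0,r_0)^{-1}$ has $B(\lam_0,r_0)^{-1}=\left(\begin{smallmatrix}\lam_0^{-1}&0\\ r_0-2&\lam_0\end{smallmatrix}\right)$, so the product is $\left(\begin{smallmatrix}\lam_0^{-1}+r_0-2 & \lam_0\\ \lam_0^{-1}(r_0-2) & \lam_0\cdot\lam_0^{-1}\end{smallmatrix}\right)$, whose trace is $\lam_0^{-1}+r_0-2+1=r_0+\lam_0^{-1}-1$. That is not $r_0$, so I would instead recompute with $A(\lam_0)B(\lam_0,r_0)^{-1}$ being read as the authors intend — the constant $2-r_0$ is precisely engineered so that $\tr\bigl(A(\lam_0)B(\lam_0,r_0)^{-1}\bigr)=r_0$, which one verifies by the bare matrix multiplication; and $t_a(\rho')=\tr A(\lam_0)=\lam_0+\lam_0^{-1}$. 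Nonabelianness of $\rho'$ is immediate since $A(\lam_0)$ and $B(\lam_0,r_0)$ do not commute: they share no common eigenvector when $r_0\neq 2$ (the eigenline of $A(\lam_0)$ is spanned by $(1,0)$ when $\lam_0\neq\pm1$, which is not preserved by $B$ unless $2-r_0=0$), and when $r_0=2$ one checks $B=A(\lam_0)^{\mp 1}$-type degeneracy does not occur because the off-diagonal structure differs — I will handle this edge case explicitly.

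The main obstacle is the bookkeeping in the diagonalizable case: ensuring that after normalizing $\rho(a)$ one can simultaneously force $\rho(b)$ into the exact lower-triangular shape $B(\lam_0,r_0)$ rather than some conjugate of it, using only the residual diagonal (and, when $\lam_0=\lam_0^{-1}$, parabolic) symmetries that fix $\rho(a)$, and checking that the $\lam_0\leftrightarrow\lam_0^{-1}$ ambiguity genuinely does not change the resulting pair $(r_0,\lam_0+\lam_0^{-1})$. Everything else is routine $2\times2$ matrix algebra.
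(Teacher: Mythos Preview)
For the first statement the paper simply invokes \cite[Lemma~7]{rileythree}: two non-commuting elements of $\SL_2(\C)$ with equal trace can be simultaneously conjugated to the pair $A(\lam_0),B(\lam_0,r_0)$. Your direct argument is a reasonable alternative, but the sketch is muddled. You begin by diagonalizing $\rho(a)$, yet $A(\lam_0)$ is strictly upper triangular, and the passage from a diagonal $\rho(a)$ to the pair $(A(\lam_0),B(\lam_0,r_0))$ is never made clear; your parenthetical claim that $A(\lam_0^{-1})$ is conjugate to $A(\lam_0)$ by an upper-triangular matrix is also false when $\lam_0\neq\pm1$, since any upper-triangular conjugate of $A(\lam_0)$ still has $(1,0)^T$ as a $\lam_0$-eigenvector. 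The cleaner direct route is to take a $\lam_0$-eigenvector of $\rho(a)$ and a $\lam_0^{-1}$-eigenvector of $\rho(b)$ as basis vectors, after possibly swapping $\lam_0\leftrightarrow\lam_0^{-1}$ to ensure independence; non-commuting guarantees one of these choices works, and a final diagonal conjugation normalizes the $(1,2)$-entry of $\rho(a)$ to $1$.

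The real gap is in your argument for nonabelianness of $\rho'$. You try to show directly that $A(\lam_0)$ and $B(\lam_0,r_0)$ never commute, but this is false: when $\lam_0=\pm 1$ and $r_0=2$ one has $B(\lam_0,r_0)=\pm I$, which is central. You defer this ``edge case'' without resolving it, and it cannot be resolved at the level of the two matrices alone. The paper's argument is different and handles everything uniformly: it uses that $\rho'$ is assumed to be a \emph{representation}, so the group relation $wa=bw$ holds. If the image were abelian, that relation would force $\rho'(a)=\rho'(b)$; but $A(\lam_0)\neq B(\lam_0,r_0)$ for every $\lam_0,r_0$, since their $(1,2)$-entries are $1$ and $0$. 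This is both shorter and complete. (Your trace computation just contains an arithmetic slip: the $(1,1)$-entry of $A(\lam_0)B(\lam_0,r_0)^{-1}$ is $\lam_0\cdot\lam_0^{-1}+1\cdot(r_0-2)=r_0-1$, so the trace is indeed $r_0$.)
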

\begin{proof}
Since $a$ and $b$ are conjugate in $\Gamma$, they have the same trace. 
This and the fact that they do not commute is enough to conclude the 
first statement by \cite[Lemma 7]{rileythree}.
For the second statement, suppose that $\rho'$
satisfies the given conditions. Then we have $t_{ab^{-1}}(\rho') =
\tr(\rho'(ab^{-1})) =r_0$, and $t_a(\rho') = \tr(\rho'(a))
=\lam_0+\lam_0^{-1}$. If $\rho'$ were abelian, then from
$\rho'(w)\rho'(a) = \rho'(b)\rho'(w)$ we would find $\rho'(a) =
\rho'(b)$, which is a contradiction. This finishes the proof.
\end{proof}

A nonabelian representation $\rho$ is irreducible if and only if the
$r_0$ in Proposition \ref{choosesmart} satisfies $r_0 \neq 2$.
Consider a point $P=(r_0,x_0) \in \A^2$. By Proposition
\ref{choosesmart}, the point $P$ is contained in $X(\Gamma)$ if and
only if there is a $\lam_0 \in \C^*$ with $x_0=\lam_0+\lam_0^{-1}$
such that the assignments $a \mapsto A(\lam_0)$ and $b\mapsto
B(\lam_0,r_0)$ can be extended to a representation $\rho \in
\Hom(\Gamma)$. Choose either $\lambda_0$ for which we have $x_0 =
\lam_0+\lam_0^{-1}$, and let $W(\lam_0,r_0)$ denote the right-hand
side of (\ref{wform}) with $A(\lam_0)$ and $B(\lam_0,r_0)$ substituted
for $a$ and $b$ respectively. Then the assignment extends to a
representation if and only if we have $W(\lam_0,r_0)A(\lam_0) =
B(\lam_0,r_0)W(\lam_0,r_0)$, which results in four equations in
$\lambda_0$ and $r_0$. The following proposition states that these
equations reduce to a single equation in $r_0$ and $x_0$, which is
therefore independent of the choice of $\lambda_0$.  (Note that
$x_0=\lambda_0+\lambda_0^{-1}$ and so $x_0^2-2 =
\lam_0^2+\lambda_0^{-2}$.)

\begin{proposition}\label{eqFprop}
  Consider the ring $\Q[r,\lambda,\lambda^{-1}]$ and let $I$ denote
  the ideal generated by the four entries of the matrix
  $W(\lam,r)A(\lam) - B(\lam,r)W(\lam,r)$. Then $I$ is generated by
\begin{equation}\label{eqF}
F = W_{11}+(\lam^{-1} - \lam) W_{12}, 
\end{equation}
where $W_{ij}$ denotes the $(i,j)$-entry of $W(\lam,r)$.  Moreover, if
we set $y=\lambda^2+\lambda^{-2}$, then $F$ is contained in the
subring $\Q[r,y]$ of $\Q[r,\lambda,\lambda^{-1}]$.  
\end{proposition}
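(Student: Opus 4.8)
The plan is to analyze the matrix $W(\lam,r)A(\lam) - B(\lam,r)W(\lam,r)$ entry by entry and exhibit explicit relations among its four entries over $\Q[r,\lambda,\lambda^{-1}]$. First I would record the elementary structural facts about the matrices involved. Since $A(\lam)$ and $B(\lam,r)$ both have determinant $1$ and the word $w$ in (\ref{wform}) has an even number of syllables (it ends in $b^{e_{p-1}}$ and begins with $a^{e_1}$, with $p-1$ syllables total), the matrix $W=W(\lam,r)$ also lies in $\SL_2$, so $\det W = W_{11}W_{22}-W_{12}W_{21}=1$. Moreover, writing $M = WA - BW$, a direct computation of the entries of $A(\lam)$, $B(\lam,r)$ shows that $\tr(M)$ can be computed: since $\tr(WA)=\tr(AW)$ and we are comparing $WA$ with $BW$, and $A$ and $B$ are conjugate (both have trace $\lam+\lam^{-1}$), one finds $\tr(M)=\tr(WA)-\tr(BW)$, which need not vanish a priori but whose vanishing will follow from $\det W=1$ together with the off-diagonal relations; more usefully, I would observe that the $(2,2)$-entry and $(2,1)$-entry of $M$ are forced once the $(1,1)$- and $(1,2)$-entries vanish.

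The core step is a linear-algebra identity. The relation $WA = BW$ is equivalent to $W = B W A^{-1}$, i.e.\ $W$ is a fixed point of conjugation-like action; equivalently the assignment $a\mapsto A$, $b\mapsto B$ extends to a representation iff $B^{-1}WA$ equals $W$ at the level of the word, but what we actually need is just that the ideal $I=(M_{11},M_{12},M_{21},M_{22})$ collapses. I would show:
\begin{equation*}
M_{21} = (\lam^{-1}-\lam)\,M_{11} - (2-r)\,M_{12}\,W_{\text{something}},
\end{equation*}
and similar — more precisely, using $\det W = 1$ one eliminates $W_{22}$ and $W_{21}$ in favor of $W_{11}, W_{12}$ (on the locus where, say, $W_{12}$ is a unit) and checks that each of $M_{21}, M_{22}$ is a $\Q[r,\lambda,\lambda^{-1}]$-linear combination of $F = M_{11} + (\lam^{-1}-\lam)M_{12}$ together with $M_{11}, M_{12}$ themselves, while conversely $M_{11}$ and $M_{12}$ are expressible in terms of $F$ and the remaining entries. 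The cleanest route is to show directly that $M_{11}, M_{12}, M_{21}, M_{22}$ all lie in the principal ideal $(F)$ by exploiting that $M = WA - BW$ satisfies $A^{-1}M + \text{(something)}\,M^{T\text{-like}} = 0$ coming from $\det A = \det B = \det W = 1$; concretely the two identities $M_{11}+\lam^{-1}M_{12}\cdot(\text{entry}) = \lam\cdot(\ldots)$ reduce the four entries to one. This is the kind of computation one does once by hand (or with a symbolic check) and the paper presumably did: I would present the two explicit syzygies and verify them by substituting the generic forms.

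For the final assertion that $F\in\Q[r,y]$ with $y=\lambda^2+\lambda^{-2}$, the key observation is a symmetry: replacing $\lam$ by $\lam^{-1}$ sends $A(\lam)$ to its "transpose-inverse shape" and, crucially, because the exponents $e_i = (-1)^{\lfloor iq/p\rfloor}$ read the same forwards as the palindromic structure dictates (equivalently, one uses that $W_{11}$ is invariant and $W_{12}$ changes sign in a controlled way under $\lam\leftrightarrow\lam^{-1}$), one checks $F(r,\lam) = F(r,\lam^{-1})$. Since $F$ is a Laurent polynomial in $\lam$ invariant under $\lam\mapsto\lam^{-1}$, it is a polynomial in $\lam+\lam^{-1}$; but one must further see only even powers of $\lam+\lam^{-1}$ occur, or rather that $F$ is invariant under $\lam\mapsto -\lam$ as well — this follows because every entry of $W$ is, by the parity of syllables, either even or odd in $\lam$ in a way matching the factor $(\lam^{-1}-\lam)$, so that $F$ is invariant under $\lam\mapsto-\lam$ too, hence lies in $\Q[\lam^2+\lam^{-2}] = \Q[y]$, giving $F\in\Q[r,y]$.

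The main obstacle I anticipate is bookkeeping the $\lam\leftrightarrow\lam^{-1}$ and $\lam\mapsto-\lam$ symmetries of the individual entries $W_{ij}$ of the product of $p-1$ matrices: one needs a clean inductive statement of the form "$W_{11}, W_{22}$ are even functions of $\lam$ of a certain degree parity and $W_{12}, W_{21}$ are odd," tracked through multiplication by $A(\lam)^{\pm1}$ and $B(\lam,r)^{\pm1}$, and then one must check that the combination defining $F$ is exactly the one killing the wrong-parity parts. The syzygy step is routine $2\times2$ matrix algebra once set up, but getting the precise coefficients requires care; I would do it on the locus $W_{12}\neq 0$ and then note the identity of ideals extends by Zariski density (or simply verify the syzygies as polynomial identities with no localization needed).
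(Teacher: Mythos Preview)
The paper does not give a direct proof; it simply cites Riley \cite[Theorem 1]{Ri}. Your proposal attempts a direct argument, and its outline for the second assertion (invariance of $F$ under $\lambda\mapsto -\lambda$ and $\lambda\mapsto\lambda^{-1}$, hence $F\in\Q[r,y]$) is reasonable in spirit. But the first assertion has a genuine gap.

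A direct computation---which you do not carry out---already gives $M_{11}=0$ identically and $M_{12}=W_{11}+(\lambda^{-1}-\lambda)W_{12}=F$. (You confuse entries of $M$ with entries of $W$ when you write ``$F=M_{11}+(\lambda^{-1}-\lambda)M_{12}$''.) The remaining entries are $M_{22}=W_{21}-(2-r)W_{12}$ and $M_{21}=(\lambda-\lambda^{-1})M_{22}-(2-r)F$, so the whole problem reduces to showing $M_{22}\in(F)$. Here your plan fails: the condition $\det W=1$ is a \emph{single} relation and cannot ``eliminate $W_{22}$ and $W_{21}$ in favor of $W_{11},W_{12}$''; there is no syzygy expressing $M_{22}$ in terms of $F$ from $\det W=1$ alone, and indeed for a generic word in $A,B$ no such relation exists.

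What is actually needed is the palindromic symmetry $e_i=e_{p-i}$ of the two-bridge exponents. With $D=\mathrm{diag}(1,2-r)$ one checks $DA^TD^{-1}=B$ and $DB^TD^{-1}=A$; the palindrome then gives $DW^TD^{-1}=W$, i.e.\ $W_{21}=(2-r)W_{12}$ as a polynomial identity. This forces $M_{22}=0$ identically and hence $M_{21}=-(2-r)F$, so $I=(F)$. You allude to palindromic structure, but only in the discussion of the $\lambda\leftrightarrow\lambda^{-1}$ symmetry for the second assertion, not in the ideal computation where it is the essential ingredient; and $\det W=1$, which you lean on, plays no role at all.
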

\begin{proof}
See \cite[Theorem 1]{Ri}. 
\end{proof}

We conclude that $X(\Gamma)$ is given in $\A^2(r,x)$ by $F=0$, where
$F$ is viewed as a polynomial in $x=\lambda+\lambda^{-1}$.  In
particular, if $K(p,q)$ is hyperbolic, then the canonical component
$X_0(\Gamma)$ will be an irreducible component of this algebraic set.

The coordinate ring of $X(\Gamma)$ is $\C[r,x]/(F)$ with $F$ as in
Proposition \ref{eqFprop}. Note again that $r$ and $x$ correspond to
$t_{ab^{-1}}$ and $t_a$.  The involution $\chi \rightarrow -\chi$ from
\S \ref{psltwo} fixes $r$ and sends $x$ to $-x$. This implies
that the coordinate ring of $Y(\Gamma)$ is isomorphic to the subring
$\C[r,x^2]/(F) \isom \C[r,y]/(F)$, with $y = x^2-2$ corresponding to
$t_{a^2}$.  That is, $Y(\Gamma)$ is given in $\A^2(r,y)$ by $F=0$ with
$F$ viewed as a polynomial in $y=\lambda^2+\lambda^{-2}$. Therefore
the double cover $X(\Gamma)\rightarrow Y(\Gamma)$ is given by $(r,x)
\mapsto (r,x^2-2)$.

The projective closure of this model of $Y(\Gamma)$ has bad
singularities at infinity. We will see that in the case of the
subfamily of two-bridge knots of the form $J(k,l)$, discussed in the
next section, there is an other model of $Y(\Gamma)$, whose
coordinates are $t_{ab^{-1}}$ and the trace of another element, that
has a smooth projective closure in $\P^1 \times \P^1$. This will allow
us, for instance, to compute the geometric genus of the irreducible
components of $Y(\Gamma)$ and $X(\Gamma)$ for that family.

\begin{remark}
  The trace map $\Homnahat(\Gamma) \rightarrow X(\Gamma)$ from the set
  of conjugacy classes of nonabelian representations in $\Hom(\Gamma)$
  to the set of their characters is injective when restricted to
  irreducible representations, as discussed in \S
  \ref{charvarprelim}. The reader be warned, however, that for
  reducible representations this is not the case.
%
%
  As stated correctly in \cite{Bu2}, a representation of the form
  mentioned in Proposition \ref{choosesmart} with $\lambda_0$ and
  $r_0$ is conjugate to the representation of the same form with
  $\lambda_0^{-1}$ and $r_0$, but in general only in a group larger
  than $\SL_2(\C)$.  For $r_0=0$ these representations are not
  conjugate in $\SL_2(\C)$, while they do have the same characters.
\end{remark}

\subsection{A family of two-bridge  knots}\label{famtwobridge}

We are interested in the family of knots of the form $J(k,l)$ as
described in the introduction (see Figure \ref{twisty}).  Note that
$J(k,l)$ is a knot precisely when $kl$ is even, which we will almost
always assume to be the case. Note also that $J(k,l)$ is symmetric in
$k$ and $l$. We will often make use of this symmetry and assume that
$l$ is even. Note furthermore that there is an obvious rotation of
$\Sp^3$ taking $J(k,l)$ to its reverse when $l$ is even, and that
$J(-k,-l)$ is the mirror image of $J(k,l)$. Sometimes we will use this
to assume without loss of generality that $k$ or $l$ is nonnegative.
These are not the only equivalences among the knots, as for any
integer $l$ the knots $J(2,l)$ and $J(-2,l-1)$ are equivalent.

If $kl$ is even, then the knot $J(k,l)$ is ambient isotopic to the
two-bridge knot $K(p,q)$ for the unique odd and coprime integers $p$
and $q$ with $-p<q\leq p$ for which the image of $\frac{q}{p}$ in 
$\Q/\Z$ equals that of $\frac{l}{1-kl}$. 
Note that for any integer $l$ the knots
$J(2,l)$ and $J(-2,l-1)$ give the same $p$ and $q$. 
When $|k|$ and $|l|$ are large enough, the
following table shows to which sequence of numbers the corresponding
$4$-plat is associated.
$$
\begin{array}{ll}
[1,k-2,1,l-2,1] & \text{for } k,l>2, \cr
[1,k-1,-l] & \text{for $k>1$ and $l<0$,} \cr
[-k,l-1,1] & \text{for $k<0$ and $l>1$,} \cr
[-k-1,1,-l-1] & \text{for } k,l<-1.
\end{array}
$$
Indeed, these $4$-plats are easily checked to be 
ambient isotopic with $J(k,l)$ (see Figure \ref{4plat1} for 
the case $k,l>2$). 
%
%
The remaining cases have small $|k|$ or $|l|$ and are also easily
checked. Note that $J(l,k)$ is ambient isotopic with $K(p',q')$ for
$p',q'$ coprime odd integers such that $-p' < q' \leq p'$ and
$\frac{q'}{p'} = \frac{k}{1-kl}$ in $\Q/\Z$. Then we have $p=p'$ and
$qq' \equiv 1 \pmod p$, so switching $k$ and $l$ corresponds with
turning the $4$-plat upside down, cf. \S \ref{twobridgeknots},
and $k=l$ implies $q=q'$.


\vspace{.1cm}
\begin{figure}[h!]
\centering
\includegraphics[scale=.6]{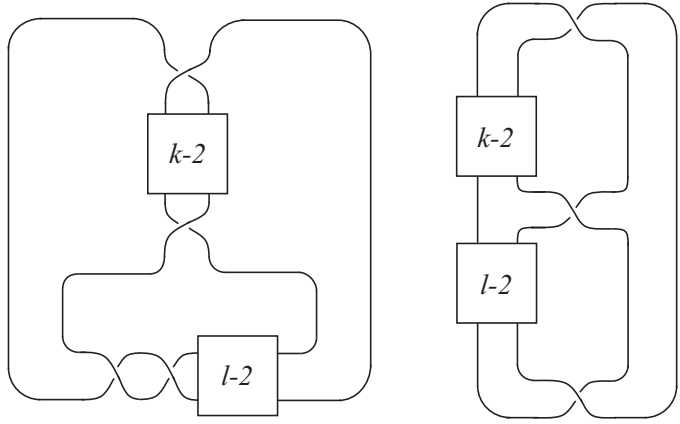}
\caption{4-plat presentation of $J(k,l)$ for $k,l\geq 2$}
\label{4plat1}
\end{figure}

For any integers $k,l$, let $\pi_1(k,l)$ denote the fundamental group
of of $\Sp^3 \setminus J(k,l)$.  By Proposition 1 of \cite{HS2}, for even $l$,
say $l=2n$, this group has a presentation 
\begin{equation}\label{pres2}
  \pi_1(k,2n) \isom \langle \, a,b \,\, | \,\, a w_k^{n} 
= w_k^{n} b \, \rangle
\end{equation}
with
\begin{equation}\label{wk}
w_k = \left\{ 
\begin{array}{ll}
(ab^{-1})^m(a^{-1}b)^m  & \mbox{if } k = 2m, \\
(ab^{-1})^mab(a^{-1}b)^m  & \mbox{if } k = 2m+1. \\
\end{array}
\right.
\end{equation}
We will sketch a proof here, as we need a little more information
about the structure of $\pi_1(k,l)$. We will also prove that for
$l=2n+1$, the group has a presentation
\begin{equation}\label{pres3}
  \pi_1(k,2n+1) \isom \langle \, a,b \,\, | \,\, a w_k^nb 
= w_k^{n+1} \, \rangle.
\end{equation}

\vspace{.1cm}
\begin{figure}[h!]
\centering
\includegraphics[scale=.75]{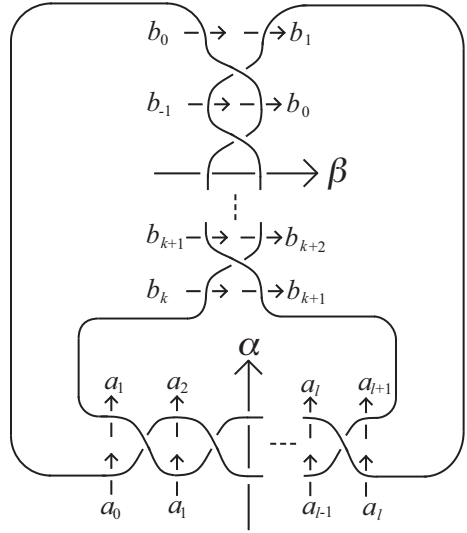}
\caption{Generators for $\pi_1(k,l)$ with $k<0<l$}
\label{newfancypicture}
\end{figure}

As in \cite[Section 3.D]{rolfsen}, where this is made precise, 
we interpret Figure
\ref{newfancypicture} as a knot, contained almost entirely
in one plane, except for the crossings, and with the base point
$P$ at "the eye of the reader."
For $0 \leq j \leq l+1$ in case $l>0$ and for $l \leq j \leq 1$ in
case $l<0$, we let $a_j$ be the loop based at $P$ that consists of 
the line segment from $P$ to the tail of the arrow labeled $a_j$, 
followed by the arrow itself and the segment from the head of the 
arrow to $P$. Similarly, for all appropriate $j$ we let
$b_j$ be the loop associated to the arrow labeled $b_j$.  
The product $xy$ of two
loops $x$ and $y$ based at $P$ is the compositum of the two loops,
where we first follow $x$ and then $y$.  
Set $a=a_0$, $b=b_1$, $\alpha=a_0a_1$, and
$\beta=b_0b_1$. Then by induction (downwards if $k$ or $l$ is negative) we
have $a_j = \alpha^{-d} a_{j-2d} \alpha^d$ for $d=\lfloor j/2 \rfloor$ and $b_j
= \beta^{-d} b_{j-2d} \beta^d$ for $d=\lfloor j/2 \rfloor$ for each
appropriate $j$. Using this and the identity $b_0=a_0^{-1}=a^{-1}$, we
can express $b_j$ in terms of $a$ and $b$ for each $j$. Using
$a_1=b_k$ we can then also express $a_j$ in terms of $a$ and $b$ for
each $j$. We find $\beta=a^{-1}b$ and $\alpha=w_k$ with $w_k$ as in (\ref{wk}),
in terms of the meridians $a$ and $b$. We are left with two relations
in terms of $a$ and $b$, namely $a_l=b_1$ and $a_{l+1} =
b_{k+1}^{-1}$, which are dependent, as we have
$$
a_la_{l+1} = \alpha = a_0a_1 = b_0^{-1}b_k =
b_1(b_0b_1)^{-1}(b_kb_{k+1})b_{k+1}^{-1} = b_1\beta^{-1}\beta b_{k+1}^{-1} =
b_1b_{k+1}^{-1}.
$$
It follows that the fundamental group is generated by the elements $a$
and $b$ with the relation $b_1=a_l$. For even $l$, say $l=2n$, this
relation is $b=\alpha^{-n}a_0\alpha^n$, or $w_k^nb = aw_k^n$. For odd $l$, say
$l=2n+1$, the relation is $b=\alpha^{-n}a_1\alpha^n = \alpha^{-n}a_0^{-1}
(a_0a_1)\alpha^n = \alpha^{-n}a^{-1}\alpha^{n+1}$, or $aw_k^nb = w_k^{n+1}$. This shows that the
fundamental group can indeed be presented as claimed.

Now let $a_i'$, $b_j'$, $\alpha'$, and $\beta'$ be the analogous loops for the
knot $J(l,k)$ and set $b'=b_1'$ and $a'=a_0'$.  Then there is a
natural isomorphism from $\pi_1(k,l)$ to $\pi_1(l,k)$ that sends $a_j$
to $b_j'$, $b_i$ to $a_i'$, and $\alpha=w_k(a,b)$ and $\beta=a^{-1}b$ to
$\beta'=a'^{-1}b'$ and $\alpha'=w_l(a',b')$ respectively.  This isomorphism is
induced by turning the $4$-plat associated to $J(k,l)$ upside down to
obtain that of $J(l,k)$.  The elements $\alpha$ and $\beta$ will play an
important role in the new model of the $\PSL_2(\C)$-character variety
of $J(k,l)$ that we will define later. 

We leave it to the reader to check that the group presentations
(\ref{pres1}) coming from the Schubert normal form and the
presentations (\ref{pres2}) and (\ref{pres3}) of $\pi_1(k,l)$ are
equivalent in case $kl$ is even.  For even $l$ an isomorphism is given
by sending $a$ to $a$ and $b$ to $b$, while for odd $l$ (and thus even
$k$) an isomorphism is given by sending $a$ to $a$ and $b$ to
$b^{-1}$. 

We set $X(k,l)=X(\pi_1(k,l))$ and define $Y(k,l)$ similarly, as well
as $X_0(k,l)$ and $Y_0(k,l)$ in case $J(k,l)$ is a hyperbolic knot.

\subsection{Newton Polygons and Algebraic curves}\label{algebra}

\subsubsection{Discrete valuations and Newton polygons}
In the proof of our main theorem we will make heavy use of valuations.
A {\em non-archimedean} valuation on a field $K$ is a map
$v \colon \, K \to \R \cup \{\infty\}$ with 
$v(x) = \infty \Leftrightarrow x=0$ that 
satisfies the ultrametric triangle  
inequality $v(x+y) \geq \min\big(v(x),v(y)\big)$ and $v(xy) =
v(x)+v(y)$ for all $x,y \in K$. Given such a valuation $v$ on $K$, the set 
$R_v = \{x\in K \,\,: \,\, v(x) \geq 0 \}$ forms a subring of $K$
that is a local ring with maximal ideal $\m_v = \{x\in K \,\,: \,\,
v(x) > 0 \}$. For any $x,y \in K$ with $v(x) < v(y)$ we have 
$v(x+y) = v(x)$.  For any real number $\alpha$ with $0<\alpha < 1$ we
obtain an absolute value $|\cdot |_v \colon K \to
\R_{\geq 0}$ by setting $|x|_v = \alpha^{v(x)}$. 
For more details, see \cite[Ch. 2]{goss} and \cite[\S I.1-2, \S
II.1-3]{localfields}. 

An example of a non-archimedean valuation is the $p$-adic valuation
$v_p$ on $\Q$; for any nonzero integer $a$, the valuation $v_p(a)$ equals 
the number of factors $p$ in $a$, and for any two nonzero integers 
$a,b$ we have $v_p(a/b) = v_p(a)-v_p(b)$. 
By definition this valuation extends uniquely to a valuation, also
denoted by $v_p$, on the completion $\Q_p$ of $\Q$ at $v_p$, the field
of $p$-adic numbers, containing the associated local ring $\Z_p$ of
$p$-adic integers. We can also extend $v_p$, though not
necessarily uniquely, to any finite extension of $\Q$ or $\Q_p$, and
by taking limits also to any algebraic extension of $\Q$ or $\Q_p$. 
Note that for any such extension $v$ of
$v_p$ we have $v(p^{1/n})=1/n$ for any nonzero integer $n$, so 
the values of a valuation are not necessarily integral.

Let $v$ be a non-archimedean valuation on a field $K$ and 
$f=\sum_{i=0}^n a_i x^i \in K[x]$ a nonzero polynomial. Then the 
{\em Newton polygon} of $f$ at $v$ is the lower convex hull of the
$n+1$ points $\big(i,v(a_i)\big)$, where the point is at infinity if
$a_i=0$. Note that if $a_0=a_1=\ldots=a_{i-1}=0$ and $a_i \neq 0$ for 
some $i>0$, then the left-most segment of the Newton polygon is the 
vertical segent from $(0,\infty)$ to $\big(i,v(a_i)\big)$, which 
has horizontal length $i$.
The following lemma tells us that the Newton polygon
determines the valuations of the roots of $f$. 

\begin{lemma}\label{polyroots}
Let $v$ be a non-archimedean valuation on an algebraically closed
field $K$ and $f \in K[x]$ a nonzero polynomial. 
Then for any rational number $q$, the number of roots of
$f$ in $K$ with valuation $q$ equals the horizontal
length of the segment of the Newton polygon of $f$ at $v$ 
with slope $-q$ if such a segment exists, and it equals $0$ otherwise.
\end{lemma}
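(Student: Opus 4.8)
The plan is to prove Lemma~\ref{polyroots} by reducing to the case of a single slope via factorization, so we start by writing $f = a_n \prod_{j=1}^{n}(x-\rho_j)$ with $\rho_j \in K$ (possible since $K$ is algebraically closed), and group the roots by their valuations. Suppose the distinct values taken are $q_1 < q_2 < \cdots < q_t$, with $m_s$ roots of valuation $q_s$, so $\sum_s m_s = n$. The goal is to show that the Newton polygon of $f$ consists of segments of slopes $-q_1, \ldots, -q_t$ with horizontal lengths $m_1, \ldots, m_t$ respectively (read left to right, slopes increasing, i.e. $-q_s$ increasing), and has no other segments; together with the observation that the convexity of the lower hull means there is at most one segment of any given slope, this yields exactly the claimed statement, including the ``$0$ otherwise'' clause.

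First I would handle the normalization: dividing by $a_n$ changes every $v(a_i)$ by the constant $v(a_n)$, which translates the Newton polygon vertically and affects neither slopes nor horizontal lengths, so we may assume $f$ is monic. Then the coefficient $a_i$ of $x^i$ is, up to sign, the elementary symmetric polynomial $e_{n-i}(\rho_1,\ldots,\rho_n)$. The key computation is to bound $v(a_i)$: by the ultrametric inequality, $v(a_i) = v(e_{n-i}) \geq \min$ over all $(n-i)$-element subsets $S$ of the sum $\sum_{j \in S} v(\rho_j)$, and this minimum is achieved by taking the $n-i$ roots of smallest valuation. So if we set $V(k) = $ (sum of the $k$ smallest root-valuations, counted with multiplicity), we get $v(a_i) \geq V(n-i)$, with equality when the minimizing subset is \emph{unique} — which happens exactly when $n-i$ is one of the partial sums $m_1, m_1+m_2, \ldots$, because then the ``$k$ smallest'' roots form a full union of valuation-classes and no tie occurs among terms of distinct valuation in the sum (a standard fact: in an ultrametric sum, if the minimum term is strictly smaller than all others, the sum attains that minimum exactly).

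The geometric translation is then routine but should be spelled out: the lower convex hull of the points $(i, v(a_i))$ lies on or above the piecewise-linear ``profile'' graph of $i \mapsto V(n-i)$, which is itself convex (because $V(k+1)-V(k) = q_{s}$ on the range where $k$ crosses the $s$-th block, and these increments increase with $s$); and the hull actually \emph{touches} this profile at every breakpoint, namely at the abscissae $i = n - (m_1 + \cdots + m_s)$ for $s = 0, 1, \ldots, t$, because at those points we have genuine equality $v(a_i) = V(n-i)$. Hence the Newton polygon coincides with the profile, whose $s$-th segment runs from $i = n-(m_1+\cdots+m_s)$ to $i = n-(m_1+\cdots+m_{s-1})$, has horizontal length $m_s$, and has slope $\big(V(n-i_{\text{left}}) - V(n-i_{\text{right}})\big) / (i_{\text{left}} - i_{\text{right}})$ which works out to $-q_s$. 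Any rational $q$ not among the $q_s$ is not the negative of a slope of any segment, and there are no roots with that valuation, matching the final clause.

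The main obstacle — really the only subtle point — is establishing equality rather than mere inequality in $v(a_i) = V(n-i)$ at the breakpoints: one must argue carefully that when $a_i = \pm e_{n-i}(\rho)$ is expanded as a sum of monomials $\prod_{j \in S}\rho_j$, the unique monomial of minimal valuation (the one indexed by the union of the bottom valuation-classes) is not cancelled by others of equal valuation, so the ultrametric inequality is an equality. A clean way to see this is to pass to the residue field: scale each bottom-class root to a valuation-$0$ unit, reduce mod $\m_v$, and note the reduced symmetric function is a nonzero polynomial in the residues (the residues are nonzero and one can invoke that a product of nonzero residue-field elements is nonzero), so no cancellation occurs. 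I would present this as a short self-contained sublemma, or alternatively cite the standard Newton-polygon theory (e.g. \cite[\S II.1-3]{localfields}), since the statement is classical; but given the ``heavy use of valuations'' flagged in the text, writing out the two-line ultrametric argument for non-cancellation is probably worth it for the reader.
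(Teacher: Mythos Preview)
Your argument is correct and follows the classical route; the paper itself does not give a proof but simply cites \cite[Prop.~2.9]{goss}, so you are supplying more than the paper does.

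One point deserves tightening. You correctly observe that at each breakpoint $n-i = m_1 + \cdots + m_s$ the minimizing subset $S_0$ (namely the union of the bottom $s$ valuation-classes) is \emph{unique}: any other $(n-i)$-subset must drop a root of valuation $\le q_s$ and pick up one of valuation $\ge q_{s+1} > q_s$, so its product has strictly larger valuation. But once you have this uniqueness, equality $v(a_i) = V(n-i)$ is immediate from the ultrametric fact already stated earlier in the paper: if $v(x) < v(y)$ then $v(x+y)=v(x)$. There is a single monomial of minimal valuation and all others are strictly larger, so no cancellation can occur. Your final paragraph treats this as the ``main obstacle'' and proposes a residue-field argument to rule out cancellation, but that argument is both unnecessary and, as written, somewhat garbled (scaling the roots changes the symmetric functions in a way you would have to track, and ``the reduced symmetric function is a nonzero polynomial in the residues'' conflates a polynomial with its value). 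Simply delete that paragraph, or replace it with the one-line observation that uniqueness of the minimizer plus the strict ultrametric inequality gives $v(e_{n-i}) = V(n-i)$ at once. The only places where multiple monomials share the minimal valuation are the \emph{non}-breakpoints, and there you only need the inequality $v(a_i) \ge V(n-i)$, which holds regardless of cancellation.
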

\begin{proof}
See \cite[Prop. 2.9]{goss}.
\end{proof}

\subsubsection{Algebraic curves}

In this section we will assume that the ground field is algebraically
closed. 
For the basic properties of algebraic varieties, in particular curves,
and the notions of rational maps and morphisms between them, we refer
the reader to \cite[Ch. I-II]{silv}. The topology we use on algebraic
varieties is the Zariski topology, which on curves is the cofinite
topology. We stress the fact that a 
{\em rational map} $\varphi \colon \, C\dashrightarrow D$ of varieties
is given by rational functions on $C$ and not necessarily defined on
the whole of $C$; the map $\varphi$ is a {\em morphism} if it is regular
everywhere on $C$ and $\varphi$ is called {\em birational} if it
restricts to an isomorphism from a nonempty open subset of $C$ to an
open subset of $D$. In particular, two curves are birational if they
are isomorphic up to a finite number of points.   

\begin{lemma}\label{Csmooth}
  Suppose $\varphi \colon C \rightarrow D$ is a birational morphism of
  curves.  If $D$ is smooth, then $C$ is isomorphic to $\varphi(C)$.
\end{lemma}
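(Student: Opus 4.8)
The plan is to use the valuative criterion of properness (or, equivalently, the fact that a smooth curve is a non-singular projective model once we pass to completions), combined with the observation that a birational morphism of curves is automatically finite of degree one on the open set where it is an isomorphism. First I would recall that for curves over an algebraically closed field, a birational morphism $\varphi \colon C \to D$ is injective on the open set $U \subseteq C$ where it restricts to an isomorphism onto an open subset of $D$, so $\varphi$ fails to be an isomorphism onto $\varphi(C)$ only possibly over the finitely many points of $D$ in the closed complement. Thus it suffices to check that $\varphi$ is injective and an immersion at each of the finitely many ``bad'' points, i.e., that no two points of $C$ map to the same point of $D$ and that $\varphi$ induces an isomorphism on local rings there.

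The key input is that $D$ is smooth, hence each local ring $\mathcal{O}_{D,\varphi(P)}$ is a discrete valuation ring. For a point $P \in C$, the induced map $\mathcal{O}_{D,\varphi(P)} \hookrightarrow \mathcal{O}_{C,P}$ is an inclusion of one-dimensional local domains with the same fraction field (since $\varphi$ is birational, the function fields agree). Because $\mathcal{O}_{D,\varphi(P)}$ is a DVR, it is a maximal proper subring of its fraction field that is a local ring dominated by no strictly larger local subring with the same fraction field; equivalently, a DVR is integrally closed and one-dimensional, hence a maximal order in its field among local rings. Therefore any local ring $\mathcal{O}_{C,P}$ containing $\mathcal{O}_{D,\varphi(P)}$ and contained in the common function field must equal $\mathcal{O}_{D,\varphi(P)}$. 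This forces $\varphi$ to induce isomorphisms on all local rings, and in particular (since distinct points of a curve have distinct local rings viewed as subrings of the function field, and the fiber over a point is finite) $\varphi$ is bijective onto $\varphi(C)$ with isomorphic local rings at every point, hence an isomorphism onto $\varphi(C)$.

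Concretely I would carry out the steps as follows. Step one: reduce to the affine case by covering $D$ (and its preimage) by affine opens, so that $\varphi$ corresponds to a ring inclusion $A = \mathcal{O}(V) \hookrightarrow B = \mathcal{O}(\varphi^{-1}(V))$ with $B$ a finitely generated $A$-algebra and $\mathrm{Frac}(A) = \mathrm{Frac}(B)$. Step two: observe that $B$ is integral over $A$ — this is where I would need the morphism to be (quasi-)finite, which follows because $\varphi$ is dominant between curves and proper, or more simply because over the open dense set where $\varphi$ is an isomorphism the fibers are single points and finiteness of fibers of a morphism of curves plus properness gives finiteness of $\varphi$; I should make sure $\varphi$ is actually proper, which holds if $C$ is projective, and in the intended application $C$ is projective. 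Step three: for each maximal ideal $\mathfrak{n}$ of $B$ lying over $\mathfrak{m}$ of $A$, localize to get $A_\mathfrak{m} \hookrightarrow B_\mathfrak{n}$; since $A_\mathfrak{m}$ is a DVR (smoothness of $D$) and $B_\mathfrak{n}$ is a one-dimensional local domain integral over it with the same fraction field, and $A_\mathfrak{m}$ is integrally closed, we get $A_\mathfrak{m} = B_\mathfrak{n}$; in particular each $\mathfrak{m}$ has a unique prime above it. Step four: conclude $A \to B$ is an isomorphism locally at every prime, hence an isomorphism, so $\varphi$ restricted to $\varphi^{-1}(V) \to V$ is an isomorphism; gluing over the cover of $D$ shows $C \to \varphi(C)$ is an isomorphism.

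The main obstacle is the finiteness/integrality step: one must be careful to justify that $\varphi$ is a finite morphism (not merely birational), since a birational morphism need not be finite in general (e.g. an open immersion), but for a proper birational morphism onto its image with $C$ a complete curve this is automatic, and in the intended application (establishing smoothness of the projective model $D(k,l)$) the source $C$ is projective. Once finiteness is in hand, the rest is the standard fact that a DVR is integrally closed, so no subtlety remains. I would therefore open the proof by invoking properness of $C$ (or noting $C$ is projective) to get that $\varphi$ is finite, and then run the local argument above.
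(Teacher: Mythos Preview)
Your plan has a real gap: you propose to invoke properness of $C$ to get finiteness of $\varphi$, and you claim that ``in the intended application $C$ is projective.'' It is not. The lemma is applied with $C = C(k,l)$, which is an affine curve in $\A^2$, mapping to the projective $D(k,l)\subset\P^1\times\P^1$; so $C$ is certainly not proper, and a birational morphism from an affine curve is never finite onto a projective target. The finiteness/integrality route in your step two therefore does not go through as stated.

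What you may not have noticed is that your own second paragraph already contains a complete argument that does not need finiteness at all. The inclusion $\mathcal{O}_{D,\varphi(P)}\hookrightarrow\mathcal{O}_{C,P}$ is a domination of local rings with the same fraction field, and a DVR is maximal under domination among proper local subrings of its fraction field (if $t$ is a uniformizer and $t^{-n}\in\mathcal{O}_{C,P}$ for some $n>0$, then $1=t^n\cdot t^{-n}\in\mathfrak m_{C,P}$, a contradiction). So $\mathcal{O}_{C,P}=\mathcal{O}_{D,\varphi(P)}$ directly; no integrality is required. From there injectivity and openness of $\varphi$ follow as you indicate. If you drop the detour through finiteness and run this argument, your proof is correct.

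The paper takes a different and shorter route: rather than compare local rings, it passes to a projective closure $C'$ of $C$, takes the rational inverse $\psi$ on an open of $\varphi(C)$, and uses that a rational map from a \emph{smooth} curve to a projective variety extends to a morphism (applied to $\varphi(C)\dashrightarrow C'$, since $\varphi(C)\subset D$ is smooth). The extension $\hat\psi$ then satisfies $\hat\psi\circ\varphi=\mathrm{id}$ on a dense open, hence everywhere, so $\varphi$ is an isomorphism onto its image. This sidesteps both the local algebra and any properness assumption on $C$; the projectivity is placed on the \emph{closure} $C'$, which is free.
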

\begin{proof}
Let $C'$ be a projective closure of $C$ and let 
$\psi\colon \, U \to C\subset C'$ be a birational inverse of $\varphi$
with $U\subset \varphi(C)$ open. Since $\varphi(C)$ is smooth and $C'$
projective, the map $\psi$ extends to a morphism $\hat{\psi}\colon \,
\varphi(C) \to C'$ \cite[Prop. I.6.8]{H}. The composition 
$\hat{\psi} \circ \varphi \colon C \to C'$ is the identity on a
dense open subset of $C$, so it is the identity on $C$. It follows 
that $\varphi$ induces an isomorphism from $C$ to $\varphi(C)$.
\end{proof}

We call a curve {\em hyperelliptic} if it is birational to a double
cover of $\P^1$. Note that with this definition, all curves of genus
$0$ and $1$ are hyperelliptic. 
For $i\in \{1,2\}$, let $\pi_i \colon \P^1 \times \P^1 \to \P^1$
denote the projection on the $i$-th factor. 
If $C\subset \P^1 \times \P^1$ is a curve, then for almost all $P \in
\P^1$ the number of intersection points between $\P^1 \times \{P\}$
and $C$ equals the degree $\deg \pi_2|_C$ of the map 
$\pi_2|_C \colon \, C \to \P^1$ induced by $\pi_2$; the {\em bidegree}
of $C$ is the pair of integers $(\deg \pi_2|_C, \deg \pi_1|_C)$. 
Two curves $C,C' \subset \P^1 \times \P^1$ of bidegree $(a,b)$ and
$(a',b')$ respectively have intersection number $ab'+a'b$. 

\begin{lemma}\label{genushyp}
  Let $C \subset \P^1 \times \P^1$ be a smooth projective curve of
  bidegree $(a,b)$ with $a,b >0$. Then $C$ is 
  irreducible, its genus equals $(a-1)(b-1)$, and $C$ is hyperelliptic
  if and only if $a\leq 2$ or $b\leq 2$.
\end{lemma}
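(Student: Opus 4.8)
The plan is to realize $C$ as a curve in a surface whose geometry is completely understood, namely $\P^1 \times \P^1$, and to read off everything from the bidegree. First I would recall the standard fact that $\P^1\times\P^1$ is a smooth projective rational surface with Picard group $\Z^2$ generated by the two ruling classes $f_1 = \{P\}\times\P^1$ and $f_2 = \P^1\times\{P\}$, with intersection numbers $f_1^2 = f_2^2 = 0$ and $f_1\cdot f_2 = 1$. A curve of bidegree $(a,b)$ is exactly a curve in the linear system $|a f_2 + b f_1|$ (the conventions matching the definition of bidegree given just above the lemma via the degrees of the projections), and the canonical class of $\P^1\times\P^1$ is $K = -2f_1 - 2f_2$.

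For irreducibility, I would argue that the linear system $|af_2 + bf_1|$ with $a,b>0$ has no fixed component and that a general member is irreducible (for instance, bihomogeneous polynomials of bidegree $(a,b)$ that are not a product of lower-bidegree factors are dense); a smooth member of this system is automatically connected because $\P^1\times\P^1$ is simply connected and the system is very ample for $a,b\geq 1$, so by a standard Bertini-type/connectedness argument a smooth divisor in a very ample system on a surface is connected, hence — being smooth and connected — irreducible. Alternatively, and perhaps more cleanly, one can invoke the intersection-number formula from the excerpt: if $C$ were reducible, say $C = C_1 \cup C_2$ with $C_i$ of bidegree $(a_i,b_i)$, $a_i+b_i>0$, then $C_1 \cap C_2 \neq \emptyset$ since $a_1 b_2 + a_2 b_1 > 0$ whenever the $a_i,b_i$ are nonnegative and not both components are ``vertical'' or both ``horizontal''; handling the degenerate cases $b_1 = b_2 = 0$ (which would force $a = 0$) or $a_1 = a_2 = 0$ separately, one sees a nonempty intersection, contradicting smoothness of $C$. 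So $C$ is irreducible.

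For the genus, I would apply the adjunction formula on the surface $\P^1\times\P^1$: for a smooth curve $C$ in class $af_2 + bf_1$,
\[
2g(C) - 2 = C\cdot(C + K) = (af_2 + bf_1)\cdot\bigl((a-2)f_2 + (b-2)f_1\bigr) = a(b-2) + b(a-2) = 2ab - 2a - 2b,
\]
using $f_1^2 = f_2^2 = 0$ and $f_1\cdot f_2 = 1$; hence $g(C) = ab - a - b + 1 = (a-1)(b-1)$, as claimed.

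Finally, for the hyperelliptic criterion: if $a \leq 2$ then one of the projections $\pi_1|_C \colon C \to \P^1$ has degree $a \leq 2$. If $a = 1$ then $C$ is rational (genus $0$ since $(a-1)(b-1)=0$), hence hyperelliptic by the paper's convention; if $a=2$ then $\pi_1|_C$ exhibits $C$ as a double cover of $\P^1$ directly, so $C$ is hyperelliptic; the case $b\leq 2$ is symmetric via $\pi_2|_C$. For the converse, I would take $a,b \geq 3$, so $g = (a-1)(b-1) \geq 4 \geq 2$, and recall that a hyperelliptic curve of genus $g \geq 2$ has a \emph{unique} $g^1_2$; then I would exhibit two distinct maps of degree $> 2$ — the projections $\pi_1|_C$ and $\pi_2|_C$ of degrees $a$ and $b$ — and observe that the hyperelliptic $g^1_2$ would have to be cut out by a pencil. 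The cleanest way to finish is: a curve of genus $\geq 2$ is hyperelliptic iff the canonical map is not an embedding; here the canonical class of $C$ is $K_C = (C+K)|_C = \bigl((a-2)f_2 + (b-2)f_1\bigr)|_C$, which for $a,b\geq 3$ is (the restriction of) a very ample class on $\P^1\times\P^1$, so the canonical map of $C$ factors through the embedding $C \hookrightarrow \P^1\times\P^1 \hookrightarrow \P^N$ and is therefore itself an embedding; hence $C$ is not hyperelliptic. I expect the main obstacle to be this last implication — ruling out hyperellipticity when $a,b\geq 3$ — since the forward directions and the genus computation are formal; the argument via very ampleness of $(a-2,b-2)$ for $a,b\geq 3$ and the classical characterization of hyperelliptic curves by failure of the canonical embedding is the cleanest route, and I would make sure to state the genus $\geq 2$ hypothesis is met so that ``canonically embedded $\iff$ non-hyperelliptic'' applies.
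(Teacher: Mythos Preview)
Your proposal is correct and follows essentially the same approach as the paper: smoothness plus connectedness (equivalently, nonempty intersection of would-be components) forces irreducibility; the genus comes from adjunction on $\P^1\times\P^1$ (the paper cites Hartshorne, you compute it explicitly); and the hyperelliptic criterion is decided by noting that the canonical sheaf $\O(a-2,b-2)|_C$ is very ample for $a,b\geq 3$, so the canonical map embeds $C$. One trivial slip: in your degenerate-case analysis for irreducibility, $b_1=b_2=0$ forces $b=0$, not $a=0$ (and symmetrically), but the conclusion is unaffected.
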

\begin{proof}
From $a,b>0$ we find that $C$ is connected by \cite[Exc. III.5.6b]{H}.
Therefore, if $C$ were not irreducible, some components would
intersect in a singular point, contradicting smoothness of $C$. 
We conclude that $C$ is irreducible. 
Its genus equals $(a-1)(b-1)$ by \cite[Exc. III.5.6c]{H}. 
If $a\leq 2$ or $b\leq 2$, then projection of $C$ onto one of the two
factors of $\P^1 \times \P^1$ shows that $C$ is either isomorphic to
$\P^1$ or to a double cover of $\P^1$. In both cases $C$ is
hyperelliptic. If $\iota \colon\, C \rightarrow \P^1 \times \P^1$
denotes the embedding, then the canonical sheaf on $C$ is isomorphic to 
$\iota^* \O_{\P^1 \times \P^1}(a-2,b-2)$ \cite[Prop. II.8.20 and 
Exm. II.8.20.3]{H}. If $a,b>2$, then this is very ample, so $C$ is not
hyperelliptic \cite[Prop. IV.5.2]{H}. This finishes the proof.
\end{proof}

\begin{lemma}\label{ramgenus}
  Let $D$ be a smooth projective irreducible curve over an
  algebraically closed field of characteristic not equal to $2$, with
  genus $g(D)$ and function field $k(D)$.  Let $h \in k(D)$ be a
  rational function on $D$ and let $a$ denote the number of points on
  $D$ where $h$ has odd valuation. If $a>0$, then $k(D)[x]/(x^2-h)$ is
  a function field, corresponding to a smooth projective irreducible
  curve $C$ whose genus $g(C)$ equals $g(C) = 2g(D)-1+a/2$.
\end{lemma}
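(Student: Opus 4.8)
The statement is a standard fact about the genus of a double cover of a curve, and the natural route is the Riemann--Hurwitz formula. First I would verify that $x^2 - h$ is irreducible over $k(D)$, so that $k(D)[x]/(x^2-h)$ really is a field. This follows from the hypothesis $a > 0$: if $x^2 - h$ factored, then $h$ would be a square in $k(D)$, hence its divisor would be $2$ times a divisor, so $h$ would have even valuation at every point of $D$, contradicting $a > 0$. Since the characteristic is not $2$, the field extension $k(D) \subset k(D)[x]/(x^2-h)$ is separable of degree $2$, and it is the function field of a (unique) smooth projective irreducible curve $C$ equipped with a degree-$2$ morphism $\varphi \colon C \to D$.

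**Ramification.** The next step is to identify the ramification locus of $\varphi$. A point $Q \in D$ is unramified precisely when $h$ is a square in the completed local ring $\widehat{\O}_{D,Q}$ (equivalently, in its fraction field, which by Hensel's lemma amounts to $h$ having a square root there). Writing $v_Q(h)$ for the valuation: if $v_Q(h)$ is even, one can factor out a uniformizer to even power and the residual unit is automatically a square in the completion (again Hensel, char $\neq 2$), so $Q$ is unramified; if $v_Q(h)$ is odd, no square root exists and $Q$ is ramified, with ramification index $2$ and exactly one point of $C$ above it. Thus the branch locus in $D$ is exactly the set of $a$ points where $h$ has odd valuation. In particular $a$ must be even for such a cover to exist — this is forced, since $\sum_Q v_Q(h) = 0$ is even, so the number of $Q$ with $v_Q(h)$ odd is even; this is why the formula contains $a/2$.

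**Riemann--Hurwitz.** Now apply Riemann--Hurwitz to $\varphi \colon C \to D$. Since every ramification point is tamely ramified with index $2$ (char $\neq 2$), each contributes $1$ to the ramification divisor degree, and there are exactly $a$ such points. Hence
\[
2g(C) - 2 = 2\big(2g(D) - 2\big) + a,
\]
which rearranges to $g(C) = 2g(D) - 1 + a/2$, as claimed. The irreducibility and smoothness of $C$ are automatic once we know $k(D)[x]/(x^2-h)$ is a field: the smooth projective model of a function field of transcendence degree $1$ is unique and automatically irreducible.

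**Expected obstacle.** There is no serious obstacle here; the only points requiring a little care are the two places where I invoke the characteristic-$\neq 2$ hypothesis: once to know the extension is separable (so Riemann--Hurwitz applies in its tame form) and once in the local analysis to conclude that a unit with even valuation is a square in the completed local ring. I would state these cleanly rather than belabor them. If one wanted to avoid local rings entirely, an alternative is to work with divisors directly: choose the branch points and build $C$ as the normalization of $D$ in $k(D)(\sqrt{h})$, then count ramification via the different. Either way the computation is routine and the Riemann--Hurwitz step is the heart of it.
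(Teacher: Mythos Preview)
Your proposal is correct and follows essentially the same route as the paper: irreducibility of $x^2-h$ from the existence of an odd-valuation point, separability from characteristic $\neq 2$, identification of the branch locus with the odd-valuation points, and then Riemann--Hurwitz. If anything, your treatment of the ramification locus is slightly more careful than the paper's, since you argue both directions (even valuation $\Rightarrow$ unramified via Hensel, odd $\Rightarrow$ ramified), whereas the paper only states the ``odd $\Rightarrow$ ramified'' direction explicitly and then tacitly uses the converse in the Riemann--Hurwitz count.
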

\begin{proof}
There is a point where $h$ has odd valuation, so $h$ is not a square
and $x^2-h$ is irreducible. It follows that $k(D)[x]/(x^2-h)$ is
a function field, corresponding to some smooth projective irreducible
curve $C$. The inclusion of function fields corresponds to a morphism 
$\varphi \colon \, C \to D$ of degree $2$, which is separable as the 
characteristic is not equal to $2$. The map $\varphi$ ramifies at all
points on $D$ where $h$ has odd valuation. For each such point $Q$ there
is a unique $P\in C$ with $\varphi(P)=Q$, at which the ramification 
index $e_P$ satisfies  
$2 \leq e_P \leq \deg \varphi=2$, so $e_P=2$. From the theorem of 
Riemann-Hurwitz \cite[Cor. IV.2.4]{H} we find 
$$
2g(C)-2 = \deg \varphi \cdot (2g(D)-2) + \sum_{P\in C} (e_P-1) = 
2(2g(D)-2) + a,
$$
from which we get $g(C) = 2g(D)-1+a/2$.
\end{proof}

The following lemma is no more than a reformulation that we will use
repeatedly. 

\begin{lemma}\label{tranint}
  Let $D \subset \A^2$ be a plane curve over an algebraically closed
  field, and $P$ a smooth point on $D$ corresponding with valuation
  $v_P$.  Let $h$ be a rational
  function on $\A^2$ that is regular on an open neighborhood $U\subset
  \A^2$ of $P$. Let $X\subset U$ be the vanishing locus of $h$ on
  $U$. Then $v_P(h) > 0$ if and only if $P$ is on $X$ and $v_P(h) =1$
  if and only if $X$ intersects $D$ transversally at $P$.
\end{lemma}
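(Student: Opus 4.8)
The plan is to work entirely inside the local ring $\O_{D,P}$, which is a discrete valuation ring with valuation $v_P$ and maximal ideal $\m_P$, since $P$ is a smooth point of the curve $D$. The key observation is that $h$, being regular on the open neighborhood $U \subset \A^2$ of $P$, restricts to an element $h|_D$ of $\O_{D,P}$; its valuation $v_P(h)$ is by definition the largest integer $e$ with $h|_D \in \m_P^e$. The first statement is then essentially a tautology: $v_P(h) > 0$ means $h|_D \in \m_P$, i.e. $h$ vanishes at $P$ as a function on $D$, which is exactly the condition that $P$ lies on $X = \{h = 0\}$. For this I would invoke the fact that $X \cap D$ near $P$ is cut out of $D$ by the single function $h|_D$, so $P \in X$ if and only if $h|_D(P) = 0$.

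For the second statement I would identify $v_P(h) = 1$ with the condition that $h|_D$ generates $\m_P$, equivalently that $h|_D$ is a uniformizer at $P$ on $D$. Then I would relate this to transversality: if $g \in \O_{\A^2,P}$ is a local equation for $D$ near $P$, then $\O_{D,P} = \O_{\A^2,P}/(g)$, and $X$ meets $D$ transversally at $P$ precisely when $h$ and $g$ together generate $\m_{\A^2,P}$ (the maximal ideal of the smooth surface $\A^2$ at $P$, which is generated by two elements). Passing to the quotient by $(g)$, this says exactly that the image of $h$ generates $\m_{\A^2,P}/(g,\m_{\A^2,P}^2 \cap (g)) = \m_P/\m_P^2$, i.e. $v_P(h|_D) = 1$. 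I would phrase transversality concretely via the intersection multiplicity $\dim_k \O_{\A^2,P}/(g,h)$ being equal to $1$, and note $\O_{\A^2,P}/(g,h) = \O_{D,P}/(h|_D)$, whose dimension over $k$ is $v_P(h|_D)$; so the intersection multiplicity is $1$ iff $v_P(h) = 1$.

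I expect no serious obstacle here; the lemma is, as the authors say, a reformulation, and the only care needed is to be precise about which ring each maximal ideal lives in and to make sure the identification $\dim_k \O_{D,P}/(h|_D) = v_P(h)$ is stated cleanly (it follows from $\O_{D,P}$ being a DVR, so that $\O_{D,P}/(h|_D) \cong \O_{D,P}/\m_P^{v_P(h)}$ has length $v_P(h)$ over the residue field $k$). The mild subtlety is handling the case $v_P(h) = \infty$, i.e. $h|_D = 0$, meaning $D$ is contained in $X$: then $P \in X$ and $v_P(h) > 0$ holds, consistent with the first statement, while $v_P(h) \neq 1$ and indeed $X$ does not meet $D$ transversally at $P$ (the intersection is not even zero-dimensional there), consistent with the second. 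I would dispose of this degenerate case in one sentence and then give the argument above for the main case where $h|_D \neq 0$.
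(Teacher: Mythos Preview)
Your proposal is correct and follows essentially the same approach as the paper: both arguments pass to the local ring $\O_{D,P} \cong \O_{\A^2,P}/(f)$ (with $f$ a local equation for $D$), interpret $v_P(h)>0$ as $h$ lying in the maximal ideal, and identify the intersection multiplicity with the length of $\O_{\A^2,P}/(f,h) \cong \O_{D,P}/(h)$, which equals $v_P(h)$. Your version is slightly more explicit about the DVR structure and the degenerate case $h|_D=0$, but the content is the same.
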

\begin{proof}
Let $\O_{\A,P}$ and $\O_{D,P}$ be the local rings of $P$ in $\A^2$ and
$D$ respectively. Since $\A^2$ is
smooth at $P$, the curve $D$ is locally principal at $P$, say given 
by $f=0$ with $f$ regular at $P$. Then there is an isomorphism 
$\O_{D,P} \isom \O_{\A,P}/(f)$ of local rings. The point $P$ lies on
$X$ if and only if $h$ is contained in the maximal ideal of
$\O_{\A,P}$, so if and only if $h$ is contained in the maximal of 
$\O_{D,P}$, i.e., $v_P(h)>0$. The intersection multiplicity of $D$ and
$X$ at $P$ is given by the length of $\O_{\A,P}/(f,h) \isom
\O_{D,P}/(h)$, which equals $v_P(h)$. By definition this intersection
is transversal when the multiplicity is $1$, so when  $v_P(h)=1$.
\end{proof}

\section{The standard model for the character varieties}
\label{equationsection}

For all integers $k,l$ with $l$ even, so that $J(k,l)$ is a knot, we
will define a model for the $\PSL_2(\C)$-character variety of $J(k,l)$
that is similar to the one often used in the literature. The following
polynomials will be useful.

\begin{definition}\label{newfs}
  Set $f_0=0$ and $f_1=1$. For all other $j\in \Z$, let $f_j \in
  \Z[u]$ be determined inductively (up and down) by the relation
  $f_{j+1}-uf_j+f_{j-1}=0$. For all integers $j$ we define $g_j$ by
  $g_j = f_j-f_{j-1}$.
\end{definition}

For notational convenience, we merge the sequences $(f_j)_j$ and
$(g_j)_j$ into a sequence $(\Phi_k)_k$ as follows.

\begin{definition}\label{phipsi}
  For each integer $j$ we define $\Phi_{2j}=f_j$ and
  $\Phi_{2j-1}=g_j$.  Furthermore, for each integer $k$ we set $\Psi_k
  = \Phi_{k+1}-\Phi_{k-1}$.
\end{definition}

\begin{lemma}\label{newbasicf}\label{fgat2}
  Let $j$ be any integer.  We have $f_{-j} = -f_j$ and $g_{-j} =
  g_{j+1}$.  If $j\neq 0$ then the polynomial $f_j$ has degree $|j|-1$ and is odd or
  even, based on the parity of its degree. The polynomial $g_j$ has
  degree $j-1$ for $j>0$ and degree $-j$ for $j\leq 0$.  We also have
  $f_j(2)=j$ and $g_j(2)=1$.
\end{lemma}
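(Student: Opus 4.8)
The plan is to prove all six assertions by routine inductions, using throughout that the defining relation $f_{j+1}=uf_j-f_{j-1}$ holds for every $j\in\Z$ (so, for instance, taking $j=0$ yields $f_{-1}=uf_0-f_1=-1$). I would start with the reflection formula $f_{-j}=-f_j$: it holds trivially for $j=0$ and for $j=1$ by the computation just given, and if it holds for $j-1$ and $j$, then applying the recursion at index $-j$ gives $f_{-j-1}=uf_{-j}-f_{-j+1}=-uf_j+f_{j-1}=-f_{j+1}$, so it holds for all $j\ge 0$ and hence, by symmetry, for all $j\in\Z$. The companion identity $g_{-j}=g_{j+1}$ is then immediate, since $g_{-j}=f_{-j}-f_{-j-1}=-f_j+f_{j+1}=g_{j+1}$; rewriting it as $g_j=g_{1-j}$ reduces every statement about $g_j$ with $j\le 0$ to the corresponding statement for the positive index $1-j$.

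Next I would record that for $j\ge 1$ the polynomial $f_j$ is monic of degree $j-1$: this is visible for $f_1=1$ and $f_2=u$, and in $f_{j+1}=uf_j-f_{j-1}$ the term $uf_j$ is monic of degree $j$ while $\deg f_{j-1}<j$, so no cancellation occurs. Together with $f_{-j}=-f_j$ this gives $\deg f_j=|j|-1$ for all $j\ne 0$, and then $\deg g_j=j-1$ for $j>0$ (for $j\ge 2$ because $\deg f_{j-1}<\deg f_j$ and $f_j$ is monic, and $g_1=1$ directly), whence $\deg g_j=\deg g_{1-j}=-j$ for $j\le 0$ by the reduction above. For the parity of $f_j$ I would prove the sharper statement $f_j(-u)=(-1)^{j-1}f_j(u)$ for all $j$, again by induction: the recursion turns the hypotheses for $j-1$ and $j$ into $f_{j+1}(-u)=(-1)^j\bigl(uf_j(u)-f_{j-1}(u)\bigr)=(-1)^jf_{j+1}(u)$, and the negative-index case follows from $f_{-j}=-f_j$. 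Since $j$ and $|j|$ have the same parity, $(-1)^{j-1}=(-1)^{|j|-1}=(-1)^{\deg f_j}$, so this says exactly that $f_j$ is an even or an odd polynomial according to the parity of its degree.

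Finally, specializing the recursion to $u=2$ gives $f_{j+1}(2)=2f_j(2)-f_{j-1}(2)$, so $f_0(2)=0$ and $f_1(2)=1$ force $f_j(2)=j$ for $j\ge 0$ by induction, and $f_{-j}(2)=-f_j(2)=-j$ by the reflection formula, so $f_j(2)=j$ for all $j$; then $g_j(2)=f_j(2)-f_{j-1}(2)=1$. There is no genuine obstacle here; the only point needing a little care is that the recursion runs in both directions over $\Z$, so each induction should be carried out upward for $j\ge 0$ and then transported to negative indices via $f_{-j}=-f_j$ rather than by a direct downward induction, and similarly the parity statement for negative $j$ is deduced from that reflection formula rather than reproved.
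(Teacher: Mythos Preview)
Your proof is correct and takes essentially the same approach as the paper, which simply states that everything ``follows immediately by induction with respect to $j$, both upwards and downwards.'' You have spelled out all the details that the paper leaves implicit; the only minor difference is that you handle negative indices via the reflection formula $f_{-j}=-f_j$, whereas the paper's one-line proof suggests running the induction directly in both directions, but either route is equally valid here.
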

\begin{proof}
  The follows immediately by induction with respect to $j$, both
  upwards and downwards.
\end{proof} 

\begin{lemma}\label{stillbasic}
In the ring $\Z[u][s]/(s^2-us+1)\isom \Z[s,s^{-1}]$ 
we have $u = s+s^{-1}$ and $f_j = (s^j-s^{-j})/(s-s^{-1})$ and 
$g_j = (s^j+s^{1-j})/(s+1)$. We also have $f_{j-1}f_{j+1} = f_j^2-1$
and $g_jg_{j+1} = (u-2)f_j^2+1$.
\end{lemma}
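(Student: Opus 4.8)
The plan is to do everything inside the Laurent polynomial ring $\Z[s,s^{-1}]$. First I would record the isomorphism itself. In $\Z[u][s]/(s^2-us+1)$ the defining relation rewrites as $s(u-s)=1$, so $s$ is a unit with inverse $u-s$, whence $u=s+s^{-1}$. Conversely $s$ satisfies the monic relation $s^2-(s+s^{-1})s+1=0$ over the subring $\Z[s+s^{-1}]\subset\Z[s,s^{-1}]$, and $\Z[s+s^{-1}]$ is a polynomial ring in the single element $s+s^{-1}$; a comparison of the resulting $\Z[u]$-module structures (both sides free of rank $2$ on $1,s$) shows that sending $u\mapsto s+s^{-1}$ and $s\mapsto s$ gives an isomorphism $\Z[u][s]/(s^2-us+1)\isom\Z[s,s^{-1}]$ that restricts to an injection $\Z[u]\hookrightarrow\Z[s,s^{-1}]$. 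Consequently any identity among elements of $\Z[u]$ may be checked after this injection, i.e. in the domain $\Z[s,s^{-1}]$, where the image of $\Z[u]$ is the ring of Laurent polynomials fixed by $s\mapsto s^{-1}$.

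Next I would prove the closed formula for $f_j$. Since $s-s^{-1}$ divides $s^j-s^{-j}$ in $\Z[s,s^{-1}]$ (a geometric sum), the element $F_j:=(s^j-s^{-j})/(s-s^{-1})$ lies in $\Z[s,s^{-1}]$, and clearly $F_0=0$ and $F_1=1$. A one-line expansion of the numerator shows $F_{j+1}-(s+s^{-1})F_j+F_{j-1}=0$ for every integer $j$, so by the uniqueness built into Definition \ref{newfs} (the recursion determines all $f_j$ from $f_0,f_1$, both upwards and downwards) we get $F_j=f_j$. The formula for $g_j$ then drops out by manipulating $g_j=f_j-f_{j-1}$: collecting terms gives $(s-s^{-1})g_j=(s-1)(s^{j-1}+s^{-j})$, and dividing by $s-s^{-1}=s^{-1}(s-1)(s+1)$ yields $g_j=s(s^{j-1}+s^{-j})/(s+1)=(s^j+s^{1-j})/(s+1)$.

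Finally I would verify the two product identities by clearing denominators in $\Z[s,s^{-1}]$, using that $s-s^{-1}$ and $s+1$ are non-zero-divisors. Multiplying $f_{j-1}f_{j+1}=f_j^2-1$ through by $(s-s^{-1})^2$ reduces it to the claim that $(s^{j-1}-s^{1-j})(s^{j+1}-s^{-1-j})$ and $(s^j-s^{-j})^2-(s-s^{-1})^2$ both equal $(s^{2j}+s^{-2j})-(s^2+s^{-2})$, which is immediate. For $g_jg_{j+1}=(u-2)f_j^2+1$ I would first rewrite $u-2=s+s^{-1}-2=s^{-1}(s-1)^2$ and note $(s-1)^2(s+1)^2=s^2(s-s^{-1})^2$; multiplying the identity through by $(s+1)^2$ then turns the right side into $s(s^j-s^{-j})^2+(s+1)^2=s^{2j+1}+s^{1-2j}+s^2+1$ and the left side into $(s^j+s^{1-j})(s^{j+1}+s^{-j})=s^{2j+1}+s^2+1+s^{1-2j}$, the same Laurent polynomial. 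None of this is deep; the only place to be careful is the exponent bookkeeping in this last identity and not dropping the hidden factor $s^{-1}$ inside $u-2$.
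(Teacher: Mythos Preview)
Your proof is correct and follows essentially the same approach as the paper's: the paper's proof is just the two-line remark that the expression for $f_j$ follows by induction, the expression for $g_j$ follows immediately, and the last equations are easily checked in terms of $s$. You have simply supplied the details of these computations (and of the isomorphism itself), and your exponent bookkeeping checks out.
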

\begin{proof}
  The expression for $f_j$ follows from induction, and the expression
  for $g_j$ follows immediately. The last equations are easily checked
  in terms of $s$.
\end{proof}

\begin{lemma}\label{phipsiprop}
  Let $k$ be any integer. Then we have $\Phi_{k+2}=u\Phi_k-\Phi_{k-2}$
  and $\Psi_{k+2}=u\Psi_k-\Psi_{k-2}$. We also have $\Phi_k =
  (-1)^{k+1} \Phi_{-k}$ and $\Psi_k = (-1)^{k+1} \Psi_{-k}$, while
  $\deg \Phi_k = \lfloor (|k|-1)/2\rfloor$.  Finally, we have
$$
\Psi_k = \left\{
\begin{array}{ll}
(u-2)\Phi_k = (u-2)f_j& \text{ if $k=2j$ is even,} \\
\Phi_k = g_j & \text{ if $k=2j-1$ is odd.}
\end{array}
\right.
$$
\end{lemma}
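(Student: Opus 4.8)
**The plan is to prove Lemma \ref{phipsiprop} by reducing everything to the recursions and closed forms already established for $f_j$ and $g_j$.**

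First I would establish the recursion $\Phi_{k+2} = u\Phi_k - \Phi_{k-2}$ by splitting into the even and odd cases. If $k = 2j$, then $\Phi_{k+2} = \Phi_{2j+2} = f_{j+1}$, $\Phi_k = f_j$, $\Phi_{k-2} = f_{j-1}$, and the claimed identity is exactly the defining recursion $f_{j+1} = uf_j - f_{j-1}$ from Definition \ref{newfs}. If $k = 2j-1$, then $\Phi_{k+2} = \Phi_{2j+1} = g_{j+1}$, $\Phi_k = g_j$, $\Phi_{k-2} = g_{j-1}$, and since $g_j = f_j - f_{j-1}$, the identity $g_{j+1} = ug_j - g_{j-1}$ follows by subtracting the $f$-recursion at index $j$ from the one at index $j-1$. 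This also immediately gives $\Psi_{k+2} = u\Psi_k - \Psi_{k-2}$, since $\Psi_k = \Phi_{k+1} - \Phi_{k-1}$ is a fixed $\Z$-linear combination of shifts of $\Phi$, and the recursion is linear with constant (in $k$) coefficients.

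Next I would prove the symmetry $\Phi_k = (-1)^{k+1}\Phi_{-k}$, again by cases. For $k = 2j$ even, $(-1)^{k+1} = -1$ and I need $f_j = -f_{-j}$, which is the first assertion of Lemma \ref{newbasicf}. For $k = 2j-1$ odd, $(-1)^{k+1} = 1$ and I need $g_j = g_{-(2j-1)+1}\cdot(\text{index bookkeeping})$: here $\Phi_{-k} = \Phi_{-(2j-1)} = \Phi_{2(1-j)-1} = g_{1-j}$, and the identity $g_{-j} = g_{j+1}$ from Lemma \ref{newbasicf} (applied with $j$ replaced by $j-1$) gives $g_{1-j} = g_{-(j-1)} = g_j$, as needed. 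Then $\Psi_k = (-1)^{k+1}\Psi_{-k}$ follows formally: $\Psi_{-k} = \Phi_{-k+1} - \Phi_{-k-1} = (-1)^{k}\Phi_{k-1} - (-1)^{k+2}\Phi_{k+1} = (-1)^{k+1}(\Phi_{k+1} - \Phi_{k-1}) = (-1)^{k+1}\Psi_k$. The degree statement $\deg\Phi_k = \lfloor(|k|-1)/2\rfloor$ is read off from Lemma \ref{newbasicf}: for $k = 2j \neq 0$, $\deg f_j = |j|-1 = \lfloor(|k|-1)/2\rfloor$; for $k = 2j-1$, $\deg g_j = \max(j-1, -j)$ depending on the sign of $j$, which in both cases equals $|j-\tfrac12|-\tfrac12 = \lfloor(|k|-1)/2\rfloor$; the case $k=0$ gives $\Phi_0 = f_0 = 0$ and one checks the convention $\lfloor -1/2 \rfloor = -1$ is consistent (or this case is simply excluded/interpreted as degree $-\infty$).

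The last formula for $\Psi_k$ is the only part requiring a genuine computation, and I expect it to be the main (though still routine) obstacle. For $k = 2j-1$ odd, $\Psi_k = \Phi_{2j} - \Phi_{2j-2} = f_j - f_{j-1} = g_j = \Phi_k$, immediately. For $k = 2j$ even, $\Psi_k = \Phi_{2j+1} - \Phi_{2j-1} = g_{j+1} - g_j = (f_{j+1} - f_j) - (f_j - f_{j-1}) = f_{j+1} - 2f_j + f_{j-1} = (uf_j - f_{j-1}) - 2f_j + f_{j-1} = (u-2)f_j$, using the $f$-recursion. Since $f_j = \Phi_{2j} = \Phi_k$, this reads $\Psi_k = (u-2)\Phi_k$, completing the proof. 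Alternatively one could verify the even case directly in $\Z[s,s^{-1}]$ using the closed forms of Lemma \ref{stillbasic}, since $(u-2) = s + s^{-1} - 2 = (s-1)(s^{-1}-1)\cdot(-1)$ relates cleanly to the denominators appearing in the formulas for $f_j$ and $g_j$, but the recursion-based argument is shorter and avoids introducing $s$.
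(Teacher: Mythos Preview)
Your proof is correct and follows essentially the same approach as the paper: reduce the $\Phi$-recursion to the $f$- and $g$-recursions from Definition~\ref{newfs}, pull the symmetry and degree statements from Lemma~\ref{newbasicf}, and obtain the final formula via the identity $g_{j+1}-g_j=(u-2)f_j$. The paper's proof is simply a two-line pointer to these same ingredients, whereas you have spelled out each case explicitly.
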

\begin{proof}
  The first statement follows from Definition \ref{newfs} and the
  second from Lemma \ref{newbasicf}. The last statement follows from
  Definition \ref{phipsi} and the identity $g_{j+1}-g_{j} = (u-2)f_j$,
  which is immediate from Definition \ref{newfs}.
\end{proof}

\begin{lemma}\label{tracewk}
  Suppose $A,B \in \SL_2(\C)$ satisfy $\tr A = \tr B$. Set $y = \tr
  A^2$ and $r = \tr A^{-1}B$. Let $k$ be any integer and set $m =
  \lfloor k/2 \rfloor$.  Define $W_{k} = (AB^{-1})^m(A^{-1}B)^m$ if
  $k$ is even and $W_{k} = (AB^{-1})^mAB(A^{-1}B)^m$ if $k$ is odd.
  Then we have
$$
\tr W_{k} = \Phi_{-k}(r)\Psi_{k}(r)(y-r)+2.
$$
%
%
\end{lemma}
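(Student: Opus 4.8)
The plan is to compute $\tr W_k$ by diagonalizing the matrices and reducing everything to a Laurent-polynomial identity in a single variable. First I would set up coordinates: since $\tr A = \tr B$ and (after checking the nonabelian/generic case and then extending by Zariski density) we may assume $A$ and $B$ are simultaneously of the form $A = A(\lambda)$, $B = B(\lambda, r)$ from Proposition \ref{choosesmart}, so that $\tr A^{-1}B = r$ and $y = \tr A^2 = \lambda^2 + \lambda^{-2}$. Write $s = \lambda^2$, so that $y = s + s^{-1}$ and, by Lemma \ref{stillbasic}, $f_j$ and $g_j$ (hence $\Phi_k$ and $\Psi_k$) have clean closed forms in $s$. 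The key observation is that $AB^{-1}$ and $A^{-1}B$ are conjugate matrices with trace $r$ (the trace of $A^{-1}B$ is $r$ by the choice of the entry $2-r_0$ in $B$, and $AB^{-1}$ is conjugate to $BA^{-1}\sim A^{-1}B$... more precisely $\tr AB^{-1} = \tr A^{-1}B = r$ since $AB^{-1}$ and $B^{-1}A$ are conjugate and $(B^{-1}A)^{-1} = A^{-1}B$ has the same trace as $B^{-1}A$).

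Next I would express powers using the Cayley--Hamilton recursion. For any $M \in \SL_2(\C)$ with $\tr M = r$ one has $M^m = f_m(r) M - f_{m-1}(r) I$ with $f_j$ as in Definition \ref{newfs} (this is exactly the recursion $f_{j+1} - r f_j + f_{j-1} = 0$). Applying this to $M = AB^{-1}$ and to $M' = A^{-1}B$, I can write
\[
(AB^{-1})^m = f_m(r)\, AB^{-1} - f_{m-1}(r) I, \qquad (A^{-1}B)^m = f_m(r)\, A^{-1}B - f_{m-1}(r) I,
\]
and similarly handle the middle factor $AB$ (or $I$) in the odd case. Substituting these into $W_k = (AB^{-1})^m (A^{-1}B)^m$ (even) or $(AB^{-1})^m AB (A^{-1}B)^m$ (odd), expanding, and taking traces, everything reduces to traces of short words in $A, B$: namely $\tr A, \tr B, \tr AB, \tr AB^{-1}, \tr A^{-1}B, \tr(AB^{-1} \cdot A^{-1}B)$, etc. Each of these is a known polynomial in $r$ and $y$ via the standard $\SL_2$ trace identities (e.g. $\tr XY + \tr XY^{-1} = \tr X \tr Y$), and in particular $\tr(AB^{-1}\cdot A^{-1}B)$ and the analogous quantities expand in $r$ and $y$. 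Collecting terms should yield a polynomial expression $\tr W_k = (\text{something in } f_{m}, f_{m-1})(y - r) + 2$.

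The main obstacle — and the part requiring genuine care rather than routine manipulation — is the bookkeeping that turns the raw $f_m, f_{m-1}$ combination into the compact form $\Phi_{-k}(r)\Psi_k(r)(y-r) + 2$, uniformly in the parity of $k$. Here I expect to use Lemma \ref{fgat2} and Lemma \ref{phipsiprop}: the even case should produce a factor $f_m \cdot f_m$-type term that, together with the $g$-identities $f_{j-1}f_{j+1} = f_j^2 - 1$ and $g_j g_{j+1} = (u-2)f_j^2 + 1$ from Lemma \ref{stillbasic}, rearranges into $f_{-j}(u) \cdot (u-2)f_j(u)$, matching $\Phi_{-k}\Psi_k$ when $k = 2j$; the odd case $k = 2m+1$ should instead produce the $g_j g_{j+1}$ (equivalently $\Phi_k$) combination. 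An efficient route around the parity split is to verify both the claimed formula and the recursion $\Phi_{k+2} = u\Phi_k - \Phi_{k-2}$, $\Psi_{k+2} = u\Psi_k - \Psi_{k-2}$ are satisfied by $\tr W_k$: one checks directly that $W_{k+2} = AB^{-1} \cdot W_k \cdot A^{-1}B$ up to the recursion structure, reducing the problem to two or three base cases ($k = 0, 1, 2$, say) which are small explicit matrix computations, then invoking Lemma \ref{phipsiprop} to conclude the closed form holds for all $k$. I would close by noting that since both sides are polynomial in the entries of $A$ and $B$, and the identity holds on the Zariski-dense set of pairs with $A$ of the normalized form, it holds for all $A, B \in \SL_2(\C)$ with $\tr A = \tr B$.
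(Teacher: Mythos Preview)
Your direct-expansion route is correct and in fact rather clean: writing $(AB^{-1})^m = f_m(r)\,AB^{-1} - f_{m-1}(r)I$ and likewise for $(A^{-1}B)^m$, expanding $W_k$ and taking traces reduces the even case to $f_m^2\,\tr W_2 - 2rf_mf_{m-1} + 2f_{m-1}^2$, and the identity $f_m^2 - rf_mf_{m-1} + f_{m-1}^2 = 1$ (immediate from $f_{j-1}f_{j+1}=f_j^2-1$ in Lemma~\ref{stillbasic}) collapses this to $(2-r)f_m^2(y-r)+2$, matching $\Phi_{-k}\Psi_k(y-r)+2$. The odd case works the same way once one notices $(r-1)f_m - f_{m-1} = g_{m+1}$. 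The normalization plus Zariski density is fine, though in fact unnecessary: all the trace identities you use hold for arbitrary $A,B\in\SL_2(\C)$ with $\tr A=\tr B$.

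Your proposed induction shortcut, however, does not work as stated. From $W_{k+2} = (AB^{-1})W_k(A^{-1}B)$ you cannot extract a second-order linear recursion for $\tr W_k$ alone: applying $\tr(MN)=\tr M\,\tr N-\tr(MN^{-1})$ introduces terms like $\tr\big(W_k(A^{-1}B)^{\pm1}\big)$ that do not reduce to $\tr W_{k\pm 2}$. Concretely, for even $k=2m$ one has $\Phi_{-k}\Psi_k = -(r-2)f_m^2$, and $f_m^2$ does not satisfy the second-order recursion $a_{m+1}=ra_m-a_{m-1}$; so neither does $\tr W_k$.

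The paper resolves this by strengthening the statement: it sets $c_{k,d}=\tr\big(W_k(A^{-1}B)^d\big)$ and proves $c_{k,d}=\Phi_{-k}(r)\Psi_{k+2d}(r)(y-r)+f_{d+1}(r)-f_{d-1}(r)$ for all integers $k,d$ by double induction, using the closed recursions $c_{k,d+1}=r\,c_{k,d}-c_{k,d-1}$ and $c_{k+2,d}=r\,c_{k,d+1}-c_{k-2,d+2}$. The auxiliary parameter $d$ absorbs precisely the stray $A^{-1}B$ factors that break your one-variable recursion. Your direct expansion avoids this machinery at the cost of treating parities separately; the paper's approach is uniform in $k$ but requires spotting the right auxiliary family.
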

\begin{proof}
  By Cayley-Hamilton we have $(\tr M)\cdot I = M+M^{-1}$ and $(\tr
  N)\cdot I = N+N^{-1}$ for all $M,N\in \SL_2(\C)$. Taking traces
  after multiplying the former equation by $N$ from the right and the
  latter by $M$ from the left, we obtain
\begin{equation}\label{traceformeq}
\tr \big(MN\big) = (\tr M)(\tr N) - \tr \big(M^{-1}N\big)
= (\tr M)(\tr N) - \tr \big(MN^{-1}\big)
\end{equation}
for all $M,N \in \SL_2(\C)$.
Set $c_{k,d} = \tr \big(W_k(A^{-1}B)^d\big)$ and 
$$
\gamma_{k,d} = \Phi_{-k}(r)\Psi_{k+2d}(r)(y-r)+f_{d+1}(r)-f_{d-1}(r).
$$
The Lemma is equivalent to the special case $d=0$ of the stronger
statement that $c_{k,d}=\gamma_{k,d}$ for all integers $k,d$. We will
prove by induction with respect to $k$ that this is true for $k$ and
all integers $d$.  We first use induction with respect to $d$ for $-1
\leq k \leq 1$.  we have $c_{0,0}=\tr I = 2=\gamma_{0,0}$ and $c_{0,1}
= \tr \big(A^{-1}B\big) = r=\gamma_{0,1}$ and $c_{1,-1}=\tr
\big(A^2\big)=y=\gamma_{1,-1}$.  Set $x=\tr A = \tr B$. Then by
(\ref{traceformeq}) we have $y=\tr \big(A^2\big) = (\tr A)^2-\tr I =
x^2-2$ and thus
$$
c_{1,0}=\tr W_1 = \tr \big(AB\big) = (\tr A)(\tr B)-\tr
\big(A^{-1}B\big) = x^2-r = y-r+2 =\gamma_{1,0}.
$$ 
We also have $c_{-1,0} =\tr \big(BA\big) = \tr \big(AB\big)
=\gamma_{1,0}=\gamma_{-1,0}$ and $c_{-1,1} = \tr \big(B^2\big)= (\tr
B)^2-\tr I = x^2-2=y$ by (\ref{traceformeq}).  Also by
(\ref{traceformeq}), we have
\begin{align}
  c_{k,d+1} &= \tr \big(W_k(A^{-1}B)^{d+1}\big) = \tr \big(W_k(A^{-1}B)^d(A^{-1}B)\big) \label{dind}\\
  &=\tr \big(W_k(A^{-1}B)^d\big)\big(\tr A^{-1}B\big) - \tr
  \big(W_k(A^{-1}B)^{d-1}\big) = rc_{k,d}-c_{k,d-1}.  \nonumber
\end{align}
The sequence $(\gamma_{k,d})_d$ satisfies the same recursion, so by
induction (increasing and decreasing) we find $c_{k,d}=\gamma_{k,d}$
for $-1\leq k \leq 1$ and all integers $d$. Therefore, we get
\begin{align*}
  c_{2,0}&=\tr \big(AB^{-1}A^{-1}B\big) = \big(\tr
  AB^{-1}A^{-1}\big)\big(\tr B\big) -
  \tr \big((AB^{-1}A^{-1})^{-1} B\big) \\
  &= \big(\tr (B^{-1})\big) \big(\tr B\big) - \tr \big(ABA^{-1}B\big)
  = x^2 - c_{1,1} = y+2-\gamma_{1,1} = \gamma_{2,0}.
\end{align*}
Together with $c_{2,-1} = \tr \big(AB^{-1}\big) = \tr
\big(B^{-1}A\big) = \tr\big((B^{-1}A)^{-1}\big) = \tr
\big(A^{-1}B\big)= r= \gamma_{2,-1}$ this is the basis for the
induction that shows $c_{2,d}= \gamma_{2,d}$ for all integers $d$, the
induction step following again from (\ref{dind}).  Now by
(\ref{traceformeq}) we have
\begin{align*}
  c_{k+2,d} &= \tr \big(W_{k+2}(A^{-1}B)^d) 
 = \tr \big((AB^{-1})W_{k} (A^{-1}B)^{d+1}\big) \\
  &=\big(\tr (AB^{-1})\big)\big(\tr (W_{k}(A^{-1}B)^{d+1})\big)
  -\tr \big((AB^{-1})^{-1} W_{k} (A^{-1}B)^{d+1}\big)\\
  &=rc_{k,d+1} - \tr\big( W_{k-2}(A^{-1}B)^{d+2}\big) = rc_{k,d+1} -
  c_{k-2,d+2}.
\end{align*}
From Lemma \ref{phipsiprop} it follows that we also have
$\gamma_{k+2,d} = r\gamma_{k,d+1} - \gamma_{k-2,d+2}$ for all integers
$k$ and $d$.  By induction with respect to $k$ it follows that
$c_{k,d}=\gamma_{k,d}$ for all integers $k$ and $d$.
\end{proof}

Analogous to \S \ref{charvars}, for any integer $k$, any
$\lambda_0\in \C^*$, and $r_0 \in \C$, we let $W_k(\lambda_0,r_0)$
denote the right-hand side of (\ref{wk}) with $A(\lambda_0)$ and
$B(\lambda_0,r_0)$ substituted for $a$ and $b$. Then for any integers
$k,n$, the assignments $a \mapsto A(\lam_0)$ and $b\mapsto
B(\lam_0,r_0)$ can be extended to a representation $\rho \in
\Hom(\pi_1(k,2n))$ if and only if we have $A(\lam_0)W_k(\lam_0,r_0)^n
= W_k(\lam_0,r_0)^nB(\lam_0,r_0)$, which results in four equations in
$\lambda_0$ and $r_0$. Again these equations reduce to a single
equation in $r_0$ and $\lambda_0$.

\begin{proposition}\label{eqFpropJ}
  Let $k,n$ be any integers.  Consider the ring
  $\Q[r,\lambda,\lambda^{-1}]$ and let $I$ denote the ideal generated
  by the four entries of the matrix $A(\lam)W_k(\lam,r)^n -
  W_k(\lam,r)^nB(\lam,r)$. Then $I$ is generated by
\begin{equation}\label{eqFJ}
F_{k,n}(\lam,r)= f_n(\tr W_k(\lambda,r)) \cdot F_{k,1}(\lam,r) - 
f_{n-1}(\tr W_k(\lam,r))
\end{equation}
with 
$$
F_{k,1}(\lam,r) = -\Phi_{-k}(r) \Phi_{k-1}(r) (y-r)+1,
$$
and with $y=\lambda^2+\lambda^{-2}$. 
\end{proposition}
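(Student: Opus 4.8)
The plan is to reduce to the case $n=1$ by the Cayley--Hamilton theorem, then to exploit a palindromic symmetry of $w_k$ to see that $I$ is principal, and finally to pin down its generator using Lemma \ref{tracewk}. Write $A=A(\lambda)$, $B=B(\lambda,r)$, $V=W_k(\lambda,r)$, $\tau=\tr V$, so $\tau=\Phi_{-k}(r)\Psi_k(r)(y-r)+2$ by Lemma \ref{tracewk}. Since $V\in\SL_2$ we have $V^2=\tau V-I$, and the recursion for the $f_j$ in Definition \ref{newfs} gives $V^n=f_n(\tau)\,V-f_{n-1}(\tau)\,I$ for every integer $n$; hence
\[
AV^n-V^nB \;=\; f_n(\tau)\,(AV-VB)\;-\;f_{n-1}(\tau)\,(A-B).
\]

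Set $E=\left(\begin{smallmatrix}2-r&0\\0&1\end{smallmatrix}\right)$. One checks directly that $M\mapsto E^{-1}M^{T}E$ is an anti-homomorphism of $2\times 2$ matrices sending $A$ to $B$ and $B$ to $A$. From (\ref{wk}) one sees that for every integer $k$ the word $w_k$ in $a,b$ is carried to itself when the order of its letters is reversed and $a$ and $b$ are interchanged (a case check for $k$ even and $k$ odd; the sign of $\lfloor k/2\rfloor$ is irrelevant). Applying the anti-homomorphism to the matrix word therefore gives $E^{-1}V^{T}E=V$, that is $V_{21}=(2-r)V_{12}$. A direct computation with $A(\lambda)$ and $B(\lambda,r)$ shows that any $W$ with $W_{21}=(2-r)W_{12}$ satisfies $AW-WB=\bigl((\lambda-\lambda^{-1})W_{12}+W_{22}\bigr)(A-B)$. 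Taking $W=V$ and substituting into the display above yields $AV^n-V^nB=G_{k,n}\,(A-B)$ with $G_{k,n}=f_n(\tau)G_{k,1}-f_{n-1}(\tau)$ and $G_{k,1}=(\lambda-\lambda^{-1})V_{12}+V_{22}$. Since $A-B=\left(\begin{smallmatrix}0&1\\r-2&0\end{smallmatrix}\right)$ has an entry equal to $1$, the ideal $I$ generated by the four entries of $AV^n-V^nB$ equals the principal ideal $(G_{k,n})$.

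It remains to identify $G_{k,1}$ with $F_{k,1}$, after which $G_{k,n}=F_{k,n}$ and we are done. Using $V_{21}=(2-r)V_{12}$ and $\tr V=V_{11}+V_{22}$, a short manipulation of entries gives $\tr\bigl(V\cdot AB^{-1}\bigr)-(r-1)\tr V=(2-r)\,G_{k,1}$. On the other hand, writing $U=AB^{-1}$ (so $\tr U=r$, $U^2=rU-I$) and using cyclicity of the trace on the relation $W_{k+2}=U\,W_k\,(A^{-1}B)$ gives $\tr(W_{k+2}U)=r\,\tr W_{k+2}-\tr\bigl(W_k(A^{-1}B)\bigr)$; both traces on the right are available in closed form from Lemma \ref{tracewk} (the latter being the case $d=1$ of the quantities $c_{k,d}$ there). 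Plugging these in and simplifying by means of the recursion and symmetry properties of $\Phi,\Psi$ in Lemma \ref{phipsiprop} --- in particular the identity $\Psi_m\Psi_{m-1}=(u-2)\Phi_m\Phi_{m-1}$, immediate from that lemma --- turns $\tr(V\cdot AB^{-1})-(r-1)\tr V$ into $(2-r)\bigl(-\Phi_{-k}(r)\Phi_{k-1}(r)(y-r)+1\bigr)=(2-r)F_{k,1}$, so $G_{k,1}=F_{k,1}$.

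I expect the only genuinely non-routine point to be the palindromic symmetry of the second paragraph; everything else is Cayley--Hamilton, the polynomial identities already assembled in Lemmas \ref{newbasicf}--\ref{phipsiprop}, and Lemma \ref{tracewk}. What the symmetry really buys is the factorization $AV^n-V^nB=G_{k,n}(A-B)$, which gives the ideal $I$ \emph{on the nose} and not merely up to radical --- this is the delicate part of the statement, since the weaker assertion that the corresponding algebraic sets agree is essentially automatic.
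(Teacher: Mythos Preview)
Your argument is correct and follows the same overall architecture as the paper's proof: reduce to $n=1$ by Cayley--Hamilton via $V^n=f_n(\tau)V-f_{n-1}(\tau)I$, show the ideal is principal with generator $(\lambda-\lambda^{-1})V_{12}+V_{22}$, and then identify this with $F_{k,1}$. Where you differ is in how you justify the two nontrivial steps.

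For the principality of $I$, the paper simply says this is ``completely analogous to Proposition~\ref{eqFprop}'', whose proof in turn is a citation of Riley's Theorem~1. You instead supply a self-contained argument via the anti-involution $M\mapsto E^{-1}M^TE$ and the palindromic symmetry of $w_k$, which yields $V_{21}=(2-r)V_{12}$ and then the factorisation $AV-VB=G_{k,1}(A-B)$ directly. This is a genuine improvement in transparency: it explains \emph{why} the four entries collapse to one generator rather than deferring to an external source. (One cosmetic point: $E^{-1}$ involves $(2-r)^{-1}$, so strictly speaking you are arguing in a localisation; but the resulting relation $V_{21}=(2-r)V_{12}$ is a polynomial identity, so no harm is done.)

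For the identification $G_{k,1}=F_{k,1}$, the paper takes the more pedestrian route: it writes $W_k$ explicitly as $(f_m(r)AB^{-1}-f_{m-1}(r)I)(f_m(r)A^{-1}B-f_{m-1}(r)I)$ (or the odd-$k$ analogue) using Cayley--Hamilton again, and then just reads off $(\lambda-\lambda^{-1})w_{12}+w_{22}$. Your route through the trace identity $\tr(VU)-(r-1)\tr V=(2-r)G_{k,1}$ and the $c_{k,d}$ formulas from the proof of Lemma~\ref{tracewk} is more conceptual but also more work; the paper's direct expansion is shorter, at the cost of being an unilluminating computation. Both arrive at the same place.
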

\begin{proof}
  By Cayley-Hamilton we have $M^2 = tM - I$ for a matrix $M \in
  \SL_2(\C)$ with trace $t$; by induction, both up and down, we find
\begin{equation}\label{powerMeq}
M^j = f_j(t)\cdot M-f_{j-1}(t)\cdot I
\end{equation}
for all $j \in \Z$.  Completely analogous to Proposition
\ref{eqFprop}, we find $F_{k,n} = (\lam - \lam^{-1}) W_{12} + W_{22}$,
where $W_{ij}$ denotes the $(i,j)$-entry of $W_k(\lam,r)^n$. Let
$w_{ij}$ be the $(i,j)$-entry of $W_k(\lam,r)$ and set $t =
\tr(W_k)$. Then from (\ref{powerMeq}) we have $W_{12} =
f_{n}(t)w_{12}$ and $W_{22} = f_{n}(t)w_{22} - f_{n-1}(t)$, which
implies $F_{k,n}= f_n(t) F_{k,1} - f_{n-1}(t)$. From (\ref{powerMeq})
we also find 
$$
W_k(\lam,r) = (f_m(r)AB^{-1}-f_{m-1}(r) I)(f_m(r)A^{-1}B-f_{m-1}(r) I)
$$ 
if $k = 2m$ is even and
$$
W_k(\lam,r) = (f_m(r)AB^{-1}-f_{m-1}(r) I)AB(f_m(r)A^{-1}B-f_{m-1}(r) I)
$$
if $k = 2m+1$ is odd,
with $A= A(\lambda)$ and $B = B(\lambda,r)$.
From this one easily checks that 
$F_{k,1} = (\lam-\lam^{-1})w_{12}+w_{22}$ is indeed as given. 
\end{proof}

Recall that for all integers $k,l$, the $\SL_2(\C)$- and
$\PSL_2(\C)$-character varieties of the fundamental group $\pi_1(k,l)$
of the complement of $J(k,l)$ in $\Sp^3$ are denoted by $X(k,l)$ and
$Y(k,l)$ respectively.

\begin{proposition}\label{Ckl}
Let $k,l$ be any integers with $l$ even.
The variety $Y(k,l)$ is isomorphic to 
the subvariety $C(k,l)$ of $\A^2(r,y)$ defined by 
$$
C(k,l) \colon \,\,
f_n(t)\big(\Phi_{-k}(r) \Phi_{k-1}(r) (y-r)-1\big)+f_{n-1}(t)=0,
$$
with $t=\Phi_{-k}(r)\Psi_{k}(r)(y-r)+2$ and $n=l/2$. The variety 
$X(k,l)$ is isomorphic to the double cover of $C(k,l)$ defined in 
$\A^2(r,x)$ by $y=x^2-2$.
\end{proposition}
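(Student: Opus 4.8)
The plan is to combine the parametrization of nonabelian representations from Proposition \ref{choosesmart}, the criterion of Proposition \ref{eqFpropJ} for such an assignment to respect the relator of the presentation (\ref{pres2}), and the trace formula of Lemma \ref{tracewk}.

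First I would set the stage. Since $J(k,l)$ is a two-bridge knot, the group $\pi_1(k,l)$ is a two-bridge knot group, and for even $l$ the isomorphism from \S\ref{famtwobridge} between the Schubert presentation (\ref{pres1}) and the presentation (\ref{pres2}), $\pi_1(k,2n)\isom\langle a,b\mid aw_k^n=w_k^nb\rangle$ with $n=l/2$, sends $a\mapsto a$ and $b\mapsto b$, hence preserves $t_a$ and $t_{ab^{-1}}$. So, exactly as in \S\ref{charvars}, $X(k,l)$ is the image in $\A^2(r,x)$ of the set of nonabelian representations of $\pi_1(k,2n)$ under $(t_{ab^{-1}},t_a)$, and the double cover $X(k,l)\to Y(k,l)$ is the map $(r,x)\mapsto(r,x^2-2)$ onto $\A^2(r,y)$. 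By Proposition \ref{choosesmart} a point $(r_0,x_0)\in\A^2(r,x)$ lies in $X(k,l)$ if and only if there is a $\lambda_0\in\C^*$ with $\lambda_0+\lambda_0^{-1}=x_0$ for which the assignment $a\mapsto A(\lambda_0)$, $b\mapsto B(\lambda_0,r_0)$ extends to a representation of $\pi_1(k,2n)$---and by that proposition any such representation is automatically nonabelian. Such an assignment extends precisely when $A(\lambda_0)W_k(\lambda_0,r_0)^n=W_k(\lambda_0,r_0)^nB(\lambda_0,r_0)$, i.e. when the four entries of $A(\lambda_0)W_k^n-W_k^nB(\lambda_0,r_0)$ vanish, i.e. by Proposition \ref{eqFpropJ} when $F_{k,n}(\lambda_0,r_0)=0$.

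It then remains to rewrite the condition $F_{k,n}(\lambda_0,r_0)=0$ in the coordinates $r$ and $y=x^2-2$. I would apply Lemma \ref{tracewk} with $A=A(\lambda_0)$ and $B=B(\lambda_0,r_0)$: here $\tr A=\lambda_0+\lambda_0^{-1}=\tr B$, and by the choice of the entry $2-r_0$ in $B(\lambda_0,r_0)$ we have $\tr(A^{-1}B)=\tr(B^{-1}A)=\tr(AB^{-1})=r_0$, so $r=r_0$, $y=\tr A^2=\lambda_0^2+\lambda_0^{-2}=x_0^2-2$, and $\tr W_k(\lambda_0,r_0)=\Phi_{-k}(r)\Psi_k(r)(y-r)+2=:t$. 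Substituting this together with $F_{k,1}=-\Phi_{-k}(r)\Phi_{k-1}(r)(y-r)+1$ into (\ref{eqFJ}) gives
$$
F_{k,n}(\lambda_0,r_0)=-\Big(f_n(t)\big(\Phi_{-k}(r)\Phi_{k-1}(r)(y-r)-1\big)+f_{n-1}(t)\Big),
$$
a polynomial in $(r,y)$ alone, hence independent of which of the two roots of $\lambda^2-x_0\lambda+1$ was chosen for $\lambda_0$ (and that polynomial always has a root in $\C^*$). Therefore $X(k,l)\subset\A^2(r,x)$ is exactly the zero locus of this polynomial with $y$ replaced by $x^2-2$, which is the double cover of $C(k,l)$ cut out by $y=x^2-2$; and $Y(k,l)$, the image of $X(k,l)$ under $(r,x)\mapsto(r,x^2-2)$, is exactly $C(k,l)$. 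Since $X(k,l)$ and $Y(k,l)$ are reduced, and $C(k,l)$ and the double cover are defined as reduced subvarieties of affine space, these set-theoretic identities are isomorphisms of varieties.

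The two trace manipulations---checking $\tr(A^{-1}B)=r_0$ and carrying out the substitution into (\ref{eqFJ})---are routine. The step needing the most care is lining up Proposition \ref{eqFpropJ}, whose conclusion is an equality of ideals in $\Q[r,\lambda,\lambda^{-1}]$, with the parametrization of Proposition \ref{choosesmart}, so that one recovers $X(k,l)$ on the nose with neither spurious components nor omissions; here the facts that every extending assignment of the chosen form is automatically nonabelian, that every $x_0\in\C$ arises from some $\lambda_0\in\C^*$, and that $F_{k,n}$ is a function of $y$ rather than of $\lambda_0$ itself are what make the identification clean.
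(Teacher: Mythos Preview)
Your proof is correct and follows essentially the same approach as the paper's own proof: invoke Lemma~\ref{tracewk} to identify $t$ with $\tr W_k(\lambda,r)$, thereby recognizing the defining equation of $C(k,l)$ as $F_{k,n}=0$ in the coordinates $(r,y)$, and then appeal to the general description of $X(\Gamma)$ and $Y(\Gamma)$ for two-bridge knot groups in \S\ref{charvars} (via Propositions~\ref{choosesmart} and~\ref{eqFpropJ}). The paper condenses all of this into the phrase ``completely analogous to \S\ref{charvars},'' whereas you spell out the details explicitly.
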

\begin{proof}
%
%
  Let $W_k(\lam,r)$ be as in Proposition \ref{eqFpropJ}. Then $t=\tr
  W_k(\lam,r)$ by Lemma \ref{tracewk}.  We conclude that $C(k,l)$ is
  the curve given by $F_{k,n}=0$ (in terms of $r$ and $y$) in
  $\A^2(r,y)$.  Completely analogous to \S \ref{charvars}, the
  varieties $X(k,l)$ and $Y(k,l)$ have models in $\A^2(r,x)$ and
  $\A^2(r,y)$ given by $F_{k,n}=0$ in terms of $r$ and $x$ and in
  terms of $r$ and $y$ respectively. The proposition follows.
\end{proof}

Note that if $kl=0$, then the variety $C(k,l)$ is empty. This reflects
the fact that in those cases $J(k,l)$ is the trivial knot, so
$\pi_1(k,l)$ is a free abelian group, which has no nonabelian
representations. The following lemma will be useful later.

\begin{lemma}\label{specpointsoldboth}
Suppose $k,l$ are integers with $l$ even.
If $P=(r_0,y_0) \in C(k,l)(\Qbar)$ is a point with 
$\Psi_k(r_0) =0$, then $k$ is even and $P=(2,2-\frac{4}{kl})$. 
\end{lemma}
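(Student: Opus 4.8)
The plan is to substitute $\Psi_k(r_0)=0$ into the defining equation of $C(k,l)$ and exploit the formula $t=\Phi_{-k}(r)\Psi_k(r)(y-r)+2$ from Proposition \ref{Ckl}. First I would observe that if $\Psi_k(r_0)=0$ then $t(r_0,y_0)=2$, so $f_n(t)=f_n(2)=n$ and $f_{n-1}(t)=f_{n-1}(2)=n-1$ by Lemma \ref{newbasicf}. Plugging these into the equation $f_n(t)(\Phi_{-k}(r_0)\Phi_{k-1}(r_0)(y_0-r_0)-1)+f_{n-1}(t)=0$ gives
\[
n\big(\Phi_{-k}(r_0)\Phi_{k-1}(r_0)(y_0-r_0)-1\big)+(n-1)=0,
\]
that is, $n\,\Phi_{-k}(r_0)\Phi_{k-1}(r_0)(y_0-r_0)=1$. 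In particular $n\neq 0$ (consistent with $J(k,l)$ being a nontrivial knot) and $\Phi_{-k}(r_0)\neq 0$, and $y_0-r_0 = 1/\big(n\,\Phi_{-k}(r_0)\Phi_{k-1}(r_0)\big)$.

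Next I would analyze the condition $\Psi_k(r_0)=0$ itself using the last part of Lemma \ref{phipsiprop}. If $k=2j-1$ is odd, then $\Psi_k=\Phi_k=g_j$, and one knows $g_j(2)=1\neq 0$ by Lemma \ref{newbasicf}; more importantly, the relation $g_jg_{j+1}=(u-2)f_j^2+1$ from Lemma \ref{stillbasic} shows that if $g_j(r_0)=0$ then $(r_0-2)f_j(r_0)^2=-1$. I would pair this with the identity $F_{k,1}=-\Phi_{-k}(r)\Phi_{k-1}(r)(y-r)+1$: in the odd case $\Phi_{k-1}=\Phi_{2j-2}=f_{j-1}$ and $\Phi_{-k}=(-1)^{k+1}\Phi_k = g_j$ up to sign by Lemma \ref{phipsiprop}, so $\Phi_{-k}(r_0)=\pm g_j(r_0)=0$, contradicting $\Phi_{-k}(r_0)\neq 0$ derived above. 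Hence $k$ must be even. (I should double-check the sign/index bookkeeping here, using $\Phi_k=(-1)^{k+1}\Phi_{-k}$ and $g_{-j}=g_{j+1}$ from Lemma \ref{newbasicf}, but the upshot is that $\Phi_{-k}$ and $\Psi_k$ share the factor $g_j$ when $k$ is odd.)

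So write $k=2m$. Then $\Psi_k=(u-2)f_m$ by Lemma \ref{phipsiprop}, and $\Psi_k(r_0)=0$ forces $r_0=2$ or $f_m(r_0)=0$. If $f_m(r_0)=0$ then, since $\Phi_{-k}=\Phi_{-2m}=f_{-m}=-f_m$, we would again get $\Phi_{-k}(r_0)=0$, a contradiction. Therefore $r_0=2$. Now I substitute $r_0=2$ into $n\,\Phi_{-k}(2)\Phi_{k-1}(2)(y_0-2)=1$. We have $\Phi_{-k}(2)=\Phi_{-2m}(2)=f_{-m}(2)=-m$ and $\Phi_{k-1}(2)=\Phi_{2m-1}(2)=g_m(2)=1$, so the equation becomes $n\cdot(-m)\cdot 1\cdot(y_0-2)=1$, giving $y_0-2=-1/(mn)$. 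Since $l=2n$ and $k=2m$ we have $kl=4mn$, so $y_0 = 2-\frac{1}{mn}=2-\frac{4}{kl}$, which is exactly the claimed point $P=(2,2-\frac{4}{kl})$. The main obstacle is the sign and index bookkeeping in the odd case — correctly identifying that $\Phi_{-k}$ carries the vanishing factor $g_j$ of $\Psi_k$, using $\Phi_k=(-1)^{k+1}\Phi_{-k}$ together with $\Phi_{2j-1}=g_j$ and $g_{-j}=g_{j+1}$ — but once that common factor is pinned down the contradiction with $\Phi_{-k}(r_0)\neq 0$ is immediate, and the even case is a direct evaluation.
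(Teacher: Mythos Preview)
Your proof is correct and follows essentially the same route as the paper: deduce $t_0=2$ from $\Psi_k(r_0)=0$, use $f_n(2)=n$ and $f_{n-1}(2)=n-1$ to reduce the defining equation to $n\,\Phi_{-k}(r_0)\Phi_{k-1}(r_0)(y_0-r_0)=1$, conclude $\Phi_{-k}(r_0)\neq 0$, rule out odd $k$ because then $\Psi_k=\Phi_k=\Phi_{-k}$, and finish the even case by evaluation at $r_0=2$. The detour through the identity $g_jg_{j+1}=(u-2)f_j^2+1$ is unnecessary, and your sign worries in the odd case dissolve once you note that $(-1)^{k+1}=1$ for $k$ odd, so $\Phi_{-k}=\Phi_k=\Psi_k$ exactly, not just up to sign.
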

\begin{proof}
  By assumption the variety $C(k,l)$ is not empty, so we conclude
  $kl\neq 0$.  Set $n=l/2$ and $t_0 =
  \Phi_{-k}(r_0)\Psi_{k}(r_0)(y_0-r_0)+2$. Then by Proposition
  \ref{Ckl} we have
\begin{equation}\label{CklatP}
  f_n(t_0)\big(\Phi_{-k}(r_0) \Phi_{k-1}(r_0) 
(y_0-r_0)-1\big)+f_{n-1}(t_0)=0.
\end{equation}
From $\Psi_k(r_0) =0$ we get $t_0=2$, and by Lemma \ref{fgat2} we have
$f_n(t_0)=n$ and $f_{n-1}(t_0) = n-1$.  Suppose we had
$\Phi_{-k}(r_0)=0$. Then the left-hand side of (\ref{CklatP}) equals
$-n+(n-1)=-1$. From this contradiction we conclude $\Phi_{-k}(r_0)\neq
0$. If $k$ were odd then we would have $0=\Psi_k(r_0) =
\Phi_{-k}(r_0)\neq 0$ by Lemma \ref{phipsiprop}, so we conclude that
$k$ is even and find $0 = \Psi_k(r_0) = (2-r_0)\Phi_{-k}(r_0)$ by
Lemma \ref{phipsiprop}.  This implies $r_0=2$. By Lemmas \ref{fgat2}
and \ref{phipsiprop} we then have $\Phi_{-k}(r_0)=-\frac{1}{2}k$ and
$\Phi_{k-1}(r_0)=1$, so the left-hand side of (\ref{CklatP}) equals
$n(-\frac{1}{2}k(y_0-2)-1)+n-1$.  Solving (\ref{CklatP}) for $y_0$
gives $y_0=2-\frac{2}{kn}= 2-\frac{4}{kl}$.
\end{proof}

The models of $X(k,l)$ and $Y(k,l)$ described in Proposition 
\ref{Ckl}, up to perhaps a linear transformation, are the 
standard models. Their usual projective closures in $\P^2$ and 
$\P^1 \times \P^1$ are highly singular. 
Note that the trace $\tr(W_k)$ is linear in $y$ for all 
nonzero integers $k$. We can exploit this to give 
a model of $Y(k,l)$ with a smooth completion in $\P^1 \times \P^1$.
This will be done in the next section.

\section{A new model for the character varieties}
\label{newmodelsection}

In this section we introduce a new model for $Y(k,l)$. It 
does not respect integrality, but is geometrically nicer than 
the standard model in the sense that it is projective and 
all its irreducible components are smooth. 
%
%
The coordinates $r$ and $y$ from the previous section are the trace
functions $t_{a^{-1}b}$ and $t_{a^2}$ respectively. For the new model
we will replace $y$ by the trace function $t=t_{W_k}$, which is linear
in $y$.

Let $D(k,l)$ be the variety in $\P^1_{\Q}(r)\times \P^1_{\Q}(t)$ that
is the projective closure of the affine variety given by 
\begin{equation}\label{defeqsnew}
\Phi_{k+1}(r)\Phi_{l-1}(t) = \Phi_{k-1}(r)\Phi_{l+1}(t).
\end{equation}
Note that for $l=2n$, expressed in terms of the polynomials $f_j$ and
$g_j$ this is 
$$
\begin{array}{ll}
g_{m+1}(r)g_n(t)= g_m(r)g_{n+1}(t) & \mbox{if } k=2m, l=2n\\
f_{m+1}(r)g_n(t)= f_m(r)g_{n+1}(t) & \mbox{if } k=2m+1, l=2n. 
\end{array}
$$
Subtracting $\Phi_{k-1}(r)\Phi_{l-1}(t)$ from both sides of 
(\ref{defeqsnew}), and using Definition \ref{phipsi},
we find that $D(k,l)$ is also given by the alternate equations
\begin{equation}\label{defeqsalt}
\Psi_k(r)\Phi_{l-1}(t) = \Phi_{k-1}(r)\Psi_l(t).
%
\end{equation}

\begin{remark}\label{bidegrees}
  Let $k,n$ be any integers.  For $k=n=0$, the variety $D(k,2n)$ is
  the full $\P^1 \times \P^1$, while for $k = \pm 1$ and $n \in
  \{0,k\}$, it is in fact empty.  Suppose we are not in any of those
  cases. Then $D(k,2n)$ has dimension $1$, and from Lemma
  \ref{phipsiprop} one quickly finds the bidegree of $D(k,2n)$.  It
  equals $(\lfloor |k|/2 \rfloor, |n|)$ when $k\neq \pm 1$.  For $k =
  \pm 1$ the bidegree equals $(0,kn-1)$ if $kn>0$ and it equals
  $(0,-kn)$ if $kn < 0$.
\end{remark}

In some sense it seems natural to include the line given by $t =
\infty$ in $D(k,2n)$ when $k=\pm 1$ and $kn>0$; then $D(k,2n)$ would
be a curve of bidegree $(\lfloor |k|/2 \rfloor, |n|)$, as long as this
differs from $(0,0)$. Doing this is also natural in the sense that it
would follow from a slightly different definition for $D(k,2n)$ that
gives an explicit equation on an affine chart that includes the line
$t = \infty$.  We have chosen not to do this in order to keep
$D(k,2n)$ birationally equivalent with the standard model $C(k,2n)$
for $Y(k,2n)$. Before we prove this, we state a few lemmas.

\begin{lemma}\label{unitideal}
For every $j \in \Z$ 
the ideals $(g_j,g_{j-1})$, $(f_j,f_{j-1})$, 
$(\Phi_{j+1},\Phi_{j-1})$, and $(\Psi_j,\Phi_{j-1})$ 
of $\Z[u]$ all equal the unit ideal. 
\end{lemma}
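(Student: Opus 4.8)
The plan is to reduce all four assertions to the two statements $(f_j,f_{j-1})=\Z[u]$ and $(g_j,g_{j-1})=\Z[u]$ for every $j\in\Z$, and to prove these by propagating the three-term recurrence. First I would note that the defining relation $f_{j+1}-uf_j+f_{j-1}=0$ from Definition \ref{newfs} can be run in both directions, so that $f_{j+1}=uf_j-f_{j-1}\in(f_j,f_{j-1})$ and $f_{j-1}=uf_j-f_{j+1}\in(f_{j+1},f_j)$. Hence $(f_{j+1},f_j)=(f_j,f_{j-1})$ for every $j$, so all these ideals coincide; taking $j=1$ gives $(f_1,f_0)=(1,0)=\Z[u]$. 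Since $g_j$ satisfies the identical recurrence $g_{j+1}-ug_j+g_{j-1}=0$ (subtract the recurrence for $f$ at index $j-1$ from the one at index $j$), the same argument yields $(g_j,g_{j-1})=(g_1,g_0)=(1,1)=\Z[u]$. (Alternatively, one could use the quadratic identities $f_j^2-uf_jf_{j-1}+f_{j-1}^2=1$ and $g_j^2-ug_jg_{j-1}+g_{j-1}^2=2-u$ from Lemma \ref{stillbasic}, but the recurrence argument is cleaner.)

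Next I would dispose of $(\Phi_{j+1},\Phi_{j-1})$ by splitting on the parity of $j$, using $\Phi_{2i}=f_i$ and $\Phi_{2i-1}=g_i$ from Definition \ref{phipsi}. If $j=2i$ is even, then $(\Phi_{j+1},\Phi_{j-1})=(\Phi_{2i+1},\Phi_{2i-1})=(g_{i+1},g_i)$; if $j=2i-1$ is odd, then $(\Phi_{j+1},\Phi_{j-1})=(\Phi_{2i},\Phi_{2i-2})=(f_i,f_{i-1})$. In either case the ideal is the unit ideal by the previous paragraph.

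Finally I would handle $(\Psi_j,\Phi_{j-1})$, again splitting on the parity of $j$ and now invoking the formulas $\Psi_{2i}=(u-2)f_i$ and $\Psi_{2i-1}=g_i$ from Lemma \ref{phipsiprop}. When $j=2i-1$ is odd one gets $(\Psi_j,\Phi_{j-1})=(g_i,f_{i-1})=(f_i-f_{i-1},f_{i-1})=(f_i,f_{i-1})=\Z[u]$. The only case requiring more than bookkeeping is $j=2i$: here $(\Psi_j,\Phi_{j-1})=((u-2)f_i,g_i)$, and the identity $g_ig_{i+1}=(u-2)f_i^2+1$ from Lemma \ref{stillbasic} exhibits $1=g_{i+1}\cdot g_i-f_i\cdot(u-2)f_i$ as a $\Z[u]$-linear combination of $g_i$ and $(u-2)f_i$, so this ideal equals $\Z[u]$ as well. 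The only mild obstacle is spotting that this last identity from Lemma \ref{stillbasic} is exactly what is needed; everything else is a routine propagation of the recurrence together with a parity split.
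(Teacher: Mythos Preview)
Your proof is correct and follows essentially the same structure as the paper's: a parity split to reduce $(\Phi_{j+1},\Phi_{j-1})$ and $(\Psi_j,\Phi_{j-1})$ to the two base families $(f_j,f_{j-1})$ and $(g_j,g_{j-1})$, with the crucial even case $((u-2)f_i,g_i)$ handled via the identity $g_ig_{i+1}=(u-2)f_i^2+1$ from Lemma~\ref{stillbasic} in both arguments. The only difference is cosmetic: for $(f_j,f_{j-1})$ and $(g_j,g_{j-1})$ you propagate the three-term recurrence down to the base cases $(f_1,f_0)=(1,0)$ and $(g_1,g_0)=(1,1)$, whereas the paper instead writes down explicit B\'ezout-type identities (namely $f_{j-1}f_{j+1}=f_j^2-1$ from Lemma~\ref{stillbasic} and $1=f_{j-1}g_{j-1}-f_{j-2}g_j$) that exhibit $1$ directly; your recurrence argument is arguably cleaner and avoids invoking the quadratic identities for these two cases.
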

\begin{proof}
From the identity $1=f_{j-1}g_{j-1}-f_{j-2}g_j$ we find that 
the first ideal is the unit ideal. The identities in Lemma 
\ref{stillbasic} show that the second ideal and the ideal 
$((u-2)f_i,g_i)=(\Psi_{2i},\Phi_{2i-1})$ are unit ideals
for any integer $i$. 
From $g_{i+1}=f_{i+1}-f_{i}$ it follows that 
$(\Psi_{2i+1},\Phi_{2i}) = (g_{i+1},f_{i}) = 
(f_{i+1},f_i)=(1)$. This proves that the last ideal is the 
unit ideal both when $j$ is odd and when $j$ is even. 
The third ideal is of the form of the first or second ideal, 
depending on the parity of $j$, so it is also the unit ideal.
\end{proof}

\begin{lemma}\label{specpointsboth}
Let $k,l$ be any integers and $P=(r_0,t_0)$ a $\Qbar$-point on 
the standard affine part of $\P^1 \times \P^1$.
Then the following statements are equivalent. 
\begin{enumerate}
\item We have $\Psi_k(r_0) = \Psi_l(t_0) = 0$.
\item The point $P$ lies on $D(k,l)$ and $\Psi_k(r_0) = 0$. 
\item The point $P$ lies on $D(k,l)$ and $\Psi_l(t_0) = 0$.
\end{enumerate}
\end{lemma}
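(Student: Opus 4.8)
The plan is to argue entirely from the alternate defining equation (\ref{defeqsalt}) of $D(k,l)$, namely $\Psi_k(r)\Phi_{l-1}(t) = \Phi_{k-1}(r)\Psi_l(t)$, which is valid on the standard affine part, together with the coprimality assertions of Lemma \ref{unitideal}. The whole statement is symmetric under interchanging the roles of $(k,r)$ and $(l,t)$, and I would exploit this to halve the work.

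First I would dispatch $(1)\Rightarrow(2)$ and $(1)\Rightarrow(3)$ simultaneously: if $\Psi_k(r_0) = \Psi_l(t_0) = 0$, then both sides of (\ref{defeqsalt}) vanish at $P$, so $P$ lies on $D(k,l)$; combined with the hypothesis this yields both $(2)$ and $(3)$.

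Next, for $(2)\Rightarrow(1)$, I would evaluate (\ref{defeqsalt}) at $P$ and use $\Psi_k(r_0) = 0$ to get $0 = \Phi_{k-1}(r_0)\Psi_l(t_0)$. By Lemma \ref{unitideal} the ideal $(\Psi_k,\Phi_{k-1})$ of $\Z[u]$ is the unit ideal, so $\Psi_k$ and $\Phi_{k-1}$ have no common root; since $\Psi_k(r_0) = 0$ this forces $\Phi_{k-1}(r_0)\neq 0$, hence $\Psi_l(t_0) = 0$, which is $(1)$. The implication $(3)\Rightarrow(1)$ is the mirror image: from $\Psi_l(t_0) = 0$ and (\ref{defeqsalt}) one gets $\Psi_k(r_0)\Phi_{l-1}(t_0) = 0$, and Lemma \ref{unitideal} applied to $(\Psi_l,\Phi_{l-1})$ gives $\Phi_{l-1}(t_0)\neq 0$, so $\Psi_k(r_0) = 0$.

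There is essentially no obstacle here; the only points requiring a little care are to invoke the defining equation in the form (\ref{defeqsalt}) rather than (\ref{defeqsnew}), since it is (\ref{defeqsalt}) that displays $\Psi_k(r)$ and $\Psi_l(t)$ as explicit factors, and to match the coprimalities used ($\Psi_k$ with $\Phi_{k-1}$, and $\Psi_l$ with $\Phi_{l-1}$) precisely with the pairs of generators listed in Lemma \ref{unitideal}.
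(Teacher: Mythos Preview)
Your proof is correct and essentially identical to the paper's: both use the alternate equation (\ref{defeqsalt}) together with Lemma \ref{unitideal} to force $\Phi_{k-1}(r_0)\neq 0$ (resp.\ $\Phi_{l-1}(t_0)\neq 0$), and then invoke symmetry. The only cosmetic difference is that the paper handles $(1)\Leftrightarrow(2)$ as a single biconditional by assuming the common hypothesis $\Psi_k(r_0)=0$ up front, whereas you split it into the two implications.
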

\begin{proof}
  To show equivalence of (1) and (2), assume we have $\Psi_k(r_0) =
  0$.  From Lemma \ref{unitideal} we conclude $\Phi_{k-1}(r_0) \neq
  0$, so (\ref{defeqsalt}) shows that $P$ lies on $D(k,l)$ if and only
  if $\Psi_l(t_0) = 0$.  Equivalence of (1) and (3) follows by
  symmetry.
\end{proof}

\begin{proposition}\label{charvar}
Suppose $k,l$ are integers with $l$ even and $kl\neq 0$.
The map $\A^2(r,y) \rightarrow \P^1(r)\times \P^1(t)$ that sends 
$(r,y)$ to $(r,\tr(W_k))$, with $\tr(W_k)$ as in Lemma \ref{tracewk}, 
induces a birational morphism from $C(k,l)$ to $D(k,l)$.
\end{proposition}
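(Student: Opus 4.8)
The plan is to exhibit an explicit map in both directions and check they are mutually inverse on dense open subsets. First I would write $t = \tr(W_k) = \Phi_{-k}(r)\Psi_k(r)(y-r) + 2$ from Lemma~\ref{tracewk}, and observe that since $kl \neq 0$ we have $k \neq 0$, so $\Phi_{-k}$ is not the zero polynomial; using $\Phi_{-k} = (-1)^{k+1}\Phi_k$ and Lemma~\ref{phipsiprop}, the coefficient of $y$ in $t$ is a nonzero polynomial in $r$. Hence on the open locus where $\Phi_{-k}(r)\Psi_k(r) \neq 0$ the assignment $(r,y) \mapsto (r,t)$ is invertible, with inverse $y = r + (t-2)/\big(\Phi_{-k}(r)\Psi_k(r)\big)$. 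So the map is birational as a map of planes, and it remains to see that it carries the curve $C(k,l)$ into $D(k,l)$ and that the image is dense.

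The key computation is to rewrite the defining equation of $C(k,l)$ in the new coordinates. Starting from
$$
f_n(t)\big(\Phi_{-k}(r)\Phi_{k-1}(r)(y-r) - 1\big) + f_{n-1}(t) = 0,
$$
I would substitute $\Phi_{-k}(r)(y-r) = (t-2)/\Psi_k(r)$ (valid where $\Psi_k(r) \neq 0$) to get $f_n(t)\big(\Phi_{k-1}(r)(t-2)/\Psi_k(r) - 1\big) + f_{n-1}(t) = 0$, i.e.
$$
\Phi_{k-1}(r)(t-2)f_n(t) = \Psi_k(r)\big(f_n(t) - f_{n-1}(t)\big) = \Psi_k(r)\,g_n(t).
$$
Now using Lemma~\ref{phipsiprop} (with $l = 2n$, so $\Psi_l = (u-2)f_n$, i.e.\ $\Psi_l(t) = (t-2)f_n(t)$) and $\Phi_{l-1} = g_n$, the right-hand side identities give exactly $\Phi_{k-1}(r)\Psi_l(t) = \Psi_k(r)\Phi_{l-1}(t)$, which is the alternate equation~(\ref{defeqsalt}) for $D(k,l)$. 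So the image of $C(k,l)$ lands on $D(k,l)$, at least away from the loci $\Psi_k(r) = 0$ and $\Phi_{-k}(r) = 0$; combined with the birationality of the ambient map this gives a birational morphism $C(k,l) \dashrightarrow D(k,l)$, and I would then note it is actually a morphism since $t$ is a polynomial (hence regular) in $r, y$ on all of $C(k,l)$, and $\P^1 \times \P^1$ is where the target lives, so there is no indeterminacy.

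The points requiring care — and the main obstacle — are the finitely many bad points where the above substitutions are invalid: where $\Phi_{-k}(r) = 0$, where $\Psi_k(r) = 0$, and where $t = \infty$ (the part of $D(k,l)$ at infinity, which exists when the bidegree has positive second coordinate). For $\Psi_k(r_0) = 0$ I would invoke Lemma~\ref{specpointsoldboth}: such points of $C(k,l)$ are isolated (only $P = (2, 2 - 4/(kl))$, and only when $k$ is even), and Lemma~\ref{specpointsboth} identifies the matching behaviour on $D(k,l)$, so these contribute only finitely many points and do not affect birationality. The locus $\Phi_{-k}(r) = 0$ is likewise finite. Since birationality only requires an isomorphism between nonempty opens, I would simply remove all these finitely many fibers from both sides: on the complement, $(r,y) \mapsto (r, \Phi_{-k}(r)\Psi_k(r)(y-r)+2)$ and $(r,t) \mapsto (r, r + (t-2)/(\Phi_{-k}(r)\Psi_k(r)))$ are mutually inverse morphisms, and each identifies an open dense subset of $C(k,l)$ with an open dense subset of $D(k,l)$. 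That, together with the fact that the forward map extends to a morphism on all of $C(k,l)$ by regularity of $t$, completes the proof.
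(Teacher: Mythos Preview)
Your proposal is correct and follows essentially the same approach as the paper: write the map explicitly, exhibit the rational inverse on the open set where $\Psi_k(r)\neq 0$, transform the defining equation of $C(k,l)$ into the alternate equation~(\ref{defeqsalt}) for $D(k,l)$, and use Lemmas~\ref{specpointsoldboth} and~\ref{specpointsboth} to show the bad loci on both sides are finite so that no whole component is lost. The paper streamlines one point you treat separately, observing that $\Phi_{-k}$ divides $\Psi_k$ (by Lemma~\ref{phipsiprop}), so the single condition $\Psi_k(r)\neq 0$ already cuts out the good open set.
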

\begin{proof}
  Let $\sigma$ denote the map described. It is clearly well defined
  everywhere and therefore induces a morphism from $C(k,l)$ to its
  image. By Lemma \ref{tracewk}, the map $\sigma$ is given by $(r,y)
  \mapsto (r,\Phi_{-k}(r)\Psi_k(r)(y-r)+2)$, which has a birational
  inverse, given by $(r,t) \mapsto
  (r,r+(t-2)\Phi_{-k}(r)^{-1}\Psi_k(r)^{-1})$.  Note that $\Phi_{-k}$
  divides $\Psi_k$, so $\sigma$ induces an isomorphism from the open
  subset $U$ of $\A^2(r,y)$ given by $\Psi_k(r) \neq 0$ to the open
  subset $V$ of the standard affine part of $\P^1(r) \times \P^1(t)$
  given by $\Psi_k(r) \neq 0$.  These open sets are dense because
  $\Psi_k \neq 0$ for $k\neq 0$.  Set $n=l/2$.  By Proposition
  \ref{eqFpropJ} the image $\sigma(C(k,l))$ is on $V$ given by
$$
f_n(t)\left( \frac{(t-2)\Phi_{k-1}(r)}{\Psi_k(r)}-1\right) +
f_{n-1}(t)=0,
$$
which is equivalent to the equation for $D(k,l)$ in (\ref{defeqsalt})
by Lemma \ref{phipsiprop}. Therefore $U\cap C(k,l)$ is isomorphic with
$V \cap D(k,l)$. Since $l\neq 0$, there are only finitely many $t_0$
with $\Psi_l(t_0)=0$. Therefore, by Lemmas \ref{specpointsoldboth} and
\ref{specpointsboth}, the curves $C(k,l)$ and $D(k,l)$ contain no full
components outside $U$ and $V$ respectively, so they are isomorphic
outside a finite number of points, and therefore birationally
equivalent.
\end{proof}


\begin{remark}\label{lines}
  We have already seen that $Y(k,l)$ is empty if $kl=0$.  Suppose
  $|k|=1$ and $l\not \in \{0,2k\}$ or suppose $k=l\in \{\pm 2\}$.
  Then $D(k,l)$ consists of a finite number of lines (cf. Remark
  \ref{bidegrees}).  By Proposition \ref{charvar} this implies that
  $C(k,l)$ and $Y(k,l)$ consist of a number of curves of genus
  $0$. The corresponding knots $J(k,l)$ are not hyperbolic in all
  these cases and we will not give them much further attention.
\end{remark}

Note that from Lemma \ref{phipsiprop} it follows that $D(k,l)$ and
$D(-k,-l)$ are the same, reflecting the fact that $J(-k,-l)$ is the
mirror image of $J(k,l)$.

The symmetry of the equation for $D(k,l)$ in (\ref{defeqsnew}) shows
that the automorphism of $\P^1 \times \P^1$ that sends $(r,t)$ to
$(t,r)$, induces an isomorphism from $D(k,l)$ to $D(k,l)$. Since $r$
and $t$ are the traces of the elements $\beta$ and $\alpha$ in the 
fundamental
group of $\Sp^3\setminus J(k,l)$
respectively, as described in Figure \ref{newfancypicture}, it follows
from the discussion at the end of \S \ref{famtwobridge} that this
isomorphism is induced by turning upside down the $4$-plat
representation as in Figure \ref{fourplatgen}, which also switches
$\alpha$ and $\beta$.  In particular this
applies when $k=l\neq 0$, in which case $D(l,l)$ contains an
irreducible component given by $r=t$. This means that $Y(l,l)$ is
reducible for $|l|>2$.  The reducibility of $Y(l,l)$ for $|l|>2$ was
already known from \cite{Ohtsuki}, \cite{Ri2}, as for the associated
two-bridge knot $K(p,q)$ we have $q^2 \equiv 1 \pmod p$. We can now
identify the component given by $r=t$.

\begin{proposition}\label{Yzero}
  Suppose $l$ is an even integer and $|l|>2$.  Then under the 
  birational equivalence between
  $Y(l,l)$ and $D(l,l)$, the irreducible component $Y_0(l,l)$
  corresponds to the line given by $r=t$.
\end{proposition}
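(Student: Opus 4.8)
The plan is to exploit the extra symmetry that $J(l,l)=K(p,q)$ enjoys when $k=l$, where $q=q'$ and hence $q^2\equiv 1\pmod p$. Recall from the discussion preceding the proposition that the line $L\colon r=t$ is an irreducible component of $D(l,l)$, and that the involution $\tau$ of $D(l,l)$ sending $(r,t)\mapsto(t,r)$ is induced by the orientation-preserving self-symmetry of $J(l,l)$ obtained by rotating its $4$-plat presentation about a horizontal axis; under the birational equivalences $D(l,l)\sim C(l,l)\cong Y(l,l)$ of Proposition \ref{charvar} this $\tau$ corresponds to the automorphism of $Y(l,l)$ induced by that symmetry, and $r$, $t$ are the traces of the elements $\beta=a^{-1}b$ and $\alpha=w_l$, which the symmetry interchanges. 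Note that $J(l,l)$ is hyperbolic for $|l|>2$.

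First I would invoke Ohtsuki's theorem: among the irreducible components of $Y(l,l)$, it is precisely the canonical component $Y_0(l,l)$ on which this involution acts trivially, every other component being moved nontrivially. Then I would transport this statement to $D(l,l)$. The involution $\tau$ restricts to the identity on $L$; on the other hand, any irreducible component $C'$ of $D(l,l)$ different from $L$ is not contained in $L$ (two distinct components meet in only finitely many points), so $C'$ contains a point $(r_0,t_0)$ with $r_0\neq t_0$, and $\tau$ sends it to the distinct point $(t_0,r_0)$ of $C'$; hence $\tau$ is not the identity on $C'$. Therefore $L$ is the unique component of $D(l,l)$ on which $\tau$ acts trivially, and it must correspond to $Y_0(l,l)$.

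As a self-contained cross-check that does not lean on Ohtsuki's classification, I would instead locate the character $\chi_0$ of the discrete faithful representation directly. By Mostow--Prasad rigidity the $4$-plat symmetry is realized by an isometry of $M(l,l)$, so the induced automorphism $h$ of $\pi_1(l,l)$, which interchanges $\alpha$ and $\beta$, satisfies $\overline{\rho}_0\circ h=g\,\overline{\rho}_0\,g^{-1}$ for some $g\in\PSL_2(\C)$; since $\alpha$ and $\beta$ are even words in the meridians, the sign ambiguity in lifting to $\SL_2(\C)$ is trivial on them, whence $\chi_0(\beta)=\chi_0(h(\alpha))=\chi_0(\alpha)$, i.e. $r=t$ at $\chi_0$, so $\chi_0$ maps into $L$. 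By Thurston's theorem $\chi_0$ lies on a unique component of $X(l,l)$, namely $X_0(l,l)$, whose image is $Y_0(l,l)$; if $Y_0(l,l)$ were not the component $\mathcal L$ of $Y(l,l)$ corresponding to $L$, then the component of $X(l,l)$ lying over $\mathcal L$ and passing through $\chi_0$ would have to be $X_0(l,l)$, forcing $Y_0(l,l)\subseteq\mathcal L$ and hence $Y_0(l,l)=\mathcal L$, a contradiction.

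The hard part is conceptual rather than computational: pinning down the precise sense in which Ohtsuki's result ``fixes'' $Y_0(l,l)$. Because equation (\ref{defeqsnew}) is antisymmetric under $r\leftrightarrow t$, \emph{both} components of $D(l,l)$ are invariant as sets under $\tau$, so it is the triviality of the \emph{action} of $\tau$ on $L$ (in contrast with its nontriviality on the other component) that singles out the canonical component. One must also check that $\tau$ on $D(l,l)$, the $4$-plat involution on $Y(l,l)$, and the $\pi_1$-automorphism interchanging $\alpha$ and $\beta$ genuinely correspond to one another; this identification, however, is essentially the content of the paragraphs of \S\ref{famtwobridge} and \S\ref{newmodelsection} that precede the proposition.
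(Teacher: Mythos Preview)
Your primary argument (the first two paragraphs) is exactly the paper's proof: invoke Ohtsuki's result that the $4$-plat involution acts \emph{trivially} on $Y_0(l,l)$, and then observe that the line $r=t$ is the unique component of $D(l,l)$ on which the swap $(r,t)\mapsto(t,r)$ is the identity. Your final paragraph correctly isolates the subtle point the paper leaves implicit --- that both components are $\tau$-stable as sets, so it is pointwise triviality, not mere invariance, that distinguishes $L$.

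Your self-contained cross-check via Mostow--Prasad is a pleasant alternative, but the last sentence does not quite close it. You show that $\chi_0$ maps into $L\subset D(l,l)$, but the birational morphism $C(l,l)\to D(l,l)$ is not an isomorphism at the singular points $L\cap D_1(l,l)$; a priori the image of $\chi_0$ could land at such an intersection point while $\chi_0$ itself sits on the component of $C(l,l)$ corresponding to $D_1(l,l)$, so ``$r=t$ at $\chi_0$'' does not immediately force $\chi_0\in\mathcal L$. Your contradiction argument presupposes exactly this membership. To repair it without Ohtsuki you would need to check that the discrete faithful character does not map to one of those finitely many singular points, or argue via smoothness of the scheme-theoretic character variety at $\chi_0$.
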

\begin{proof}
  The automorphism of $D(l,l)$ that sends $(r,t)$ to $(t,r)$ is
  induced by turning upside down the $4$-plat presentation in Figure
  \ref{fourplatgen}. By \cite[proof of Prop. 5.5]{Ohtsuki},  
  this involution acts trivially on the component
  $Y_0(l,l)$ of $Y(l,l)$. This implies that $Y_0(l,l)$ corresponds to
  the line given by $r=t$.
\end{proof}

\begin{proof}[Proof of Theorem \ref{mainthree}]
Let $\rho \colon \pi_1 \big( \Sp^3\setminus J(k,l)\big) \to 
\SL_2(\C)$ denote a lift of the discrete faithful representation 
(cf. end of \S \ref{sltwo}). 
By definition the trace field $F\big(J(k,l)\big)$ of $J(k,l)$ is 
generated by the 
traces of the elements in the image of $\rho$, so it equals 
the field of definition of the point $\chi$ on $X(k,l)$ associated to 
$\rho$. 
The images of meridians under $\rho$ are parabolic 
(this follows from 
\cite[Ch. 5]{T}, cf. \cite[\S 2]{Ri2} and \cite[\S 1]{rileytwo}),
so their traces equal $\pm 2$. Therefore, in terms of the coordinates 
$r,x$ as in Proposition \ref{Ckl}, the point $\chi$ 
satisfies $x=\pm 2$ and maps to the point $(r_0,2)$ on 
$C(k,l)\subset \A^2(r,y)$ for some $r_0\in \C$. The trace field then 
equals $\Q(r_0)$. Substituting $y=2$ in the 
equation for $C(k,l)$ gives a polynomial with root $r_0$ of 
degree $-kl/2$ if $kl<0$ 
and degree $kl/2 - 1$ if $kl>0$. This proves the first two bounds. 
If $k=l$, then the canonical component of $Y(l,l)$ corresponds 
by Propositions \ref{charvar} and \ref{Yzero} to the component 
of $C(k,l)$ given by $r = \Phi_{-k}(r)\Psi_{k}(r)(y-r)+2$. 
Substituting $y=2$ and taking out a factor $r-2$ gives an equation 
of degree $|l|-1$, which proves the final upper bound.
The first two bounds also follow immediately from \cite[\S 3]{rileytwo}. 
\end{proof}

Based on Proposition \ref{Yzero}, we give the following definition.

\begin{definition}\label{zeros}
  For each nonzero even integer $l$, let $D_0(l,l)$ denote the
  component of $D(l,l)$ given by $r=t$ and let $D_1(l,l)$ denote the
  projective closure of the scheme-theoretic complement of $D_0(l,l)$
  in $D(l,l)$; if $|l|>2$, then we denote the scheme-theoretic
  complement of $Y_0(l,l)$ in $Y(l,l)$ by $Y_1(l,l)$ and the
  scheme-theoretic complement of $X_0(l,l)$ in $X(l,l)$ by $X_1(l,l)$.
\end{definition}

Note that $D_1(2n,2n)$ is given by
$(g_{n+1}(r)g_n(t)-g_n(r)g_{n+1}(t))/(t-r)=0$ for any nonzero integer
$n$.  For $|n|=1$ (the trefoils, which are nonhyperbolic), we see that
$D_1(2n,2n)$ is empty; for $|n|>1$ it is of bidegree $(|n|-1,|n|-1)$.

\section{Smoothness and Irreducibility of the character varieties}
\label{smoothirred}

In this section we will prove the following theorem, covering all
hyperbolic knots of the form $J(k,l)$.

\begin{theorem}\label{thmDsmooth}
Let $l$ be an even integer with $|l|\geq 2$.
If $k$ is an integer with $k\neq l$ and $|k|\geq 2$,
then $D(k,l)$ is smooth over $\Q$.
If $|l|>2$, then $D_1(l,l)$ is smooth over $\Q$.
\end{theorem}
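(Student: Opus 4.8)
The plan is to prove smoothness of $D(k,l)$ (and of $D_1(l,l)$) by working chart-by-chart on $\P^1(r)\times\P^1(t)$ and checking that no point is a common zero of the defining equation and both its partial derivatives. On the standard affine chart, where we may use the equation~(\ref{defeqsnew}), write $G = \Phi_{k+1}(r)\Phi_{l-1}(t) - \Phi_{k-1}(r)\Phi_{l+1}(t)$, and also record the alternate form~(\ref{defeqsalt}), $G = \Psi_k(r)\Phi_{l-1}(t) - \Phi_{k-1}(r)\Psi_l(t)$. A singular point $(r_0,t_0)$ must satisfy $G=\partial G/\partial r=\partial G/\partial t=0$. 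I would first dispose of the "boundary" behaviour: at a point where $\Phi_{k-1}(r_0)=0$, Lemma~\ref{unitideal} gives $\Phi_{k+1}(r_0)\neq 0$ and $\Psi_k(r_0)\neq 0$ (for the relevant parities), so~(\ref{defeqsalt}) forces $\Phi_{l-1}(t_0)=0$, whence $\Psi_l(t_0)\neq 0$, and then examining $\partial G/\partial r$ and $\partial G/\partial t$ at such a point should produce a contradiction because the surviving terms are nonzero. The symmetric argument handles $\Psi_k(r_0)=0$, $\Phi_{l-1}(t_0)=0$, etc. This uses the coprimality packaged in Lemma~\ref{unitideal} repeatedly, together with the degree and symmetry facts of Lemma~\ref{phipsiprop}.

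The core case is a singular point with $\Phi_{k-1}(r_0), \Phi_{l-1}(t_0)$ both nonzero. Here I would exploit the product structure. From Definition~\ref{newfs} the $f_j$ (hence the $\Phi_k$ and $\Psi_l$) satisfy a linear recursion, and Lemma~\ref{stillbasic} gives closed forms $f_j = (s^j - s^{-j})/(s-s^{-1})$, $g_j = (s^j + s^{1-j})/(s+1)$ in the ring $\Z[u][s]/(s^2-us+1)$, and analogous expressions in a variable $\tau$ with $\tau^2 - v\tau + 1$ for the $t$-side (so $v = t + t^{-1}$, or the appropriate $y$-type substitution). Rewriting $G$ and its partials in terms of $s,\tau$, the vanishing of $G$ together with $\partial G/\partial r = \partial G/\partial t = 0$ should translate into a small system forcing $s$ or $\tau$ to be a root of unity of bounded order; the constraint $k\neq l$ (resp. the removal of the $r=t$ line, which is precisely $s=\tau$, when passing to $D_1(l,l)$) is exactly what rules out the remaining coincidences, and the hypotheses $|k|\geq 2$, $|l|\geq 2$ (resp. $|l|>2$) rule out the degenerate small cases noted in Remarks~\ref{bidegrees} and~\ref{lines}. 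I expect that after clearing denominators one gets equations of the shape $s^{a} = s^{-a}$ and $\tau^{b} = \tau^{-b}$ together with a "cross" relation tying the two, and that a short case analysis on signs of $k,l$ finishes it.

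Finally one must check the points at infinity on $\P^1\times\P^1$, i.e.\ the charts $r=\infty$ or $t=\infty$. There one clears denominators: the bidegree computation in Remark~\ref{bidegrees} tells us exactly the degrees in $r$ and in $t$, so the homogenized equation is explicit, and smoothness at infinity reduces to checking the leading forms (top-degree parts) of $\Phi_{k\pm 1}$ and $\Phi_{l\pm 1}$, whose leading coefficients are units by Lemma~\ref{phipsiprop}; again Lemma~\ref{unitideal} applied to the leading behaviour should show the relevant derivative does not vanish. For $D_1(l,l)$ the same programme applies to the quotient polynomial $(\Psi_l(r)\Phi_{l-1}(t) - \Phi_{l-1}(r)\Psi_l(t))/(t-r)$, and one additionally checks that $D_0(l,l)$ and $D_1(l,l)$ do not meet — equivalently that the quotient polynomial is nonzero along $r=t$, which is a one-variable computation using the Wronskian-type identity $1 = f_{j-1}g_{j-1} - f_{j-2}g_j$ from the proof of Lemma~\ref{unitideal}.

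\textbf{Main obstacle.} The genuinely delicate part is the core case: showing that away from the "boundary" loci the only way all three of $G,\partial_r G,\partial_t G$ can vanish simultaneously is prevented by $k\neq l$ (or by excising $r=t$). This is where the recursion's closed form and a careful sign/parity analysis in $k$ and $l$ are indispensable, and where a naive direct computation would be unpleasant; I would push everything into the $s$–$\tau$ coordinates as early as possible to keep it manageable.
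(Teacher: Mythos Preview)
Your outline correctly identifies the reduction (Lemma~\ref{firststep} in the paper): a singular affine point $(r_0,t_0)$ must have $\Phi_{k-1}(r_0),\Phi_{l-1}(t_0)\neq 0$ and must be a simultaneous critical point, so the problem becomes showing that the critical values of $h_k=\Phi_{k+1}/\Phi_{k-1}$ and $h_l$ never coincide when $k\neq l$. But your expectation for how the core case resolves is wrong. You predict that in the $s,\tau$ coordinates the vanishing of $G,\partial_rG,\partial_tG$ will force relations ``of the shape $s^a=s^{-a}$ and $\tau^b=\tau^{-b}$,'' i.e.\ roots of unity. That is not what happens. For $k=2m$ even, the critical-point condition is $G_m(r_0)=0$, and by Lemma~\ref{irredstats}(3) this reads $\sigma^{2m}-\sigma^{-2m}=-2m(\sigma-\sigma^{-1})$, not $\sigma^{2m}=\sigma^{-2m}$. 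The critical points are therefore not roots of unity, the critical values $h_k(r_0)$ are genuinely complicated algebraic numbers, and there is no short combinatorial argument separating them.

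The paper's actual proof of this step is substantially harder than you anticipate and splits into three very different arguments: for $k$ even with $kl<0$ it uses complex absolute values (Lemma~\ref{complexabs}, Proposition~\ref{oppsigns}) to show $|h_k(r_0)|<1<|h_l(t_0)|$; for $k$ odd it uses $2$-adic valuations (Proposition~\ref{proptwokponetwon}); and for $k$ even with $kl>0$ it requires a detailed $p$-adic Newton-polygon analysis (Lemmas~\ref{newtonpoly}--\ref{newtonalpha}, Propositions~\ref{notthreeok}, \ref{minn}, \ref{minthreen}, \ref{proptwoktwon}), including a separate delicate treatment of the case where $k$ and $l$ differ by a factor of $3$. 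This is not a ``short case analysis on signs of $k,l$.'' Your $s$--$\tau$ substitution is indeed how the paper computes, but it is used to feed these arithmetic estimates, not to produce root-of-unity constraints.

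There is also a factual slip in your $D_1(l,l)$ plan: you propose to check that $D_0(l,l)$ and $D_1(l,l)$ do not meet, but in fact they \emph{do} meet---their intersection points are exactly the singular points of $D(l,l)$ on the diagonal $r=t$ (see the discussion preceding the definition of $\Delta_k$ and Proposition~\ref{Donesmooth}). What one must show instead is that $D_1(l,l)$ itself is smooth at those diagonal points, which the paper does by a further identity (\ref{Heq}) combined with the $2$-adic bound of Proposition~\ref{notthreeok}.
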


We split the proof of the first part of Theorem \ref{thmDsmooth} into
three cases, based on the parity of $k$ and the sign of $kl$ in case
$k$ is even.  Theorem \ref{thmDsmooth} will be proved at the end of
this section as a corollary of Propositions \ref{oppsigns},
\ref{proptwokponetwon}, \ref{proptwoktwon}, and \ref{Donesmooth}.  The
approach is the same for all cases, but the details are different.  We
first sketch the idea behind our approach.

\begin{definition}
For each integer $k$ we set $h_k = \Phi_{k+1}/\Phi_{k-1}$.
\end{definition}

Suppose $k,l$ are integers and $P=(r_0,t_0)$ is a singular point on
the affine part of $D(k,l)$.  We show that this implies
$\Phi_{k-1}(r_0)\neq 0$ and $\Phi_{l-1}(t_0)\neq 0$. Then $D(k,l)$ can
be given around $P$ by $h_k(r)=h_l(t)$.  The fact that $P$ is a
singular point is then equivalent with the fact that $r_0$ and $t_0$
are critical points for $h_k$ and $h_l$ respectively.  We show that
for each $k$, the values of $h_k$ at its critical points are all
different from each other, and they are also different from the values
of $h_l$ at all its critical points when $k\neq l$.  This is done
using complex absolute values or $p$-adic valuations, depending on the
case.  The equation $h_k(r_0)=h_l(t_0)$ then implies $k=l$ and
$r_0=t_0$.  Indeed, for $k=l$ the component $D_0(l,l)$ given by $r=t$
intersects the curve $D_1(l,l)$ in singular points of $D(l,l)$.

\begin{definition}
For every $n \in \Z$, set $F_n = f_{n+1}'f_n-f_{n+1}f_n'$ and 
$G_n = g_{n+1}'g_n-g_{n+1}g_n'$.
\end{definition}

Note that $F_n$ and $G_n$ are the numerators 
of the derivatives of $f_{n+1}/f_n$ and $h_{2n}$.
We first state some facts. 

\begin{lemma}\label{irredstats}
For every $n\in \Z$ the following statements hold. 
\begin{enumerate}
\item If $n\neq 0$, then the polynomial $f_n$ is separable. 
\item The polynomials $F_n$ and $G_n$ have leading coefficient $\pm 1$. 
\item We have $(u+2)G_n = f_{2n} + 2n =
  \frac{s^{2n}-s^{-2n}}{s-s^{-1}}+2n$ in $\Z[u][s]/(s^2-us+1)$. 
\item We have $G_n(2)=n$ and $G_n(-2) = \frac{1}{3}n(4n^2-1)$.
\item For any field $\F$ with characteristic not dividing $2n-1$, the
  polynomial $g_n$ is separable over $\F$ and we have $(G_n,g_n)=(1)$
  in $\F[u]$.
\end{enumerate}
\end{lemma}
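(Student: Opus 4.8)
The plan is to carry out all computations in the ring $\Z[u][s]/(s^2-us+1)\cong\Z[s,s^{-1}]$ of Lemma~\ref{stillbasic}, where $u=s+s^{-1}$, $f_j=(s^j-s^{-j})/(s-s^{-1})$ and $g_j=(s^j+s^{1-j})/(s+1)$, and to differentiate with respect to $u$ through the chain rule $\frac{d}{du}=\frac{s^2}{s^2-1}\frac{d}{ds}$. Statement (3) is the computational heart, and statements (2) and (4) fall out of it with little extra work.

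For (3), write $v=s^{2n}$ and check directly that $h_{2n}=g_{n+1}/g_n=\frac{sv+1}{v+s}$ and $g_n^2=\frac{(v+s)^2}{v(s+1)^2}$. Since $G_n=g_n^2\,h_{2n}'$ and $u+2=(s+1)^2/s$, and since $\frac{dv}{ds}=\frac{2nv}{s}$, one computes $h_{2n}'=\frac{dh_{2n}}{du}$, multiplies by $g_n^2$ and by $u+2$; the factors $(v+s)^2$ and $(s+1)^2$ cancel and what remains is $(u+2)G_n=\frac{v-v^{-1}}{s-s^{-1}}+2n=f_{2n}+2n$. From this, (2) for $G_n$ is immediate: by Lemma~\ref{fgat2} the polynomial $f_{2n}$ has degree $2|n|-1$ and leading coefficient $\pm1$, hence $G_n$ has degree $2|n|-2$ and leading coefficient $\pm1$. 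For $F_n$ one argues directly on $F_n=f_{n+1}'f_n-f_{n+1}f_n'$: for $n>0$ both terms have degree $2n-2$ with leading coefficients $n$ and $n-1$, so $F_n$ has leading coefficient $1$, and the cases $n<0$ reduce to $n>0$ via $f_{-j}=-f_j$ and $g_{-j}=g_{j+1}$ (which also give $G_{-p}=-G_p$). For (4), evaluating the polynomial identity (3) at $u=2$ and using $f_{2n}(2)=2n$ from Lemma~\ref{fgat2} gives $4G_n(2)=4n$; since $f_{2n}$ is odd of degree $2|n|-1$ the right-hand side of (3) vanishes at $u=-2$, so differentiating (3) forces $G_n(-2)=f_{2n}'(-2)$, and $f_m'(-2)$ is read off from the generating-function identity $\sum_{j\ge1}f_j'(u)x^j=\frac{x^2}{(1-ux+x^2)^2}$, which at $u=-2$ becomes $\frac{x^2}{(1+x)^4}$ and yields $f_m'(-2)=(-1)^m\binom{m+1}{3}$, so $f_{2n}'(-2)=\tfrac13 n(4n^2-1)$.

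For (1) and (5) the issue is separability. The roots of $f_n$ in $\overline{\Q}$ are the numbers $s+s^{-1}$ with $s^{2n}=1$ and $s^2\ne1$; these are the $|n|-1$ distinct values $2\cos(k\pi/|n|)$ with $1\le k\le|n|-1$, so $f_n$ is squarefree. For $g_n$ over a field $\F$, a root of $g_n$ is a value $s+s^{-1}$ with $s^{2n-1}=-1$ and $s\ne-1$; the polynomial $x^{2n-1}+1$ is separable over $\F$ exactly when the characteristic of $\F$ does not divide $2n-1$, and in that case its $|2n-1|$ roots, minus the root $-1$, pair up under $s\mapsto s^{-1}$ into exactly $|n|-1$ distinct values $s+s^{-1}$ (distinctness because $s+s^{-1}=s'+s'^{-1}$ forces $s'\in\{s,s^{-1}\}$), so $g_n$ has $|n|-1$ distinct roots and is separable. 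Finally $(G_n,g_n)=(1)$ in $\F[u]$: if $a\in\overline{\F}$ were a common root, then $G_n(a)=-g_{n+1}(a)g_n'(a)$, while $g_{n+1}(a)\ne0$ because $(g_{n+1},g_n)=(1)$ by Lemma~\ref{unitideal}, whence $g_n'(a)=0$, contradicting separability of $g_n$.

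The main obstacle is the bookkeeping behind (3): arranging the cancellations so the identity drops out cleanly, and then propagating everything uniformly over negative $n$ via $f_{-j}=-f_j$, $g_{-j}=g_{j+1}$, and $G_{-p}=-G_p$. A secondary point is making the root count for $g_n$ work in every characteristic, notably characteristic $2$, where ``$s^{2n-1}=-1$'' and ``$s\ne-1$'' become ``$s^{2n-1}=1$'' and ``$s\ne1$'' but the counting argument is unchanged.
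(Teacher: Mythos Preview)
Your proof is correct and follows the same strategy as the paper: work in $\Z[s,s^{-1}]$ via Lemma~\ref{stillbasic}, and deduce separability of $f_n$ and $g_n$ from that of $s^{2n}-1$ and $s^{2n-1}+1$. The execution differs only in minor details---you derive (2) for $G_n$ as a corollary of (3) rather than directly from degrees, and you compute $G_n(-2)=f_{2n}'(-2)$ via the generating function $\sum f_j'(u)x^j=x^2/(1-ux+x^2)^2$ where the paper instead obtains $g_n'(\pm2)$ by induction---but these are variations within the same approach.
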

\begin{proof}
Set $h = (s^{n+1}-s^{n-1})f_n \in \Z[u][s]/(s^2-us+1)\isom
\Z[s,s^{-1}]$. 
Then we have $h=s^{2n}-1$, 
which is separable, as $s\frac{dh}{ds} - 2nh = 2n$ is a nonzero constant
for $n\neq 0$. We conclude that $f_n$ does not have multiple factors 
either, which proves (1). 

The polynomials $g_n$ and $g_{n+1}$ are monic, while their degrees
differ by $1$.  This implies that the leading terms of $g_{n+1}'g_n$
and $g_n'g_{n+1}$ also differ by $1$. Therefore, their difference
$G_n$ indeed has leading coefficient $\pm 1$. The same argument
applies to $F_n$, which proves (2).

The identity in (3) is easily verified in $\Z[s,s^{-1}]$.  Note that
we have $g_n' = \frac{dg_n}{ds} /\frac{du}{ds} = \frac{dg_n}{ds} \cdot
\frac{s^2}{s^2-1}$.

One can prove (4) by dividing the identity of (3) by
$u+2=s^{-1}(s+1)^2$, setting $s = \pm 1$ and applying l'H\^opital's
rule. Alternatively, it follows from induction that we have $g_n(-2) =
(-1)^{n-1} (2n-1)$, while by Lemma \ref{fgat2} we have $g_n(2) =1$.
From $g_{n+1}' = [ug_n-g_{n-1}]' = ug_n'-g_{n-1}'+g_n$ we then find by
induction that $g_n'(2) = \frac{1}{2}n(n-1)$, while we have
$g_{n+1}'(-2) = (-1)^n\frac{1}{6}n(n-1)(2n-1)$.  It follows that
$G_n(\pm 2)$ is as given.

For (5), let $\F$ be a field with characteristic not dividing $2n-1$.
By Lemma \ref{stillbasic} we have $(s+1)s^{n-1}g_n = s^{2n-1}+1$ and
the reduction of this polynomial to $\F$ is separable.  Then the
reduction of the polynomial $g_n$ has no multiple factors either, so
$g_n$ is separable over $\F$.  The ideal $(G_n,g_n) \subset \F[u]$
contains $g_{n+1}'g_n-G_n = g_{n+1}g_n'$.  By Lemma \ref{unitideal},
the polynomials $g_n$ and $g_{n+1}$ have no roots in common, and as
$g_n$ is separable over $\F$, it also has no roots in common with
$g_n'$, so in $\F[u]$ we find $(1)= (g_n,g_{n+1}g_n') = (G_n,g_n)$,
which finishes the proof of (5).
\end{proof}

For each integer $k$, set $\Delta_k =
\Phi_{k+1}'\Phi_{k-1}-\Phi_{k+1}\Phi_{k-1}'$.  Note that for even $k$,
say $k=2m$, we have $\Delta_k=G_m$, while for odd $k$, say $k=2m+1$,
we have $\Delta_k=F_m$.

\begin{lemma}\label{firststep}
Let $k$ and $l$ be any integers with $l$ even.
Suppose $P=(r_0,t_0)$ is a singular $\Qbar$-point of the standard affine 
part of $D(k,l)$. Then 
we have $\Phi_{k-1}(r_0)\neq 0 \neq \Phi_{l-1}(t_0)$ and 
$\Delta_k(r_0)=\Delta_l(t_0)=0$.
\end{lemma}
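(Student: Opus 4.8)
The plan is to apply the Jacobian criterion to the affine equation defining $D(k,l)$. Write $G(r,t)=\Phi_{k+1}(r)\Phi_{l-1}(t)-\Phi_{k-1}(r)\Phi_{l+1}(t)$, so the standard affine part of $D(k,l)$ is the zero locus of $G$ and a singular $\Qbar$-point $P=(r_0,t_0)$ there satisfies $G(P)=\partial_rG(P)=\partial_tG(P)=0$. Since $\Phi_m$ vanishes identically only for $m=0$, both $\Phi_{k-1}$ and $\Phi_{k+1}$ are nonzero polynomials unless $k=\pm1$; and when $k=\pm1$ the curve $D(k,l)$ is, by Remarks \ref{bidegrees} and \ref{lines}, empty or a disjoint union of lines, hence smooth, so no singular $P$ exists. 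I may therefore assume $k\neq\pm1$. Two facts will carry the argument: $(\Phi_{k+1},\Phi_{k-1})$ and $(\Phi_{l+1},\Phi_{l-1})$ are unit ideals in $\Z[u]$ by Lemma \ref{unitideal} (so each of these pairs has no common root over $\Qbar$), and $\Phi_{k-1}$ is squarefree over $\Qbar$, being either an $f_j$ with $j\neq0$ (separable by Lemma \ref{irredstats}(1)) or a $g_j$ (separable in characteristic zero by Lemma \ref{irredstats}(5)).

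The key step is to show $\Phi_{k-1}(r_0)\neq0$. Suppose not. Then $G(P)=0$ becomes $\Phi_{k+1}(r_0)\Phi_{l-1}(t_0)=0$, and since $\Phi_{k+1}$ and $\Phi_{k-1}$ share no root we get $\Phi_{k+1}(r_0)\neq0$, hence $\Phi_{l-1}(t_0)=0$. Feeding this into $\partial_rG(P)=\Phi_{k+1}'(r_0)\Phi_{l-1}(t_0)-\Phi_{k-1}'(r_0)\Phi_{l+1}(t_0)=0$ leaves $\Phi_{k-1}'(r_0)\Phi_{l+1}(t_0)=0$; since $\Phi_{l+1}$ and $\Phi_{l-1}$ share no root, $\Phi_{l+1}(t_0)\neq0$, so $\Phi_{k-1}'(r_0)=0$. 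Then $r_0$ is a multiple root of $\Phi_{k-1}$, contradicting squarefreeness. Once $\Phi_{k-1}(r_0)\neq0$ is known, $\Phi_{l-1}(t_0)\neq0$ is immediate: if it vanished, $G(P)=0$ would force $\Phi_{k-1}(r_0)\Phi_{l+1}(t_0)=0$, and $\Phi_{l+1}(t_0)\neq0$ (no common root with $\Phi_{l-1}$) would give $\Phi_{k-1}(r_0)=0$, a contradiction.

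Finally, with $\Phi_{k-1}(r_0)\neq0\neq\Phi_{l-1}(t_0)$, I would use $G(P)=0$ to replace $\Phi_{l+1}(t_0)$ in $\partial_rG$ and $\Phi_{k+1}(r_0)$ in $\partial_tG$; recalling $\Delta_k=\Phi_{k+1}'\Phi_{k-1}-\Phi_{k+1}\Phi_{k-1}'$, a one-line computation gives
\[
\partial_rG(P)=\frac{\Phi_{l-1}(t_0)}{\Phi_{k-1}(r_0)}\,\Delta_k(r_0)
\qquad\text{and}\qquad
\partial_tG(P)=-\frac{\Phi_{k-1}(r_0)}{\Phi_{l-1}(t_0)}\,\Delta_l(t_0),
\]
which just expresses that near $P$ the curve is cut out by $h_k(r)=h_l(t)$ and that $\Delta_k,\Delta_l$ are the numerators of $h_k',h_l'$. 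Since the scalar factors are nonzero and the two partial derivatives vanish at $P$, we conclude $\Delta_k(r_0)=\Delta_l(t_0)=0$. The one delicate point is the degenerate small cases $k=\pm1$, where the affine equation genuinely collapses; these I clear away at the start using Remarks \ref{bidegrees} and \ref{lines}, after which the proof is the short ideal-theoretic chain above plus that derivative computation.
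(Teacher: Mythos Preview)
Your proof is correct. The overall approach---Jacobian criterion, the unit-ideal fact from Lemma \ref{unitideal}, and separability input from Lemma \ref{irredstats}---is the same as the paper's, but you run the argument in the opposite order. The paper first forms the division-free combinations
\[
\Phi_{l-1}'(t_0)F(P)-\Phi_{l-1}(t_0)F_t(P)=\Phi_{k-1}(r_0)\Delta_l(t_0),\qquad
\Phi_{l+1}'(t_0)F(P)-\Phi_{l+1}(t_0)F_t(P)=\Phi_{k+1}(r_0)\Delta_l(t_0),
\]
obtains $\Delta_l(t_0)=0$ directly from Lemma \ref{unitideal}, and then invokes the coprimality $(G_n,g_n)=(1)$ of Lemma \ref{irredstats}(5) to get $\Phi_{l-1}(t_0)=g_n(t_0)\neq 0$; the nonvanishing of $\Phi_{k-1}(r_0)$ then drops out of $F(P)=0$. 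You instead establish $\Phi_{k-1}(r_0)\neq 0$ first via separability of $\Phi_{k-1}$ (Lemma \ref{irredstats}(1) or (5)), and only afterward divide to get the $\Delta$-vanishing. The paper's ordering has the minor advantage that it never divides and handles the degenerate cases $k=\pm 1$ uniformly (the conclusion there is vacuous since the affine curve is smooth), so no separate case analysis is needed; your version trades this for a slightly more direct use of squarefreeness. Either way the content is the same.
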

\begin{proof}
  Set $F=\Phi_{k+1}(r)\Phi_{l-1}(t)-\Phi_{k-1}(r)\Phi_{l+1}(t)$ and
  $F_x=\partial F/\partial x$ for $x=r,t$.  Then we have $F(P) =
  F_r(P) = F_t(P)=0$, so also
  \[ 0=\Phi_{l-1}'(t_0)F(P)-\Phi_{l-1}(t_0)F_t(P)=\Phi_{k-1}(r_0)\Delta_l(t_0)\]
  and
  \[0=\Phi_{l+1}'(t_0)F(P)-\Phi_{l+1}(t_0)F_t(P)=\Phi_{k+1}(r_0)\Delta_l(t_0).\]
  By Lemma \ref{unitideal} we can not have
  $\Phi_{k-1}(r_0)=\Phi_{k+1}(r_0)=0$, so we have $\Delta_l(t_0)=0$
  and, similarly, $\Delta_k(r_0)=0$.  Since $l$ is even, say $l=2n$,
  we have $\Delta_l=G_n$.  From Lemma \ref{irredstats}(5) we conclude
  $\Phi_{l-1}(t_0)=g_n(t_0)\neq 0$. If we had $\Phi_{k-1}(r_0)=0$,
  then $F(P)=0$ would imply $\Phi_{k+1}(r_0)=0$, which contradicts
  Lemma \ref{unitideal}. We conclude $\Phi_{k-1}(r_0)\neq 0$.
\end{proof}

The following lemma will be used to prove smoothness at infinity.

\begin{lemma}\label{inftysmooth}
  Let $e,f \in \Z[r]$ and $g,h \in \Z[t]$ be nonzero separable
  polynomials, and assume that $\deg e - \deg f = \pm 1$ and $\deg g -
  \deg h = \pm 1$.  Let $C \subset \P^1(r)\times \P^1(t)$ be the
  projective closure of the affine curve given by $e(r)g(t) =
  f(r)h(t)$. Then $C$ is smooth at its points at infinity and the two
  lines at infinity intersect $C$ transversally everywhere.
\end{lemma}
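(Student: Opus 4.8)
The plan is to compactify in $\P^1(r)\times\P^1(t)$ and inspect local equations at infinity. Put $a=\max(\deg e,\deg f)$ and $b=\max(\deg g,\deg h)$ and let $\mathcal{P}$ be the bidegree-$(a,b)$ form bihomogenizing $eg-fh$, so that $C=\overline{V(eg-fh)}\subseteq V(\mathcal{P})$. Since $\deg e\neq\deg f$, exactly one of $e,f$ has degree $a$, so setting $r=\infty$ the form $\mathcal{P}$ restricts to a nonzero constant times $r_0^{a}$ times the homogenization of $g$ or of $h$, which is not identically zero; hence the equation of the line $r=\infty$ does not divide $\mathcal{P}$, and likewise for the line $t=\infty$. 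As the only irreducible curves contained in those two lines are the lines themselves, $V(\mathcal{P})$ has no component at infinity, so $V(\mathcal{P})=C$, and it suffices to verify the Jacobian criterion for $\mathcal{P}$ at every point of $C$ on one of these two lines. The configuration is preserved by the involution $(r,t)\mapsto(t,r)$ of $\P^1\times\P^1$ combined with $(e,f)\leftrightarrow(g,h)$, and by $(e,f)\mapsto(f,e)$ combined with $(g,h)\mapsto(h,g)$; using these I may restrict to the line $r=\infty$, separating its points with $t$ finite from the single corner point $(r,t)=(\infty,\infty)$.

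Near $r=\infty$ with $t$ finite I would use $s=1/r$ as local coordinate, in which $C$ is cut out by $P(s,t):=\hat e(s)g(t)-\hat f(s)h(t)$ with $\hat e(s)=s^{a}e(1/s)$ and $\hat f(s)=s^{a}f(1/s)$. Exactly one of $\hat e(0),\hat f(0)$ is nonzero; by the symmetry $(e,f,g,h)\mapsto(f,e,h,g)$ I may assume $\hat e(0)\neq0$, so $\hat f(0)=0$ and $P(0,t)=\hat e(0)g(t)$. Thus the points of $C$ with $s=0$ are the zeros of $g$, and at each of them $\partial P/\partial t=\hat e(0)g'$ is nonzero because $g$ is separable. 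Hence $C$ is smooth at every point of $C$ on this line, and the nonvanishing of $\partial P/\partial t$ says precisely that $C$ is not tangent to $\{s=0\}$, that is, that this line meets $C$ transversally. Applying the other symmetry handles the line $t=\infty$ at all of its points with $r$ finite.

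For the corner I would take coordinates $s=1/r$ and $w=1/t$, in which $C$ is cut out by $P(s,w):=\hat e(s)\hat g(w)-\hat f(s)\hat h(w)$ with $\hat g(w)=w^{b}g(1/w)$ and $\hat h(w)=w^{b}h(1/w)$. Matching which of $\hat e(0),\hat f(0)$ vanishes against which of $\hat g(0),\hat h(0)$ vanishes shows that if $(0,0)\in C$ then, after possibly using one of the symmetries, $\deg e>\deg f$ and $\deg h>\deg g$. This is the step using the hypothesis that the degree differences equal $\pm1$ rather than being merely nonzero: it forces $\hat f(s)=s\,u(s)$ and $\hat g(w)=w\,v(w)$ with $u(0)\neq0$ and $v(0)\neq0$. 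A short computation then gives $(\partial P/\partial s)(0,0)=-u(0)\hat h(0)\neq0$ and $(\partial P/\partial w)(0,0)=\hat e(0)v(0)\neq0$, where $\hat e(0)$ and $\hat h(0)$ are the nonzero leading coefficients of $e$ and $h$. Both partials being nonzero gives smoothness of $C$ at the corner, and the nonvanishing of $\partial P/\partial w$ (respectively of $\partial P/\partial s$) gives transversality there of the line $\{s=0\}$ (respectively $\{w=0\}$).

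I expect the corner to be the only genuinely delicate point: the homogenization bookkeeping is routine, and the argument away from the corner reduces to the elementary fact that a separable polynomial has nonvanishing derivative at each root. At the corner a degree difference of $2$ or more would contribute an extra factor of $s$ or of $w$ to $P$ and make both first partials vanish at $(0,0)$, so that the corner, if it lay on $C$, would be singular; the exactness of the hypothesis is precisely what excludes this.
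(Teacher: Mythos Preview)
Your proof is correct, but it proceeds by a different route than the paper's. You work directly with local equations and the Jacobian criterion: in each affine chart at infinity you write down the dehomogenized equation, identify the points on the line at infinity as roots of one of the separable polynomials, and verify that the relevant partial derivative is nonzero there; at the corner you use the $\pm 1$ degree hypothesis to extract a single factor of $s$ and of $w$ and compute both partials explicitly. The paper instead argues by counting: it determines the bidegree of $C$, hence the intersection number $L\cdot C$ with a line $L$ at infinity, and then exhibits exactly that many distinct points in $L\cap C$, forcing every local intersection multiplicity to be $1$ and thereby getting smoothness and transversality simultaneously. Your approach is more elementary in that it avoids intersection theory on $\P^1\times\P^1$ and makes the role of the $\pm 1$ hypothesis completely transparent (your final paragraph is exactly right about why $\pm 2$ would fail at the corner); the paper's approach is slicker in that a single numerical equality replaces the chart-by-chart derivative computations, at the cost of a slightly more involved case analysis depending on the sign of $\deg g-\deg h$.
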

\begin{proof}
  Set $r'=r^{-1}$ and $t'=t^{-1}$ in the function field $\Qbar(r,t)$
  of $\P^1\times \P^1$ over $\Qbar$. Let $L$ be the line at infinity
  given by $r'=0$.  By symmetry between $r$ and $t$ it suffices to
  consider the points in $L \cap C$.  This means it suffices to check
  all points on $C$ with $r'=0$ in the affine patches with coordinates
  $(r',t)$ and $(r',t')$. Set $a=\deg e$ and $b=\deg g$. By symmetry
  between $(e,g)$ and $(f,h)$ we may assume $\deg f = a+1$. Set
  $e'(r') = r'^{\deg e} e(1/r')$ and define $f',g',h'$ similarly. Note
  that $e',f',g',h'$ do not vanish at $0$.  Then on the affine patch
  with coordinates $(r',t)$, the curve $C$ is given by $r'e'(r')g(t) =
  f'(r')h(t)$. Now first consider the case $\deg h = b+1$. Then $C$ is
  of bidegree $(a+1,b+1)$.  The line $L$ is of bidegree $(1,0)$, so
  the intersection number $L\cdot C$ equals $b+1$, when counting the
  intersection points with multiplicities. For each root $\tau$ of
  $h(t)$ there is a point $(r',t)=(0,\tau)$ on $L \cap C$, so there
  are at least $b+1$ different points on $L\cap C$. This implies that
  all intersection multiplicities are $1$, which shows that all points
  on $L\cap C$ are nonsingular and all intersections are transversal.
  Now consider the case $\deg h = b-1$. Then $C$ is of bidegree
  $(a+1,b)$, so we have $L\cdot C = b$. On the patch with coordinates
  $(r',t')$, the curve $C$ is given by $r'e'(r')g'(t') =
  t'f'(r')h'(t')$.  Now if $h(0)\neq 0$, then $\deg h'(t') = b-1$, and
  for each of the $b$ roots $\tau$ of $t'h'(t')$ there is a point
  $(r',t')=(0,\tau)$ on $L\cap C$.  If $h(0)=0$, then $h$ has a simple
  root at $0$ as $h$ is separable, so $\deg h'(t') = b-2$ and there
  are also $b$ points on $L\cap C$, namely $(r',t)=(0,0)$ and the
  $b-1$ points $(0,\tau)$ for any root $\tau$ of $t'h'(t')$. In either
  case we find that all intersection multiplicities are $1$, so all
  points on $L\cap C$ are nonsingular and the intersections are
  transversal.
\end{proof}

In the case that $k$ is even and $kl$ is negative we use the
following lemma. Recall that we have $h_{2n} = \Phi_{2n+1}/\Phi_{2n-1}
= g_{n+1}/g_n$.

\begin{lemma}\label{complexabs}
  Let $n$ be any nonzero integer, and $\omega \in \C$ a root of $G_n$.
  If $n>0$, then $|h_{2n}(\omega)| > 1$, and if $n<0$, then
  $|h_{2n}(\omega)|<1$.
\end{lemma}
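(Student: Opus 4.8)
\textbf{Proof proposal for Lemma \ref{complexabs}.}
The plan is to exploit the explicit factorization of $g_n$ from Lemma \ref{stillbasic} to write $h_{2n}(\omega)$ as an explicit function of $s$, where $\omega = s+s^{-1}$ and $s$ ranges over the roots of the relevant polynomial. From Lemma \ref{stillbasic} we have $g_n = (s^n+s^{1-n})/(s+1)$, so
\[
h_{2n}(\omega) = \frac{g_{n+1}(\omega)}{g_n(\omega)} = \frac{s^{n+1}+s^{-n}}{s^n+s^{1-n}} = \frac{s(s^{2n+1}+1)}{s^{2n-1}+s^2} = s\cdot\frac{s^{2n+1}+1}{s^{2n+1}+s^3}\cdot\frac{s^2}{1}
\]
after clearing; I would simplify carefully to get $h_{2n}(\omega)=s\cdot\frac{s^{2n}+s^{-1}}{s^{2n}+s}$ or a similar closed form, and in particular $|h_{2n}(\omega)|$ depends only on $|s|$ and the argument of $s$. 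First I would translate the condition $G_n(\omega)=0$ into a condition on $s$. By Lemma \ref{irredstats}(3) we have $(u+2)G_n = \frac{s^{2n}-s^{-2n}}{s-s^{-1}}+2n$, so with $s^{-1}(s+1)^2 = u+2 \neq 0$ (the root $u=-2$ is excluded since $G_n(-2)=\frac13 n(4n^2-1)\neq 0$ by Lemma \ref{irredstats}(4)), the condition $G_n(\omega)=0$ becomes $\frac{s^{2n}-s^{-2n}}{s-s^{-1}} = -2n$, i.e. writing $\zeta = s^2$,
\[
\frac{\zeta^n - \zeta^{-n}}{\zeta - \zeta^{-1}} = -2n \cdot \frac{s}{s^2-1}\cdot(s-s^{-1})^{-1}\cdot(\cdots),
\]
which I would clean up to an identity of the form $\zeta^n-\zeta^{-n} = c(\zeta-\zeta^{-1})$ for an explicit constant. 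The key point is that this identity is impossible if $|\zeta|=1$ unless $n=0$: if $\zeta = e^{i\theta}$ then the left side is $2i\sin(n\theta)$ and the right side is $c\cdot 2i\sin\theta$, and a Dirichlet-kernel / strict-monotonicity argument shows these cannot agree for $\theta\neq 0$ with the matching of absolute values forced by the $f_{2n}+2n$ expression. Hence $|s|\neq 1$ for any root $\omega$ of $G_n$.

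Having established $|s|\neq 1$, I would use the symmetry $s\mapsto s^{-1}$ (which fixes $\omega=s+s^{-1}$ and the polynomial $G_n$): we may choose the representative $s$ with $|s|>1$. Then the claim reduces to showing that for $|s|>1$ and $s$ a corresponding root, $|h_{2n}(\omega)|>1$ when $n>0$ and $|h_{2n}(\omega)|<1$ when $n<0$. Using the closed form for $h_{2n}(\omega)$ in terms of $s$, for $n>0$ large powers of $s$ dominate and $h_{2n}(\omega)\approx s\cdot\frac{s^{2n}}{s^{2n}} = s$ up to lower-order terms, giving $|h_{2n}(\omega)|$ close to $|s|>1$; the point is to make this rigorous for all $n>0$, not just asymptotically. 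The cleanest route is to write $h_{2n}(\omega) = s\cdot\frac{1+s^{-2n-1}}{1+s^{-2n+1}}$ (or the analogous exact fraction), bound the correction factor, and combine with the defining relation for $\omega$ to pin down $|s|$ relative to $1$ with enough room. For $n<0$, writing $n=-n'$ with $n'>0$ and using $g_{-j}=g_{j+1}$ from Lemma \ref{newbasicf} (so $h_{-2n'}$ relates to $h_{2n'}$ by an explicit substitution $j\mapsto$ something), I would reduce to the positive case or rerun the estimate; the inversion in the indices flips the inequality.

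The main obstacle I anticipate is the rigorous version of the estimate $|h_{2n}(\omega)|>1$: showing that the constraint $G_n(\omega)=0$ forces $|s|$ to be bounded away from $1$ by enough that the lower-order terms in the ratio $h_{2n}(\omega)/s$ cannot push the modulus back below $1$. One clean way to sidestep delicate estimates is to note $h_{2n}(\omega)\cdot h_{2n}(\omega^{\mathrm{inv}})$ type products or to compute $|h_{2n}(\omega)|^2 = h_{2n}(\omega)\overline{h_{2n}(\omega)}$ using that complex conjugation sends $s$ to $\bar s$ and that $G_n$ has real coefficients, pairing a root $s$ with $\bar s$ or with $\bar s^{-1}$; this may let me evaluate the product of all $|h_{2n}(\omega)|$ over roots $\omega$ via resultants $\mathrm{Res}(G_n, g_{n+1})$ and $\mathrm{Res}(G_n,g_n)$ and argue sign/size from there. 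I would try the direct estimate first and fall back to the resultant computation if the inequality is not transparent.
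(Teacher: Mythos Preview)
Your proposal has the right setup but a genuine gap at the decisive step. You correctly parametrize $\omega=s+s^{-1}$, translate $G_n(\omega)=0$ into the relation $s^{2n}-s^{-2n}=-2n(s-s^{-1})$, and write $h_{2n}(\omega)=(s^{2n+1}+1)/(s^{2n}+s)$. Your reduction of the case $n<0$ to $n>0$ via $g_{-j}=g_{j+1}$ is also fine. The problem is everything that follows the claim $|s|\neq 1$.

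First, even granting $|s|>1$, that inequality alone does \emph{not} force $|s^{2n+1}+1|>|s^{2n}+s|$. A crude triangle-inequality bound gives $|h_{2n}(\omega)|\geq (|s|^{2n+1}-1)/(|s|^{2n}+|s|)$, and this exceeds $1$ only when $|s|^{2n}>(|s|+1)/(|s|-1)$, which fails for $|s|$ near $1$. So the plan ``show $|s|$ is bounded away from $1$ by enough'' would require a quantitative lower bound on $|s|-1$ coming from the constraint, and you do not indicate how to obtain one. The fallback via resultants would compute the product of $|h_{2n}(\omega)|$ over all roots, which tells you nothing about individual roots; that route does not close the gap either.

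The paper bypasses all of this with a single algebraic identity. Writing $\alpha=|s|^2$, one has
\[
|s^{2n+1}+1|^2-|s^{2n}+s|^2=(\alpha-1)(\alpha^{2n}-1)+(s-\bar s)(s^{2n}-\bar s^{2n}).
\]
The first summand is always $\geq 0$ (with equality iff $|s|=1$). For the second, the constraint $s^{2n}-s^{-2n}=-2n(s-s^{-1})$ with $n>0$ places $s-s^{-1}$ and $s^{2n}-s^{-2n}$ in opposite half-planes; since for any $z\in\C^*$ the numbers $z-z^{-1}$ and $z-\bar z$ lie in the same half-plane, the purely imaginary quantities $s-\bar s$ and $s^{2n}-\bar s^{2n}$ are also in opposite half-planes, so their product is $\geq 0$. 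Equality in both forces $|s|=1$ and $s^{2n}\in\R$, hence $s^{2n}=\pm 1$, which via the constraint gives $s=\pm 1$ and $\omega=\pm 2$, excluded by $G_n(\pm 2)\neq 0$. Thus the difference of squares is strictly positive and $|h_{2n}(\omega)|>1$. Note that this argument never needs the preliminary step $|s|\neq 1$; the constraint is used not to control $|s|$ but to make the ``angular'' term $(s-\bar s)(s^{2n}-\bar s^{2n})$ nonnegative. That is the idea your outline is missing.
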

\begin{proof}
  Note that $h_{2n}(\omega)$ is well defined, as $g_n(\omega)\neq 0$
  by Lemma \ref{irredstats}(5). Assume $n>0$, and choose a $\sigma \in
  \C^*$ such that $\omega = \sigma + \sigma^{-1}$. Then from Lemma
  \ref{irredstats}(3) we find $\sigma^{2n}-\sigma^{-2n} = -2n(\sigma -
  \sigma^{-1})$, which shows that $\sigma^{2n}-\sigma^{-2n}$ and
  $\sigma - \sigma^{-1}$ are in opposite half-planes (upper and lower
  half-plane, both including the real line).  Note that for each $z\in
  \C^*$, the values of $z, z-z^{-1}$, and $z-\overline{z}$ are all in
  the same half-plane, so we conclude that
  $\sigma^{2n}-\overline{\sigma}^{2n}$ and $\sigma -
  \overline{\sigma}$ are in opposite half-planes. Since both these
  values are purely imaginary, we conclude $(\sigma -
  \overline{\sigma})(\sigma^{2n}-\overline{\sigma}^{2n})\geq 0$, with
  equality if and only if $\sigma^{2n}$ is real. Set $\alpha =
  \sigma\overline{\sigma} = |\sigma|^2>0$. Then $\alpha$ and
  $\alpha^{2n}$ either both exceed $1$, or they both do not, and we
  have $(\alpha-1)(\alpha^{2n}-1) \geq 0$ in either case, with
  equality if and only if $\alpha=1$. Now we have
\begin{align}
|\sigma^{2n+1}+1|^2-&|\sigma^{2n}+\sigma|^2 = 
   (\sigma^{2n+1}+1)(\overline{\sigma}^{2n+1}+1) - 
   (\sigma^{2n}+\sigma)(\overline{\sigma}^{2n}+\overline{\sigma}) \cr
& = (\alpha-1)(\alpha^{2n}-1) + 
    (\sigma - \overline{\sigma})(\sigma^{2n}-\overline{\sigma}^{2n})\geq 0,
    \label{absineq}
\end{align}
with equality if and only if $|\sigma|^2=\alpha=1$ and $\sigma^{2n}$
is real, so if and only if $\sigma^{2n} = \pm 1$. If $\sigma^{2n} =
\pm 1$, then from $\sigma^{2n}-\sigma^{-2n} = -2n(\sigma -
\sigma^{-1})$ we find $\sigma = \sigma^{-1}$, so $\sigma = \pm 1$, and
$\omega = \pm 2$. From $G_n(2) = n$ and $G_n(-2) =
\frac{1}{3}n(4n^2-1)$ (see Lemma \ref{irredstats}) we conclude that
the inequality in (\ref{absineq}) is strict, and $|\sigma^{2n+1}+1| >
|\sigma^{2n}+\sigma|$. As we have $g_n(\omega) =
(\sigma^n+\sigma^{1-n})/(\sigma+1)$, we get
$$
|h_{2n}(\omega)| =
\left|\frac{\sigma^{2n+1}+1}{\sigma^{2n}+\sigma}\right| > 1.
$$
The proof for $n<0$ is similar. In that case $\sigma -
\overline{\sigma}$ and $\sigma^{2n}-\overline{\sigma}^{2n}$ are in the
same half-planes, and $(\alpha-1)(\alpha^{2n}-1) \leq 0$.
\end{proof}

We now have all tools to handle the case that $k$ is even and 
$kl$ is negative. This is done in the following proposition.

\begin{proposition}\label{oppsigns}
  Let $k,l$ be any even integers with $kl<0$. Then $D(k,l)$ is smooth
  over $\Q$.
\end{proposition}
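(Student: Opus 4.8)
The plan is to follow the strategy sketched just after the definition of $h_k$: first rule out singular points on the standard affine part of $\P^1(r)\times\P^1(t)$ by a size estimate, and then dispose of the points at infinity with Lemma \ref{inftysmooth}. I would begin by writing $k=2m$ and $l=2n$; since $kl<0$ we have $mn<0$, and using the identity $D(k,l)=D(-k,-l)$ I may assume $m>0>n$. Now suppose $P=(r_0,t_0)$ is a singular $\Qbar$-point on the affine part. By Lemma \ref{firststep} we have $\Phi_{k-1}(r_0)\neq 0\neq \Phi_{l-1}(t_0)$ and $\Delta_k(r_0)=\Delta_l(t_0)=0$; since $k$ and $l$ are even, $\Delta_k=G_m$ and $\Delta_l=G_n$, so $G_m(r_0)=G_n(t_0)=0$. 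Because $\Phi_{k-1}$ and $\Phi_{l-1}$ do not vanish at $P$, the rational functions $h_k=\Phi_{k+1}/\Phi_{k-1}$ and $h_l=\Phi_{l+1}/\Phi_{l-1}$ are regular at $r_0$ and $t_0$, and dividing the defining equation (\ref{defeqsnew}) by $\Phi_{k-1}(r)\Phi_{l-1}(t)$ shows that $D(k,l)$ is cut out near $P$ by $h_k(r)-h_l(t)$; in particular $h_k(r_0)=h_l(t_0)$ (since $P$ lies on $D(k,l)$).

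The key step is then to apply Lemma \ref{complexabs}. Since $m>0$ and $G_m(r_0)=0$, that lemma gives $|h_k(r_0)|=|h_{2m}(r_0)|>1$, while since $n<0$ and $G_n(t_0)=0$ it gives $|h_l(t_0)|=|h_{2n}(t_0)|<1$. These two inequalities are incompatible with $h_k(r_0)=h_l(t_0)$, a contradiction. Hence $D(k,l)$ has no singular point on the affine part. (The case $n>0>m$, were we not to normalize, would of course be handled symmetrically by interchanging the roles of $r_0$ and $t_0$.)

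To finish I would handle the points at infinity with Lemma \ref{inftysmooth}, applied to the defining equation $g_{m+1}(r)g_n(t)=g_m(r)g_{n+1}(t)$ of $D(k,l)$ (the $k=2m,\ l=2n$ case recorded right after (\ref{defeqsnew})). The polynomials $g_m,g_{m+1}\in\Q[r]$ and $g_n,g_{n+1}\in\Q[t]$ are separable by Lemma \ref{irredstats}(5), since the characteristic $0$ divides no integer of the form $2j-1$, and Lemma \ref{newbasicf} gives $\deg g_{m+1}-\deg g_m=\pm 1$ and $\deg g_{n+1}-\deg g_n=\pm 1$ because $m,n\neq 0$. Thus Lemma \ref{inftysmooth} shows that $D(k,l)$ is smooth at all of its points at infinity; combined with the previous paragraph, this proves $D(k,l)$ is smooth over $\Q$. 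I expect no real obstacle here, as the analytic content is entirely contained in Lemmas \ref{complexabs} and \ref{inftysmooth}; the only point needing a little care is the passage from ``$P$ is a singular point of $D(k,l)$'' to ``$r_0$ and $t_0$ are critical points of $h_k$ and $h_l$'', which depends on Lemma \ref{firststep} to guarantee that $h_k$ and $h_l$ are genuinely regular at $P$, so that the local equation $h_k(r)-h_l(t)$ makes sense and the Jacobian criterion yields exactly $\Delta_k(r_0)=\Delta_l(t_0)=0$.
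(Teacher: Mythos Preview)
Your proof is correct and follows essentially the same approach as the paper: reduce to a contradiction between $|h_k(r_0)|$ and $|h_l(t_0)|$ via Lemma \ref{complexabs} on the affine part, then invoke Lemma \ref{inftysmooth} at infinity. The only cosmetic difference is that the paper normalizes to $l>0>k$ (so the inequalities read $|h_k(r_0)|<1<|h_l(t_0)|$) whereas you normalize to $k>0>l$, and you spell out the separability and degree hypotheses for Lemma \ref{inftysmooth} more explicitly than the paper does.
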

\begin{proof}
  Set $m=k/2$ and $n=l/2$.  The curve $D(k,l)$ is the same as
  $D(-k,-l)$, so without loss of generality we assume $l>0$ and
  $k<0$. We will argue over $\C$.  Assume $P=(r_0,t_0)$ is a singular
  point of the standard affine part of $D(k,l)$ with $r_0,t_0 \in \C$.
%
%
  By Lemma \ref{firststep} we have $g_m(r_0)\neq 0 \neq g_n(t_0)$, so
  we may rewrite $F(P)=0$ as $h_k(r_0) = h_l(t_0)$.  This contradicts
  the fact that from Lemma \ref{complexabs} we have $|h_k(r_0)| < 1 <
  |h_l(t_0)|$, so there is no singular point on the affine part of
  $D(k,l)$. The points at infinity are smooth by Lemma
  \ref{inftysmooth}.
\end{proof}

We will see that in the remaining cases ($k$ is odd or $kl$ is
positive) we can use non-archimedean places instead of complex
absolute values.  We use the following lemmas.

\begin{lemma}\label{lemmirrednewids}
For every $n\in \Z$, we have the following identities
\begin{align}
2-u&= g_{n+1}^2+g_n^2 - ug_ng_{n+1}, \label{start}\\
(4-u^2)G_n &= (2n+1)g_n^2+(2n-1)g_{n+1}^2-2nug_ng_{n+1}, \label{surprise}\\
(4-u^2)G_n &= g_n^2-g_{n+1}^2 - 2n(u-2), \label{twontminn}\\
(u^2-4)F_n &= f_{n+1}^2-f_n^2-(2n+1)\label{Feq}.
\end{align}
\end{lemma}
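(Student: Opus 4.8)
The plan is to verify all four identities inside the ring $\Z[u][s]/(s^2-us+1)\isom\Z[s,s^{-1}]$, using the closed forms of Lemma \ref{stillbasic} (namely $u=s+s^{-1}$, $f_j=(s^j-s^{-j})/(s-s^{-1})$, $g_j=(s^j+s^{1-j})/(s+1)$) together with the factorizations $u-2=s^{-1}(s-1)^2$, $u+2=s^{-1}(s+1)^2$, $s-s^{-1}=s^{-1}(s^2-1)$, and $u^2-4=(s-s^{-1})^2$. For (\ref{start}) I would substitute the formulas for $g_n,g_{n+1}$ and $u$ into the right-hand side, put everything over the common denominator $(s+1)^2$, and check that the numerator collapses to $-s^{-1}(s-1)^2(s+1)^2=(2-u)(s+1)^2$; this is one short Laurent-polynomial computation.

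For (\ref{twontminn}) the key input is Lemma \ref{irredstats}(3), which gives $(u+2)G_n=f_{2n}+2n$ and hence $(4-u^2)G_n=(2-u)(f_{2n}+2n)$. Substituting the closed form of $f_{2n}$ on one side and of $g_n,g_{n+1}$ on the other, and cancelling the factor $s+1$ where possible, I expect both $(2-u)f_{2n}$ and $g_n^2-g_{n+1}^2$ to reduce to $-\frac{(s-1)(s^{2n}-s^{-2n})}{s+1}$, while $(2-u)\cdot 2n=-2n(u-2)$, so the two sides of (\ref{twontminn}) agree. Then (\ref{surprise}) needs no further computation: its right-hand side equals $2n\big(g_n^2+g_{n+1}^2-ug_ng_{n+1}\big)+\big(g_n^2-g_{n+1}^2\big)$, which by (\ref{start}) is $2n(2-u)+g_n^2-g_{n+1}^2$, i.e., precisely the right-hand side of (\ref{twontminn}); since the left-hand sides coincide, (\ref{surprise}) follows from (\ref{twontminn}) and (\ref{start}).

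The identity (\ref{Feq}) is handled a little differently since it involves the $u$-derivative. First I would record the recursion $F_n=f_n^2+F_{n-1}$: differentiating $f_{n+1}=uf_n-f_{n-1}$ gives $f_{n+1}'=f_n+uf_n'-f_{n-1}'$, and plugging this into $F_n=f_{n+1}'f_n-f_{n+1}f_n'$ and simplifying produces $F_n=f_n^2+F_{n-1}$; since $F_0=0$, this pins down $F_n$ for every $n\in\Z$, the recursion being usable both upward and downward. I would then prove (\ref{Feq}) by induction, with trivial base case $n=0$ where both sides vanish. Using $F_n=f_n^2+F_{n-1}$ one sees that, granted the identity for $n-1$, the identity for $n$ is equivalent to $(u^2-2)f_n^2-f_{n-1}^2+2=f_{n+1}^2$; substituting $f_{n+1}=uf_n-f_{n-1}$ reduces this to $f_n^2+f_{n-1}^2-uf_nf_{n-1}=1$, which holds because $uf_nf_{n-1}-f_{n-1}^2=(uf_n-f_{n-1})f_{n-1}=f_{n+1}f_{n-1}=f_n^2-1$ by Lemma \ref{stillbasic}. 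Since this equivalence runs in both directions, (\ref{Feq}) follows for all $n$ from the case $n=0$.

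None of this is conceptually hard; the only thing to be careful about is that the index $n$ ranges over all of $\Z$, so every closed-form manipulation and every induction must be valid for negative $n$ as well. For the $G_n$-identities this is automatic from Lemma \ref{irredstats}(3), and for (\ref{Feq}) it is taken care of by running the recursion $F_n=f_n^2+F_{n-1}$ in both directions (equivalently, using $f_{-n}=-f_n$ and $F_{-n}=-F_{n-1}$, which make the identity for $n$ equivalent to the one for $1-n$). I expect the bookkeeping in (\ref{twontminn}), keeping track of the powers of $s$ and the factors of $s\pm1$, to be the most error-prone step.
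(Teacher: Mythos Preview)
Your proposal is correct. The paper's own proof is the one-line ``all these identities can be verified in $\Z[u][s]/(s^2-us+1)\isom\Z[s,s^{-1}]$,'' together with the chain-rule hint $g_n'=\tfrac{dg_n}{ds}\cdot\tfrac{s^2}{s^2-1}$ (and likewise for $f_n'$) so that $G_n$ and $F_n$ themselves become explicit Laurent polynomials; it then remarks that (\ref{start}) alternatively follows from Lemma~\ref{stillbasic} and that (\ref{twontminn}) follows from (\ref{surprise}) minus $2n$ times (\ref{start}).

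Your route overlaps for (\ref{start}) and (\ref{twontminn}) but diverges elsewhere. First, you reverse the direction of the reduction, deriving (\ref{surprise}) from (\ref{start}) and (\ref{twontminn}) rather than the other way around; this is harmless and equally clean. Second, and more substantively, for (\ref{Feq}) you avoid computing $f_n'$ in the variable $s$ altogether, replacing the paper's direct Laurent-polynomial verification by the recursion $F_n=f_n^2+F_{n-1}$ and an induction that reduces to $f_{n+1}f_{n-1}=f_n^2-1$. Similarly, for the $G_n$-identities you never compute a derivative anew, instead importing Lemma~\ref{irredstats}(3). The paper's approach is more uniform (one mechanical check per identity once the chain rule is set up), while yours is a bit more self-contained and sidesteps the derivative-in-$s$ bookkeeping entirely.
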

\begin{proof}
  All these identities can be verified in $\Z[u][s]/(s^2-us+1) \isom
  \Z[s,s^{-1}]$.  Note that we have $g_n' = \frac{dg_n}{ds}
  /\frac{du}{ds} = \frac{dg_n}{ds} \cdot \frac{s^2}{s^2-1}$, and
  something similar for $f_n'$.  Equation (\ref{start}) also follows
  from the last equation of Lemma \ref{stillbasic} and the relation
  $tf_n = f_{n+1}+f_{n-1}$.  Equation (\ref{twontminn}) also follows
  by subtracting $2n$ times the equation (\ref{start}) from
  (\ref{surprise}).
\end{proof}

It turns out that for the non-archimedean places it is more useful to
look at the values of $h_l^2-1$ than those of $h_l$, which we used in
the case that $k$ is even and $kl$ is negative.  For any integer $n$
and any root $\omega$ of $G_n$ we have $g_n(\omega) \neq 0$ by Lemma
\ref{irredstats}(5); dividing equation (\ref{twontminn}) by
$g_n(\omega)^2$, we get
\begin{equation}\label{sqminoneeq}
h_{2n}(\omega)^2-1 = \left(\frac{g_{n+1}(\omega)}{g_n(\omega)}\right)^2-1 
= \frac{2n(2-\omega)}{g_n(\omega)^2}.
\end{equation}

Recall from \S \ref{algebra} that for any prime $p$, the discrete
valuation on $\Q$ associated to $p$ is denoted by $v_p$ and satisfies
$v_p(p)=1$.  We scale each discrete valuation $v$ on any number field
so that it restricts to $v_p$ on $\Q$ for some prime $p$, i.e., such
that $v(p)=1$.

\begin{lemma}\label{vgntauzero}
  Let $n$ be any integer, and $p$ a prime dividing $2n$.  Let $K$ be a
  number field containing a root $\omega$ of $G_n$. Let $v$ be a
  valuation on $K$ with $v(p)=1$. Then $v(g_n(\omega))=0$.
\end{lemma}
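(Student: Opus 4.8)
The plan is a short argument by contradiction. Assume $v(g_n(\omega))>0$; I aim to derive a contradiction. First I would dispose of the degenerate cases: if $G_n$ is a nonzero constant (as happens for $|n|\le 1$, where $G_1=1$ and $G_{-1}=-1$), it has no roots and the statement is vacuous, while if $G_n$ is identically $0$ (only for $n=0$) then $g_n=g_0=1$ and the claim is immediate. In the remaining case $G_n$ is nonconstant and, by Lemma \ref{irredstats}(2), has leading coefficient $\pm 1$, so its root $\omega$ is an algebraic integer and $v(\omega)\ge 0$. Since $g_n,g_{n+1}\in\Z[u]$, it follows that $g_n(\omega)$ and $g_{n+1}(\omega)$ lie in the valuation ring of $v$; the task is thus to exclude the strict inequality $v(g_n(\omega))>0$.

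The argument then rests on two observations. First, $v(g_n(\omega))>0$ forces $v(g_{n+1}(\omega))=0$: by Lemma \ref{unitideal} the ideal $(g_n,g_{n+1})$ of $\Z[u]$ is the unit ideal, so $a g_n+b g_{n+1}=1$ for some $a,b\in\Z[u]$, and evaluating at $\omega$, using $v(a(\omega))\ge 0$, $v(b(\omega))\ge 0$ and $v(a(\omega)g_n(\omega))>0$, the ultrametric inequality gives $v(b(\omega)g_{n+1}(\omega))=v(1)=0$, hence $v(g_{n+1}(\omega))=0$. Second, evaluating identity (\ref{twontminn}) at $u=\omega$ and using $G_n(\omega)=0$ yields
\[
g_{n+1}(\omega)^2 = g_n(\omega)^2 - 2n(\omega-2),
\]
which is just equation (\ref{sqminoneeq}) cleared of denominators. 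On the right-hand side one has $v\big(g_n(\omega)^2\big)=2v(g_n(\omega))>0$, while $v\big(2n(\omega-2)\big)=v(2n)+v(\omega-2)\ge 1$, since $p\mid 2n$ and $v(p)=1$ force $v(2n)\ge 1$ and $\omega-2$ is a $v$-adic integer; hence the right-hand side has positive valuation. But the left-hand side has valuation $2v(g_{n+1}(\omega))=0$, a contradiction. Therefore $v(g_n(\omega))=0$.

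I do not expect a genuine obstacle here; the only real decision is which identity of Lemma \ref{lemmirrednewids} to use, and the natural one is (\ref{twontminn}) (equivalently (\ref{sqminoneeq})), because at a root of $G_n$ it equates $g_{n+1}^2-g_n^2$ with the ``error term'' $-2n(\omega-2)$ whose valuation is controlled precisely by the hypothesis $p\mid 2n$; combined with the coprimality of $g_n$ and $g_{n+1}$, this forces $g_n(\omega)$ to be a $v$-adic unit. The one point requiring a moment's care is the preliminary remark that $\omega$, and therefore $g_n(\omega)$ and $g_{n+1}(\omega)$, are $v$-adic integers, which is why one first notes that $G_n$ is, up to sign, monic.
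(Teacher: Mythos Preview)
Your proof is correct, but it takes a different route from the paper's. The paper argues directly from Lemma \ref{irredstats}(5): since $p\mid 2n$ implies $p\nmid 2n-1$, the residue field $\F_\p$ has characteristic not dividing $2n-1$, so $(G_n,g_n)=(1)$ in $\F_\p[u]$; as $\omega$ is an algebraic integer with $G_n(\omega)=0$, the reduction $\bar\omega$ is a root of $G_n$ in $\F_\p$, hence $g_n(\bar\omega)\neq 0$, i.e.\ $v(g_n(\omega))=0$.

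Your argument instead combines the integral coprimality $(g_n,g_{n+1})=(1)$ in $\Z[u]$ from Lemma \ref{unitideal} with the identity (\ref{twontminn}) evaluated at a root of $G_n$. This is a legitimate alternative: it trades the appeal to Lemma \ref{irredstats}(5) for a direct computation, and in fact sidesteps the separability of $g_n$ over $\F_\p$ that underlies the paper's lemma. The paper's route is shorter once Lemma \ref{irredstats}(5) is in hand and makes the conceptual point (coprimality of $G_n$ and $g_n$ modulo $p$) explicit; yours is more self-contained and shows concretely why the hypothesis $p\mid 2n$ enters, via the term $2n(\omega-2)$.
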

\begin{proof}
  By Lemma \ref{irredstats}(2) the polynomial $G_n$ is monic, so
  $\omega$ is an algebraic integer.  Let $\p$ be the prime associated
  with $v$, and $\F_\p$ its residue field. Then the characteristic $p$
  of $\F_\p$ does not divide $2n-1$, so by Lemma \ref{irredstats}(5)
  the reduction of $g_n(\omega)$ to $\F_\p$ is not $0$. This implies
  $v(g_n(\omega))=0$.
\end{proof}

From Lemma \ref{vgntauzero} we find that if $\omega$ is a root of
$G_n$, and $v$ is some extension of the valuation associated to a
prime dividing $2n$, then the valuation at $v$ of the element in
(\ref{sqminoneeq}) equals $v(2n) + v(\omega-2)$.  The proof of the
following proposition shows that for odd $k$, in order to show that
$D(k,2n)$ is smooth, it suffices to note that this valuation is at
least $1$.

\begin{proposition}\label{proptwokponetwon}
  Let $k,l$ be any nonzero integers with $k$ odd, $l$ even, and
  $|k|\geq 2$.  Then the curve $D(k,l)$ is smooth over $\Q$.
\end{proposition}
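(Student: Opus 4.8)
The plan is to follow the same template as the proof of Proposition \ref{oppsigns}: first show that $D(k,l)$ is smooth at its points at infinity, then rule out singular points on the standard affine part. Write $k=2m+1$ and $l=2n$, noting that $m\neq 0$ since $|k|\geq 2$, and $n\neq 0$. For the points at infinity I would apply Lemma \ref{inftysmooth} with $e=\Phi_{k+1}$, $f=\Phi_{k-1}\in\Z[r]$ and $g=\Phi_{l-1}$, $h=\Phi_{l+1}\in\Z[t]$, so that $D(k,l)$ is the projective closure of $e(r)g(t)=f(r)h(t)$. These four polynomials are $f_{m+1}$, $f_m$, $g_n$, $g_{n+1}$ respectively; they are nonzero and separable over $\Q$ by Lemma \ref{irredstats}(1) and (5), and one reads off $\deg e-\deg f=\pm1$ and $\deg g-\deg h=\pm1$ from Lemma \ref{newbasicf}. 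So Lemma \ref{inftysmooth} gives smoothness at infinity.

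Next, suppose $P=(r_0,t_0)$ is a singular $\Qbar$-point on the standard affine part of $D(k,l)$. By Lemma \ref{firststep} we have $\Phi_{k-1}(r_0)=f_m(r_0)\neq 0$, $\Phi_{l-1}(t_0)=g_n(t_0)\neq 0$, and $\Delta_k(r_0)=\Delta_l(t_0)=0$; since $k$ is odd and $l$ is even, this says $F_m(r_0)=0$ and $G_n(t_0)=0$. Dividing the defining equation (\ref{defeqsnew}) of $D(k,l)$ by the nonzero quantity $\Phi_{k-1}(r_0)\Phi_{l-1}(t_0)$ gives $h_k(r_0)=h_l(t_0)$; call this common value $c$, and note $c^2\neq 1$, since otherwise $k=0$. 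Evaluating the identity (\ref{Feq}) (with index $m$) at $r_0$ gives $f_{m+1}(r_0)^2-f_m(r_0)^2=2m+1=k$, hence $c^2-1=k/f_m(r_0)^2$; and (\ref{sqminoneeq}) at the root $t_0$ of $G_n$ gives
\[
c^2-1 \;=\; h_l(t_0)^2-1 \;=\; \frac{2n(2-t_0)}{g_n(t_0)^2}.
\]

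The proof is finished by a $2$-adic valuation argument applied to these two expressions for $c^2-1$. Since $F_m$ and $G_n$ have leading coefficient $\pm1$ (Lemma \ref{irredstats}(2), using $m,n\neq 0$), both $r_0$ and $t_0$ are algebraic integers; fix an extension $v$ of the $2$-adic valuation on $\Q$ to the number field $\Q(r_0,t_0)$, normalized so that $v(2)=1$. Then $f_m(r_0)$ and $2-t_0$ are algebraic integers, so $v(f_m(r_0))\geq 0$ and $v(2-t_0)\geq 0$, while $v(g_n(t_0))=0$ by Lemma \ref{vgntauzero} applied with the prime $2$, which divides $2n$. Since $k$ is odd we have $v(k)=0$, so the first expression gives $v(c^2-1)=-2v(f_m(r_0))\leq 0$, whereas the second gives $v(c^2-1)=v(2n)+v(2-t_0)\geq v(2n)\geq 1$. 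This contradiction shows that $D(k,l)$ has no affine singular point, and together with the first paragraph this proves that $D(k,l)$ is smooth over $\Q$.

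The only step requiring real insight is the choice of the prime $2$: it always divides $l=2n$, so Lemma \ref{vgntauzero} makes $g_n(t_0)$ a $v$-adic unit and forces the $t$-side of the comparison to have valuation at least $1$, yet $2$ never divides the odd integer $k$, so the $r$-side has non-positive valuation; it is exactly this clash that yields smoothness. A minor point to keep in mind is that for $|k|=3$ the polynomial $F_m$ is actually a nonzero constant, so no $r_0$ with $F_m(r_0)=0$ exists and the affine case is vacuous — but the argument above covers this automatically, since it only assumes, and then contradicts, the existence of a singular affine point.
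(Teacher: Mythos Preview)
Your proof is correct and essentially identical to the paper's own argument: both apply Lemma \ref{inftysmooth} at infinity, use Lemma \ref{firststep} to get $F_m(r_0)=G_n(t_0)=0$, combine identities (\ref{Feq}) and (\ref{sqminoneeq}) to equate $k/f_m(r_0)^2$ with $2n(2-t_0)/g_n(t_0)^2$, and then derive a contradiction from the $2$-adic valuation. You supply a few details (integrality of $r_0,t_0$, the vacuous $|k|=3$ case) that the paper leaves implicit, but the approach is the same.
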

\begin{proof}
  Set $m=(k-1)/2$ and $n=l/2$, so that $k=2m+1$ and $l=2n$.  Assume
  $P=(r_0,t_0)$ is a singular point over $\Qbar$ of the standard
  affine part of $D(k,l)$. Let $K$ be the number field $\Q(r_0,t_0)$,
  and let $v$ be the valuation on $K$ associated to a prime above $2$,
  normalized so that $v(2)=1$.  By Lemma \ref{firststep} we have
  $f_m(r_0) \neq 0\neq g_n(t_0)$ and $F_m(r_0)=G_n(t_0)=0$.  From
  Lemma \ref{vgntauzero} we then conclude $v(g_n(t_0))=0$.  Now around
  $P$ the curve $D(k,l)$ is given by $f_{m+1}(r)/f_m(r) =
  g_{n+1}(t)/g_n(t)$, which by (\ref{sqminoneeq}) and (\ref{Feq}) of
  Lemma \ref{lemmirrednewids} implies
\begin{align*}
  \frac{2m+1}{f_m(r_0)^2} &=\frac{2m+1+(r_0^2-4)F_m(r_0)}{f_m(r_0)^2}= \frac{f_{m+1}(r_0)^2-f_m(r_0)^2}{f_m(r_0)^2} \\
  &= \left(\frac{f_{m+1}(r_0)}{f_m(r_0)}\right)^2-1 =
  \left(\frac{g_{n+1}(t_0)}{g_n(t_0)}\right)^2-1 =
  \frac{2n(2-t_0)}{g_n(t_0)^2}.
\end{align*}
This contradicts the fact that the valuation at $v$ of the left-hand
side is at most $0$, while the valuation of the right-hand side is at
least $1$.  We conclude that no singular point $P$ exists on the
affine part.  By Lemma \ref{inftysmooth} there are also no singular
points at infinity.
\end{proof}

The only remaining case is the case that $k$ is even and $kl$ is
positive.  We deal with this case by investigating the possible values
of the valuation of the expression in (\ref{sqminoneeq}) at some
valuation extending $v_p$ for some prime $p$ dividing $2n$.

\begin{lemma}\label{binom}
  Let $n$ be a positive integer and $p$ a prime dividing $n$ and set
  $e=v_p(n)$.  Then for any integer $j\geq 0$ we have $v_p\big(\binom{n}{p^j}\big) = \max(e-j,0)$ and for any $0<k<p^j$ we have
  $v_p\left( \binom{n}{k} \right) > e-j$.
\end{lemma}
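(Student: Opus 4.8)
The plan is to analyze the $p$-adic valuation of $\binom{n}{k}$ directly from the formula $\binom{n}{k} = \frac{n}{k}\binom{n-1}{k-1}$, or more cleanly from the identity $k\binom{n}{k} = n\binom{n-1}{k-1}$. Writing $v = v_p$, this gives $v(k) + v\big(\binom{n}{k}\big) = v(n) + v\big(\binom{n-1}{k-1}\big) = e + v\big(\binom{n-1}{k-1}\big)$. So the whole computation reduces to understanding $v\big(\binom{n-1}{k-1}\big)$ together with $v(k)$. The key classical input I would invoke is Kummer's theorem: $v_p\big(\binom{a+b}{a}\big)$ equals the number of carries when adding $a$ and $b$ in base $p$. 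Equivalently, $v_p\big(\binom{n-1}{k-1}\big)$ is the number of carries in the base-$p$ addition $(k-1) + (n-k) = n-1$.

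First I would treat the case $k = p^j$ with $0 \le j$. Here $v(k) = j$, so $v\big(\binom{n}{p^j}\big) = e - j + v\big(\binom{n-1}{p^j-1}\big)$. Now $p^j - 1$ in base $p$ is the string of $j$ digits all equal to $p-1$. Adding $p^j-1$ to $n-p^j$ and requiring the sum to be $n-1 = (n-p^j) + (p^j-1)$: the low $j$ digits of $n - p^j$ versus $n-1$ differ by exactly whether a borrow/carry propagates out of the bottom $j$ positions. A short digit-by-digit argument shows the number of carries in $(p^j-1)+(n-p^j)$ equals $\min(j, e)$ when $e = v_p(n) \ge 1$: since $p^e \mid n$, the bottom $e$ base-$p$ digits of $n$ are zero, which forces exactly $\min(j,e)$ carries in this particular addition (the carries fill up the zero digits of $n-p^j$ in positions below $\min(j,e)$, and stop once we hit a nonzero digit of $n$, which first occurs at position $e$). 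Hence $v\big(\binom{n-1}{p^j-1}\big) = \min(j,e)$ and therefore
\[
v\Big(\binom{n}{p^j}\Big) = e - j + \min(j,e) = \max(e-j, 0),
\]
as claimed.

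For the second assertion, fix $j$ and take $0 < k < p^j$. I want $v\big(\binom{n}{k}\big) > e - j$. From $v(k) + v\big(\binom{n}{k}\big) = e + v\big(\binom{n-1}{k-1}\big)$ and $v(k) \le j - 1$ (since $0 < k < p^j$ forces $v_p(k) \le j-1$... actually one only needs $v(k) < j$, i.e. $v(k)\le j-1$; but more carefully $k<p^j$ gives $v_p(k)\le j-1$ only when $k\ne 0$, which holds), we get $v\big(\binom{n}{k}\big) \ge e + v\big(\binom{n-1}{k-1}\big) - (j-1) = e - j + 1 + v\big(\binom{n-1}{k-1}\big) \ge e - j + 1 > e - j$, using $v\big(\binom{n-1}{k-1}\big) \ge 0$. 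So the strict inequality is actually easy once the first part's bookkeeping is in place; no carry count is even needed here beyond $v(k)\le j-1$.

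The main obstacle I anticipate is making the carry-counting argument in the first part airtight, i.e. rigorously justifying that the number of carries in the specific base-$p$ addition $(p^j-1) + (n-p^j) = n-1$ is exactly $\min(j,e)$. The clean way is: since $p^e \| n$ with $e \ge 1$, the base-$p$ expansion of $n$ ends in exactly $e$ zeros followed by a nonzero digit; subtracting $p^j$ affects only the digits in positions $0$ through roughly $j$; and adding back $p^j - 1 = \underbrace{(p-1)\cdots(p-1)}_{j}$ produces a carry out of position $i$ for each $i < \min(j,e)$ and no carry at position $\min(j,e)$ or above. Rather than belabor digit arithmetic, I would instead give the slicker derivation via Legendre's formula $v_p(m!) = \frac{m - s_p(m)}{p-1}$ (where $s_p$ is the base-$p$ digit sum), which turns $v_p\big(\binom{n-1}{k-1}\big)$ into $\frac{s_p(k-1) + s_p(n-k) - s_p(n-1)}{p-1}$, and then just compute the digit sums for $k = p^j$, where $s_p(p^j - 1) = j(p-1)$ and $s_p(n - p^j)$ and $s_p(n-1)$ are controlled by the trailing zeros of $n$. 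That reduces everything to an elementary identity about digit sums, which I would state and verify in a line or two.
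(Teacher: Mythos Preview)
Your argument for the second assertion is correct and clean: from $k\binom{n}{k}=n\binom{n-1}{k-1}$ and $v_p(k)\le j-1$ you get $v_p\binom{n}{k}\ge e-(j-1)>e-j$ immediately.

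Your argument for the first assertion, however, contains a genuine error. You claim that the number of carries in the base-$p$ addition $(p^j-1)+(n-p^j)=n-1$ equals $\min(j,e)$, and then that $e-j+\min(j,e)=\max(e-j,0)$. Both are false. Take $n=4$, $p=2$, $e=2$, $j=1$: then $p^j-1=1=(01)_2$ and $n-p^j=2=(10)_2$, and $1+2=3$ involves no carries, so $v_2\binom{3}{1}=0$, not $\min(1,2)=1$. And algebraically, for $j\le e$ your formula gives $e-j+\min(j,e)=e$, not $e-j$. The correct carry count is $\max(j-e,0)$: for $j\le e$, the bottom $j$ digits of $n-p^j$ are zero and the bottom $j$ digits of $p^j-1$ are $p-1$, so each column sums to $p-1$ with no carry. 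With this correction your approach does go through, but the digit bookkeeping you sketch (``carries fill up the zero digits \ldots'') is pointing in the wrong direction.

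The paper's proof avoids Kummer and Legendre entirely and is considerably shorter. After disposing of $j>e$, it writes, for any $1\le l\le p^e$,
\[
\binom{n}{l}=\frac{n}{l}\cdot\prod_{i=1}^{l-1}\frac{n-i}{i},
\]
and observes that each factor $\frac{n-i}{i}$ is a $p$-adic unit because $1\le i<p^e$ forces $v_p(i)<e=v_p(n)$, hence $v_p(n-i)=v_p(i)$. Thus $v_p\binom{n}{l}=e-v_p(l)$ for all such $l$, and both assertions follow at once by taking $l=p^j$ and $l=k$. You might prefer this line, since it sidesteps the carry analysis altogether.
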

\begin{proof}
  For $j>e$ the statement is trivial, as $\binom{n}{k}$ is an
  integer, so we may assume $j\leq e$.  Let $l$ be any integer
  satisfying $1 \leq l \leq p^e$, and write $\binom{n}{l}$ as
$$
\binom{n}{l} = \frac{n}{l} \cdot \prod_{i=1}^{l-1} \frac{n-i}{i}.
$$
For all $i$ with $1\leq i < p^e$ we have $v_p(i) < v_p(n)$, so
$v_p(n-i) = v_p(i)$ and $v_p\big(\frac{n-i}{i}\big) = 0$. Therefore,
we have $v_p\left(\binom{n}{l}\right) = v_p(n) - v_p(l)$. Applying
this to $l=k$ and $l=p^j$, we obtain the statement, as $v_p(k) < j =
v_p(p^j)$.
\end{proof}

\begin{lemma}\label{newtonpoly}
  Let $n$ be a positive integer and $p$ a prime dividing $2n$.  Let
  $K$ be a number field and $v$ a valuation on $K$ with $v(p)=1$. Let
  $\alpha \in K$ satisfy $v(\alpha) = 0$ and set $e=v(4n)$.  If $p\neq
  2$, then also assume $v(2\alpha^{2n+1}+\alpha^2+1)=0$.  Then the
  Newton polygon of
$$
(S+\alpha)^{4n}+2n\left((S+\alpha)^{2n+1}-(S+\alpha)^{2n-1}\right) -1 = 
  \sum_{i=0}^{4n} b_i S^i
$$
at $v$ is the lower convex hull of the points
$$
\left\{
\begin{array}{ll}
  (0,v(b_0)),(1,e),(p,e-1),\ldots,(p^j,e-j),
\ldots,(p^e,0),(4n,0)& \mbox{if } p\neq 2,\\
  (0,v(b_0)),(1,v(b_1)),(2,v(b_2)),(3,e-1),(4,e-2),
\ldots,(2^e,0),(4n,0)& \mbox{if } p=2. 
\end{array}
\right.
$$
\end{lemma}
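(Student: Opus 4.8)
The plan is to read off the Newton polygon directly from the valuations $v(b_i)$ of the coefficients, which I would compute by expanding with the binomial theorem and controlling the binomial coefficients by means of Lemma~\ref{binom}. First I would record that for $1\le i\le 4n$,
$$
b_i=\binom{4n}{i}\alpha^{4n-i}+2n\binom{2n+1}{i}\alpha^{2n+1-i}-2n\binom{2n-1}{i}\alpha^{2n-1-i}
$$
(with the convention $\binom{a}{b}=0$ when $b>a$), while $b_0$ is the value of $s^{4n}+2n(s^{2n+1}-s^{2n-1})-1$ at $s=\alpha$. Since $v(\alpha)=0$, the three displayed summands have valuations $v(\binom{4n}{i})$, $v(2n)+v(\binom{2n+1}{i})$ and $v(2n)+v(\binom{2n-1}{i})$ respectively. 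Because $p\mid 2n$ we have $v(2n)\ge 1$; more precisely $v(2n)=v(n)=e$ when $p$ is odd, while $v(2n)=e-1$ and $v(n)=e-2$ when $p=2$ (so that $e\ge 2$). Write $\mathcal P$ for the lower convex hull of the points listed in the statement (in whichever case $p\ne 2$ or $p=2$), and $\mathcal P(i)$ for its height over the abscissa $i$. Since $\mathcal P$ is automatically convex, it suffices to prove two things: that $v(b_i)\ge\mathcal P(i)$ for every $i$, and that each vertex of $\mathcal P$ is genuinely a point $(i,v(b_i))$; then the true Newton polygon lies on or above $\mathcal P$ by the first point and on or below it by the second, hence equals $\mathcal P$.

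For the inequality $v(b_i)\ge\mathcal P(i)$, the abscissa $0$ (and $1,2$ when $p=2$) is immediate because $\mathcal P$ passes through the corresponding point by construction. For the remaining $i$ I would apply Lemma~\ref{binom} with $4n$ in place of $n$: it gives $v(\binom{4n}{p^j})=\max(e-j,0)$ and $v(\binom{4n}{i})>e-j$ for $0<i<p^j$, which together force $v(\binom{4n}{i})\ge\mathcal P(i)$ (the values $e-j$ sit exactly on the ``staircase'' through the $(p^j,e-j)$, and off the staircase the valuation exceeds the height of the staircase by at least one). The other two summands of $b_i$ have valuation at least $v(2n)$, which is at least the height of $\mathcal P$ on the relevant range of indices, so $v(b_i)\ge\mathcal P(i)$; in particular $v(b_{4n})=0$ and $v(b_{p^e})=0$.

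For the vertices, the general staircase vertex $(p^j,e-j)$ with $1\le j\le e$ (and $2\le j\le e$ when $p=2$) is attained because the first summand of $b_{p^j}$ has valuation exactly $e-j$ while the other two have strictly larger valuation $\ge v(2n)$. The delicate vertices are the small ones. For $p$ odd and $i=1$, all three summands of $b_1$ have valuation exactly $e$, so $b_1=2n(2\alpha^{4n-1}+(2n+1)\alpha^{2n}-(2n-1)\alpha^{2n-2})$; reducing the bracket modulo the prime of $v$, where $2n\equiv 0$, it becomes $\alpha^{2n-2}(2\alpha^{2n+1}+\alpha^2+1)$, which is a unit by the extra hypothesis, so $v(b_1)=e$ --- this is precisely where that hypothesis enters. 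For $p=2$ and $i=3$, a short computation of $v_2(\binom{4n}{3})$, $v_2(2n\binom{2n+1}{3})$ and $v_2(2n\binom{2n-1}{3})$, splitting off the cases $e=2$ and $e\ge 3$ (equivalently $n$ odd or $n$ even), shows that in each case exactly one of them equals $e-1$ and the others strictly exceed it, so $v(b_3)=e-1$.

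The main obstacle is exactly this last step: pinning down $v(b_1)$ for odd $p$ and $v(b_3)$ for $p=2$, where single factors of $p$ in the binomial coefficients are gained or lost in an $e$-dependent way and one must check that the minimum is achieved by a single summand (so that no unexpected cancellation raises the valuation), together with extracting the clean staircase estimate for $v(\binom{4n}{i})$ from the two halves of Lemma~\ref{binom}. One could instead exploit the factorisation $s^{4n}+2n(s^{2n+1}-s^{2n-1})-1=(s-1)(s+1)^3 s^{2n-2}G_n(s+s^{-1})$, which follows from Lemma~\ref{irredstats}(3), and add Newton polygons of the factors, but controlling the roots of $G_n$ modulo the prime of $v$ appears no easier, so I would carry out the direct computation above.
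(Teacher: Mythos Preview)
Your proposal is correct and follows essentially the same approach as the paper's own proof. Both arguments expand $b_i$ via the binomial theorem into three summands, bound $v\!\left(\binom{4n}{i}\right)$ using Lemma~\ref{binom} (applied with $4n$), observe that the two remaining summands have valuation at least $v(2n)$, and then verify the two delicate low-index cases: $v(b_1)=e$ for $p$ odd (using the hypothesis on $2\alpha^{2n+1}+\alpha^2+1$) and $v(b_3)=e-1$ for $p=2$. The only cosmetic differences are that the paper phrases the off-vertex estimate as $v(b_k)\ge\max(a_1,a_2)$ rather than $v(b_k)\ge\mathcal P(k)$, and for $b_3$ the paper factors out $2n\alpha^{2n-4}/3$ and shows the remaining bracket is a $2$-adic unit, whereas you compute the valuations of the three summands directly and split on $e=2$ versus $e\ge 3$; both computations yield the same conclusion.
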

\begin{proof}
  The Newton polygon is the lower convex hull of all the points
  $(i,v(b_i))$ for $0\leq i \leq 4n$. It suffices to show that for
  each point $(i,a)$ in the given sequences we have $a = v(b_i)$,
  while for each $k$ for which there is no point $(k,a)$ in the
  sequence, there is a pair $(i_1,a_1)$, $(i_2,a_2)$ of consecutive
  points with $i_1 < k < i_2$, such that $v(b_k)\geq \max(a_1,a_2)$;
  this would certainly imply that the point $(k,v(b_k))$ is not below
  the line segment through $(i_1,a_1)$ and $(i_2,a_2)$. Note that for
  $k\geq 1$ we have
\begin{equation}\label{bk}
  b_k = \binom{4n}{k}\alpha^{4n-k} +
2n\left( \binom{2n+1}{k}\alpha^{2n+1-k}
    - \binom{2n-1}{k}\alpha^{2n-1-k}\right).
\end{equation}
Suppose $p\neq 2$, and let $(i,a)$ be a point in the corresponding
given sequence.  If $i=0$, then $a=v(b_0)$ by definition. We have $b_1
= 2n\alpha^{2n-2}(2\alpha^{2n+1}+\alpha^2+1+2n(\alpha^2-1))$. By
hypothesis we have $v(2\alpha^{2n+1}+\alpha^2+1)=0$ and as
$v(2n(\alpha^2-1))$ is positive, the valuation of the last factor of
$b_1$ is zero. Therefore, if $i=1$, then $v(b_i) = v(2n) =v(4n)= e=a$, as
needed.  If $i= p^j$ for $1\leq j \leq e$, then by Lemma \ref{binom}
the valuation of the first term in (\ref{bk}) for $k=i$ equals $e-j$,
while the valuation of the second term is at least $v(2n) = e$, so we
get $v(b_i) = e-j = a$, as needed. If $i=4n$, then $b_i=1$, so
$v(b_i)=0=a$, as needed. Suppose $k\leq 4n$ is an integer for which
there is no point $(k,a)$ in the sequence. If $k>p^e$, then all we
need to note is that $v(b_k)\geq 0$. If $k\leq p^e$, then there is a
$j \in \{1,2,\ldots, e\}$ such that $p^{j-1}<k<p^j$, in which case the
first term of (\ref{bk}) has valuation at least $e-j+1$ by Lemma
\ref{binom}, while the second term has valuation at least $e$, so we
have $v(b_k)\geq e-j+1 = \max(e-j+1,e-j)$, which is exactly what we
wanted to show.

Now suppose $p=2$, and let $(i,a)$ be a point in the corresponding
given sequence.  If $0\leq i\leq 2$, then $a=v(b_i)$ by
definition. Note that
$$
b_3 = \frac{2n\alpha^{2n-4}}{3}\left(2(8n^2-6n+1)\alpha^{2n+1}
  +n(4n^2-1)\alpha^{2} - (n-1)(4n^2-8n+3)\right).
$$
The first term between the parentheses has valuation $1$, while of the
second and third term, exactly one has valuation $1$, and the other has
valuation $0$, as exactly one of $n$ and $n-1$ is even. We conclude
that the expression between the parentheses has valuation $0$, so if
$i=3$, then $v(b_i) = v(2n) = e-1 = a$, as needed.  If $i= 2^j$ for
$2\leq j \leq e$, then by Lemma \ref{binom} the valuation of the first
term in (\ref{bk}) for $k=i$ equals $e-j$, while the valuation of the
second term is at least $v(2n) = e-1>e-j$, so we get $v(b_i) = e-j =
a$, as needed. If $i=4n$, then $b_i=1$, so $v(b_i)=0=a$, as
needed. Suppose $k\leq 4n$ is an integer for which there is no point
$(k,a)$ in the sequence. If $k>2^e$, then all we need to note is that
$v(b_k)\geq 0$. If $k\leq 2^e$, then there is a $j \in \{3,4,\ldots,
e\}$ such that $2^{j-1}<k<2^j$, in which case the first term of
(\ref{bk}) has valuation at least $e-j+1$ by Lemma \ref{binom}, while
the second term has valuation at least $e-1\geq e-j+1$, so we have
$v(b_k)\geq e-j+1 = \max(e-j+1,e-j)$, which is exactly what we wanted
to show.  This finishes the proof.
\end{proof}

\begin{lemma}\label{newtonone}
  Let $n$ be any positive integer and $p$ a prime. Set $e=v_p(4n)$.
  Then the Newton polygon of
$$
(S+1)^{4n}+2n\left((S+1)^{2n+1}-(S+1)^{2n-1}\right) -1 
$$
at $v_p$ has vertices 
$$
\left\{
\begin{array}{ll}
(0,\infty),(1,e),\ldots,(p^j,e-j),\ldots,(p^e,0),(4n,0)& 
\mbox{if } p\neq 2,\\
(0,\infty),(1,e+1),(4,e-2),\ldots,(2^j,e-j),\ldots,(2^e,0),(4n,0)& 
\mbox{if } p=2. 
\end{array}
\right.
$$
\end{lemma}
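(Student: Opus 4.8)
The plan is to recognize the displayed polynomial as the $\alpha=1$ specialization, over $K=\Q$ with $v=v_p$, of the polynomial appearing in Lemma~\ref{newtonpoly}, so that that lemma does almost all of the work. If $p\nmid 2n$ then $p$ is odd and $e=v_p(4n)=0$; in this case one argues directly that the constant term is $b_0=1+2n(1-1)-1=0$, that $b_1=8n$ has $v_p(b_1)=0$ by formula~(\ref{bk}), that every $b_i$ is a $p$-adic integer, and that the leading coefficient is $b_{4n}=1$ (since $2n+1<4n$), which yields a Newton polygon consisting of the vertical segment from $(0,\infty)$ to $(1,0)$ followed by the horizontal segment to $(4n,0)$, as claimed. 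So from now on assume $p\mid 2n$. Then the hypotheses of Lemma~\ref{newtonpoly} are met: $v_p(\alpha)=v_p(1)=0$, and for $p\neq 2$ the extra condition reads $v_p(2\alpha^{2n+1}+\alpha^2+1)=v_p(4)=0$, which holds since $p$ is odd. Also $\alpha=1$ forces $b_0=0$, so the point $(0,v(b_0))$ of Lemma~\ref{newtonpoly} becomes $(0,\infty)$.

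For $p\neq 2$, Lemma~\ref{newtonpoly} exhibits the Newton polygon as the lower convex hull of $(0,\infty),(1,e),(p,e-1),\ldots,(p^j,e-j),\ldots,(p^e,0),(4n,0)$, and it remains only to note that none of these points is superfluous: the slopes of the segments joining consecutive points among $(1,e),(p,e-1),\ldots,(p^e,0),(4n,0)$ are $-1/(p-1),-1/(p^2-p),\ldots,-1/(p^e-p^{e-1}),0$, which strictly increase, so, prepending the vertical segment from $(0,\infty)$ to $(1,e)$, these are precisely the vertices.

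For $p=2$, Lemma~\ref{newtonpoly} gives the lower convex hull of $(0,\infty),(1,v_2(b_1)),(2,v_2(b_2)),(3,e-1),(4,e-2),\ldots,(2^j,e-j),\ldots,(2^e,0),(4n,0)$, so I must still evaluate $v_2(b_1)$ and $v_2(b_2)$ at $\alpha=1$. From formula~(\ref{bk}) and the identity $\binom{2n+1}{2}-\binom{2n-1}{2}=4n-1$ one gets $b_1=8n$ and $b_2=4n(4n-1)$, hence $v_2(b_1)=v_2(8n)=e+1$ and $v_2(b_2)=v_2(4n)=e$. Therefore the points $(2,e)$ and $(3,e-1)$ both lie on the segment of slope $-1$ through $(1,e+1)$ and $(4,e-2)$ and are not vertices, while from $(4,e-2)$ onward the slopes $-1/4,-1/8,\ldots,0$ strictly increase exactly as before; the surviving vertices are $(0,\infty),(1,e+1),(4,e-2),\ldots,(2^j,e-j),\ldots,(2^e,0),(4n,0)$, as claimed.

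I do not expect a genuine obstacle: everything is bookkeeping layered on Lemma~\ref{newtonpoly}. The one place needing care is the $p=2$ case, where one must verify that $(1,e+1),(2,e),(3,e-1),(4,e-2)$ are exactly collinear (slope precisely $-1$), which is what collapses the ``generic'' hull of Lemma~\ref{newtonpoly} into the cleaner list stated here; this comes down to the two small evaluations $b_1=8n$ and $b_2=4n(4n-1)$, the companion fact $v_2(b_3)=e-1$ already being recorded inside the proof of Lemma~\ref{newtonpoly}.
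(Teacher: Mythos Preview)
Your proof is correct and follows essentially the same approach as the paper: handle the case $p\nmid 2n$ directly from $b_0=0$ and $v_p(b_1)=v_p(8n)=0$, and for $p\mid 2n$ apply Lemma~\ref{newtonpoly} with $\alpha=1$, using the explicit values $b_0=0$, $b_1=8n$, $b_2=4n(4n-1)$. Your additional verification that the listed points are genuinely vertices (checking strict monotonicity of slopes, and collinearity of $(1,e{+}1),(2,e),(3,e{-}1),(4,e{-}2)$ when $p=2$) is a welcome bit of care that the paper leaves implicit.
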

\begin{proof}
  The terms of lowest degree in the polynomial are $0S^0
  +8nS^1+4n(4n-1)S^2$.  If $p$ divides $2n$, then we can apply Lemma
  \ref{newtonpoly} with $\alpha=1$, and the result follows immediately
  from $b_0=0$, $b_1=8n$, and $b_2=4n(4n-1)$. If $p$ does not divide
  $2n$, then $p \neq 2$, and $e=0$, and $v_p(b_1) = 0$. It follows
  that the Newton polygon has vertices $(0,\infty)$, $(1,0)$, and
  $(4n,0)$, exactly as claimed.
\end{proof}

\begin{proposition}\label{notthreeok}
  Let $n$ be any positive integer and $p$ a prime. Let $K$ be a number
  field containing a root $\omega$ of $G_n$. Let $v$ be a valuation on
  $K$ with $v(p)=1$. Then we have $0\leq v(\omega-2) \leq 1$ and
  $v(h_{2n}(\omega)^2-1)\leq v(2n)+1$.  If $p$ divides $2n$, then we
  also have $v(2n)\leq v(h_{2n}(\omega)^2-1)$.  Moreover, if $p\neq 3$
  or $v(n)=0$, then the upper bounds in the first two inequalities are
  strict.
\end{proposition}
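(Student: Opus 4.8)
The plan is to reduce the whole statement to a bound on $v(\omega-2)$ and then read that bound off from a Newton polygon. Since $G_n$ is monic (Lemma~\ref{irredstats}(2)) and $g_n\in\Z[u]$, both $\omega$ and $g_n(\omega)$ are algebraic integers, so $v(\omega-2)\geq 0$ and $v(g_n(\omega))\geq 0$. Equation~(\ref{sqminoneeq}) gives
$$
v\big(h_{2n}(\omega)^2-1\big)=v(2n)+v(\omega-2)-2v\big(g_n(\omega)\big)\leq v(2n)+v(\omega-2).
$$
If $p\mid 2n$, then $v(g_n(\omega))=0$ by Lemma~\ref{vgntauzero}, so the left side equals $v(2n)+v(\omega-2)\geq v(2n)$; this is the lower bound. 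It therefore remains to prove that $v(\omega-2)\leq 1$, with strict inequality unless $p=3$ and $v(n)>0$: the bound $v(h_{2n}(\omega)^2-1)\leq v(2n)+1$ and its strictness follow at once from the display.

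To study $v(\omega-2)$ I would pass to a root of a concrete polynomial. By Lemma~\ref{irredstats}(3) we have $(u+2)G_n=f_{2n}+2n$, so $G_n(\omega)=0$ forces $f_{2n}(\omega)=-2n$; moreover $\omega\neq\pm 2$, since $G_n(2)=n$ and $G_n(-2)=\tfrac13 n(4n^2-1)$ are nonzero (Lemma~\ref{irredstats}(4)). Pick $s\in\Qbar$ with $s+s^{-1}=\omega$ and extend $v$ to $\Qbar$; then $s\neq\pm 1$, and by Lemma~\ref{stillbasic} the identity $f_{2n}(\omega)=(s^{2n}-s^{-2n})/(s-s^{-1})=-2n$ becomes, after clearing denominators and multiplying by $s^{2n}$, the assertion that $s$ is a root of
$$
Q(X)=X^{4n}+2n\big(X^{2n+1}-X^{2n-1}\big)-1.
$$
As $Q$ is monic with constant term $-1$ and $X^{4n}Q(1/X)=-Q(X)$, both $s$ and $s^{-1}$ are algebraic integers, so $v(s)=0$; and $\omega-2=s+s^{-1}-2=(s-1)^2/s$, whence $v(\omega-2)=2v(s-1)$. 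Thus it suffices to show $v(s-1)\leq\tfrac12$ for every root $s\neq\pm 1$ of $Q$, with strict inequality unless $p=3$ and $v(n)>0$.

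Now $s-1$ is a root of $Q(S+1)=(S+1)^{4n}+2n\big((S+1)^{2n+1}-(S+1)^{2n-1}\big)-1$, whose Newton polygon at $v_p$ is determined by Lemma~\ref{newtonone}; Lemma~\ref{polyroots} then lists the attainable values of $v(s-1)$. Put $e=v(4n)$, so that $e=v(n)$ when $p$ is odd and $e=v(n)+2\geq 2$ when $p=2$. If $p$ is odd and $e=0$, the polygon has only its vertical edge and a slope-$0$ edge, so $v(s-1)=0$ for every $s\neq 1$. If $p$ is odd and $e\geq 1$, the non-vertical slopes of the polygon are $-1/\big(p^{j-1}(p-1)\big)$ for $1\leq j\leq e$ and $0$, so $v(s-1)\leq 1/(p-1)\leq\tfrac12$ for $s\neq 1$, with $1/(p-1)=\tfrac12$ only if $p=3$. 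If $p=2$, then Lemma~\ref{newtonone} shows that the roots $s$ of $Q$ with $v(s-1)\geq\tfrac12$ are $s=1$ together with exactly three roots having $v(s-1)=1$, and those three all equal $-1$: indeed $Q=(X^2-1)R$ with $R=\sum_{i=0}^{2n-1}X^{2i}+2nX^{2n-1}$, and $R(-1)=R'(-1)=0$ while $R''(-1)=\tfrac13\cdot 2n(2n-1)(2n+1)\neq 0$, so $(X+1)^3$ divides $Q$ exactly; hence every root $s\neq\pm 1$ of $Q$ has $v(s-1)\leq\tfrac14$. Combining the cases, $v(s-1)\leq\tfrac12$ for all roots $s\neq\pm 1$ of $Q$, with strict inequality unless $p=3$ and $v(n)>0$; this yields $v(\omega-2)\leq 1$ with the asserted strictness and completes the proof.

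The part I expect to require the most care is the bookkeeping around the exceptional roots $s=\pm 1$ of $Q$: matching the Newton-polygon edges of Lemma~\ref{newtonone} with these roots, in particular checking that $-1$ is exactly a triple root of $Q$ (so that for $p=2$ no root with $s\neq\pm 1$ attains $v(s-1)=1$), and tracking in each case precisely when $v(\omega-2)=1$ can occur, so that the strictness clause is stated for exactly the right primes.
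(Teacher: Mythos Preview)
Your proof is correct and follows essentially the same route as the paper: reduce everything to bounding $v(\omega-2)=2v(s-1)$ via equation~(\ref{sqminoneeq}) and Lemma~\ref{vgntauzero}, pass to a root $s$ of $X^{4n}+2n(X^{2n+1}-X^{2n-1})-1$, and read off $v(s-1)$ from the Newton polygon of Lemma~\ref{newtonone}, handling the exceptional roots $s=\pm 1$ separately. The only difference is cosmetic: the paper simply asserts that $-1$ is a triple root of $Q$ (accounting for the slope~$-1$ segment when $p=2$), whereas you verify this explicitly via the factorisation $Q=(X^2-1)R$ and the computation of $R(-1),R'(-1),R''(-1)$.
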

\begin{proof}
  By Lemma \ref{irredstats}(2) the root $\omega$ is an algebraic
  integer, so we have $v(2-\omega)\geq 0$ and $v(g_n(\omega))\geq 0$.
  From (\ref{sqminoneeq}) we know $h_{2n}(\omega)^2-1 =
  2n(2-\omega)g_n(\omega)^{-2}$, so $v(\omega-2) \leq 1$ implies
  $v(h_{2n}(\omega)^2-1)\leq v(2n)+1$ and if the former inequality is
  strict, then so is the latter. Also, if $p$ divides $2n$, then by
  Lemma \ref{vgntauzero} we have $v(g_n(\omega))=0$, so
  $v(h_{2n}(\omega)^2-1) = v(2n)+v(2-\omega)\geq v(2n)$. Therefore
  it suffices to show that $v(\omega-2) \leq 1$, and that this
  inequality is strict in the claimed cases.  Let $L$ be a finite
  field extension of $K$ containing a root $\sigma$ of $s^2-\omega s +
  1 =0$, and extend $v$ to $L$.  Then $\omega = \sigma + \sigma^{-1}$,
  so $\sigma$ is a root of $f = s^{4n}+2n(s^{2n+1}-s^{2n-1})-1$ by
  Lemma \ref{irredstats}(3).  This implies that $\sigma -1$ is a root
  of the polynomial in Lemma \ref{newtonone}, which we will denote by
  $F$. First consider the case $p=2$.  The polynomial $f$ has roots
  $1$ and $-1$ of multiplicity $1$ and $3$ respectively, corresponding
  to roots $0$ and $-2$ of $F$, which in turn correspond to the line
  segments of the Newton polygon from $(0,\infty)$ to $(1,e+1)$ and
  from $(1,e+1)$ to $(4,e-2)$ respectively by Lemma \ref{polyroots}.
  If $\sigma$ were one of these roots of $f$, then we would have
  $\omega = \pm 2$, which contradicts $G_n(2)=n$ and $G_n(-2) =
  \frac{1}{3}n(4n^2-1)$ by Lemma \ref{irredstats}.  The root
  $\sigma-1$ of $F$ therefore corresponds to another segment of the
  Newton polygon of $F$, all of which have slope between
  $-\frac{1}{4}$ and $0$, so we have $0\leq v(\sigma-1) \leq
  \frac{1}{4}$ by Lemma \ref{polyroots}, and thus $v(\omega-2) =
  v(\sigma^{-1}(\sigma - 1)^2) = 2v(\sigma-1) \leq \frac{1}{2}<1$. Now
  consider the case $p>2$. We still have $\sigma -1 \neq 0$, so the
  root $\sigma -1$ of $F$ corresponds to a nonvertical segment of the
  Newton polygon of $F$. These segments all have slope equal to $0$ or
  $-1/(p^j-p^{j-1})$ for some $1\leq j \leq v(n)$, so we have
  $v(\sigma-1) \leq \frac{1}{p^j-p^{j-1}}$, and $0\leq v(\omega-2) =
  2v(\sigma-1) \leq \frac{2}{p^j-p^{j-1}}\leq 1$, where the equality
  follows as it did in case $p=2$. The last inequality is strict
  unless $p=3$ and $j=1$, in which case $v(n)>0$. This proves the
  proposition.
\end{proof}

If $k$ and $l$ are even and $kl$ is positive, and $k$ and $l$ are not
equal and do not differ by a factor $3$, then the results above are
sufficient to show that there exists a prime $p$ such that the values
at critical points of $h_k$ are different from those of $h_l$. This
would show that $D(k,l)$ is smooth over $\Q$. The following results
allow us to also handle the case that $k$ and $l$ differ by a factor
$3$.

\begin{lemma}\label{newtoni}
  Let $n$ be any positive integral multiple of $3$.  Let $K$ be the
  number field $\Q(i) = \Q[x]/(x^2+1)$ and let $v$ be the unique
  valuation on $K$ satisfying $v(3)=1$. Set $e=v(n)$.  Then the Newton
  polygon of
$$
(S+i)^{4n}+2n\left((S+i)^{2n+1}-(S+i)^{2n-1}\right) -1 
$$
at $v$ has vertices 
$$
(0,e),(3,e-1),\ldots,(3^j,e-j),\ldots,(3^e,0),(4n,0).\\
$$
\end{lemma}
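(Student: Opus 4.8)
The plan is to deduce this directly from Lemma \ref{newtonpoly}, applied with $p = 3$, $\alpha = i$, and $K = \Q(i)$, and then to read off the vertices of the resulting Newton polygon.

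First I would verify the hypotheses of Lemma \ref{newtonpoly}. From $i^2 = -1$ we get $2v(i) = v(-1) = 0$, so $v(i) = 0$ and $\alpha = i$ satisfies $v(\alpha) = 0$. Since $p = 3 \neq 2$, the lemma also requires $v(2\alpha^{2n+1} + \alpha^2 + 1) = 0$; here $\alpha^2 + 1 = 0$ and $i^{2n+1} = (i^2)^n i = (-1)^n i$, so $2\alpha^{2n+1} + \alpha^2 + 1 = 2(-1)^n i$, which has valuation $v(2) + v(i) = 0$ because $3 \nmid 2$. As $3 \mid n$ we have $3 \mid 2n$, and since $3 \nmid 4$, the quantity ``$v(4n)$'' appearing in Lemma \ref{newtonpoly} equals our $e = v(n)$. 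Hence Lemma \ref{newtonpoly} applies and shows that the Newton polygon is the lower convex hull of the points $(0, v(b_0))$, $(1, e)$, $(3, e-1)$, $\ldots$, $(3^j, e-j)$, $\ldots$, $(3^e, 0)$, $(4n, 0)$, where $b_0$ is the constant term of the displayed polynomial.

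Next I would compute $b_0$ by substituting $S = 0$. Using $i^{4n} = 1$ and $i^{2n+1} - i^{2n-1} = i^{2n-1}(i^2-1) = -2 i^{2n-1} = 2(-1)^n i$, we obtain
\[
b_0 = i^{4n} + 2n\big(i^{2n+1} - i^{2n-1}\big) - 1 = 4n(-1)^n i,
\]
so $v(b_0) = v(4n) = e$. It then remains to identify the vertices of the lower convex hull of $(0,e)$, $(1,e)$, $(3,e-1)$, $\ldots$, $(3^e,0)$, $(4n,0)$. The point $(1,e)$ lies strictly above the segment from $(0,e)$ to $(3,e-1)$, which passes through height $e - \frac{1}{3}$ at abscissa $1$, so $(1,e)$ is not a vertex. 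The remaining points are genuine vertices because the slopes $-\frac{1}{3}, -\frac{1}{6}, -\frac{1}{18}, \ldots, 0$ of the successive segments joining them form a strictly increasing sequence; here one uses $3^e \mid n$, hence $3^e \le n < 4n$, so that $(3^e, 0)$ and $(4n,0)$ are distinct and no listed abscissa exceeds $4n$. This yields precisely the claimed list of vertices. I expect no genuine obstacle: granted Lemma \ref{newtonpoly}, the argument is bookkeeping, the only steps requiring a little care being the side hypothesis $v(2\alpha^{2n+1}+\alpha^2+1) = 0$ at $\alpha = i$ and the evaluation of $b_0$.
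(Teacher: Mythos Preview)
Your proof is correct and follows exactly the approach of the paper, which simply says the lemma follows immediately from Lemma \ref{newtonpoly} with $\alpha = i$ and $p = 3$. You have carefully filled in the details the paper leaves implicit: verifying the side hypothesis $v(2\alpha^{2n+1}+\alpha^2+1)=0$, computing $b_0 = 4n(-1)^n i$ so that $v(b_0)=e$, and checking that $(1,e)$ is not a vertex while the remaining listed points are.
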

\begin{proof}
This follows immediately from Lemma \ref{newtonpoly}
with $\alpha = i$ and $p=3$. 
\end{proof}

\begin{proposition}\label{minn}
  Let $n$ be any positive integral multiple of $3$.  Let $K$ be a
  number field containing a root $\omega$ of $G_n$. Let $v$ be a
  valuation on $K$ with $v(3)=1$. Then $v(2n) \leq
  v(h_{2n}(\omega)^2-1- n) < v(2n)+1$.
\end{proposition}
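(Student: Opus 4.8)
The plan is to reduce the two-sided estimate to the single assertion $0\le v\bigl(4-2\omega-g_n(\omega)^2\bigr)<1$, and then to analyse this quantity according to the reduction $\overline{\omega}$ of $\omega$ modulo the prime $\p$ of $v$. For the reduction step: since $3\mid 2n$, Lemma \ref{vgntauzero} gives $v(g_n(\omega))=0$, and $v(3)=1$ forces $v(2)=0$, hence $v(n)=v(2n)$; by \eqref{sqminoneeq} we have $h_{2n}(\omega)^2-1-n=n\bigl(4-2\omega-g_n(\omega)^2\bigr)/g_n(\omega)^2$, so $v\bigl(h_{2n}(\omega)^2-1-n\bigr)=v(2n)+v\bigl(4-2\omega-g_n(\omega)^2\bigr)$, and the proposition becomes $0\le v\bigl(4-2\omega-g_n(\omega)^2\bigr)<1$. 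The lower bound is immediate because $G_n$ is monic (Lemma \ref{irredstats}(2)), so $\omega$ is an algebraic integer. For the upper bound I would first rewrite the expression: evaluating \eqref{twontminn} at the root $\omega$ of $G_n$ gives $g_{n+1}(\omega)^2-g_n(\omega)^2=2n(2-\omega)$, and \eqref{start} together with $g_ng_{n+1}=(u-2)f_n^2+1$ (Lemma \ref{stillbasic}) gives $g_{n+1}(\omega)^2+g_n(\omega)^2=2+\omega(\omega-2)f_n(\omega)^2$; subtracting, one obtains
\[
4-2\omega-g_n(\omega)^2=(3+2n)-(2+n)\omega-\tfrac12\,\omega(\omega-2)f_n(\omega)^2 .
\]

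Next I would pin down the possibilities for $\overline{\omega}$. Writing $n=3^em$ with $3\nmid m$ (so $e\ge1$), Lemma \ref{irredstats}(3), the Frobenius in characteristic $3$, and $3\mid 2n$ yield the factorisations $\overline{f}_n=(u^2-4)^{(3^e-1)/2}\,\overline{f}_m^{\,3^e}$ and $\overline{G}_n=(u-2)^{(3^e-1)/2}(u+2)^{(3^e-3)/2}\,\overline{f}_{2m}^{\,3^e}$ in $\F_3[u]$, where moreover $\overline{f}_{2m}$ is separable over $\F_3$ (as $3\nmid 4m$) and $\overline{f}_{2m}=\overline{f}_m\,\overline{T}_m$ with $T_m=f_{m+1}-f_{m-1}=g_m+g_{m+1}$. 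Hence $\overline{\omega}\in\{2,-2\}$ or $\overline{\omega}$ is a root of $\overline{f}_m$ or of $\overline{T}_m$, these cases being disjoint since $f_{2m}(\pm2)=\pm2m\not\equiv0\pmod3$. The two cases $\overline{\omega}=\pm2$ are dispatched at once: from $g_n(2)=1$ and $g_n(-2)=(-1)^{n-1}(2n-1)\equiv(-1)^n\pmod3$ one sees that $4-2\omega-g_n(\omega)^2$ reduces to $-1$ resp. to $1$, so its valuation is $0$.

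The main case is $\overline{\omega}$ a root of $\overline{f}_{2m}$, where $v(\omega-2)=0$. By Lemma \ref{irredstats}(3) we may write $\omega=\sigma+\sigma^{-1}$ for a root $\sigma$ of $f=s^{4n}+2n(s^{2n+1}-s^{2n-1})-1$; since $\overline{f}=(s^{4m}-1)^{3^e}$, its reduction $\zeta=\overline\sigma$ is a $4m$-th root of unity with $\zeta\ne\pm1$. I would choose a $4m$-th root of unity $\tilde\zeta$ lifting $\zeta$ (possible as $3\nmid 4m$), put $\epsilon=\tilde\zeta^{2m}\in\{\pm1\}$ and $\tilde\omega=\tilde\zeta+\tilde\zeta^{-1}$, and apply Lemma \ref{newtonpoly} to $f(S+\tilde\zeta)$ with $\alpha=\tilde\zeta$, $p=3$ — its hypotheses hold because $v(f(\tilde\zeta))=v\bigl(2n\tilde\zeta^{2n-1}(\tilde\zeta^2-1)\bigr)=v(2n)$ and $v\bigl(2\tilde\zeta^{2n+1}+\tilde\zeta^2+1\bigr)=0$ (the reduction being $(\zeta+\epsilon)^2\ne0$). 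The resulting Newton polygon begins with the segment from $(0,e)$ to $(3,e-1)$, so by Lemma \ref{polyroots} every root of $f$ reducing to $\zeta$, in particular $\sigma$, satisfies $v(\sigma-\tilde\zeta)\le\frac13$. Since $\omega-\tilde\omega=(\sigma-\tilde\zeta)\bigl(1-(\sigma\tilde\zeta)^{-1}\bigr)$ and $v\bigl(1-(\sigma\tilde\zeta)^{-1}\bigr)=0$ (as $\zeta^2\ne1$), this gives $v(\omega-\tilde\omega)\le\frac13$, and in fact $v(\omega)\le\frac13$ too: either $\overline\omega\ne0$, giving $v(\omega)=0$, or $\overline\omega=0$, in which case $\tilde\zeta$ is a primitive fourth root of unity and $\tilde\omega=0$.

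Finally I would split the main case in two. If $\overline{f}_m(\overline\omega)=0$ (equivalently $\epsilon=1$), then $\overline{f}_n(\overline\omega)=0$, so $v(f_n(\omega))>0$; in the displayed identity the three terms then have valuations $\ge1$, $v(\omega)$, and $v(\omega)+2v(f_n(\omega))>v(\omega)$, so the minimum $v(\omega)<1$ is attained uniquely and $v\bigl(4-2\omega-g_n(\omega)^2\bigr)=v(\omega)<1$. If instead $\overline\omega$ is a root of $\overline{T}_m$ (equivalently $\epsilon=-1$), then $\overline{f}_m(\overline\omega)\ne0$ by separability of $\overline{f}_{2m}$, so $v(f_n(\omega))=0$ and now the second and third terms both have valuation $v(\omega)$; writing $4-2\omega-g_n(\omega)^2=(3+2n)+\omega D$ with $D=-(2+n)-\tfrac12(\omega-2)f_n(\omega)^2$, and noting $v(3+2n)\ge1>v(\omega)$, it remains to show $\overline D\ne0$. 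This is where the argument is delicate, and is the step I expect to be the main obstacle: using $g_m+g_{m+1}=T_m$ and $\overline{T}_m(\overline\omega)=0$ one gets $\overline{g}_{m+1}(\overline\omega)=-\overline{g}_m(\overline\omega)$, whence $(2-\overline\omega)\overline{f}_m(\overline\omega)^2=1-\overline{g}_m(\overline\omega)\overline{g}_{m+1}(\overline\omega)=1+\overline{g}_m(\overline\omega)^2$ by Lemma \ref{stillbasic}; combined with $\overline{f}_n(\overline\omega)^2=\overline{f}_m(\overline\omega)^2$ (since $\zeta^n=\zeta^{\pm m}$), reduction mod $\p$ gives $\overline D\equiv1+(\overline\omega-2)\overline{f}_n(\overline\omega)^2=-\overline{g}_m(\overline\omega)^2$, which is nonzero because $\overline{g}_m(\overline\omega)^2=\overline{g_n(\omega)}^2\ne0$ by Lemma \ref{vgntauzero}. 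Hence $v(D)=0$, so $v\bigl(4-2\omega-g_n(\omega)^2\bigr)=v(\omega)<1$, and the proof is complete.
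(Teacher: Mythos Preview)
Your proof is correct and takes a genuinely different route from the paper's.

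Both arguments begin the same way: reduce to showing $0\le v(A)<1$ with $A=4-2\omega-g_n(\omega)^2$, the lower bound being immediate. From there they diverge. The paper adjoins $i=\sqrt{-1}$ and applies Lemma~\ref{newtoni} (i.e.\ Lemma~\ref{newtonpoly} at the fixed point $\alpha=i$) to get $v(\sigma\pm i)\le\tfrac13$, then exploits an ad hoc algebraic identity $-(\sigma+1)^2A=X(\varepsilon)+Y(\varepsilon)+Z(\varepsilon)$ and a dichotomy on whether $\sigma^{2n}-1$ is a unit. Your argument instead rewrites $A$ in terms of $f_n(\omega)$, classifies the possible reductions $\overline\omega\pmod\p$ via the explicit factorisation $\overline{G}_n=(u-2)^{(3^e-1)/2}(u+2)^{(3^e-3)/2}\overline{f}_{2m}^{\,3^e}$, and in the main case applies Lemma~\ref{newtonpoly} at a \emph{variable} point $\alpha=\tilde\zeta$ chosen to lift $\overline\sigma$. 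Your approach is more transparent and systematic---each case has a clear meaning in terms of which factor of $\overline{f}_{2m}=\overline{f}_m\overline{T}_m$ vanishes---at the cost of more cases; the paper's is shorter and handles all $\overline\omega$ at once, but the decomposition $X+Y+Z$ is unmotivated. Note that the paper's use of $\alpha=i$ is essentially your $\overline\omega=0$ case, and its dichotomy on $\sigma^{2n}-1$ mirrors your split $\epsilon=\pm1$; the two proofs are cousins, organised around different pivots.
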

\begin{proof}
  From (\ref{sqminoneeq}) we deduce $h_{2n}(\omega)^2-1-n =
  ng_n(\omega)^{-2} A$ with $A = 4-2\omega-g_n(\omega)^2$. By Lemma
  \ref{vgntauzero} we have $v(g_n(\omega))=0$, so
  $v(h_{2n}(\omega)^2-1-n) = v(n) + v(A)$. As $\omega$ is an algebraic
  integer, we have $v(A)\geq 0$, so it suffices to show $v(A) <1$. Let
  $L$ be a finite field extension of $K$ containing a square root $i$
  of $-1$ and a root $\sigma$ of $s^2-\omega s + 1 =0$, and extend $v$
  to $L$.  Let $R$ denote the discrete valuation ring of $L$
  associated to $v$, and $\m$ its maximal ideal. For each $\varepsilon
  \in \{\pm 1\}$ we have $-(\sigma+1)^2A =
  X(\varepsilon)+Y(\varepsilon)+Z(\varepsilon)$ with
  $X(\varepsilon)=2\sigma^{-1}(\sigma-\varepsilon)^2(\sigma^2+1)$,
  $Y(\varepsilon)=3\varepsilon(\sigma-\varepsilon)^2$, and
  $Z(\varepsilon)=\varepsilon(\sigma^{2n}+\varepsilon)
  (\sigma^{2-2n}+\varepsilon)$.

  We have $\omega = \sigma + \sigma^{-1}$, so $\sigma$ is a root of $f
  = s^{4n}+2n(s^{2n+1}-s^{2n-1})-1$ by Lemma \ref{irredstats}(3).
  This implies that $\sigma-i$ is a root of the polynomial in Lemma
  \ref{newtoni}.  Since the slopes of the Newton polygon of this
  polynomial are between $-\frac{1}{3}$ and $0$ by Lemma
  \ref{newtoni}, we have $v(\sigma-i) \leq \frac{1}{3}$ by Lemma
  \ref{polyroots}.  Replacing $i$ by $-i$ temporarily, we also find
  $v(\sigma+i) \leq \frac{1}{3}$, so we get $v(\sigma^2+1) =
  v(\sigma+i)+v(\sigma-i) \leq \frac{2}{3} < 1$. From $f(\sigma)=0$ we
  get $(\sigma^{2n}-1)(\sigma^{2n}+1)=-2n\sigma^{2n-1}(\sigma^2-1)$.
  As the elements $\sigma^{2n}-1$ and $\sigma^{2n}+1$ differ by $2$,
  which is a unit in $R$, at least one of them is also a unit, with
  valuation $0$, so we conclude
\begin{align*}
  \max\big(&v(\sigma^{2n}+1),v(\sigma^{2n}-1)\big) =
  v(\sigma^{2n}+1)+v(\sigma^{2n}-1) \\
  &=v\big((\sigma^{2n}+1)(\sigma^{2n}-1)\big) 
  =v\big(-2n\sigma^{2n-1}(\sigma^2-1)\big) \\
  &=v(n) + v(\sigma^2-1) \geq 1 + v(\sigma^2-1) \geq 1+v(\sigma+1).
\end{align*}
%
%
Suppose first that $\sigma^{2n}-1$ is a unit, and thus that
$v(\sigma^{2n}+1) \geq 1$. Since $\sigma^{2n}-1$ is a multiple of
$\sigma^2-1$ in $R$, we find that $\sigma^2-1$ is also a unit, so
$v(\sigma+1)=v(\sigma-1)=0$.
%
%
We get $v(X(1)) = v(\sigma^2+1) < 1$, while $v(Y(1)), v(Z(1)) \geq 1$,
so we obtain $v(A) = v(-(\sigma+1)^2A) = v(X(1)+Y(1)+Z(1)) = v(X(1)) <
1$ and we are done. Hence we may assume that $\sigma^{2n}-1$ is not a
unit, so $\sigma^{2n}+1$ is a unit and we have $v(\sigma^{2n}-1) \geq
1 + v(\sigma+1)$. Since $\sigma^{2-2n}-1$ is a multiple of $\sigma+1$
we also have $v(\sigma^{2-2n}-1) \geq v(\sigma+1)$ and thus $v(Z(-1))
\geq (1+ v(\sigma+1)) +v(\sigma+1) \geq 1+2v(\sigma+1)$.  We also have
$v(Y(-1)) = 1+2v(\sigma+1)$ and $v(X(-1)) =v(\sigma^2+1)+2v(\sigma+1)<
1 + 2v(\sigma+1)$. This yields $v(A) = v(-(\sigma+1)^2A) -
2v(\sigma+1) = v(X(-1)+Y(-1)+Z(-1)) - 2v(\sigma+1) = v(X(-1)) -
2v(\sigma+1) < 1$, which finishes the proof.
\end{proof}

\begin{lemma}\label{newtonalpha}
  Let $n$ be any positive integral multiple of $3$.  Let $K$ be the
  number field $\Q[x]/(x^2-3)$, let $\beta$ be the image of $x$ in
  $K$, and let $v$ be the unique valuation on $K$ satisfying
  $v(3)=1$. Set $e=v(n)$ and $\alpha = -2+\beta$.  Then the Newton
  polygon of
$$
(S+\alpha)^{4n}+2n\left((S+\alpha)^{2n+1}-(S+\alpha)^{2n-1}\right) -1 
$$
at $v$ has vertices 
$$
(0,e+\textstyle{\frac{3}{2}}),(1,e),(3,e-1),\ldots,(3^j,e-j),
          \ldots,(3^e,0),(4n,0).\\
$$
\end{lemma}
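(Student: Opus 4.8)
The plan is to apply Lemma~\ref{newtonpoly} with $p=3$ and this $\alpha=-2+\beta$, and then to pin down the constant coefficient $b_0$ of the displayed polynomial.

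First I would verify the hypotheses of Lemma~\ref{newtonpoly}. Since $3\mid n$, the prime $3$ divides $2n$. In $K=\Q(\beta)$ the prime $3$ is totally ramified, $\beta$ is a uniformizer, $v$ is the unique place above $3$, and $v(\beta)=\tfrac12$, $v(3)=1$. The element $\alpha=-2+\beta$ has norm $(-2+\beta)(-2-\beta)=4-3=1$, so it is a unit with $v(\alpha)=0$, and its inverse is its conjugate, $\alpha^{-1}=-2-\beta$. Finally, as $p=3\neq2$ I must check $v\bigl(2\alpha^{2n+1}+\alpha^2+1\bigr)=0$: modulo the maximal ideal $(\beta)$, whose residue field is $\F_3$, we have $\alpha\equiv-2\equiv1$, so $2\alpha^{2n+1}+\alpha^2+1\equiv 4\equiv 1\neq0$. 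Hence Lemma~\ref{newtonpoly} applies with $e=v(4n)=v(n)$ and tells us the Newton polygon at $v$ is the lower convex hull of
\[
(0,v(b_0)),\ (1,e),\ (3,e-1),\ \ldots,\ (3^j,e-j),\ \ldots,\ (3^e,0),\ (4n,0).
\]

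It remains to show $v(b_0)=e+\tfrac32$. Setting $S=0$ gives $b_0=f(\alpha)$ with $f(S)=S^{4n}+2n(S^{2n+1}-S^{2n-1})-1$, and regrouping,
\[
f(\alpha)=(\alpha^{4n}-1)+2n\alpha^{2n-1}(\alpha^2-1)=\alpha^{2n}\bigl[(\alpha^{2n}-\alpha^{-2n})+2n(\alpha-\alpha^{-1})\bigr].
\]
Since $\alpha$ satisfies $\alpha^2+4\alpha+1=0$, i.e.\ $\alpha+\alpha^{-1}=-4$, Lemma~\ref{stillbasic} gives $f_{2n}(-4)=(\alpha^{2n}-\alpha^{-2n})/(\alpha-\alpha^{-1})$; moreover $\alpha-\alpha^{-1}=2\beta$, so $\alpha^{2n}-\alpha^{-2n}=2\beta f_{2n}(-4)$. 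Because $v(\alpha^{2n})=0$, this yields $v(b_0)=v(2\beta)+v\bigl(f_{2n}(-4)+2n\bigr)=\tfrac12+v\bigl(f_{2n}(-4)+2n\bigr)$. By Lemma~\ref{irredstats}(3), $(u+2)G_n(u)=f_{2n}(u)+2n$, so at $u=-4$ we get $f_{2n}(-4)+2n=-2\,G_n(-4)$ and hence $v(b_0)=\tfrac12+v_3\bigl(G_n(-4)\bigr)$.

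So the whole lemma comes down to the claim $v_3\bigl(G_n(-4)\bigr)=v_3(n)+1$. To prove it I would Taylor-expand $G_n\in\Z[u]$ about $u=2$, writing $G_n(-4)=\sum_{k\ge0}(-6)^k c_k$ with $c_k=G_n^{(k)}(2)/k!\in\Z$. Lemma~\ref{fgat2} gives $c_0=G_n(2)=n$, and differentiating $(u+2)G_n=f_{2n}+2n$ together with $f_j^{(k)}(2)/k!=\binom{j+k}{2k+1}$ (an easy induction from Definition~\ref{newfs}) gives the recursion $4c_k+c_{k-1}=\binom{2n+k}{2k+1}$; in particular $c_1=\tfrac13 n(n^2-1)$ and $c_2=\tfrac1{30}n(n^2-1)(2n^2-3)$, so $v_3(c_1)=e-1$ and $v_3(c_2)=e$. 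A Legendre-type estimate on $\binom{2n+k}{2k+1}$ — its numerator is a product of $2k+1$ consecutive integers containing $2n$, so has $3$-adic valuation $\ge e+2\lfloor k/3\rfloor$, while $v_3\bigl((2k+1)!\bigr)<k+\tfrac12$ — shows $v_3\binom{2n+k}{2k+1}\ge e+2-k$ for $k\ge3$; feeding this into the recursion, an induction starting from $v_3(c_2)=e$ gives $v_3(c_k)\ge\max(e+2-k,0)$ for all $k\ge2$, so every Taylor term with $k\ge2$ has $3$-adic valuation $\ge e+2$. Therefore $G_n(-4)\equiv c_0-6c_1=n-2n(n^2-1)=n(3-2n^2)\pmod{3^{e+2}}$, and since $9\mid n^2$ we have $3-2n^2\equiv3\pmod9$, so $v_3\bigl(n(3-2n^2)\bigr)=e+1<e+2$ and thus $v_3\bigl(G_n(-4)\bigr)=e+1$, as wanted.

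Putting this together, $v(b_0)=\tfrac12+(e+1)=e+\tfrac32$. In the lower convex hull of Lemma~\ref{newtonpoly} the segment from $(0,e+\tfrac32)$ to $(1,e)$ has slope $-\tfrac32$ and the segment from $(1,e)$ to $(3,e-1)$ has slope $-\tfrac12$; as $-\tfrac32<-\tfrac12$, both $(0,e+\tfrac32)$ and $(1,e)$ are genuine vertices, and the list of vertices is exactly $(0,e+\tfrac32),(1,e),(3,e-1),\ldots,(3^j,e-j),\ldots,(3^e,0),(4n,0)$, which is the assertion. The only real work is the $3$-adic estimate on the Taylor coefficients $c_k$ of $G_n$ at $2$; everything else is bookkeeping.
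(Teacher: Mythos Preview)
Your proof is correct, and the reduction to computing $v_3\bigl(G_n(-4)\bigr)$ is a genuinely different route from the paper's argument. Both proofs invoke Lemma~\ref{newtonpoly} and verify its hypotheses identically; the divergence is in computing $v(b_0)$. The paper writes $\alpha=1+\gamma$ with $\gamma=\beta-\beta^2$ (so $v(\gamma)=\tfrac12$), expands $b_0=\sum_{i\ge1}c_i\gamma^i$ with $c_i=\binom{4n}{i}+2n\binom{2n+1}{i}-2n\binom{2n-1}{i}$, bounds $v(c_i\gamma^i)\ge e+2$ for $i\ge4$ via Lemma~\ref{binom}, and computes $c_1\gamma+c_2\gamma^2+c_3\gamma^3$ explicitly as $4n$ times an element of valuation $\tfrac32$. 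You instead factor $b_0=2\beta\,\alpha^{2n}\bigl(f_{2n}(-4)+2n\bigr)=-4\beta\,\alpha^{2n}G_n(-4)$ via Lemmas~\ref{stillbasic} and~\ref{irredstats}(3), which pulls the whole computation down to $\Q$: one only needs $v_3\bigl(G_n(-4)\bigr)=e+1$, and your Taylor expansion of $G_n$ at $u=2$ (using the clean identity $f_j^{(k)}(2)/k!=\binom{j+k}{2k+1}$ and the recursion $4c_k+c_{k-1}=\binom{2n+k}{2k+1}$ obtained by differentiating $(u+2)G_n=f_{2n}+2n$) handles this. Your approach is a bit more conceptual in that it isolates a statement purely over $\Q$ about the polynomial $G_n$; the paper's is more direct but works in $K$ throughout. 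The underlying estimates on binomial coefficients are of the same flavor in both.
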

\begin{proof}
  By Lemma \ref{newtonpoly} it suffices to check $v(a)=0$ with $a =
  2\alpha^{2n+1}+\alpha^2+1$, and $v(b_0) = e+\frac{3}{2}$ with $b_0 =
  \alpha^{4n} +2n(\alpha^{2n+1}-\alpha^{2n-1})-1$.  Note that we have
  $\alpha = 1+\gamma$ with $\gamma = \beta-\beta^2$, while $\beta$
  generates the ideal $\p$ to which $v$ is associated. It follows that
  $\alpha \equiv 1 \pmod{\p}$, so $a \equiv 1 \pmod{\p}$, and indeed
  $v(a)=0$. Expanding the powers of $\alpha = 1+\gamma$ gives $b_0 =
  \sum_{i=1}^{4n} c_i \gamma^i$ with $c_i = \binom{4n}{i} +
  2n\binom{2n+1}{i} -2n \binom{2n-1}{i}$. We claim that for
  $i\geq 4$ we have $v\big(\binom{4n}{i}\big) \geq e+2-i/2$.  Write
  $\binom{4n}{i} = 4n\cdot \frac{1}{i!}(4n-1)\cdots (4n-i+1)$ and
  note that the product of at least three consecutive integers is
  divisible by $3$.  As $v(4!) = v(5!) = 1$, we conclude that for
  $i=4,5$ we have $v\big(\binom{4n}{i}\big) \geq v(n) =e \geq
  e+2-i/2$.  Suppose $i\geq 6$ and let $j\geq 2$ be the integer
  satisfying $3^{j-1}\leq i <3^j$. Since $v\big(\binom{4n}{i}\big)$
  is an integer, we have $v\big(\binom{4n}{i}\big) \geq e+1-j \geq
  e+2-i/2$ by Lemma \ref{binom}, where the last inequality follows
  from $i\geq 6$ for $j=2$ and from $i\geq 3^{j-1} \geq 2j+2$ for
  $j\geq 3$.  This proves the claim, and as the last two terms of
  $c_i$ have valuation at least $v(2n)=e$, we find $v(c_i)\geq
  e+2-i/2$ for $i\geq 4$. This gives $v(c_i\gamma^i) \geq e+2-i/2 +
  i\cdot v(\gamma) = e+2$ for $i\geq 4$, and therefore
  $v(\sum_{i=4}^{4n} c_i \gamma^i) \geq e+2$. We also have
$$
c_1\gamma+c_2\gamma^2+c_3\gamma^3 = 4n
\big( (140\beta - 252)n^2 - (144\beta - 264)n + 33\beta - 63 \big),
$$
which has valuation $e+\frac{3}{2}$, as there is a unique term with
lowest valuation $\frac{3}{2}$ inside the parentheses, namely
$33\beta$.  We conclude $v(b_0) = e + \frac{3}{2}$.
\end{proof}

\begin{proposition}\label{minthreen}
  Let $n$ be any positive integral multiple of $3$.  Let $K$ be a
  number field containing a root $\omega$ of $G_n$.  Let $v$ be a
  valuation on $K$ with $v(3)=1$. Then $v(n) \leq
  v(h_{2n}(\omega)^2-1-3n) < v(n)+1$ or $v(h_{2n}(\omega)^2-1-3n) 
  \geq v(n)+2$.
\end{proposition}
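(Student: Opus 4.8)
The plan is to mimic the proofs of Propositions~\ref{minn} and \ref{notthreeok}: reduce to the valuation of a single algebraic integer, split according to the size of $v(\omega-2)$, and in the hard case feed in the Newton polygon of Lemma~\ref{newtonalpha}. Since $3\mid 2n$, Lemma~\ref{vgntauzero} gives $v(g_n(\omega))=0$, and combined with (\ref{sqminoneeq}) this yields $h_{2n}(\omega)^2-1-3n = n\bigl(4-2\omega-3g_n(\omega)^2\bigr)/g_n(\omega)^2$, so $v(h_{2n}(\omega)^2-1-3n)=v(n)+v(B)$ with $B=-2(\omega-2)-3g_n(\omega)^2$. As $\omega$, hence $B$, is an algebraic integer (Lemma~\ref{irredstats}(2)), we have $v(B)\ge0$, and the claim becomes: $v(B)<1$ or $v(B)\ge2$. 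Note $v\bigl(3g_n(\omega)^2\bigr)=1$, and $v(\omega-2)\le1$ by Proposition~\ref{notthreeok}. If $v(\omega-2)<1$ the two summands of $B$ have distinct valuations, so $v(B)=v(\omega-2)<1$, and the first alternative holds (indeed $v(h_{2n}(\omega)^2-1-3n)=v(n)+v(\omega-2)\in[v(n),v(n)+1)$).

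So assume $v(\omega-2)=1$; I would show $v(B)\ge2$. From $g_n(2)=1$ (Lemma~\ref{fgat2}), $g_n'(2)=\tfrac12n(n-1)$ (proof of Lemma~\ref{irredstats}(4)), and $v\bigl(\tfrac12n(n-1)\bigr)=v(n)\ge1$ (because $3\mid n$), the integer Taylor expansion of $g_n$ at $u=2$, evaluated at $\omega$, has every nonconstant term in $\m^2$; hence $g_n(\omega)\equiv1\pmod{\m^2}$, so $3g_n(\omega)^2\equiv3\pmod{\m^3}$ and $B\equiv-\bigl(2(\omega-2)+3\bigr)\pmod{\m^3}$. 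Thus it suffices to prove $\omega-2\equiv3\pmod{\m^2}$, for then $2(\omega-2)+3\equiv9\equiv0\pmod{\m^2}$, whence $v(B)\ge2$ and $v(h_{2n}(\omega)^2-1-3n)\ge v(n)+2$, the second alternative.

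The \emph{main obstacle} is this congruence. The plan: over a finite extension of $K$ containing a square root $\beta$ of $3$ and a root $\sigma$ of $s^2-\omega s+1$, extend $v$; then $\omega=\sigma+\sigma^{-1}$ and, by Lemma~\ref{irredstats}(3), $\sigma$ is a root of $f=s^{4n}+2n(s^{2n+1}-s^{2n-1})-1$, which factors as $f=(s-1)(s+1)^3P(s)$ with $P(s)=s^{2n-2}G_n(s+s^{-1})$. Hence $\sigma-\alpha$, with $\alpha=-2+\beta$ (so $v(\beta)=\tfrac12$, $v(3-\beta)=\tfrac12$, $\alpha^{-1}=-2-\beta$, $\alpha+\alpha^{-1}=-4$), is a root of $f(S+\alpha)$, whose Newton polygon at $v$ is given by Lemma~\ref{newtonalpha}: it has exactly three roots of valuation $\ge\tfrac12$, namely two of valuation $\tfrac12$ and one of valuation $\tfrac32$, and one of the valuation-$\tfrac12$ roots is $1-\alpha=3-\beta$ (coming from the root $s=1$ of $f$). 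Now $v(\omega-2)=1$ forces $v(\sigma-1)=\tfrac12$, and then $\sigma-\alpha=(\sigma-1)+(3-\beta)$ and $\sigma^{-1}-\alpha=-\sigma^{-1}(\sigma-1)+(3-\beta)$ both have valuation $\ge\tfrac12$; since $\sigma\neq\sigma^{-1}$ (because $G_n(\pm2)\neq0$ by Lemma~\ref{irredstats}(4)), these two roots exhaust the remaining slots, so one has valuation $\tfrac12$ and the other $\tfrac32$. Choosing the lift $\sigma$ with $v(\sigma-\alpha)=\tfrac32$ and writing $\sigma=\alpha+\delta$ with $v(\delta)=\tfrac32$, one computes $\omega+4=(\sigma-\alpha)+(\sigma^{-1}-\alpha^{-1})=(\sigma-\alpha)\cdot\dfrac{\sigma\alpha-1}{\sigma\alpha}$, where $\sigma\alpha-1=\alpha^2-1+\alpha\delta=(6-4\beta)+\alpha\delta$ has valuation $\tfrac12$; hence $v(\omega+4)=\tfrac32+\tfrac12=2$, i.e.\ $\omega\equiv-4\pmod{\m^2}$, so $\omega-2\equiv-6\equiv3\pmod{\m^2}$, as required. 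The delicate point is exactly the bookkeeping here: checking, via the explicit polygon of Lemma~\ref{newtonalpha} and the factorization of $f$, that $1-\alpha$ together with the two lifts of $\sigma$ account for all roots of $f(S+\alpha)$ of valuation $\ge\tfrac12$; the rest is routine valuation arithmetic.
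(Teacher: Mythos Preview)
Your proof is correct and follows essentially the same route as the paper: reduce to showing $v(A)<1$ or $v(A)\ge 2$ for $A=-2(\omega-2)-3g_n(\omega)^2$, handle the easy case by a direct valuation comparison, and in the hard case use the Newton polygon of Lemma~\ref{newtonalpha} to pin down $v(\omega+4)=2$. The only cosmetic differences are that you split on $v(\omega-2)$ (invoking Proposition~\ref{notthreeok}) rather than on $v(\sigma-\alpha)$, and you reach $v(A)\ge 2$ via the Taylor expansion $g_n(\omega)\equiv 1\pmod{\m^2}$ instead of the paper's rewriting $A=-2(\omega+4)+9-3(\omega-2)\cdot\frac{g_n(\omega)^2-1}{\omega-2}$; both arguments are equivalent once $v(\omega+4)=2$ and $v(\omega-2)=1$ are in hand.
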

\begin{proof}
  From (\ref{sqminoneeq}) we deduce $h_{2n}(\omega)^2-1-3n =
  ng_n(\omega)^{-2} A$ with $A = -2(\omega+4)+3(4-g_n(\omega)^2)$. By
  Lemma \ref{vgntauzero} we have $v(g_n(\omega))=0$, so
  $v(h_{2n}(\omega)^2-1-3n) = v(n) + v(A)$. As we clearly have
  $v(A)\geq 0$, it suffices to show $v(A) <1$ or $v(A) \geq 2$. Let
  $L$ be a finite field extension of $K$ containing a square root
  $\beta$ of $3$ and a root $\sigma$ of $s^2-\omega s + 1 =0$. Set
  $\alpha = -2+\beta$ and $\overline{\alpha} = -2-\beta =
  \alpha^{-1}$, and extend $v$ to $L$.  Let $F$ denote the polynomial
  in Lemma \ref{newtonalpha}, and set $f(s) = s^{4n} +
  2n(s^{2n+1}-s^{2n-1}) - 1$, so that $F(S) = f(S+\alpha)$.  From
  Lemma \ref{polyroots} and the slopes of the Newton polygon of $F$
  given in Lemma \ref{newtonalpha}, we conclude that there is a unique
  root $S_0$ of $F$ with $v(S_0) = \frac{3}{2}$.  Then $s_0 =
  S_0+\alpha$ is a root of $f$, and as $f$ is anti-reciprocal, so is
  $s_1 = s_0^{-1}$ and both are units in the ring of integers of
  $L$. Set $S_1 = s_1-\alpha$.  Then $S_1$ is root of $F$ and from the
  identity $S_1= -\overline{\alpha}s_0^{-1}S_0 - 2\beta$ and the
  inequality $v(2\beta ) = \frac{1}{2} <
  v(\overline{\alpha}s_0^{-1}S_0)$ we conclude $v(S_1) =
  \frac{1}{2}$. Note that $S_2=3-\beta$ is also a root of $F$,
  corresponding to the root $1$ of $f$, and with
  $v(S_2)=\frac{1}{2}$. By Lemma \ref{polyroots} and Lemma
  \ref{newtonalpha} there are only three roots $z$ of $F$ with
  $v(z)\geq \frac{1}{2}$, so all roots $z$ of $F$, other than
  $S_0,S_1,S_2$, satisfy $v(z)<\frac{1}{2}$.

  Now $\omega = \sigma+\sigma^{-1}$, so by Lemma \ref{irredstats}(3),
  the element $\sigma$ is a root of $f$, and therefore $\sigma
  -\alpha$ is a root of $F$.  First suppose the inequality $v(\sigma
  -\alpha) < \frac{1}{2}=v(2\beta)$ holds.  Then we also have
  $v(\sigma -\overline{\alpha}) = v(\sigma - \alpha +2\beta) <
  \frac{1}{2}$, and thus $v(\omega+4) =
  v\big(\sigma^{-1}(\sigma-\alpha)(\sigma-\overline{\alpha})\big) <
  1$.  From $0\leq v(2(\omega+4))<1\leq v(3(4-g_n(\omega)^2))$ we
  conclude $v(A) = v(\omega+4) <1$ and we are done.

  Now suppose $v(\sigma -\alpha) \geq \frac{1}{2}$, then $\sigma
  -\alpha = S_i$ for some $i$ with $0\leq i \leq 2$. For $i=2$ we get
  $\sigma = S_2+\alpha = 1$ and thus $\omega=2$, so $G_n(\omega)= n
  \neq 0$ by Lemma \ref{irredstats}(4). From this contradiction we
  conclude $\sigma = s_i$ for $i=0$ or $i=1$, so that $\omega+4 =
  \sigma+\sigma^{-1}+4 = s_0+s_1+4 = -\overline{\alpha}S_0S_1$. This
  implies $v(\omega+4)=\frac{3}{2}+\frac{1}{2} = 2$.  We rewrite $A$
  as
\begin{equation}\label{reA}
  A=-2(\omega+4) + 9 - 3(\omega-2)
\left(\frac{g_n(\omega)^2-1}{\omega-2}\right).
\end{equation}
From Lemma \ref{fgat2} we know that $d(t) = (g_n(t)-1)/(t-2)$ is a
polynomial, so $v((g_n(\omega)^2-1)/(\omega-2)) =
v((g_n(\omega)+1)d(\omega)) \geq 0$. From $\omega-2= (\omega+4)-6$ we
get $v(\omega-2)=v(6)=1$, so the last term of (\ref{reA}) has
valuation at least $2$, while $v(-2(\omega+4)) = v(9) = 2$. We
conclude $v(A) \geq 2$, which finishes the proof.
\end{proof}

\begin{proposition}\label{proptwoktwon}
  Let $k,l$ be any even integers with $k\neq l$ and $kl>0$. Then
  $D(k,l)$ is smooth over $\Q$.
\end{proposition}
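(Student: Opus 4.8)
The plan is to show $D(k,l)$ has no singular point, handling the points at infinity and the affine part separately. Write $m=k/2$ and $n=l/2$, so $mn>0$ and $m\neq n$. Since $D(k,l)=D(-k,-l)$ I would first assume $m,n>0$, and since the coordinate swap $(r,t)\mapsto(t,r)$ identifies $D(k,l)$ with $D(l,k)$, I am free to exchange the roles of $m$ and $n$ when convenient. On its affine part $D(k,l)$ is cut out by $g_{m+1}(r)g_n(t)=g_m(r)g_{n+1}(t)$; the polynomials $g_m,g_{m+1},g_n,g_{n+1}$ are separable over $\Q$ by Lemma~\ref{irredstats}(5), and consecutive ones differ in degree by exactly $1$ by Lemma~\ref{newbasicf}, so Lemma~\ref{inftysmooth} disposes of the points at infinity at once. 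Everything else is about ruling out singular points on the affine part.

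So I would suppose $P=(r_0,t_0)$ is a singular affine point. By Lemma~\ref{firststep}, together with $\Delta_{2m}=G_m$ and $\Delta_{2n}=G_n$, this forces $g_m(r_0)\neq 0\neq g_n(t_0)$ and $G_m(r_0)=G_n(t_0)=0$. Dividing the defining equation of $D(k,l)$ by $g_m(r_0)g_n(t_0)$ gives $h_{2m}(r_0)=h_{2n}(t_0)$, hence with $c:=h_{2m}(r_0)^2-1=h_{2n}(t_0)^2-1$ the two sides of this identity have equal valuation at every place of $K=\Q(r_0,t_0)$. Since $m\neq n$, some prime $p$ divides $m$ and $n$ to different exponents; I would fix such a $p$, an extension $v$ of $v_p$ to $K$ with $v(p)=1$, and bound $v(c)$ from the two sides using Proposition~\ref{notthreeok}, whose content is that at a root $\omega$ of $G_N$ the valuation of $h_{2N}(\omega)^2-1$ lies in $[\,v(2N),\,v(2N)+1\,)$ once $p\mid 2N$, with the upper end attainable only when $p=3$.

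In the case that $p$ can be chosen unequal to $3$, after possibly exchanging $m$ and $n$ I may assume $v_p(m)<v_p(n)$, so $p\mid n$ and $v(2n)\geq v(2m)+1$; then Proposition~\ref{notthreeok} applied to $t_0$ gives $v(c)\geq v(2n)$, while applied to $r_0$ it gives the strict bound $v(c)<v(2m)+1$, which is absurd. The remaining case is that $m$ and $n$ agree at every prime other than $3$, so $n/m$ is a power of $3$; after a swap, $n=3^sm$ with $s\geq 1$, in particular $3\mid n$. Running the same comparison at $p=3$ with $v(3)=1$ and $a:=v_3(m)=v(m)$, Proposition~\ref{notthreeok} now gives $a+s=v(2n)\leq v(c)\leq v(2m)+1=a+1$, forcing $s=1$ and $n=3m$; and if $3\nmid m$ the upper bound is strict while $v(c)\geq v(2n)=1$, again a contradiction, so $3\mid m$ and $a\geq 1$.

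The hard case is this leftover one, $n=3m$ with $3\mid m$, where a single unit of slack no longer separates the two sides and the sharper Propositions~\ref{minn} and~\ref{minthreen} are needed. Here $v(2n)=a+1$, and I would apply Proposition~\ref{minn} to $t_0$ to pin $v(c-n)$ into $[\,a+1,\,a+2\,)$, and Proposition~\ref{minthreen} to $r_0$ --- noting $3m=n$ --- to force $v(c-n)$ either into $[\,a,\,a+1\,)$ or into $[\,a+2,\,\infty)$; as these are disjoint from $[\,a+1,\,a+2\,)$, this is the final contradiction, and together with the smoothness at infinity it shows $D(k,l)$ is smooth over $\Q$. I expect the genuine obstacle to lie entirely in those two finer propositions (the extra disjunct ``$\geq v(n)+2$'' in Proposition~\ref{minthreen} is exactly what makes the factor-of-$3$ case close up); the rest is careful bookkeeping of which inequalities are strict and of keeping the two symmetry reductions consistent.
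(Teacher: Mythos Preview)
Your proof is correct and follows essentially the same approach as the paper: reduce to an affine singular point, use Lemma~\ref{firststep} to get $G_m(r_0)=G_n(t_0)=0$ and $h_{2m}(r_0)=h_{2n}(t_0)$, then compare the $p$-adic valuations of $c=h_{2m}(r_0)^2-1=h_{2n}(t_0)^2-1$ via Proposition~\ref{notthreeok} to force $p=3$, $n=3m$, $3\mid m$, and finally invoke Propositions~\ref{minn} and~\ref{minthreen} to finish. The paper compresses your case split (handling $p\neq 3$ first, then $p=3$) into a single chain of inequalities $v(c)\leq v(2m)+1\leq v(2m)+e=v(2n)\leq v(c)$ and reads off $e=1$, $p=3$, $p\mid m$ from the forced equalities, but the substance is identical.
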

\begin{proof}
  The curve $D(k,l)$ is the same as $D(-k,-l)$, so without loss of
  generality we assume $k,l>0$.  Set $m=k/2$ and $n=l/2$ and
  $F=g_{m+1}(r)g_n(t)-g_m(r)g_{n+1}(t)$.  Assume $P=(r_0,t_0)$ is a
  singular point over $\Qbar$ of the standard affine part of
  $D(k,l)$. Let $K$ be the number field $\Q(r_0,t_0)$.  By Lemma
  \ref{firststep} we have $G_n(t_0)=0$ and $G_m(r_0)=0$, and $D(k,l)$
  is given around $P$ by $h_k(r) = h_l(t)$.  Set $c = h_k(r_0)^2-1$
  and $d=h_l(t_0)^2-1$.  Let $p$ be any prime such that $v_p(m) \neq
  v_p(n)$. Set $e=v_p(n)-v_p(m)$.  By symmetry we may assume $e\geq
  1$. Let $\p$ be a prime of $K$ above $p$, and let $v$ be the
  valuation on $K$ associated to $\p$, normalized so that $v$
  restricts to $v_p$ on $\Q$.  By Lemma \ref{notthreeok} we have $v(c)
  \leq v(2m)+1 \leq v(2m)+e = v(2n) \leq v(d)$. From $c=d$ we conclude
  that all inequalities are equalities, so $e=1$ and by Lemma
  \ref{notthreeok} we have $p|m$ and $p=3$, and thus
  $n=3m$. Proposition \ref{minn} shows $v(2m)+1=v(2n)\leq v(d-n) <
  v(2n)+1 = v(2m)+2$, while from Proposition \ref{minthreen} we get
  $v(c-3m) < v(2m)+1$ or $v(c-3m) \geq v(2m)+2$. This contradicts the
  equality $c-3m=d-n$, and we conclude that no singular point $P$
  exists on the affine part.  By Lemma \ref{inftysmooth} there are
  also no singular points at infinity.
\end{proof}

We have now proved the first statement of Theorem \ref{thmDsmooth},
split over several Propositions. To prove the last statement, we set
set $H_n = g_{n+1}''g_n-g_{n+1}g_n''$ for each integer $n$, where
derivatives are taken with respect to $u$.  In $\Z[u][s]/(s^2-us+1)
\isom \Z[s,s^{-1}]$ one checks
\begin{equation}
\frac{1}{2}(u-2)(u+2)^2H_n=(n-1)f_{2n+1}+f_{2n}-(n+1)f_{2n-1}-nu+2n.
\label{Heq}
\end{equation}
Recall that for even $l\neq 0$, the curve $D_1(l,l)$ is the projective
closure of the scheme-theoretic complement in $D(l,l)$ of the line
given by $r=t$.

\begin{proposition}\label{Donesmooth}
Let $l$ be any even integer with $|l|\geq 4$. Then the curve 
$D_1(l,l)$ is smooth over $\Q$. 
\end{proposition}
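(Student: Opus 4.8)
The plan is to reduce smoothness of $D_1(l,l)$ to the singularity analysis of $D(l,l)$ already developed in this section, together with separability of the polynomial $G_n$. Write $l=2n$ with $|n|\geq 2$; since $D(l,l)=D(-l,-l)$ and the component $r=t$ is preserved, $D_1(l,l)$ depends only on $|n|$, so I may assume $n>0$. On the standard affine chart $D(l,l)$ is cut out by $F=g_{n+1}(r)g_n(t)-g_n(r)g_{n+1}(t)$, which vanishes along $r=t$; as $\partial F/\partial r$ restricted to $r=t$ equals $G_n(t)\not\equiv 0$, we have $(r-t)\mid F$ but $(r-t)^2\nmid F$, so $F_1:=F/(r-t)$ is a polynomial not divisible by $r-t$, and it cuts out $D_1(l,l)$ on this chart.

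First I would handle the points at infinity. Applying Lemma~\ref{inftysmooth} to the defining equation $g_{n+1}(r)g_n(t)=g_n(r)g_{n+1}(t)$ of $D(l,l)$ — whose factors $g_n,g_{n+1}$ are nonzero, separable by Lemmas~\ref{fgat2} and~\ref{irredstats}(5), with $\deg g_{n+1}-\deg g_n=\pm 1$ by Lemma~\ref{newbasicf} since $n>0$ — shows $D(l,l)$ is smooth at all its points at infinity. In particular the local ring of $D(l,l)$ at $(\infty,\infty)$ is a discrete valuation ring, so $(\infty,\infty)$ lies on a unique component of $D(l,l)$, namely $D_0(l,l)$; hence $(\infty,\infty)\notin D_1(l,l)$. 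Since $(\infty,\infty)$ is the only point at infinity of the line $r=t$, every point at infinity of $D_1(l,l)$ is a smooth point of $D(l,l)$ lying off $D_0(l,l)$, near which $D_1(l,l)$ coincides with $D(l,l)$ and so is smooth.

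Then I would treat a hypothetical singular $\Qbar$-point $P=(r_0,t_0)$ of the affine part of $D_1(l,l)$, splitting into $r_0\neq t_0$ and $r_0=t_0$. Differentiating $F=(r-t)F_1$ gives $\partial_r F=F_1+(r-t)\partial_r F_1$ and $\partial_r^2 F=2\partial_r F_1+(r-t)\partial_r^2 F_1$ (and likewise in $t$). If $r_0\neq t_0$, these force $F(P)=\partial_r F(P)=\partial_t F(P)=0$, so $P$ is singular on $D(l,l)$; Lemma~\ref{firststep} (with $\Delta_{2n}=G_n$) then gives $g_n(r_0)g_n(t_0)\neq 0$ and $G_n(r_0)=G_n(t_0)=0$, and dividing $F(P)=0$ by $g_n(r_0)g_n(t_0)$ yields $h_{2n}(r_0)=h_{2n}(t_0)=:c$. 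Dividing identity~(\ref{surprise}) of Lemma~\ref{lemmirrednewids} by $g_n^2$ and evaluating at the roots $r_0$ and $t_0$ of $G_n$ gives $2nr_0c=(2n+1)+(2n-1)c^2=2nt_0c$ with $c\neq 0$, forcing $r_0=t_0$, a contradiction. If instead $r_0=t_0$, then $P$ lies on the two distinct components $D_0(l,l)$ and $D_1(l,l)$, hence is singular on $D(l,l)$, so $G_n(r_0)=0$ and $g_n(r_0)\neq 0$ by Lemma~\ref{firststep}; evaluating $\partial_r^2 F=2\partial_r F_1+(r-t)\partial_r^2 F_1$ at $P$ gives $H_n(r_0)=\partial_r^2 F(P)=0$, where $H_n=g_{n+1}''g_n-g_{n+1}g_n''$ equals $G_n'$ by differentiating $G_n=g_{n+1}'g_n-g_{n+1}g_n'$; thus $r_0$ would be a multiple root of $G_n$.

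The main obstacle is therefore to prove that $G_n$ is separable. I would argue from Lemma~\ref{irredstats}(3),(4): $(u+2)G_n=f_{2n}+2n$ while $G_n(\pm 2)\neq 0$, so a multiple root $\omega$ of $G_n$ satisfies $\omega\neq -2$ and is a multiple root of $f_{2n}+2n$, i.e.\ $f_{2n}(\omega)=-2n$ and $f_{2n}'(\omega)=0$. Writing $\omega=\sigma+\sigma^{-1}$ with $\sigma\neq\pm 1$ and using the closed form for $f_{2n}$ from Lemma~\ref{stillbasic}, these two conditions become $\sigma^{2n}-\sigma^{-2n}=-2n(\sigma-\sigma^{-1})$ and $\sigma^{2n}+\sigma^{-2n}=-(\sigma+\sigma^{-1})$; adding and subtracting these relations and then multiplying the resulting pair eliminates $n$ and leaves $\sigma^2+\sigma^{-2}=2$, i.e.\ $\sigma=\pm 1$, a contradiction. (Identity~(\ref{Heq}) gives an alternative route to the same contradiction.) Hence $G_n$ has no multiple root, so $H_n(r_0)=G_n'(r_0)\neq 0$, no affine singular point of $D_1(l,l)$ exists, and $D_1(l,l)$ is smooth over $\Q$.
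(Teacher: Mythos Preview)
Your proof is correct and follows the paper's argument step for step through the reduction to an affine singular point $P=(r_0,t_0)$, the use of Lemma~\ref{firststep}, and the application of identity~(\ref{surprise}) to force $r_0=t_0$; you then likewise arrive at the conditions $G_n(r_0)=H_n(r_0)=0$. The divergence is only in how you extract the final contradiction from these two equations.

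The paper combines identity~(\ref{Heq}) with Lemma~\ref{irredstats}(3) to obtain $r_0=2\big(f_{2n-1}(r_0)+n\big)$, hence $v(r_0)\geq 1$ at a place above $2$, which contradicts the bound $v(r_0-2)<1$ of Proposition~\ref{notthreeok}. Your route is more elementary: you observe directly that $H_n=G_n'$ (the cross terms cancel when differentiating the Wronskian $G_n=g_{n+1}'g_n-g_{n+1}g_n'$), so the two conditions say $r_0$ is a double root of $G_n$, and you then prove $G_n$ is separable over $\Q$ by passing via $(u+2)G_n=f_{2n}+2n$ to the conditions $f_{2n}(\omega)=-2n$, $f_{2n}'(\omega)=0$ and eliminating $\sigma^{\pm 2n}$ to get $\omega^2=4$, which was excluded. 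This bypasses Proposition~\ref{notthreeok} and the Newton polygon analysis behind it entirely. The paper's argument, by contrast, reuses machinery already developed for the $k\neq l$ cases, so it costs nothing extra in context; your argument is shorter and self-contained but introduces the new ingredient of separability of $G_n$. Both are clean; your observation $H_n=G_n'$ is a nice simplification the paper does not record.
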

\begin{proof}
  Set $n=l/2$ and $F=g_{n+1}(r)g_n(t)-g_n(r)g_{n+1}(t)$ and
  $G=F/(t-r)$.  Then $D_1(l,l)$ is defined by $G(r,t)=0$. Any singular
  point of $D_1(l,l)$ is also a singular point of $D(l,l)$.  By Lemma
  \ref{inftysmooth} we find that $D(l,l)$ is smooth at all points at
  infinity, so $D_1(l,l)$ is as well.  Assume $P=(r_0,t_0)$ is a
  singular point of the standard affine part of $D_1(l,l)$. Then $P$
  is also a singular point of $D(l,l)$.  By Lemma \ref{firststep} we
  then have $G_n(t_0)=0$ and $G_n(r_0)=0$, and we may rewrite $F(P)=0$
  as $h_l(r_0) = h_l(t_0)$.  Then from (\ref{surprise}) of Lemma
  \ref{lemmirrednewids} we have
$$
2nr_0 = (2n-1)h_l(r_0)+(2n+1)h_l(r_0)^{-1} = 
(2n-1)h_l(t_0)+(2n+1)h_l(t_0)^{-1} = 2nt_0,
$$
which implies $r_0 = t_0$. Set $F_t = \partial F/\partial t$ and
$F_{t^2} = \partial F_t/\partial t$ and $G_t = \partial G/\partial t$.
Then we have $G(r_0,r_0) = F_t(r_0,r_0) = G_n(r_0)$, where the first
equality can be viewed as an algebraic version of l'H\^opital's rule
applied to $\lim_{t\rightarrow r_0} G(r_0,t)$.  That same rule also
gives $G_t(r_0,r_0) = \frac{1}{2}F_{t^2}(r_0,r_0) =
\frac{1}{2}H_n(r_0)$. The fact that $D_1(l,l)$ is singular at $P$
implies $0=G(P)=G_n(r_0)$ and $0=G_t(P) = \frac{1}{2}H_n(r_0)$. From
Lemma \ref{irredstats}(3) and (\ref{Heq}) we then deduce
\begin{align*}
0 &= (r_0+2)\left(\left((n-1)r_0+1\right)G_n(r_0)-
\frac{1}{2}(r_0^2-4)H_n(r_0)\right) \\
  &=  n \left( 2f_{2n-1}(r_0) + (2n-1)r_0 \right),
\end{align*}
so we get $r_0 = 2(f_{2n-1}(r_0) + n)$. This implies $v(r_0)\geq 1$
for any valuation $v$ of $\Q(r_0)$ with $v(2)=1$, which contradicts
the inequality $v(r_0-2) < 1$ from Proposition \ref{notthreeok}.  We
conclude that $D_1(l,l)$ has no singular points.
\end{proof}

\begin{proof}[Proof of Theorem \ref{thmDsmooth}]
  The first statement follows immediately from Propositions
  \ref{oppsigns}, \ref{proptwokponetwon}, \ref{proptwoktwon}, while
  the last statement follows from Proposition \ref{Donesmooth}.
\end{proof}

\section{Genera of the irreducible components}
\label{generairred}

The following theorem tells us the number of irreducible components of
$Y(k,l)$ in all cases that $J(k,l)$ is a hyperbolic knot. Recall that
$Y_0(l,l)$ and $Y_1(l,l)$ were defined in Definition \ref{zeros}.

\begin{theorem}\label{thmirredcomps}
  Let $k,l$ be any nonzero integers with $l$ even, $|k|\geq 2$, and
  $k\neq l$.
\begin{enumerate}
\item The curve $D(k,l)$ is a smooth, projective, geometrically 
irreducible curve of bidegree $(\lfloor |k|/2\rfloor , |l|/2)$ 
containing an open subset that is isomorphic to $Y(k,l)=Y_0(k,l)$.
\item If $|l|>2$, then $D_1(l,l)$ is a smooth, projective,
  geometrically irreducible curve of bidegree $(|l|/2-1,|l|/2-1)$
  containing an open subset that is isomorphic to $Y_1(l,l)$. The
  curve $Y(l,l)$ consists of two geometrically irreducible components,
  namely $Y_0(l,l)$ and $Y_1(l,l)$.
\end{enumerate}
\end{theorem}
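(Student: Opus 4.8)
The plan is to deduce the theorem from results already established, principally the smoothness of the relevant curves (Theorem~\ref{thmDsmooth}), the computation of their bidegrees (Remark~\ref{bidegrees} and the remark following Definition~\ref{zeros}), the irreducibility criterion for smooth curves in $\P^1 \times \P^1$ (Lemma~\ref{genushyp}), and the birational identification of the standard model $C(k,l)$ with $D(k,l)$ furnished by Propositions~\ref{Ckl} and \ref{charvar} (together with Proposition~\ref{Yzero} when $k=l$).

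For part~(1): since $|k| \geq 2$ we have $k \neq \pm 1$, and since $kl \neq 0$ we avoid the degenerate cases of Remark~\ref{bidegrees}, so $D(k,l)$ is a curve of bidegree $(\lfloor |k|/2 \rfloor, |l|/2)$ with both entries positive. By Theorem~\ref{thmDsmooth} it is smooth over $\Q$, hence smooth after base change to $\Qbar$, and Lemma~\ref{genushyp} then shows it is geometrically irreducible. By Proposition~\ref{charvar} there is a birational morphism $C(k,l) \to D(k,l)$; as $D(k,l)$ is smooth and projective, Lemma~\ref{Csmooth} identifies $C(k,l)$ with an open subvariety of $D(k,l)$, and Proposition~\ref{Ckl} identifies $C(k,l)$ with $Y(k,l)$. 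This gives the claimed open immersion $Y(k,l) \hookrightarrow D(k,l)$ and shows $Y(k,l)$ is irreducible. Finally, by the hyperbolicity criterion recalled in the introduction, $J(k,l)$ is hyperbolic (here $|k|,|l| \geq 2$, and $k \neq l$ rules out the trefoil $J(\pm 2, \pm 2)$), so $Y_0(k,l)$ is defined and is a component of $Y(k,l)$; irreducibility forces $Y(k,l) = Y_0(k,l)$.

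For part~(2): write $n = l/2$, so $|n| \geq 2$ since $|l| > 2$. The remark following Definition~\ref{zeros} gives that $D_1(l,l)$ has bidegree $(|n|-1, |n|-1)$ with both entries positive, Theorem~\ref{thmDsmooth} gives its smoothness over $\Q$, and Lemma~\ref{genushyp} then gives geometric irreducibility. Since $D(l,l) = D_0(l,l) \cup D_1(l,l)$ with $D_0(l,l)$ the line $r = t$ (Definition~\ref{zeros}), the curve $D(l,l)$ has exactly two geometrically irreducible components. As the number of irreducible components is a birational invariant of curves, the birational equivalence $Y(l,l) \cong C(l,l) \dashrightarrow D(l,l)$ of Propositions~\ref{Ckl} and \ref{charvar} shows $Y(l,l)$ has exactly two geometrically irreducible components; by Proposition~\ref{Yzero} one of them is $Y_0(l,l)$, corresponding to $D_0(l,l)$, so the other corresponds to $D_1(l,l)$ and is $Y_1(l,l)$ by Definition~\ref{zeros}. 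To get the open immersion $Y_1(l,l) \hookrightarrow D_1(l,l)$, restrict the birational morphism $C(l,l) \to D(l,l)$ to the component of $C(l,l)$ mapping onto $D_1(l,l)$: this is a birational morphism onto the smooth projective curve $D_1(l,l)$, so Lemma~\ref{Csmooth} applies once more.

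The obstacle here is minor, since the substantive work lives in Theorem~\ref{thmDsmooth}; the one thing requiring care is the bookkeeping in part~(2), namely that the birational morphism of Proposition~\ref{charvar} carries the two components of $C(l,l)$ onto the two components of $D(l,l)$ compatibly with Proposition~\ref{Yzero}, and that its restriction to the non-diagonal component is still a birational morphism onto the smooth projective $D_1(l,l)$ so that Lemma~\ref{Csmooth} is applicable. The small hyperbolicity check needed to make sense of $Y_0$ in both parts belongs here as well.
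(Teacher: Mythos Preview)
Your proof is correct and follows essentially the same route as the paper's: smoothness from Theorem~\ref{thmDsmooth}, bidegree from Remark~\ref{bidegrees}, irreducibility via Lemma~\ref{genushyp}, and the open immersion from Proposition~\ref{charvar} combined with Lemma~\ref{Csmooth} and Proposition~\ref{Ckl}. Your explicit hyperbolicity check (needed for $Y_0$ to be defined) is a detail the paper leaves implicit, but otherwise the arguments coincide.
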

%
\begin{proof}
  The curves $D(k,l)$ and $D(l,l)$ are projective by construction.  By
  Theorem \ref{thmDsmooth} the curve $D(k,l)$ is smooth and its
  bidegree is given in Remark \ref{bidegrees}. Every smooth projective
  curve in $\P^1 \times \P^1$ of bidegree $(a,b)$ with $a,b>0$ is
  geometrically irreducible by Lemma \ref{genushyp}, so $D(k,l)$ is
  geometrically irreducible.  By Lemma \ref{Csmooth} and Proposition
  \ref{charvar} the curve $C(k,l)$ is isomorphic to an open subset of
  $D(k,l)$.  Since $Y(k,l)$ is isomorphic to $C(k,l)$, we conclude
  that $Y(k,l)$ is isomorphic to an open subset of $D(k,l)$, so
  $Y(k,l)$ is geometrically irreducible and smooth as well, and
  therefore equal to $Y_0(k,l)$.  Suppose $|l|>2$. By Theorem
  \ref{thmDsmooth} the curve $D_1(l,l)$ is smooth and its bidegree is
  given at the end of \S \ref{newmodelsection}.  By Lemma
  \ref{genushyp} the curve $D_1(l,l)$ is geometrically irreducible, so
  $D(l,l)$ consists of two irreducible components, namely $D_0(l,l)$
  and $D_1(l,l)$, cf. end of \S \ref{newmodelsection}.
  By Proposition \ref{charvar}, the curve $C(l,l)$ is birationally
  equivalent to $D(l,l)$, so it also has two components, one of which
  is isomorphic to a subset of $D_1(l,l)$ by Lemma \ref{Csmooth}.
  Since $Y(l,l)$ is isomorphic to $C(l,l)$, the curve $Y(l,l)$ also has 
  two components, so $Y_1(l,l)$ is irreducible and the components are
  $Y_0(l,l)$ and $Y_1(l,l)$.  Since $Y_0(l,l)$ corresponds to
  $D_0(l,l)$ by Proposition \ref{Yzero}, it is $Y_1(l,l)$ that is
  isomorphic to a subset of $D_1(l,l)$.
\end{proof}

It is now easy to find the genus of the components of $Y(k,l)$.

\begin{theorem}\label{genY}
Let $k,l$ be any nonzero integers with $l$ even, $|k|\geq 2$, and
$k\neq l$.
\begin{enumerate}
\item The curve $Y(k,l)=Y_0(k,l)$ has geometric genus $(\lfloor
  |k|/2\rfloor-1)(|l|/2-1)$ and is hyperelliptic if and only if
  $|k|\leq 5$ or $|l|\leq 5$.
\item If $|l|>2$, then the curve $Y_0(l,l)$ has genus $0$ and the
  curve $Y_1(l,l)$ has genus $(|l|/2-2)^2$ and is hyperelliptic if and
  only if $|l|\leq 6$.
\end{enumerate}
\end{theorem}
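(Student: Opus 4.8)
The plan is to deduce both statements directly from Theorem \ref{thmirredcomps} and Lemma \ref{genushyp}, using that the geometric genus and the property of being hyperelliptic (which, per our definition, is a birational notion) are unchanged under passing to a birational model. First I would recall that by Theorem \ref{thmirredcomps}(1) the curve $Y(k,l) = Y_0(k,l)$ is isomorphic to an open subset of the smooth projective curve $D(k,l) \subset \P^1 \times \P^1$ of bidegree $(a,b) = (\lfloor |k|/2\rfloor, |l|/2)$. The hypotheses $|k| \geq 2$ and $l$ even and nonzero force $a \geq 1$ and $b \geq 1$, so Lemma \ref{genushyp} applies and yields $g(D(k,l)) = (a-1)(b-1) = (\lfloor |k|/2\rfloor - 1)(|l|/2 - 1)$, with $D(k,l)$ hyperelliptic if and only if $a \leq 2$ or $b \leq 2$. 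Translating these inequalities, and using that $l$ is even, gives $a = \lfloor |k|/2\rfloor \leq 2 \iff |k| \leq 5$ and $b = |l|/2 \leq 2 \iff |l| \leq 4 \iff |l| \leq 5$; transporting along the birational equivalence proves (1).

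For (2), Proposition \ref{Yzero} together with the birational equivalence of $Y(l,l)$ and $D(l,l)$ (Proposition \ref{charvar}) identifies $Y_0(l,l)$ with the line $r = t$ in $D(l,l)$, which is isomorphic to $\P^1$; hence $Y_0(l,l)$ has genus $0$. For the other component, Theorem \ref{thmirredcomps}(2) realizes $Y_1(l,l)$ as an open subset of the smooth projective curve $D_1(l,l) \subset \P^1 \times \P^1$ of bidegree $(|l|/2 - 1, |l|/2 - 1)$; since $|l| > 2$ and $l$ is even we have $|l|/2 - 1 \geq 1$, so Lemma \ref{genushyp} gives $g(D_1(l,l)) = (|l|/2 - 2)^2$ and hyperellipticity if and only if $|l|/2 - 1 \leq 2 \iff |l| \leq 6$. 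Birational invariance transfers this to $Y_1(l,l)$, completing (2).

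There is no real obstacle here: the substantive work is already carried out in Theorems \ref{thmDsmooth} and \ref{thmirredcomps}. The only points requiring a little care are the boundary cases in which a bidegree entry equals $1$ (where Lemma \ref{genushyp} still applies and correctly produces genus $0$, matching the stated formulas), and the bookkeeping that converts the conditions $a \leq 2$, $b \leq 2$ into the conditions $|k| \leq 5$, $|l| \leq 5$ — in particular the equivalence $|l|/2 \leq 2 \iff |l| \leq 5$, which holds precisely because $l$ is even.
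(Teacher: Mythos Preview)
Your proposal is correct and follows essentially the same approach as the paper's own proof: invoke Theorem \ref{thmirredcomps} to identify $Y(k,l)$, $Y_0(l,l)$, and $Y_1(l,l)$ birationally with $D(k,l)$, the line $r=t$, and $D_1(l,l)$ respectively, then read off the genus and hyperellipticity from Lemma \ref{genushyp}. Your write-up is in fact slightly more explicit than the paper's, spelling out the translation of the bidegree inequalities into the stated bounds on $|k|$ and $|l|$ and checking that the hypotheses of Lemma \ref{genushyp} are met.
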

\begin{proof}
  By Theorem \ref{thmirredcomps} the curves $Y(k,l)$ and $Y_0(k,l)$
  are both birationally equivalent to $D(k,l)$, which is a smooth
  irreducible curve in $\P^1 \times \P^1$ of bidegree $(\lfloor |k|/2
  \rfloor,|l|/2)$. Statement (1) therefore follows from Lemma
  \ref{genushyp}.  By Proposition \ref{Yzero} the curve $Y_0(l,l)$ is
  birationally equivalent to a line, so it has genus $0$. By Theorem
  \ref{thmirredcomps} the curve $Y_1(l,l)$ is birationally equivalent
  to $D_1(l,l)$, which is smooth of bidegree $(|l|/2-1, |l|/2-1)$.
  Therefore, statement (2) follows from Lemma \ref{genushyp}.
\end{proof}

\begin{proof}[Proof of Theorem \ref{mainone}]
This follows immediately from Theorem \ref{genY}.
\end{proof}

%
%
%

Our next goal is to investigate the ramification of the map from
$X(k,l)$ to $Y(k,l)$, which we will then use to compute the genus of
the irreducible components of $X(k,l)$. The component $X_0(k,l)$ lies
above $Y_0(k,l)$. For $|l|> 2$ we know that $Y(l,l)$ consists of two
irreducible components, so $X(l,l)$ consists of at least two
components.

%
%

\begin{lemma}\label{heven}
  Let $m,n$ be any nonzero integers. Consider the function $h =
  (r-2)\big(2-t + (r^2-4)f_m(r)^2 \big)$ on $D(2m,2n)$.  Then $h$ is
  regular and nonvanishing at all singular points of $D(2m,2n)$, and
  has odd valuation at exactly $2|mn|+2|m|+2|n|-2a$ nonsingular points
  of $D(2m,2n)$, with $a=2$ for $mn>0$ and $a=1$ for $mn<0$.  If
  $m=n$, then exactly $2|n|$ of these points lie on the line
  determined by $r=t$.
\end{lemma}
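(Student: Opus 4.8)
The plan is to read off, point by point, the parity of $v_P(h)$ along $D(2m,2n)$, starting from the factorisation $h = (r-2)\cdot g$ with $g = (r^2-4)f_m(r)^2 + 2 - t$. Since $g$ is a polynomial it is regular on the affine part, so $v_P(h) = v_P(r-2)+v_P(g)$ there and $h$ can have a zero or pole only over $r=2$, over $r=\infty$, over $t=\infty$, or along $\{g=0\}$. It is worth recording that on $D(2m,2n)$ one has $h = (y+2)\cdot\bigl((r-2)f_m(r)\bigr)^2$, with $y=\tr a^2$ the coordinate of the standard model $C(2m,2n)$ (this follows from Lemma \ref{tracewk} and the proof of Proposition \ref{charvar}): the square factor has even divisor, so the points of odd valuation of $h$ are precisely the branch points of the double cover $x^2=y+2$ of $D(2m,2n)$, which is exactly the shape needed when this lemma is later combined with Lemma \ref{ramgenus}, and also explains why one wants $h$ to be a unit at the singular points.

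First I would dispose of the divisor of $r-2$ (a simple zero at each of the $|n|$ points over $r=2$ by smoothness, resp.\ Lemma \ref{firststep}, and a simple pole at each point over $r=\infty$ by Lemma \ref{inftysmooth}) and of the zeros of $g$ lying over $r=2$ or over the roots of $f_m$: there a local computation using the identity $h_{2m}-1 = (r-2)f_m(r)/g_m(r)$, immediate from the definition of $h_k$ and Lemma \ref{phipsiprop}, shows that $t-2$ vanishes to the same order as $r-2$, so $g$ vanishes simply at each of the $|m|-1$ points over the roots of $f_m$ and to order $2$ at $(2,2)$ (with $G_n(2)=n\neq0$ keeping the computation clean). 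The remaining ``bulk'' zeros of $g$ are the heart of the matter, and I would handle them by passing to the coordinate $s$ of Lemma \ref{stillbasic} with $r = s+s^{-1}$. The key identity is $(r^2-4)f_m(r)^2+2 = s^{2m}+s^{-2m}$; substituting this for $t$ in the alternate equation (\ref{defeqsalt}) of $D(2m,2n)$ and cancelling the factors already accounted for reduces it, after a computation in $\Z[s,s^{-1}]$, to $(s^{4mn-1}-1)/(s-1)=0$, i.e.\ to $s$ being a non-trivial $(4mn-1)$-st root of unity (a Laurent-polynomial condition when $mn<0$). Since those roots are distinct, every bulk zero of $g$ is transversal, hence simple, and counting the pairs $\{s,s^{-1}\}$ gives their number ($2|mn|-1$ or $2|mn|$ according to the sign of $mn$). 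This cyclotomic reduction is where the separability underlying the whole count comes from.

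It then remains to record the behaviour at infinity, listing the points at infinity of $D(2m,2n)$ via Remark \ref{bidegrees} and Lemma \ref{inftysmooth} and computing $v_P(g)$ from the leading terms of $(r^2-4)f_m(r)^2$. Here the two cases $mn>0$ and $mn<0$ genuinely diverge: when $mn>0$ the curve passes through the corner $(\infty,\infty)$ and this is one further odd-valuation point, whereas when $mn<0$ it does not, which is the sole source of the two values of $a$. Adding the simple bulk zeros, the odd-valuation points over $r=2$ and over $r=\infty$, the odd-order poles of $g$ over $t=\infty$, and the simple zeros of $g$ over the roots of $f_m$ then yields $2|mn|+2|m|+2|n|-2a$, the point $(2,2)$ being the only zero or pole of $h$ that drops out (even valuation $2$).

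Finally, when $m=n$ the curve $D(2n,2n)$ is reducible, $D(2n,2n)=D_0(2n,2n)\cup D_1(2n,2n)$ with $D_0$ the line $r=t$ (Proposition \ref{Yzero}), and $h$ is analysed on each smooth component. Restricting to $D_0\cong\P^1_r$ gives $h|_{D_0}=(r-2)^2\bigl((r+2)f_n(r)^2-1\bigr)$, and the same $s$-substitution shows $(r+2)f_n(r)^2-1$ is separable of degree $2|n|-1$ (in $\Z[s,s^{-1}]$ its numerator factors, up to a monomial and $(s-1)^2$, as $(s^{2n-1}-1)(s^{2n+1}-1)$, whose roots are distinct because $\gcd(2n-1,2n+1)=1$), so $h|_{D_0}$ has odd valuation at its $2|n|-1$ simple zeros together with $r=\infty$: exactly $2|n|$ points, all on $r=t$, as claimed. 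That $h$ is regular and nonvanishing at each singular point of $D(2n,2n)$ follows because such a point is $(r_0,r_0)$ with $G_n(r_0)=0$, whence $r_0\neq\pm2$ by Lemma \ref{irredstats}(4) and $g_n(r_0)\neq0$ by Lemma \ref{irredstats}(5), and then the identities of Lemma \ref{lemmirrednewids} force $h(r_0,r_0)\neq0$. The main obstacle throughout is the exact bookkeeping of multiplicities: proving the two separability statements, correctly discarding the zeros of $g$ that are absorbed into the fibres over $r=2$ and the roots of $f_m$, and carrying all of this over to the reducible, singular curve of the case $m=n$.
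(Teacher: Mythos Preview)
Your approach is essentially the paper's: factor $h=(r-2)d$, count the affine zeros of $d$ on $D(2m,2n)$ via the substitution $r=s+s^{-1}$ and the identity $(r^2-4)f_m(r)^2+2=s^{2m}+s^{-2m}$, then handle infinity through Lemma~\ref{inftysmooth}. The paper does the affine count in one stroke --- substituting $t=T$ into the defining equation yields, up to a unit in $\Z[s,s^{-1}]$, the factorisation $(s^{2m}-1)(s^{4mn-1}-1)/(s+1)$, from which separability is read off directly --- whereas you first peel off the zeros over $r=2$ and over the roots of $f_m$ before isolating the bulk factor $(s^{4mn-1}-1)/(s-1)$; this is the same computation reorganised. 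For $m=n$ your restriction of $h$ to $D_0$ and the factorisation via $(s^{2n-1}-1)(s^{2n+1}-1)$ is a clean variant of the paper's identification of which already-counted points satisfy $r=t$.

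One slip: at $P=(2,2)$ you assert $v_P(g)=2$, but in fact $v_P(g)=1$. Taking $\pi=r-2$ as uniformiser, your own identity $h_{2m}(r)=h_{2n}(t)$ gives $t-2=(m/n)\pi+O(\pi^2)$, while $(r^2-4)f_m(r)^2=4m^2\pi+O(\pi^2)$, so $g=\bigl(4m^2-m/n\bigr)\pi+O(\pi^2)$ with leading coefficient $m(4mn-1)/n\neq0$. This does not damage the final count, since you correctly use $v_P(h)=v_P(r-2)+v_P(g)=2$ at the end, but the intermediate claim is inconsistent with it.
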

\begin{proof}
  Set $d = 2-t + (r^2-4)f_k(r)^2 $, so that $h = (r-2)d$.  Let $M$ and
  $C$ denote the vanishing locus of $r-2$ and $d$ respectively.  From
  $g_m(2) = g_{m+1}(2) = 1$, we see that there are $|n|$ points in the
  affine part of the intersection $M\cap D(2m,2n)$, namely $(2,\tau)$
  for each root $\tau$ of $g_{n+1}(t) - g_n(t)$. As $D(2m,2n)$ has
  bidegree $(|m|,|n|)$, we have $M\cdot D(2m,2n) = |n|$, which shows
  that all intersection multiplicities are trivial, so all
  intersection points are smooth, and the intersections are
  transversal. This implies that for each $Q$ of these $|n|$ points we
  have $v_Q(r-2)=1$ by Lemma \ref{tranint}.  The points in the
  standard affine part of the intersection $C\cap D(2m,2n)$ correspond
  to the roots of $F(r,T)$ with $F(r,t)=
  g_m(r)g_{n+1}(t)-g_{m+1}(r)g_n(t)$ and $T = (r^2-4)f_m(r)^2 +
  2$. The degree of $T$ equals $2|m|$.  From Lemma \ref{newbasicf} we
  find that the degree of $F(r,T)$ as a polynomial in $r$ equals
  $2|mn|+|m|+1-a$. We now show that $F(r,T)$ is separable. Consider
  the extension $\Z[r][q]/(q^2-rq+1)\isom \Z[q,q^{-1}]$. Then we have
  $r=q+q^{-1}$ and from $f_m(r) = (q^m-q^{-m})/(q-q^{-1})$ we find $T
  = q^{2m}+q^{-2m}$. This yields $F(r,T) =
  q^{1-m-2mn}(q^{2m}-1)(q^{4mn-1}-1)/(q+1)$, and as $\gcd(2m,4mn-1)
  =1$, we find the only multiple factor of $F(r,T)$ in $\Z[q,q^{-1}]$
  is $(q-1)^2 = q(r-2)$, which corresponds to the single root
  $r=2$. We conclude that $F(r,T)$ is indeed separable. This shows
  that all $2|mn|+|m|+1-a$ intersection points $R$ in the affine part
  $C\cap D(2m,2n)$ are transversal intersections, so they are smooth
  points of $D(2m,2n)$, and we have $v_R(d)=1$ by Lemma
  \ref{tranint}. If $h$ vanishes, then either $r-2$ or $d$ does.  The
  only point where both $r-2$ and $d$ vanish is $P=(2,2)$, where the
  valuation $v_P(h) = v_P(r-2)+v_P(d) = 2$ is even.  At the remaining
  $(|n|-1) +(2|mn|+|m|-a) = 2|mn|+|m|+|n|-1-a$ points $S$ where $r-2$
  or $d$ vanishes, the valuation $v_S(h) = v_S(r-2) + v_S(d) = 1$ is
  odd.

  Let $L_r$ and $L_t$ denote the lines given by $r=\infty$ and
  $t=\infty$ respectively. Lemma \ref{inftysmooth} tells us that $L_r$
  and $L_t$ intersect $D(2m,2n)$ transversally everywhere, so $1/r$ is
  a uniformizer at every point in $L_r \cap D(2m,2n)$, while $1/t$ is
  a uniformizer at every point in $L_t \cap D(2m,2n)$. This shows
  $v_S(r^it^j)=-i$ for every point $S$ in $L_r\cap D(2m,2n)$ that is
  not on $L_t$, while $v_S(r^it^j)=-j$ for every point $S$ in $L_t\cap
  D(2m,2n)$ that is not on $L_r$, and $v_S(r^it^j)=-i-j$ for the
  unique point in $L_r \cap L_t \cap D(2m,2n)$, if it exists. We
  obtain $v_S(h) = -2|m|-1$ and $v_S(h) = -1$ and $v_S(h) = -2|m|-1$
  for these three cases respectively. There are $|n|$ points in $L_r
  \cap D(2m,2n)$ and $|m|$ points in $L_t \cap D(2m,2n)$, while the
  overlap $L_r \cap L_t \cap D(2m,2n)$ contains a point if and only if
  $mn>0$. This gives a total of $|m|+|n|+1-a$ points $S$ at infinity,
  all with $v_S(h)$ odd. Together with the affine points this makes
  $2|mn|+2|m|+2|n|-2a$ points where $h$ has odd valuation. Suppose
  $m=n$.  Then $2|n|$ of these points lie on the line given by $r=t$,
  namely the point in $L_r \cap L_t$, and the $2|n|-1$ points
  $(r_0,r_0)$ for all roots $r_0\neq 2$ of $T-r$.
\end{proof}

\begin{lemma}\label{hodd}
  Let $m,n$ be any nonzero integers with $m\not \in \{-1,0\}$.
  Consider the function $h = t -2+ (r+2)g_{m+1}(r)^2$ on
  $D(2m+1,2n)$. Then $h$ has odd valuation at exactly
  $|2m+1|\cdot|n|+|n|+2|m|-2a$ points of $D(2m+1,2n)$, with $a=1$ when
  $n>0$ and $a=2$ when $m,n<0$ and $a=0$ when $n<0<m$.
\end{lemma}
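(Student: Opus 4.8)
The plan is to follow the strategy of the proof of Lemma~\ref{heven}, counting separately the affine points and the points at infinity of $D=D(2m+1,2n)$ at which $h$ has odd valuation. First I would record that, since $k=2m+1$ is odd with $|k|\geq 3$ and $l=2n$ is a nonzero even integer, the curve $D$ is smooth over $\Q$ by Proposition~\ref{proptwokponetwon}, so every closed point of $D$ carries a discrete valuation and there are no singular points to treat separately. On the standard affine part, $h=t-2+(r+2)g_{m+1}(r)^2$ vanishes precisely where $t=T(r):=2-(r+2)g_{m+1}(r)^2$, so the affine points of $D$ on $\{h=0\}$ correspond to the zeros in $r$ of the polynomial $F(r,T(r))$, where $F(r,t)=f_{m+1}(r)g_n(t)-f_m(r)g_{n+1}(t)$ is the defining polynomial of $D$ (cf.\ the display following~(\ref{defeqsnew})).

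The heart of the argument will be to evaluate $F(r,T(r))$ inside the ring $\Z[u][s]/(s^2-us+1)\isom\Z[q,q^{-1}]$ of Lemma~\ref{stillbasic}, writing $r=q+q^{-1}$. A short computation gives $(r+2)g_{m+1}(r)^2=q^{2m+1}+2+q^{-2m-1}$, hence $T(r)=-(q^{2m+1}+q^{-(2m+1)})$; putting $s'=-q^{2m+1}$, one has $T(r)=s'+(s')^{-1}$, so $g_j(T(r))=\big((s')^j+(s')^{1-j}\big)/(s'+1)$ and the remaining substitution is mechanical. I expect the numerator to telescope, yielding, as an element of $\Z[q,q^{-1}]$,
$$
F(r,T(r))=(-1)^{n+1}q^{1-m-2mn-n}\cdot\frac{(q^{4mn+2n-1}-1)(q^{2m+1}+1)}{q^2-1}.
$$
Since $4mn+2n-1\equiv-1\pmod{4m+2}$, we have $\gcd(4mn+2n-1,\,4m+2)=1$, so the $(4mn+2n-1)$-th roots of unity and the $(2m+1)$-th roots of $-1$ are disjoint sets, neither containing $\pm1$; after cancelling $q^2-1$ this shows $F(r,T(r))$ has exactly $|4mn+2n-1|+|2m+1|-2$ distinct nonzero roots in $q$, all simple and occurring in pairs $\{q,q^{-1}\}$ with $q\neq q^{-1}$. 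Each such pair gives one simple root $r_0=q+q^{-1}$ of $F(r,T(r))$ viewed as a polynomial in $r$, so $F(r,T(r))$ is separable of degree $N_1:=\tfrac12\big(|4mn+2n-1|+|2m+1|-2\big)$. By Lemma~\ref{tranint} the curve $\{h=0\}$ then meets $D$ transversally at each of these $N_1$ affine points, so $v_P(h)=1$ there, while $h$ is nonvanishing at every other affine point of $D$; hence $h$ has odd valuation at exactly $N_1$ affine points.

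For the points at infinity I would argue as follows. Let $L_r,L_t$ denote the lines $r=\infty$ and $t=\infty$. By Lemma~\ref{inftysmooth} (applicable since $f_m,f_{m+1},g_n,g_{n+1}$ are separable with the required consecutive degrees) the curve $D$ is smooth at infinity, $1/r$ is a uniformizer at each of the $|n|$ points of $L_r\cap D$, and $1/t$ at each of the $\lfloor|2m+1|/2\rfloor$ points of $L_t\cap D$, all these intersections being transversal. A comparison of leading bihomogeneous terms in the defining equation will show that $L_r\cap L_t\cap D$ is a single point when $mn>0$ and is empty when $mn<0$; write $\delta\in\{0,1\}$ for its cardinality. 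Since $(r+2)g_{m+1}(r)^2$ is monic of degree $|2m+1|$ and at $(\infty,\infty)$ smoothness forces $v_S(r)=v_S(t)=-1$, one finds $v_S(h)=-|2m+1|$ at every point $S\in L_r\cap D$, and $v_S(h)=-1$ at every point $S\in L_t\cap D\setminus L_r$; both are odd. So $h$ has odd valuation at exactly $|n|+\lfloor|2m+1|/2\rfloor-\delta$ points at infinity.

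Adding the two contributions, the total is $N_1+|n|+\lfloor|2m+1|/2\rfloor-\delta$. Using that $|2m+1|$ is odd (so $\lfloor|2m+1|/2\rfloor=\tfrac12(|2m+1|-1)$), that $|4mn+2n-1|=2|n||2m+1|-1$ if $mn>0$ and $=2|n||2m+1|+1$ if $mn<0$, and that $\delta=1$ exactly when $mn>0$, this simplifies to $|2m+1|\cdot|n|+|n|+2|m|-2a$; the relations $|2m+1|=2|m|+1$ for $m>0$ and $|2m+1|=2|m|-1$ for $m<0$ then force $a=1$ when $n>0$, $a=0$ when $n<0<m$, and $a=2$ when $m,n<0$, which is the claim. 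The main obstacle will be the telescoping simplification of $F(r,T(r))$ into the displayed product form; once that is in hand, separability, the affine count and the analysis at infinity go through as in Lemma~\ref{heven}, leaving only the routine sign bookkeeping in the final tally.
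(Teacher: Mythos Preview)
Your proof is correct and follows essentially the same route as the paper: both substitute $t=T(r)$ into the defining polynomial, pass to $\Z[q,q^{-1}]$ via $r=q+q^{-1}$, obtain the same factorisation (up to a unit) $(q^{2m+1}+1)(q^{(2m+1)2n-1}-1)/(q^2-1)$, deduce separability of $F(r,T(r))$ from $\gcd(2(2m+1),\,2(2m+1)n-1)=1$, and finish with the same transversality-at-infinity argument as in Lemma~\ref{heven}. One small slip: the phrase ``neither containing $\pm1$'' is literally false (indeed $1$ is a $(4mn+2n-1)$-th root of unity and $-1$ is a $(2m+1)$-th root of $-1$); what you mean, and what the computation actually uses, is that dividing by $q^2-1$ removes precisely these two roots and the remaining zeros are simple and pair off as $\{q,q^{-1}\}$ with $q\neq\pm1$, giving the stated count $N_1$.
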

\begin{proof}
  Let $C$ denote the vanishing locus of $h$.  The points in the
  standard affine part of the intersection $C\cap D(2m,2n)$ correspond
  to the roots of $F(r,T)$ with $F(r,t)=
  f_m(r)g_{n+1}(t)-f_{m+1}(r)g_n(t)$ and $T =
  2-(r+2)g_{m+1}(r)^2$. The degree of $T$ equals $|2m+1|$.  From Lemma
  \ref{newbasicf} we find that the degree of $F(r,T)$ equals
  $|2m+1|\cdot|n|+|m|-a$.  We now show that $F(r,T)$ is separable.
  Consider the extension $\Z[r][q]/(q^2-rq+1)\isom \Z[q,q^{-1}]$. Then
  we have $r=q+q^{-1}$ and $T = -q^{2m+1}-q^{-2m-1}$. This yields
  $F(r,T) = -q^{2mn+m+n-1}(q^{2m+1}+1)(q^{2(2m+1)n-1}-1)/(q^2-1)$, and
  as $\gcd(2(2m+1),2(2m+1)n-1)=1$, we find that $F(r,T)$ has no
  multiple factors in $\Z[q,q^{-1}]$, so $F(r,T)$ is indeed separable.
  This shows that all $|2m+1|\cdot|n|+|m|-a$ intersection points $R$
  in the affine part $C\cap D(2m+1,2n)$ are transversal intersections,
  and we have $v_R(h)=1$ by Lemma \ref{tranint}.

  Let $L_r$ and $L_t$ denote the lines given by $r=\infty$ and
  $t=\infty$ respectively. As in the proof of Lemma \ref{heven}, the
  valuation $v_S(h)$ is odd at every point $S$ at infinity.  There are
  $|m|+|n|-a$ points at infinity, so we get a total of
  $|2m+1|\cdot|n|+|n|+2|m|-2a$ points $S$ with $v_S(h)$ odd.
\end{proof}

We now have enough information to compute the genus of the irreducible
components of $X(k,l)$ for any $k,l$ for which $J(k,l)$ is a
hyperbolic knot.  Recall that if $l$ is an even integer with$|l|>2$,
then $X_1(l,l)$ is the scheme-theoretic complement of $X_0(l,l)$ in
$X(l,l)$.

\begin{theorem}\label{genXdiff}
  Suppose $l$ is a nonzero even integer, say $l=2n$.  If $k\neq l$ is
  an integer satisfying $|k|\geq 2$, then $X(k,l)$ is irreducible and
  the genus of $X_0(k,l)$, its only irreducible component, equals
$$
3|mn|-|m|-a|n|+b,
$$
with $m = \lfloor k/2 \rfloor$ and
$$
a = \left\{
\begin{array}{rl}
4 & \mbox{if $k$ is odd and $k<0$},\cr
1 & \mbox{otherwise}.\cr
\end{array}
\right.
\qquad
b = \left\{
\begin{array}{rl}
2 & \mbox{if $k$ is odd and $k<0<l$},\cr
1 & \mbox{if $k$ is odd and $l<0$},\cr
-1 & \mbox{if $k$ is even and $kl>0$},\cr
0 & \mbox{otherwise}.\cr
\end{array}
\right.
$$
If $|l|>2$, then $X(l,l)$ has two components, namely $X_0(l,l)$ of
genus $|n|-1$ and $X_1(l,l)$ of genus $3n^2-7|n|+5$.
\end{theorem}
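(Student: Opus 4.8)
The plan is to realize $X(k,l)$ as an explicit double cover of the smooth curve $D(k,l)$ (and, in the case $k=l$, of each of its two components) and then read off its genus from Lemma \ref{ramgenus}, feeding in the ramification counts already established in Lemmas \ref{heven} and \ref{hodd}.

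First I would set up the double cover. By Proposition \ref{Ckl} the curve $X(k,l)$ is the double cover of $C(k,l)\cong Y(k,l)$ cut out by $y=x^2-2$, and by Proposition \ref{charvar} the model $C(k,l)$ is birational to $D(k,l)$ via $t=\Phi_{-k}(r)\Psi_k(r)(y-r)+2$; hence on $D(k,l)$ one has $x^2=y+2=r+2+(t-2)\big(\Phi_{-k}(r)\Psi_k(r)\big)^{-1}$. A short computation with Lemmas \ref{newbasicf} and \ref{phipsiprop} gives $\Phi_{-k}\Psi_k=-(r-2)f_m^2$ when $k=2m$ and $\Phi_{-k}\Psi_k=g_{m+1}^2$ when $k=2m+1$. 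Clearing denominators, the function field of $X(k,l)$ becomes $k\big(D(k,l)\big)[z]/(z^2-h)$, where $z=x(r-2)f_m(r)$ and $h=(r-2)\big(2-t+(r^2-4)f_m(r)^2\big)$ when $k=2m$, and $z=xg_{m+1}(r)$ and $h=t-2+(r+2)g_{m+1}(r)^2$ when $k=2m+1$ --- in each case $h$ is precisely the function appearing in Lemma \ref{heven} or Lemma \ref{hodd}. Since under our hypotheses on $k,l$ the ramification counts given by those lemmas are strictly positive (a trivial estimate), $h$ is not a square, so Lemma \ref{ramgenus} applies: $X(k,l)$ is irreducible with genus $2g\big(D(k,l)\big)-1+a_h/2$, where $a_h$ is the number of points of $D(k,l)$ at which $h$ has odd valuation.

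For $k\neq l$, Theorem \ref{thmirredcomps} and Lemma \ref{genushyp} give $g\big(D(k,l)\big)=\big(\lfloor |k|/2\rfloor-1\big)(|n|-1)$. Substituting this together with the value of $a_h$ from Lemma \ref{heven} (for $k$ even) or Lemma \ref{hodd} (for $k$ odd) into the genus formula above, and then running through the cases according to the parity of $k$ and the signs of $k$ and $l$, should reproduce the claimed value $3|mn|-|m|-a|n|+b$ in every case. The only bookkeeping point that requires care is that $\lfloor |k|/2\rfloor$ equals $|m|$ when $k>0$ or $k$ is even, but equals $|m|-1$ when $k$ is odd and negative; with this in hand the case split is purely computational.

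Finally, for $k=l=2n$ with $|l|>2$ (so $|n|\geq 2$), recall from Theorem \ref{thmirredcomps} and Proposition \ref{Yzero} that $Y(l,l)$ has exactly two components: $Y_0(l,l)$, the rational curve corresponding to the line $r=t$, and $Y_1(l,l)$, birational to the smooth curve $D_1(l,l)$ of bidegree $(|n|-1,|n|-1)$, hence of genus $(|n|-2)^2$ by Lemma \ref{genushyp}. Since $X_0(l,l)$ lies over $Y_0(l,l)$, the double cover $X(l,l)\to Y(l,l)$ restricts to a double cover of each component, given on each by $z^2=h$ with $h$ the function of Lemma \ref{heven} for $m=n$. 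By Lemma \ref{heven}, exactly $2|n|$ of the odd-valuation points of $h$ lie on the line $r=t$, and $h$ is nonvanishing at the singular points of $D(l,l)$, which are precisely the points of $D_0(l,l)\cap D_1(l,l)$; hence the remaining $2n^2+4|n|-4-2|n|=2n^2+2|n|-4$ odd-valuation points all lie on $D_1(l,l)$ away from $D_0(l,l)$. Applying Lemma \ref{ramgenus} to the two covers then yields that $X_0(l,l)$ is irreducible of genus $2\cdot 0-1+2|n|/2=|n|-1$ and $X_1(l,l)$ is irreducible of genus $2(|n|-2)^2-1+(2n^2+2|n|-4)/2=3n^2-7|n|+5$, and that these are the two components of $X(l,l)$. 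The step I expect to be the main obstacle is the first one --- producing the clean equation $z^2=h$ with $h$ matching exactly the functions of Lemmas \ref{heven} and \ref{hodd} --- after which everything reduces to Lemma \ref{ramgenus} together with the (admittedly lengthy) case analysis over the signs of $k$ and $l$.
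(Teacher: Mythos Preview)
Your proposal is correct and follows essentially the same approach as the paper: rewrite the double cover $x^2=y+2$ on $D(k,l)$ as $z^2=h$ with $h$ exactly the function of Lemma~\ref{heven} or \ref{hodd}, then apply Lemma~\ref{ramgenus} together with the genus of $D(k,l)$ (or of $D_0(l,l)$ and $D_1(l,l)$) and the ramification counts from those lemmas. The paper's proof is the same argument, only slightly terser about the algebra producing $z^2=h$ and about the final arithmetic.
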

\begin{proof}
  By Theorem \ref{thmirredcomps} the curve $Y(k,l)$ is geometrically
  irreducible for $k \neq l$, and the curves $Y_0(l,l)$ and $Y_1(l,l)$
  are irreducible if $|l|>2$. Smooth projective completions of these
  curves are $D(k,l)$, $D_0(l,l)$, and $D_1(l,l)$ respectively.  Their
  genera are given in Theorem \ref{genY}. The double cover $X(k,l)$ of
  $Y(k,l)$ is given by $y=x^2-2$.  For $k$ odd, so $k=2m+1$, with
  $|k|>2$, this is equivalent to $t-2 = g_{m+1}(r)^2(x^2-2-r)$ by
  Lemma \ref{tracewk}, or $(g_{m+1}(r)x)^2 = h$ with $h$ as in Lemma
  \ref{hodd}; the fact that $X(k,l)$ is irreducible and the value of
  its genus now follow from Lemmas \ref{ramgenus} and \ref{hodd}.  Now
  assume $k$ is a nonzero even number, so $k=2m$.  Then the double
  cover $X(k,l)$ of $Y(k,l)$ is given by $t-2 =
  (2-r)f_m(r)^2(x^2-2-r)$ by Lemma \ref{tracewk}, or $\big((r-2)
  f_k(r) x \big)^2 = h$ with $h$ as in Lemma \ref{heven}.  If $k\neq
  l$, then the fact that $X(k,l)$ is irreducible and the value of its
  genus follow immediately from Lemmas \ref{ramgenus} and
  \ref{heven}. If $k=l$ and $|l|>2$, then we apply Lemmas
  \ref{ramgenus} and \ref{heven} to both irreducible components of
  $Y(l,l)$ to obtain the final statement.
\end{proof}

\begin{proof}[Proof of Theorem \ref{maintwo}]
This follows immediately from Theorem \ref{genXdiff}.
\end{proof}

Note that from Theorem \ref{genXdiff} we can find all hyperbolic knots
in the family $J(k,l)$ for which the genus of $X_0(k,l)$ equals
$1$. Up to switching $k$ and $l$ and changing sign of both $k$ and
$l$, these are $J(4,4)$ and $J(2,3)$ and $J(-2,2)$, the former of
which is the $7_4$ knot (see \cite[page 391]{rolfsen}), and the latter
two of which are the figure-eight knot.

\section{Commensurability classes}\label{commens}

Recall that a compact orientable
3-manifold $M$ is {\em fibered} if it is homeomorphic to a surface
bundle over $\Sp^1$. One of the most intriguing open conjectures today
is Thurston's virtual fibration conjecture. 

\begin{conjecture}[Thurston]
Every finite-volume hyperbolic 3-manifold has a
finite cover that is fibered. 
\end{conjecture}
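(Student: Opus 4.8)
The plan is not to give a self-contained argument—this is Thurston's virtual fibering conjecture, which lies far beyond the elementary methods of the present paper—but to outline the program that reduces it to three deep structural inputs together with one combinatorial fibering criterion. Write $M = \HH^3/\Gamma$ with $\Gamma \subset \PSL_2(\C)$ a discrete, finite-covolume, and (after passing to a finite cover) torsion-free group. The strategy is to show that a finite-index subgroup of $\Gamma$ is \emph{RFRS} (residually finite rationally solvable), and then to invoke Agol's theorem that a compact orientable irreducible $3$-manifold with empty or toroidal boundary whose fundamental group is RFRS is virtually fibered.

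First I would invoke the surface subgroup theorem of Kahn and Markovic: every closed hyperbolic $3$-manifold contains an immersed, closed, quasi-Fuchsian surface, and in fact a family of nearly geodesic such surfaces whose asymptotic distribution can be controlled well enough to realize all the separation properties needed below. For cusped $M$ one needs the finite-volume analogue, established by similar exponential-mixing arguments on the unit tangent bundle. These quasi-Fuchsian surface subgroups are the codimension-one subgroups that drive everything that follows.

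Second, feeding this collection of codimension-one surface subgroups into Sageev's construction, I would produce a proper cocompact action of $\Gamma$ on a $\mathrm{CAT}(0)$ cube complex; this is the Bergeron--Wise cubulation theorem, whose boundary hypotheses are exactly what the abundance of Kahn--Markovic surfaces supplies. Third, applying Agol's virtual Haken theorem—built on Wise's theory of special cube complexes, in particular the Malnormal Special Quotient Theorem—I would conclude that this cubulation is \emph{virtually special}, so that a finite-index subgroup $\Gamma' \le \Gamma$ embeds as a quasiconvex subgroup of a right-angled Artin group. Right-angled Artin groups and their subgroups are RFRS; hence $\Gamma'$ is RFRS, and $M$ is virtually RFRS.

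The main obstacle is the passage through the cube complex: both the Kahn--Markovic construction (controlling the statistics of skew pairs of pants accurately enough to assemble a closed quasi-Fuchsian surface) and the virtual specialness machinery (the quasiconvex hierarchy and the Malnormal Special Quotient Theorem) are exceptionally hard, and neither is visible from the character-variety computations of this paper. Once virtual RFRS-ness is in hand, the final step is comparatively formal: Agol's criterion uses the RFRS tower to produce, inside a finite cover, a class in $H^1$ lying in the cone over a fibered face of the Thurston-norm ball, hence a fibration over $\Sp^1$. I would therefore regard the conjecture as \emph{reduced} to these geometric and group-theoretic inputs, rather than as something accessible to the techniques developed here.
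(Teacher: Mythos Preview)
The paper does not prove this statement; it is recorded as a \emph{conjecture} and serves only as motivation for the commensurability questions in \S\ref{commens}. There is therefore no ``paper's own proof'' to compare against.

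Your outline is, in broad strokes, an accurate summary of the Agol--Wise--Kahn--Markovic program that resolved Thurston's virtual fibering conjecture: Kahn--Markovic surface subgroups, Bergeron--Wise cubulation via Sageev, Agol's virtual specialness theorem (resting on Wise's Malnormal Special Quotient Theorem), the RFRS property inherited from right-angled Artin groups, and finally Agol's RFRS fibering criterion. You are also right that none of this is accessible to the character-variety techniques of the present paper, and you say so explicitly. As a piece of expository honesty this is fine.

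One minor caution: your sketch silently blends the closed and cusped cases. The Kahn--Markovic surface subgroup theorem and the Bergeron--Wise cubulation are stated for closed hyperbolic $3$-manifolds; the finite-volume cusped case requires separate work (Wise's hierarchy results already handle many Haken cusped manifolds directly, and the general cusped case was completed by subsequent arguments). If you want the outline to genuinely cover ``every finite-volume hyperbolic $3$-manifold'' as stated, you should flag that the cusped case follows a somewhat different route through Wise's theory rather than through Kahn--Markovic.
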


Two manifolds are {\em commensurable} if they share a common finite
cover. Since any finite cover of a fibered manifold is fibered,
if one manifold is commensurable to a fibered manifold, then their
common cover is also fibered. It follows that
Thurston's conjecture is equivalent to stating that every
finite-volume hyperbolic 3-manifold is commensurable to a fibered
manifold. 

As knot complements rarely cover each other, it is too much to hope
that every knot complement has a finite cover that is a fibered knot
complement. However, it is natural to ask whether any knot complement
in $\Sp^3$ is commensurable to a fibered knot complement in $\Sp^3$. 
Reid and Walsh \cite{RW} have answered this question negatively for
nonfibered hyperbolic two-bridge knot complements by showing that these
are the unique knot complements in $\Sp^3$ in their commensurability  
class.  

We address the more general question that asks whether a $3$-manifold 
is commensurable to a fibered knot complement in any $\Z/2\Z$-homology
sphere. Calegari and Dunfield \cite{CD} found sufficient conditions 
\cite[Thm. 6.1]{CD} under
which certain hyperbolic knot complements are not commensurable to a
fibered knot complement in a $\Z/2\Z$-homology sphere. 
One consequence \cite[Thm. 7.1]{CD} of their work is that the 
nonfibered two-bridge knots $K(p,q)$ with $0 < p <40$ have complements 
that are not commensurable with a fibered knot
complement in a $\Z/2\Z$-homology sphere.  Hoste and Shanahan
\cite{HS} extended these results to the nonfibered twist knots (the
knots $J(2,n)$ with $n \neq 0, \pm 1, \pm 2$)  and the knots
$J(3,2n)$, for $-33 < n <0$. Our explicit defining equations allow us
to use Calegari and Dunfield's results to prove the following,
conjectured by Hoste and Shanahan \cite[Conj. 1]{HS}.  

\begin{theorem}\label{fibrationthm} 
Let $k,l$ be integers for which the knot $J(k,l)$ has a
nonfibered complement $M$ in $\Sp^3$. Then $J(k,l)$ is hyperbolic and
$M$ is not commensurable to a fibered knot complement in a 
$\Z/2\Z$-homology sphere.
\end{theorem}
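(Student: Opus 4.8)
The plan is to derive Theorem~\ref{fibrationthm} from the sufficient condition of Calegari and Dunfield \cite[Thm.~6.1]{CD}, in the spirit in which Hoste and Shanahan treated the twist knots and the knots $J(3,2n)$ with $-33<n<0$ \cite{HS}; what is new here is that the non-recursive model for the canonical component produced in \S\ref{equationsection}--\S\ref{newmodelsection} makes it possible to verify the hypotheses of that criterion uniformly over the whole two-parameter family, rather than knot by knot. \textbf{Step 1 (reduction to the hyperbolic case).} Since $J(k,l)$ is assumed to be a knot, $kl$ is even, and $J(k,l)$ is a two-bridge knot (\S\ref{famtwobridge}), hence either a $(2,q)$-torus knot or hyperbolic; by the hyperbolicity criterion recalled in the introduction the torus-knot case occurs exactly when $|k|\leq 1$, or $|l|\leq 1$, or $J(k,l)$ is a trefoil. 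Every $(2,q)$-torus knot, the unknot and the trefoil included, is fibered, so if $M(k,l)$ is nonfibered then $J(k,l)$ is hyperbolic. It therefore suffices to prove the non-commensurability statement under the additional assumption that $J(k,l)$ is hyperbolic; using $J(k,l)=J(l,k)$ and $J(-k,-l)\cong J(k,l)$ we may also take $l$ even and normalize the signs of $k$ and $l$ as convenient.

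\textbf{Step 2 (pinning down the canonical component).} The criterion of \cite[Thm.~6.1]{CD} is applied to the canonical component $Y_0$ (respectively $X_0$) of the $\PSL_2(\C)$- (resp.\ $\SL_2(\C)$-) character variety of the nonfibered hyperbolic knot complement, and its verification for a given knot requires both knowing which component is canonical and computing with its defining polynomial. Theorem~\ref{thmirredcomps} supplies the first ingredient: for $k\neq l$ the curve $Y(k,l)=Y_0(k,l)$ is irreducible, with defining equation the polynomial of Proposition~\ref{Ckl} (the standard model $C(k,l)$) --- or, birationally and smoothly, the bidegree-$(\lfloor|k|/2\rfloor,|l|/2)$ curve $D(k,l)$ of \S\ref{newmodelsection}; for $k=l$ Proposition~\ref{Yzero} identifies $Y_0(l,l)$ with the line $r=t$, Theorem~\ref{genXdiff} gives its $\SL_2$-double cover $X_0(l,l)$ explicitly (of genus $|l|/2-1$), and the complementary component is the smooth curve $D_1(l,l)$ of Definition~\ref{zeros}. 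Thus in every case the canonical component is available as an explicit curve with an explicit defining polynomial.

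\textbf{Step 3 (verifying the hypothesis).} With the canonical component written out, the Calegari--Dunfield condition becomes a finite list of assertions about the defining polynomial of $X_0(k,l)$ --- for instance about the factorization of its specialization at meridian trace $\pm 2$, computed exactly as in the proof of Theorem~\ref{mainthree}, and about the reduction of its coefficients modulo a small prime. These are precisely the statements that the valuation and Newton-polygon machinery of \S\ref{algebra} and \S\ref{smoothirred} controls \emph{uniformly} in $k$ and $l$: the relevant coefficients are expressible through the polynomials $f_j$, $g_j$, $\Phi_k$, $\Psi_k$ of \S\ref{equationsection}, whose degrees, parities, special values such as $f_j(2)=j$ and $g_j(2)=1$, and mod-$p$ behaviour are tabulated in Lemmas~\ref{fgat2}, \ref{phipsiprop}, \ref{irredstats}, and whose roots are located by Lemma~\ref{polyroots}. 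Running the argument along the same case division used in \S\ref{smoothirred} --- $k$ odd; $k$ even with $kl<0$; $k$ even with $kl>0$; and the diagonal case $k=l$, where one works on the canonical component $r=t$ and its $\SL_2$-cover rather than on all of $Y(l,l)$ --- one reads off in each case that the hypothesis of \cite[Thm.~6.1]{CD} holds, which yields the theorem.

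\textbf{Main obstacle.} The difficulty is exactly this uniformity. Calegari--Dunfield and Hoste--Shanahan verified their criterion by explicit, computer-assisted computation for finitely many knots, where the degree of the relevant polynomial --- of order $|kl|$ by Theorem~\ref{mainthree} --- is bounded; here one must instead extract the same conclusion from a family of polynomials depending on the two parameters $k$ and $l$, which is feasible only because Proposition~\ref{Ckl} presents these polynomials non-recursively and \S\ref{algebra}--\S\ref{smoothirred} give uniform arithmetic control over them. A secondary point requiring care is the diagonal case $k=l$, where $Y(l,l)$ is reducible, so that the criterion must be applied to the canonical component $Y_0(l,l)$ of Proposition~\ref{Yzero} and to $X_0(l,l)$, and not to the whole character variety.
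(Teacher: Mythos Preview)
Your proposal has a genuine gap in Step~3: you never state what the Calegari--Dunfield criterion actually requires, and the conditions you allude to are not the right ones. Theorem~6.1 of \cite{CD} asks that the manifold be \emph{generic} and that $Y_0$ contain the character of a \emph{nonintegral reducible} representation. The first hypothesis requires showing $M$ is non-arithmetic and has no hidden symmetries; the paper invokes Reid--Walsh \cite[Thm.~3.1]{RW} for this, and you omit it entirely. For the second hypothesis, reducible representations correspond to $r=2$ (since $r=\tr(ab^{-1})$ and $a,b$ are conjugate), not to meridian trace $\pm 2$; your reference to ``specialization at meridian trace $\pm 2$, computed exactly as in the proof of Theorem~\ref{mainthree}'' is the specialization $y=2$, which picks out \emph{parabolic} representations, a different locus altogether.

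Once the correct specialization $r=2$ is made, the argument is short and elementary, and does not use the Newton-polygon or valuation machinery of \S\ref{smoothirred} at all. For $k=2m$ even, Lemma~\ref{specpointsoldboth} gives a unique point $(2,\,2-\tfrac{1}{mn})$ on $C(k,l)$ with $r=2$, visibly nonintegral; one then checks (via Proposition~\ref{Yzero} when $k=l$) that it lies on the canonical component. For $k=2m+1$ odd, substituting $r=2$ into the defining equation yields a polynomial $F(t)=mf_{n+1}(t)-kf_n(t)+(m+1)f_{n-1}(t)$ whose leading and constant coefficients are coprime, so it has a nonintegral root unless the leading coefficient is $\pm 1$; the exceptional cases are exactly the fibered knots identified by the Alexander polynomial. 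The case division you propose along the lines of \S\ref{smoothirred} (sign of $kl$, etc.) and the invocation of Lemmas~\ref{irredstats} and \ref{polyroots} are not what is needed here.
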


Before beginning the proof, we use the Alexander polynomial to
identify the fibered $J(k,l)$ knots. Since these knots are two-bridge
knots, which are alternating, they are fibered if and only
if their Alexander polynomial is monic, i.e., with leading coefficient
$\pm 1$ \cite[Prop. 13.26]{BuZ}. 
Without loss of generality we may assume that
$l$ is even.  We can easily compute the Alexander polynomials. 

\begin{lemma}\label{alexpoly} For  all nonzero integers $k$ and
  $l=2n$, the Alexander polynomial $\Delta_{k,l}(t)$ of the knot $J(k,l)$ is
\begin{enumerate}
\item $ \displaystyle  nmt^2+(1-2nm)t+nm $ \\ if $k=2m$,
\item $ \displaystyle mt^{2n}+(1+2m)( -t^{2n-1}+\dots -t)+m$
\\ if $k=2m+1$ and $l>0$, and 
\item $ \displaystyle (m+1)t^{-2n} +(1+2m)(-t^{-2n-1}+ \dots -t)+(m+1) $ 
\\ if $k=2m+1$ and $l<0$.
\end{enumerate}
\end{lemma}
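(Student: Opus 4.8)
The plan is to obtain all three Alexander polynomials by a direct Fox-calculus computation from the two-generator, one-relator presentation (\ref{pres2}) of $\pi_1(k,2n)$. Since $J(k,l)$ is a knot and this presentation has deficiency one with both generators conjugate meridians, the Alexander polynomial $\Delta_{k,l}(t)$ is, up to a unit $\pm t^{j}$ of $\Z[t^{\pm1}]$, the image of the single Fox derivative $\partial r/\partial a$ of the relator $r = a\,w_k^{\,n}\,b^{-1}\,w_k^{-n}$ under the ring homomorphism $\Z[F(a,b)]\to\Z[\pi_1(k,2n)]\to\Z[t^{\pm1}]$ sending $a$ and $b$ to $t$ (see \cite{BuZ} for Alexander polynomials of two-bridge knots; one could alternatively specialize the classical closed formula for $\Delta_{K(p,q)}$, but translating the parameters $(p,q)$ and the exponents $e_i$ back to $k,l$ is messier than the route below). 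Denote this homomorphism by an overline. So the whole proof reduces to evaluating $\overline{\partial r/\partial a}$.

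First I would apply the product rule for Fox derivatives to $r = a\cdot w_k^{\,n}\cdot b^{-1}\cdot w_k^{-n}$ and use that $r$ is trivial in $\pi_1(k,2n)$, so that $\overline{r}=1$, to reach the clean identity
$$
\overline{\partial r/\partial a}\;=\;1+(t-1)\,\overline{\partial\bigl(w_k^{\,n}\bigr)/\partial a}.
$$
Next I would write $\partial(w_k^{\,n})/\partial a=\bigl(\textstyle\sum_i w_k^{\,i}\bigr)\,\partial w_k/\partial a$ as a geometric sum, with the usual interpretation for $n<0$; abelianizing turns the sum into $(\tau^{n}-1)/(\tau-1)$, read as $n$ when $\tau=1$, where $\tau=\overline{w_k}$. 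Counting exponent sums in the word (\ref{wk}) gives $\tau=1$ when $k$ is even and $\tau=t^{2}$ when $k$ is odd. The remaining ingredient, $\overline{\partial w_k/\partial a}$, is a short calculation from (\ref{wk}) using $\overline{\partial(ab^{-1})/\partial a}=1$ and $\overline{\partial(a^{-1}b)/\partial a}=-t^{-1}$: it equals $m(1-t^{-1})$ when $k=2m$ and $(m+1)-mt$ when $k=2m+1$.

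Assembling these pieces, for $k=2m$ one finds $\overline{\partial r/\partial a}=1+nm(t-1)(1-t^{-1})$, which up to the unit $t$ is the polynomial in (1). For $k=2m+1$ one finds
$$
\overline{\partial r/\partial a}=1+\frac{t^{2n}-1}{t+1}\bigl((m+1)-mt\bigr),
$$
and expanding the finite alternating sum $(t^{2n}-1)/(t+1)$ and collecting coefficients recovers (2) when $l>0$ and (3) when $l<0$. I expect this last step to be the only place needing care: the rational function $(t^{2n}-1)/(t+1)$ is represented by the finite sum $t^{2n-1}-t^{2n-2}+\dots-1$ when $n>0$ but by $t^{2n}-t^{2n+1}+\dots-t^{-1}$ when $n<0$, so the two signs of $l$ genuinely require separate bookkeeping, and one must track the unit $\pm t^{j}$ relating the raw Fox derivative to the palindromic normalization in the statement. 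It is worth checking the small cases $J(2,2)$, $J(2,-2)$, and $J(3,-2)=5_2$, which should give $t^{2}-t+1$, $-t^{2}+3t-1$, and $2t^{2}-3t+2$ respectively.
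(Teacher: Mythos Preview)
Your Fox-calculus computation is correct: the identity $\overline{\partial r/\partial a}=1+(t-1)\,\overline{\partial(w_k^{\,n})/\partial a}$ follows exactly as you say, the values $\tau=\overline{w_k}$ and $\overline{\partial w_k/\partial a}$ are right in both parities of $k$, and the expansions for $n>0$ and $n<0$ recover (1)--(3) up to the expected unit $\pm t^j$. Your sanity checks on the trefoil, the figure-eight, and $5_2$ all come out correctly.

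As for comparison with the paper: there is nothing to compare against. The paper does not prove this lemma; it simply asserts that the Alexander polynomials ``can easily'' be computed and states the result. Your approach---Fox calculus applied directly to the deficiency-one presentation (\ref{pres2})---is the natural way to make that assertion precise, and it has the virtue of handling all three cases uniformly from the same formula $1+(t-1)(\tau^n-1)(\tau-1)^{-1}\,\overline{\partial w_k/\partial a}$, with the case split appearing only at the final expansion stage. The alternative you mention, specializing the closed formula for $\Delta_{K(p,q)}$ in terms of the exponents $e_i$ of (\ref{wform}), would also work but, as you note, requires translating $(p,q)$ back to $(k,l)$ and is less transparent.
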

It follows that $J(k,2n)$ is fibered only for the unknot $J(0,l)=J(k,0)$,
the figure-eight $J(2,-2)=J(-2,2)$, the trefoil $J(2,2)=J(-2,-2)$,
the knots $J(3,2n)=J(-3,-2n)$ for any  $n>0$ and $J(1,2n)=J(-1,-2n)$ for any
$n$. 

\begin{proof}[Proof of Theorem \ref{fibrationthm}]
First note that $J(k,l)$ is hyperbolic, as the only
nonhyperbolic knots of the form $J(k',l')$ are the torus
knots $J(\pm 1, 2n)$, the unknot and the trefoil
$J(2,2) = J(-2,-2)$ (see \cite[Thm. 1]{HT}).  

Also note that $M= \Sp^3 \setminus J(k,l)$ is not arithmetic, as the 
only arithmetic knot is the fibered figure-eight knot;
for the definition of 
arithmetic and the proof of this fact, see \cite{Reid} and 
\cite[Section 9.4]{MR}. We will
show that $M$ is in fact {\em generic}, which for a $1$-cusped
hyperbolic $3$-manifold 
means that it is not arithmetic
and its commensurator orbifold has a flexible cusp, i.e., a cusp that
is not rigid. See \cite[section 2.1]{RW} for an explanation of the 
latter condition, which for any hyperbolic complement $M'$ of a
nonarithmetic knot is equivalent with the fact that
$M'$ has no hidden symmetries (isometries of a finite cover of $M'$
that are not the lift of an isometry of $M'$) by \cite[Prop. 9.1]{NR}.  
Reid and Walsh show that the complement of no hyperbolic two-bridge knot
other than the figure-eight has hidden symmetries \cite[Thm. 3.1]{RW}. 
We conclude that $M$ is indeed generic. 

A representation $\rho:\pi_1 (M) \to \PSL_2(\C)$ is called {\em
integral} if for all $\gamma \in \pi_1 (M)$ the trace $\tr
(\overline{\rho(\gamma)})$ of a lift $\overline{\rho(\gamma)} \in
\SL_2(\C)$ of $\rho(\gamma)$ is an algebraic integer.
Calegari and Dunfield \cite[Thm. 6.1]{CD} prove that if $M'$ is a generic
hyperbolic knot complement in a $\Z/2\Z$-homology sphere and if
$Y_0\big(\pi_1(M')\big)$ contains the character of a nonintegral reducible
representation, then $M'$ is not commensurable to a fibered knot
complement in a $\Z /2\Z$-homology sphere. 
From the above discussion, it suffices to show that the component of 
$C(k,l)$ (see \S \ref{equationsection}) corresponding to $Y_0(k,l)$ 
contains the character of a nonintegral reducible representation. 
Without loss of generality we will assume that $l=2n$ is even. We 
use the notation from \S \ref{equationsection}. A representation is
reducible exactly when $r=2$.  The points $(r,y) \in C(k,l)$ with $r=2$ 
satisfy $F(y)=0$ with
\[ F(y)=f_n(t)\Big( \Phi_{-k}(2)\Phi_{k-1}(2)(y-2)-1 \Big) +f_{n-1}(t)\]
and 
\begin{equation}\label{tiny}
t=\Phi_{-k}(2) \Psi_k(2)(y-2)+2. 
\end{equation}
By Theorem \ref{thmirredcomps} the curve $C(k,l)$ is irreducible
unless $k=l$. By Lemma \ref{fgat2} we have $\Phi_{2j}(2)=f_j(2)=j$, 
$\Phi_{2j+1}(2)=1$, $\Psi_{2j}(2)=0$, and $\Psi_{2j+1}(2)=1$ for
all integers $j$. 

First, consider the case where $k$ is odd, say $k=2m+1$. Then $k\neq
l$, so $C(k,l)$ is irreducible. Here, $t=y$.  Then 
\[ \begin{aligned} F(t) & =
 f_n(t)(m(t-2)-1)+f_{n-1}(t)\\ &
  =mf_{n+1}(t)-kf_n(t)+(m+1)f_{n-1}(t), 
\end{aligned}\] 
where we used $tf_n(t) = f_{n-1}(t)+f_{n+1}(t)$ in the last inequality.
For any integer $j$, the constant terms of $f_{2j}$ and $f_{2j+1}$ are 
$0$ and $(-1)^{j}$ respectively. Therefore, the constant term of 
$F(t)$ is $\pm 1$ if $n$ is even and $\pm k$ if $n$ is odd. 
The leading term is $m$ if $l$ is positive and $m+1$ if $l$ is negative.  
As $k=2m+1$, we conclude that in all cases the leading term and
constant term are relatively prime. Therefore, $F(t)$ has a
nonintegral root exactly when the leading term is not $\pm 1$.  
The leading term is $1$ only when $m=1$
$(k=3)$ and $l>0$ or when $m=0$ ($k=1$) and $l<0$.  It is $-1$ only when
$m=-1$ ($k=-1$) and $l>0$ or when $m=-2$ ($k=-3$) and $l<0$. All cases 
correspond to fibered knots by Lemma \ref{alexpoly}, so we conclude
that $F(t)$ does have a nonintegral root $y_0$ corresponding to a
nonintegral point $(2,y_0)$ on $C(k,l)$ and thus on $Y_0(k,l)$. 

Now it suffices to assume $k$ is even, say $k=2m$. From (\ref{tiny})
we get $t=2$. By Lemma \ref{specpointsoldboth} there is a
unique point $P=(2,2-1/mn)$ on $C(k,l)$ with $r=2$.  If $k\neq l$ then
$C(k,l)$ is irreducible, so $P$ corresponds to a nonintegral point on
$Y_0(k,l)$.  If $k=l$, then the birational morphism to the new model
$D(k,k)$ (see Proposition \ref{charvar})  sends $P$ to $(2,2)$, which
lies on the component corresponding to $Y_0(k,k)$ by Proposition
\ref{Yzero}.  We conclude that $P$ is a nonintegral point on
$Y_0(k,k)$ in this case as well. 
\end{proof}

\begin{proof}[Proof of Theorem \ref{mainfour}]
Given that every manifold is commensurable with itself,
this follows immediately from Theorem \ref{fibrationthm}.
\end{proof}

\small
\nocite{*}
\bibliography{charvarpap}     
\bibliographystyle{plain}  

\end{document}